\documentclass[11pt,letterpaper]{amsart}
\usepackage{graphicx}
\usepackage{amsmath}
\usepackage{amssymb}
\usepackage{amsfonts}
\usepackage{amsthm}
\usepackage{cancel}
\usepackage[all]{xy}
\usepackage[neveradjust]{paralist}
\usepackage{enumitem}
\usepackage{float}
\usepackage{multicol}
\usepackage{mathrsfs}
\usepackage{mathdots}
\usepackage{mathtools}
\usepackage{todonotes}
\usepackage[normalem]{ulem} 
\usepackage{caption} 
\usepackage{bbm}
\usepackage[pagebackref,colorlinks=true,linkcolor=blue,citecolor=blue, urlcolor=blue]{hyperref}
\usepackage{tikz-cd}
\usepackage{tikz}
\tikzcdset{scale cd/.style={every label/.append style={scale=#1},
    cells={nodes={scale=#1}}}}
\usetikzlibrary{matrix, positioning}

\newlength{\wdth}

\usepackage{tabularray}

\newcommand{\Z}{\mathbb{Z}}

\newcommand{\R}{\mathbb{R}}

\newcommand{\Hom}{{\mathrm{Hom}}}

\newcommand{\Eseg}{{\mathrm{Eseg}}}
\newcommand{\Vseg}{{\mathrm{Vseg}}}
\newcommand{\Rep}{{\mathrm{Rep}}}
\newcommand{\VRep}{{\mathrm{VRep}}}
\newcommand{\up}{{\mathrm{up}}}
\newcommand{\down}{{\mathrm{down}}}

\newcommand{\B}{{\mathcal B}}

\newcommand{\inv}{^{-1}}
\newcommand{\sm}{\setminus}

\newcommand{\GL}{\mathrm{GL}}
\newcommand{\SO}{\mathrm{SO}}
\newcommand{\OO}{\mathrm{O}}
\newcommand{\SL}{\mathrm{SL}}
\newcommand{\Sp}{\mathrm{Sp}}

\newcommand{\BC}{\mathbb{C}}

\newcommand{\BB}{\mathcal{B}}
\newcommand{\CC}{\mathcal{C}}
\newcommand{\EE}{\mathcal{E}}
\newcommand{\FF}{\mathcal{F}}

\newcommand{\Block}{\mathrm{Block}}
\newcommand{\ABlock}{\mathrm{ABlock}}

\newcommand{\rc}{\operatorname{rc}}
\newcommand{\tc}{\operatorname{tc}}

\renewcommand{\implies}{\Rightarrow}

\newcommand{\comment}[1]{}

\newtheorem{thm}{Theorem}[section]
\newtheorem{cor}[thm]{Corollary}
\newtheorem{lemma}[thm]{Lemma}
\newtheorem{prop}[thm]{Proposition}
\newtheorem{conj}[thm]{Conjecture}

\newtheorem{ques/conj}[thm]{Question/Conjecture}

\newtheorem{defn}[thm]{Definition}
\newtheorem{prob}[thm]{Problem}
\newtheorem{rmk}[thm]{Remark}

\newtheorem{exmp}[thm]{Example}
\newtheorem{algo}[thm]{Algorithm}

\DeclareMathOperator{\supp}{supp}

\numberwithin{equation}{section}

\begin{document}

\title{Beyond the Adams Conjecture}
\author{Alexander Hazeltine, Aarya Kumar, Andrew Tung}
\date{\today}
\subjclass[2020]{Primary 11F27, 11F70, 22E50}
\thanks{The research of the first named author was supported by the AMS-Simons Travel Grant program. The research of the second and third named authors was supported by Department of Mathematics at the University of Michigan and NSF Grant DMS-2301507.}

\begin{abstract}
    For symplectic and even orthogonal groups over a $p$-adic field, we determine the number of local Arthur packets containing the local theta lift of a tempered representation at the first occurrence in the going-up tower. These counts show that the local theta lifts may lie in many more local Arthur packets than those predicted by the Adams conjecture.
\end{abstract}

\maketitle

\tableofcontents

\section{Introduction}\label{sec intro}

Recently, Ben-Zvi, Sakellaridis, and Venkatesh have proposed a series of conjectures which are known as the relative Langlands program (\cite{BZSV24}). One of the key ideas in their theory is the notion of duality. The goal of this article is to study a potential manifestation of this duality which we outline below.

The influential Gan-Gross-Prasad (GGP) conjectures seek to study certain branching problems for classical groups (\cite{GGP12, GGP20}). For example, let $n\in\mathbb{Z}_{\geq 1}$, $F$ be a non-Archimedean local field of characteristic 0, $G=\SO_{2n+1}(F)$ and $H=\SO_{2n}(F).$ Given an irreducible $\pi$ representation of $G$, the GGP conjectures aim to study the restriction of $\pi$ to $H$, i.e., to determine the irreducible admissible representations $\pi'$ of $H$ for which
\[
\Hom_H(\pi\otimes \pi',\mathbb{C})\neq 0.
\]
The GGP conjectures provide a partial answer in terms of \emph{relevant} pairs of local Arthur packets (\cite[Conjectures 6.1 and 7.1]{GGP20}). However, it is known that this relevance condition is not necessary. Specifically, there exists $\pi$ and $\pi'$ as above such that $\Hom_H(\pi\otimes \pi',\mathbb{C})\neq 0$, but $\pi$ and $\pi'$ lie in a pair of non-relevant local Arthur packets (\cite[\S7]{GGP20}).  

The GGP conjectures fit into the broader theory of the relative Langlands program (\cite[\S1.5]{BZSV24}). In this framework, the GGP conjectures can be considered as the ``dual'' problem to the Adams conjecture (see Conjecture \ref{conj Adams intro} below). We defer to \cite[Remark 7.12]{GW25} for the details of this duality.
We are particularly interested in studying the analogues of non-relevant pairs (called non-$\theta$-relevant below) for the Adams conjecture.

Hereinafter, let $m,n\in\mathbb{Z}_{\geq 0}$ and $G_n=\Sp_{2n}(F)$ denote a split symplectic group and $H_m^\pm=\OO_{2m}^\pm(F)$ denote an even orthogonal group (see \S\ref{sec local theta correspondence} for an explanation of the notation). We assume that $m>n$ and let $\alpha=2m-2n-1$ denote the difference in the ranks of the dual groups. Let $\Pi(G_n),$ respectively $\Pi(H_m^\pm)$, denote the set of equivalence classes of complex irreducible admissible representations of $G_n$, respectively $H_m^\pm.$
We consider the local theta correspondence defined by Howe (\cite{How79}) which we denote as a map $\theta_{-\alpha}^\pm:\Pi(G_n)\rightarrow\Pi(H_m^\pm)\cup\{0\}$. The local theta correspondence has proven to be an incredibly useful tool within the Langlands program; however, it does not necessarily preserve $L$-packets. As an attempt at remedying this non-preservation, Adams proposed that instead of $L$-packets, the local theta correspondence should preserve local Arthur packets (\cite{Ada89}). This prediction is known as the Adams conjecture and we recall its precise formulation in Conjecture \ref{conj Adams intro} below.

We briefly recall several notions related to local Arthur packets and defer to \S\ref{sec local Arthur packets} for details.
For brevity, let $G\in\{G_n,H_m^\pm\}.$ A local Arthur parameter of $G$ may be roughly thought of as a homomorphism $\psi:W_F\times\SL_2(\BC)\times\SL_2(\BC)\rightarrow{}^LG,$ where $W_F$ is the Weil group of $F$ and ${}^LG$ denotes the $L$-group of $G$. We note that we may regard ${}^LG_n=\SO_{2n+1}(\BC)$ and ${}^L H_m^\pm=\OO_{2m}(\BC).$ Furthermore, we have a natural inclusion of ${}^LG_n\hookrightarrow{}^LH_m^\pm$ since $m>n.$ In Arthur's seminal work, to each local Arthur parameter $\psi$ of $G$, Arthur attaches a local Arthur packet $\Pi_\psi$ which is a finite subset of $\Pi(G)$ which satisfies the twisted endoscopic character identities (\cite[Theorem 1.5.1]{Art13}).

Let $\chi_W$ denote the trivial representation of $W_F$ and $\chi_V$ denote the character of $W_F$ associated to a certain quadratic character related to $H_m^\pm$ (see \S\ref{sec the Adams conjecture}) via local class field theory. Given a local Arthur parameter $\psi$ of $G_n$, we set
\[
\psi_\alpha:=(\chi_W\chi_V^{-1}\otimes\psi)\oplus \chi_W\otimes S_1\otimes S_\alpha.
\]
The Adams conjecture is the following.

\begin{conj}[The Adams conjecture {(\cite{Ada89})}]\label{conj Adams intro}
    Suppose that $\pi\in\Pi_\psi$ for some local Arthur parameter $\psi$ of $G_n.$ If $\theta_{-\alpha}^\pm(\pi)\neq 0,$ then $\theta_{-\alpha}^\pm(\pi)\in\Pi_{\psi_\alpha}.$
\end{conj}

We remark that the Adams conjecture in this setting is fully understood via \cite{BH22, Moe11c}. In particular, M{\oe}glin showed that the Adams conjecture is true when $\alpha\gg0$ (\cite[Theorem 6.1]{Moe11c}) but it can and does fail otherwise. The exact extent of this failure is determined by Baki{\'c} and Hanzer algorithmically (\cite[Theorem A]{BH22}).
Moreover, if we fix $\pi$ and allow $\psi$ to vary, then we understand the extent to which the Adams conjecture holds via \cite[Theorem 1.3]{Haz24}.

Note that it is a pair of local Arthur parameters $(\psi,\psi_\alpha)$ which are involved in the Adams conjecture. In analogy with the GGP conjectures, we say that a pair of local Arthur parameters $(\psi,\psi')$ of $G_n\times H_m^\pm$ are \emph{$\theta$-relevant} if $\psi'=\psi_\alpha.$ Note that representations often lie in many local Arthur packets (\cite{Ato23, HLL22, HLL25}). Consequently, we expect to have many non-$\theta$-relevant pairs $(\psi,\psi')$ of local Arthur parameters for which $(\pi,\theta_{-\alpha}^\pm(\pi))\in\Pi_\psi\times\Pi_{\psi'}$. Our overarching goal is a systematic investigation of these non-$\theta$-relevant pairs given a fixed $\pi\in\Pi(G_n)$.

\begin{prob}\label{prob non theta relevant}
    Determine all non-$\theta$-relevant pairs given a fixed $\pi\in\Pi(G_n)$.
\end{prob}

For a representation $\pi'\in\Pi(G),$ let $\Psi(\pi')$ denote the set of local Arthur parameters $\psi$ of $G$ such that $\pi'\in\Pi_\psi.$ Then, for a fixed $\pi\in\Pi(G_n),$ we have that $(\pi,\theta_{-\alpha}^\pm(\pi))\in\Pi_\psi\times\Pi_{\psi'}$ for any $(\psi,\psi')\in\Psi(\pi)\times\Psi(\theta_{-\alpha}^\pm(\pi)).$  Consequently, determining the non-$\theta$-relevant pairs reduces to the computation of $\Psi(\theta_{-\alpha}^\pm(\pi)).$ For $\alpha\gg0,$ it is a straightforward consequence of the theory of intersections of local Arthur packets developed by Liu, Lo, and the first author (\cite{HLL22, HLL25}, see Theorem \ref{thm intersections of local Arthur packets}) that $\Psi(\theta_{-\alpha}^\pm(\pi))=\{\psi_\alpha \ | \ \psi\in\Psi(\pi)\}$ and so determining the non-$\theta$-relevant pairs is simple for $\alpha\gg0$. Specifically, for $\alpha\gg0,$ we have $|\Psi(\pi)|=|\Psi(\theta_{-\alpha}^\pm(\pi))|$. When $\alpha$ is small, the problem becomes significantly more complicated as we discuss in the following example.

Let $\up\in\{+,-\}$ be the ``going-up'' tower for $\pi\in\Pi(G_n)$ and $m^{\up, \alpha}(\pi)$ denote a certain odd integer determined by the first occurrence of the local theta lift $\pi$ in the going-up tower (see \S\ref{sec local theta correspondence} for terminology). Baki{\'c} and Hanzer showed that the Adams conjecture holds at (and above) the first occurrence in the going-up tower (\cite[Theorem 2]{BH22}, see Theorem \ref{thm Adams going up tower}). Suppose that $\pi$ is supercuspidal. Then $\theta_{-m^{\up, \alpha}(\pi)}^\up(\pi)$ is also supercuspidal (\cite{Kud86}). Results of M{\oe}glin imply that $|\Psi(\theta_{-m^{\up, \alpha}(\pi)}^\up(\pi))|=3|\Psi(\pi)|$ (\cite{Moe06b,Moe09a}, see \cite[Theorem 7.5]{GGP20}) and so we obtain many non-$\theta$-relevant pairs than in the case for $\alpha\gg0$. In particular, $\theta_{-m^{\up, \alpha}(\pi)}^\up(\pi)$ lies in many local Arthur packets besides those predicted by the Adams conjecture.

In this article, we provide a conjecture which completely determines the count $|\Psi(\theta_{-m^{\up, \alpha}(\pi)}^\up(\pi))|$ when $\pi\in\Pi(G_n)$ is tempered. 

\begin{conj}[Theorem \ref{thm-count-theta-temp} and Conjecture \ref{conj-count-theta-temp}]\label{conj theta up first occur count intro}
   Suppose that $\pi\in\Pi(G_n)$ is tempered. Then there is a recursive formula for computing $|\Psi(\theta_{-m^{\up, \alpha}(\pi)}^\up(\pi))|$.
\end{conj}

The precise form of Conjecture \ref{conj theta up first occur count intro} is given in seven cases based on a reduction to a certain combinatorial problem which we discuss below. The main result of this article is that we verify six of these cases.

\begin{thm}[Theorem \ref{thm-count-theta-temp}]\label{thm conjecture verification intro}
In six cases, we determine a recursive formula for $|\Psi(\theta_{-m^{\up, \alpha}(\pi)}^\up(\pi))|$.
\end{thm}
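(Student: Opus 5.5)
The plan has three stages: make the theta lift explicit, translate the Arthur-packet count into a combinatorial count, and then solve that count case by case.

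\emph{Making the lift explicit.} Since $\pi$ is tempered, write its $L$-parameter as
\[
\phi=\bigoplus_{i}\rho_i\otimes S_{a_i}
\]
with character $\varepsilon$ of the component group. By Theorem~\ref{thm Adams going up tower}, the first occurrence in the going-up tower satisfies the Adams conjecture. Using the known description of theta lifts of tempered representations (Atobe--Gan), I would express $\theta^{\up}_{-m^{\up,\alpha}(\pi)}(\pi)$ as the unique representation in $\Pi_{\psi_\alpha}$, where
\[
\psi_\alpha=(\chi_V^{-1}\otimes\phi)\oplus 1\otimes S_1\otimes S_\alpha, \qquad \alpha=m^{\up,\alpha}(\pi).
\]
Its Langlands data are explicit: it is either tempered, or a Langlands quotient attached to a ladder-type segment $[\tfrac{\alpha-1}{2},-\tfrac{\alpha-1}{2}]$ sitting above the tempered part. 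The value of $\alpha$ is determined by the first occurrence index, namely by the largest $a$ for which $\chi_V\otimes S_a$ occurs with the appropriate sign pattern in $\varepsilon$.

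\emph{Reduction to a combinatorial count.} I would next apply Theorem~\ref{thm intersections of local Arthur packets} (Hazeltine--Liu--Lo). It says $\Psi(\sigma)$ is exactly the set of parameters reachable from $\psi^{\max}(\sigma)$ by the elementary operators (unions and intersections of segments, row exchanges, and their duals). These act independently on each cuspidal line $\rho|\cdot|^x$. So the count factors as a product over $\rho$. Only the $\rho=\chi_V$ line (the one carrying $S_\alpha$) differs from the $\Psi(\pi)$ count, and for the other lines the factor equals the corresponding factor of $|\Psi(\pi)|$. On the $\chi_V$ line, $\pi$ is tempered, so its extended multi-segment is a list of singleton segments $[\tfrac{a_i-1}{2},\tfrac{a_i-1}{2}]$ with signs. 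The lift adds one new segment $[\tfrac{\alpha-1}{2},-\tfrac{\alpha-1}{2}]$ whose sign is forced, and $\alpha$ sits just beyond the first sign change of the relevant alternating pattern.

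\emph{Case analysis and recursion.} I would split into the seven cases according to the local configuration near $\tfrac{\alpha-1}{2}$: whether $\alpha-2$ and $\alpha$ occur in $\phi$, with which multiplicities and sign patterns, and whether the lift is tempered. In each case I would count the extended multi-segments equivalent to the given one. The method is to strip off the largest segment, or the new long segment, using the operator ``ui'' (union–intersection). This yields a recursion of the form
\[
N(\text{config})=N(\text{config}')+N(\text{config}''),
\]
where $\text{config}'$ and $\text{config}''$ are configurations of smaller total length, with the base cases given by the supercuspidal count of M{\oe}glin (the factor $3$). Proving each recursion requires showing that the operator sequences produced this way are exhaustive and give pairwise non-equivalent parameters. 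I would do this with the canonical-form or ``max/min'' parameter description, checking injectivity through the invariants that distinguish the parameters.

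\emph{Main obstacle.} The hardest step will be the exhaustiveness and injectivity argument on the $\chi_V$ line when the long segment can interact with many nearby singleton segments of varying multiplicities, since then the sequences of union–intersection moves branch. The six verified cases should be those where the interaction is limited to adjacent segments, which lets one prove a clean bijection with smaller configurations; the seventh case, with unbounded interaction, would be left as Conjecture~\ref{conj-count-theta-temp}. There I would only attempt the recursion via computer checks.
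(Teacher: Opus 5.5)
Your first two stages match the paper's reduction. $\Psi$ factors over the cuspidal lines, and only the $\chi_V$ line changes. By Theorems \ref{thm going up first occurrence} and \ref{thm compute theta lift}, the lift is $\pi(\Theta_1(\EE_{\chi_V}))$: the tempered multi-segment with the hat $([c_{\max}+1,-c_{\max}-1],c_{\max}+1,-\eta(\EE))$ placed on top. (It is not ``the unique representation in $\Pi_{\psi_\alpha}$''; packets are not singletons, and the lift is this specific member.)

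\textbf{The gap is your third stage, which is the whole content of the theorem.} Counting $\Psi$ of the lift needs two things:
\begin{enumerate}
\item an explicit family of extended multi-segments that you prove is closed under every basic operator and its inverse;
\item an injectivity criterion on that family.
\end{enumerate}
Knowing $\psi^{\max}$ and $\psi^{\min}$ (Theorem \ref{thm psi max min}) gives neither. ``Strip off a segment with $ui$'' does not show you have found everything.

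The paper gets closure from two results. First, every multi-segment equivalent to a block is classified by its $\mathcal S$- and $\mathcal T$-data (type $Y_{\mathcal M}$, Theorem \ref{block classification}). Second, the hat creates only three new kinds of multi-segments (Theorems \ref{first block m_n = 1}, \ref{first block m_n > 1} and Lemmas \ref{duds for almost block}, \ref{Almost block classification}):
\begin{itemize}
\item $\Theta_2$ and $\Theta_4$, obtained by dualizing the hat with the first, resp.\ last, row ending at $c_{\max}$;
\item $\Theta_3$, obtained by one further $ui^{-1}$.
\end{itemize}
Injectivity also has two parts. Within these families the parameter determines the multi-segment (Corollary \ref{cor same support}, Lemma \ref{packets distinct Z}). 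The layer $i$ can be read off from the support of the row ending at $c_{\max}+1$ (Lemma \ref{thetas distinct case 3}).

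This is why the formulas have the shape $3N$, $4N-N'$, $3N-N'$. They reflect a three- or four-fold layering, corrected by the coincidences $\psi_{\Theta_2}=\psi_{\Theta_4}$, which occur exactly when $\{c_{\max}\}$ is in the $\mathcal S$-data. They are not an additive recursion $N(\text{config}')+N(\text{config}'')$; you never specify $\text{config}'$ or $\text{config}''$, and that shape does not match the actual relations.

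\textbf{The case-specific ingredients are also missing.}
\begin{itemize}
\item \emph{Cases 1, 2, 4, 6.} The key fact is that the lift is itself tempered. In Case 1 this is immediate from Algorithm \ref{algo compute theta lift}. In Cases 2, 4, 6 a $dual\circ ui\circ dual$ followed by a $ui^{-1}$ on the first block makes it tempered. Then Theorem \ref{thm-count-temp} applies and no new recursion is needed.
\item \emph{Case 3.} This needs a new closed family, type $Z_{\mathcal M}$: two consecutive even multiplicities, with certain chains carrying $l=1$. Its classification (Theorem \ref{Z Classification}) gives the factor $4$ of Lemma \ref{times 4 relation}.
\item \emph{Case 5.} This needs the almost-block count (Theorem \ref{thm-count-block-theta-temp}). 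It also needs an independence theorem: no operator on anything equivalent to the lift involves rows from two different parts $\CC_{i_1},\CC_{i_2}$ (Lemma \ref{lem-case5-independence}, proved via the truncation analysis of Lemma \ref{lem-truncations-theta-lifts}). Finally it needs the matching existence statement, Lemma \ref{lem-existence-theta-lift}.
\end{itemize}
Without results of this kind, none of the product formulas over blocks are justified.

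What separates the six cases from Case 7 is $\eta_N=\eta_H$ versus $\eta_N\neq\eta_H$. When the signs differ, dualizing the hat with the last circle in column $H_{col}$ enables new $ui$'s between rows coming from $\BB_1$ and $\BB_2$. The second dualization can also be blocked (Appendix \ref{sec motivation}). Your ``adjacent versus unbounded interaction'' heuristic does not capture this obstruction.
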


The remaining case for Conjecture \ref{conj theta up first occur count intro} is stated in Conjecture \ref{conj-count-theta-temp}.

Before remarking on the ideas behind the proof, we note that one could compute $\Psi(\theta_{-m^{\up, \alpha}(\pi)}^\up(\pi))$ via brute force by algorithmically using algorithms for the local theta correspondence (\cite{AG17a,BH22}) and algorithms for determining the local Arthur packets to which the theta correspondence belongs (\cite{HLL25}), but the relation to $\Psi(\pi)$ becomes unclear. Furthermore, this computation by brute force is impractical for theoretical applications. In contrast, the main benefit of
Conjecture \ref{conj theta up first occur count intro} is its explicit use of the structure of $\Psi(\pi)$. For example, when $\pi$ is supercuspidal, Theorem \ref{thm conjecture verification intro} immediately implies that $|\Psi(\theta_{-m^{\up, \alpha}(\pi)}^\up(\pi))|=3|\Psi(\pi)|$. In contrast, the brute force approach would require computing both $\Psi(\theta_{-m^{\up, \alpha}(\pi)}^\up(\pi))$ and $\Psi(\pi)$ explicitly.

Of course, as outlined earlier, the supercuspidal case was already known. As further applications, we determine  $|\Psi(\theta_{-m^{\up, \alpha}(\pi)}^\up(\pi))|$ in several new important cases.

\begin{thm}
    Let $\pi\in\Pi(G_n)$ be of Arthur type.
    \begin{enumerate}
        \item If $\pi$ is anti-tempered, then $|\Psi(\theta_{-m^{\up, \alpha}(\pi)}^\up(\pi))|$ is determined via a recursive formula (Theorem \ref{theta-antitempered-count}).
        \item If $\pi$ is generic, then $|\Psi(\theta_{-m^{\up, \alpha}(\pi)}^\up(\pi))|\in\{1,2,3\}$ (Theorem \ref{thm-generic-theta-count}).
        \item If $\pi$ is unramified, then $|\Psi(\theta_{-m^{\up, \alpha}(\pi)}^\up(\pi))|\in\{2,3\}$ (Theorem \ref{thm-unram-theta-count}).
    \end{enumerate}
\end{thm}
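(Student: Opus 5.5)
The three parts have a common strategy. First, describe $\theta^\up_{-m^{\up,\alpha}(\pi)}(\pi)$ explicitly in terms of $\pi$ using known algorithms for the first occurrence in the going-up tower (\cite{AG17a,BH22}). Then compute $\Psi$ of that lift with the intersection theory of local Arthur packets (Theorem \ref{thm intersections of local Arthur packets}, \cite{HLL22,HLL25}). In each part we reduce to the tempered count, Theorem \ref{thm-count-theta-temp}, or to its combinatorial model, which counts the admissible ``raising/lowering'' operators on the extended multi-segments of $\pi$ together with the new segment $\chi_W\otimes S_1\otimes S_\alpha$ that is inserted at the first occurrence.

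\textbf{(1) Anti-tempered case.} If $\pi$ is anti-tempered, then $\pi=\hat\rho$ is the Aubert--Zelevinsky dual of a tempered $\rho$.
\begin{itemize}
\item I would first check that the first-occurrence lift in the going-up tower is compatible with the Aubert dual up to a change of tower and index. Concretely, $\widehat{\theta^\up(\pi)}$ should be a theta lift of $\rho$ with $\alpha$ replaced by a suitable value, read off from $m^{\up,\alpha}(\pi)$ and the Jordan blocks of $\rho$. I would prove this by combining the Kudla filtration with the compatibility of Jacquet modules with Aubert duality.
\item Aubert duality induces a bijection $\Psi(\sigma)\to\Psi(\hat\sigma)$, $\psi\mapsto\hat\psi$, where $\hat\psi$ swaps the two $\SL_2$'s. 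Hence $|\Psi(\theta^\up(\pi))|=|\Psi(\widehat{\theta^\up(\pi)})|$.
\item The right-hand side is then the count for a theta lift of a tempered representation. The recursion is the one from Theorem \ref{thm-count-theta-temp}, transported across the duality.
\end{itemize}
The main obstacle is this first step. The dual lift is generally not itself a first-occurrence lift in the going-up tower. I therefore expect to redo the combinatorial recursion directly on the dual extended multi-segments rather than quote the tempered theorem verbatim.

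\textbf{(2) Generic case.} Here $\pi$ is tempered, since a generic representation of Arthur type is tempered. Genericity forces the $L$-parameter $\phi_\pi$ to have no repeated ``linked'' configurations, so $\Psi(\pi)$ is very small. In the combinatorial model of Theorem \ref{thm-count-theta-temp}:
\begin{itemize}
\item the only freedom left is how the new block $S_\alpha$ interacts with the at most two Jordan blocks of $\phi_\pi$ adjacent to it, at $\chi_V$ and in the neighbouring range;
\item the recursion terminates after one step and gives $1+\epsilon_1+\epsilon_2$ with $\epsilon_i\in\{0,1\}$;
\item hence the count lies in $\{1,2,3\}$.
\end{itemize}
I would split into cases according to whether $S_{\alpha-2}$ and $S_{\alpha+2}$ occur in $\phi_\pi\otimes\chi_V$, and for each case check directly which of the three candidate parameters contain the lift.

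\textbf{(3) Unramified case.} Here $\pi$ has an unramified anti-tempered-type parameter, essentially a sum of $\chi\otimes S_1\otimes S_b$ with $\chi$ unramified. The lift is again unramified, with parameter $\psi_\alpha$ or an adjacent modification. Using (1), it suffices to enumerate $\Psi$ of an unramified representation; the results of \cite{HLL25} on unramified members show that it has at most the Adams parameter plus the two ``merged'' alternatives. I expect to show that one alternative always exists, because the first occurrence forces the block $S_\alpha$ to be adjacent to an existing block. This gives the lower bound $2$.
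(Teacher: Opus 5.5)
\textbf{Part (1): the key step fails.} Your argument rests on identifying $\widehat{\theta^{\up}(\pi)}$ with a theta lift of $\rho=\hat\pi$ and then transporting Theorem \ref{thm-count-theta-temp}. In general, however, $\widehat{\theta^{\up}(\pi)}$ is not a theta lift of $\rho$ at all.

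Take $\EE_{\chi_V}=\{([1,-1],1,\eta)\}$. Then:
\begin{itemize}
\item the $\chi_V$-part of $\rho$ is a single circle in column $1$, so $\rho$ does not start at zero, and both towers have first occurrence at $\alpha=1$;
\item hence every lift of $\rho$ at $\alpha\ge 3$ lies strictly above the first occurrence and is non-tempered; this includes $\alpha=5$, the only level with the right target group;
\item but $\widehat{\theta^{\up}(\pi)}$ is tempered, with $\chi_W$-part two alternating circles in columns $1$ and $2$.
\end{itemize}

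What is missing is the explicit description of the lift, which is easy here. Suppose the first row of $\EE$ in (P') order is $([k,-k],k,\eta(\EE))$. Dualizing the output of Algorithm \ref{algo compute theta lift} at $\alpha=2k+1$ shows it is nonzero exactly for the sign $\eta(\EE)$; otherwise the dual has two circles of opposite sign in column $k$. By Theorem \ref{thm compute theta lift}, $\eta(\EE)$ therefore labels the going-down tower, $m^{\up,\alpha}(\pi)=2k+3$, and
\[
\theta^{\up}_{-m^{\up,\alpha}}(\pi)=\pi\bigl(([k+1,-k-1],k+1,-\eta(\EE))\cup\EE\bigr),
\]
which is again anti-tempered. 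Its dual is $dual(\EE)$, up to a global sign change, plus one circle in column $k+1$ alternating with the top circle. That is an ordinary tempered multi-segment in which the last block of $dual(\EE)$ gains a new column of multiplicity one. Using $|\Psi(\sigma)|=|\Psi(\hat\sigma)|$, the count then follows from Theorems \ref{thm-count-block-temp} and \ref{thm-count-temp}, and this produces the four cases of Theorem \ref{theta-antitempered-count}. No theta-lift count, no Kudla filtration, and no conjectural Case 7 enter. Without this description, your fallback of redoing the recursion on the dual has nothing concrete to run on.

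\textbf{Parts (2) and (3) also need repair.} In (2), genericity does not exclude repeated or linked Jordan blocks; the multiplicities in $\EE_{\chi_V}$ are arbitrary. What genericity gives (Remark \ref{rmk generic}) is that every $\eta_i=1$, and this is exactly what the argument uses:
\begin{itemize}
\item it rules out the conjectural Case 7;
\item when $\EE$ starts at zero, it forces $\up=-1$ and $m^{\up,\alpha}=3$, and Theorem \ref{thm-count-theta-temp} then gives $3$ in Cases 2, 4, 6 and $2$ in Cases 3, 5;
\item in Case 1 the tower is ambiguous and the count is $1$, except that it is $3$ when $\min A_i=1$ and the added circle $([0,0],0,-1)$ is grouped with the column-$1$ circle into a block.
\end{itemize}
Your heuristic $1+\epsilon_1+\epsilon_2$, keyed to $S_{\alpha\pm2}$, neither proves the upper bound nor tracks this case structure.

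In (3), the claim that the lift is again unramified is unsupported. The lift lives on the going-up tower, here $\mathcal{V}^-$, whose groups are not even quasi-split when the discriminant is trivial. There is also no result in \cite{HLL25} limiting $\Psi$ of an unramified representation to the Adams parameter plus two merged alternatives. The right reduction is that an unramified $\pi$ of Arthur type has $\EE_{\chi_V}=\{([A_i,-A_i],A_i,1)\}$ (\cite{Moe09b}, \cite{HLL24}), so (3) is a specialization of (1). Then $dual(\EE)$ is tempered with all signs equal, so it is a block only when it is a single column of odd multiplicity. Every auxiliary count equals $1$, and Theorem \ref{theta-antitempered-count} yields $3$ or $2$.
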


We note that when $\pi$ is generic or unramified, then $|\Psi(\pi)|=1$ (see \cite[Theorem 1.2]{HLL24} for the generic case and \cite[Proposition 6.4]{Moe09b} for the unramified case; see also \cite[Proposition 5.5]{HLL24}). In particular, there are often non-$\theta$-relevant pairs in these cases.

We now remark on the strategy of the proof of Theorem \ref{thm conjecture verification intro}. The main idea is to reduce to a combinatorial problem using several deep results: the parameterization of local Arthur packets by extended multi-segments and their theory of intersections developed in \cite{Ato20b, HLL22, HLL25}, the validity of the Adams conjecture for the going-up tower (\cite{BH22, Moe11c}), the calculation of the first occurrence on the going-up tower for tempered representations (\cite{AG17b}), and the translation of these results into the language of extended multi-segments (\cite{HLL25}). The resulting combinatorial problem then largely follows from the following counting problem. 

\begin{prob}\label{prob temp count intro}
    Given a tempered representation $\pi\in\Pi(G_n)$, determine is the size of $\Psi(\pi)$.
\end{prob}
Indeed, in many cases if $\pi$ is tempered, then $\theta_{-m^{\up, \alpha}(\pi)}^\up(\pi)$ is also tempered (\cite[Theorem 4.1]{AG17a}). In fact, the computation of $\theta_{-m^{\up, \alpha}(\pi)}^\up(\pi)$ only affects a certain part of the Langlands classification of $\pi$ and so it largely suffices to understand Problem \ref{prob temp count intro} for a certain class of tempered representations. 
In an earlier paper, we developed a partial result towards Problem \ref{prob temp count intro} which covers this class of tempered representations (\cite[Theorem 3.7]{HKT26}; see Theorem \ref{thm-count-temp}). 

\begin{thm}\label{thm tempered count intro}
For certain tempered representations $\pi\in\Pi(G_n)$, there is a recursive formula which determines $|\Psi(\pi)|$.
\end{thm}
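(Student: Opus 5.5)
Since this statement restates \cite[Theorem 3.7]{HKT26}, the proof amounts to recalling that argument in the language of extended multi-segments. I would run it in four steps: encode $\pi$, encode its packets, reduce the count to combinatorics, and induct on that combinatorial data.

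\emph{Encoding $\pi$.} Let $\pi$ be tempered with $L$-parameter $\phi=\bigoplus_{\rho}\bigoplus_{i}\rho\otimes S_{a_i}$ and character $\varepsilon$ of the component group. By M{\oe}glin's construction and Atobe's reformulation (\cite{Ato20b}), $\pi=\pi(\EE_\phi)$ for the extended multi-segment $\EE_\phi$ attached to $(\phi,\varepsilon)$; this is the diagonal one, in which every segment is a single point $[x,x]$ with sign $\varepsilon$.

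\emph{Encoding $\Psi(\pi)$.} The intersection theory of \cite{HLL22, HLL25} (Theorem \ref{thm intersections of local Arthur packets}) gives $\Psi(\pi)$ as the set of parameters $\psi_\EE$, where $\EE$ runs over extended multi-segments obtained from $\EE_\phi$ by finite sequences of the basic operators. These are the unions/splittings $ui$, the duality, and the partial dualities $dual_k$, which change the row ``shape.'' Equivalently, each $\EE$ is a realization of one fixed union-intersection ``block'' pattern.

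\emph{Reduction to combinatorics.} The parameter $\psi_\EE$ depends only on the multiset of segments, not on the signs. So $|\Psi(\pi)|$ equals the number of distinct multisets of segments $\{[A_i,B_i]\}$ reachable by these operators. I would then impose the hypothesis ``certain tempered'': for example, $\phi$ involves a single cuspidal $\rho$, the $a_i$ have a fixed parity, and multiplicities are bounded by two. Under this hypothesis the reachability condition becomes explicit. A multiset arises exactly when it partitions the points into segments $[A,B]$ centered in a prescribed way, satisfying the sign-alternation condition inherited from $\varepsilon$ (when $\varepsilon$ alternates on consecutive points, they may be merged). Thus $|\Psi(\pi)|$ is the number of admissible partitions of the chain $x_1<\dots<x_r$ into such blocks.

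\emph{Recursion.} I would set up the recursive formula by conditioning on the segment containing the largest point $x_r$. Either that segment is $[x_r,x_r]$, or it absorbs neighbors symmetrically, via a partial dual, or as a union with $x_{r-1}$. Removing it leaves an instance of the same type on fewer points, which gives a Fibonacci-like recursion in the lengths of maximal alternating runs. As a check, the supercuspidal case ($r=1$) should yield $1$.

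\emph{Main obstacle.} The hard step is proving that distinct admissible partitions give distinct $\psi$, and conversely that every $\EE$ in the orbit yields an admissible partition. That is, one must verify that the nonnegativity conditions of \cite{HLL25} (Atobe's $\pi(\EE)\neq0$ criterion) are exactly the local block conditions. I would check this by applying Atobe's criterion segment by segment, using that for these $\pi$ the segments are pairwise either disjoint or nested.
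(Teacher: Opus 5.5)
Your reduction of $|\Psi(\pi)|$ to counting the supports reachable from $\EE_\phi$ is sound. The gap is in the step that carries all the content, identifying those supports: you misdescribe the set, and the tool you plan to use cannot settle it.

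Atobe's criterion decides whether $\pi(\EE)\neq 0$, not whether $\pi(\EE)=\pi$. On a fixed support there are typically several nonvanishing $\EE$, giving different members of the same packet (Corollary \ref{cor same support}). For instance, on the support $\{[1,0]\}$ the rows $([1,0],0,1)$, $([1,0],0,-1)$ and $([1,0],1,1)$ all give nonzero representations. Only the first is equivalent, by a $ui$ of type 3', to $\{([0,0],0,1),([1,1],0,-1)\}$.

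What has to be proved is two things:
\begin{itemize}
\item existence: every admissible shape is reached from $\EE_\phi$ by explicit basic operators;
\item closure: the set of admissible shapes is stable under every basic operator and its inverse, which by Theorem \ref{thm intersections of local Arthur packets}(2) then exhausts $\Psi(\pi)$.
\end{itemize}
Your ``distinct partitions give distinct $\psi$'' is automatic and is not the existence direction.

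The structural input you intend to use is also false: reachable segments need not be pairwise disjoint or nested. Already in your class, with signs $+,-,-,+$ in columns $0,1,1,2$, two $ui$'s reach $\{([1,0],0,1),([2,1],0,-1)\}$, and these two chains share column $1$. Multiplicities $\geq 3$ produce $z$-chains as in Figure \ref{fig: z-chain}. Finally, $dual$ is not a basic operator (it replaces $\pi$ by its Aubert--Zelevinsky dual), and $dual_k$ plays no role for integral segments. The ``symmetric'' rows are hats, created and absorbed by $dual\circ ui\circ dual$ and its inverse.

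You are also missing the structure behind \cite[Theorems 3.6 and 3.7]{HKT26}, recalled here as Theorems \ref{thm-count-block-temp} and \ref{thm-count-temp}:
\begin{itemize}
\item the unique block decomposition of a tempered $\EE$ (Lemma \ref{lem-unique-block-decomp});
\item independence of blocks: no basic operator involves rows of two different blocks, every $\EE'\sim\EE$ splits into pieces equivalent to the blocks (Lemma \ref{lem-independence-of-blocks}, Proposition \ref{prop-independence-of-blocks}), and every combination is realized (\cite[Lemmas 7.6, 7.7]{HKT26}); this yields $|\Psi(\pi(\EE))|=|\Psi(\pi(\BB_1))|\prod_{i\geq 2}|\Psi(\pi(sh^1(\BB_i)))|$;
\item inside one block, the classification of the orbit as the type $Y_{\mathcal{M}}$ multi-segments (Theorem \ref{block classification}), generated by $S,M,U,D$ (Lemma \ref{lem SMUD operators}) and proved by showing that raising operators preserve this type starting from $\EE^{\min}$.
\end{itemize}

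Removing the last column of the block ($\rc_{c_{\max}}$) then gives the recursion, and it is not Fibonacci-like. For consecutive columns of multiplicity one, a block starting at $0$ contributes $3^{L-1}$: the new column opens a new $\mathcal{S}_i$, extends the last part of $\mathcal{T}$, or opens a new hat. A block starting after $0$ admits no hats (Lemma \ref{S for E starting after 0}) and contributes $2^{L-1}$. Multiplicities $\geq 3$ bring in $z$-chains and the correction term $-|\Psi(\pi(\BB''))|$.

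Your sanity check is also misplaced: a supercuspidal $\pi$ is not the case $r=1$. Each of its integral $\rho$-parts is a block starting at $0$ with all multiplicities one, so it contributes a factor $3^{c_{\max}}$, not $1$. Your restriction to multiplicities at most two is unnecessary, since the cited result covers every tempered $\EE$ with integral segments.
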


The recursive formula for the above theorem is determined by decomposing extended multi-segments into blocks (see Definition \ref{def-block}) and then determining the number of local Arthur packets ``containing'' each block (Theorem \ref{thm-count-block-temp}).
The precise form of Conjecture \ref{conj theta up first occur count intro} predicts that $|\Psi(\theta_{-m^{\up, \alpha}(\pi)}^\up(\pi))|$ is tempered (in which case we use Theorem \ref{thm-count-temp}) or is determined recursively based on these blocks, or almost blocks, in which case we use Theorem \ref{thm-count-block-temp}, respectively Theorem \ref{thm-count-block-theta-temp}. Based on the block decomposition,  Conjecture \ref{conj theta up first occur count intro} falls into the seven cases mentioned above. 
The main difficulty between the various cases is determining how different blocks may or may not interact with each other.

Here is the organization of this paper.  
In \S\ref{sec background} and \S\ref{sec extended multi-segments}, we recall various results on the local theta correspondence and parameterization of local Arthur packets using extended multi-segments. 
We then state the main results precisely in \S\ref{sec-results}. 
Before proving the main results, we collect some technical preliminary definitions and statements in \S\ref{sec basic notions and lemmas}. 
We proceed to prove the main results in \S\ref{sec theta results}. 
We further provide some motivation behind the conjectural formula for the remaining case of Conjecture \ref{conj theta up first occur count intro}, i.e., Conjecture \ref{conj-count-theta-temp}, in Appendix \ref{sec motivation}. We also discuss some results related to determining $\Psi(\theta_{-m^{\up, \alpha}(\pi)}^\up(\pi)),$ as opposed to just its size, in Appendix \ref{sec commutativity}.

\subsection*{Acknowledgments}
The first named author thanks Yiannis Sakellaridis for inspiring conversations which started this research. The authors also thank Chi-Heng Lo for enlightening discussions and remarks. Finally, the authors thank Wee Teck Gan for helpful comments.

\section{Background}\label{sec background}

Recall that $F$ is a non-Archimedean local field of characteristic 0. For the moment, let $\mathrm{G}$ be a reductive group defined over $F$ and $G=\mathrm{G}(F).$ We are primarily concerned with the set of equivalence classes of complex irreducible admissible representations of $G$, which we denote by $\Pi(G).$ 

\subsection{Local theta correspondence}\label{sec local theta correspondence}

We let $n\in\mathbb{Z}_{\geq 0}$, fix an additive character $\psi_F$ of $F,$ and 
let $W_{2n}$ denote the unique (up to isomorphism) symplectic vector space over $F$ of dimension $2n.$ 
Let $m\in\mathbb{Z}_{\geq 1}$, fix $d\in F^\times/(F^\times)^2$, and consider the quadratic spaces over $F$ of dimension $2m$ and discriminant $d.$ Note that up to isomorphism, there are two such quadratic spaces which are distinguished by their Hasse-Witt invariant. We let $V_{2m}^\pm$ be the unique quadratic space over $F$ of dimension $2m$, discriminant $d,$ and Hasse-Witt invariant $\pm1.$ The isometry groups of $W_{2n}$ and $V_{2m}^\pm$, denoted $G_n=G(W_{2n})$ and $H_m^\pm=H(V_{2m}^\pm)$, respectively, are isomorphic to the symplectic group $\Sp_{2n}(F)$ and an even orthogonal group $\OO_{2m}^\pm(F)$ (which depends on $d$, but we have omitted it in the notation). Here the superscript $\pm$ in $\OO_{2m}^\pm(F)$ is used to keep track of the underlying quadratic space as it plays an important role in the local theta correspondence. We have that $H_m^+$ is quasi-split while $H_m^-$ denotes its (possibly non-quasi-split) nontrivial pure inner form.
These symplectic and quadratic spaces are naturally arranged into towers:
\begin{align*}
    \mathcal{W}&=\{W_{2n} \ | \ n\in\mathbb{Z}_{\geq 0}\}, \\
    \mathcal{V}^+&=\{V_{2m}^+ \ | \ m\in\mathbb{Z}_{\geq 1}\}, \\
    \mathcal{V}^-&=\{V_{2m}^- \ | \ m\in\mathbb{Z}_{\geq 1}\}.
\end{align*}

The pair $(G_n,H_m^\pm)$ forms a reductive dual pair of a certain metaplectic group. Consequently, given $\pi\in\Pi(G_n),$ we may consider its local theta lift which we denote by $\theta_{W_{2n},V_{2m}^\pm,\psi_F}(\pi)$ which either vanishes or is an element of $\Pi(H_m^\pm)$. The local theta lift was originally defined by Howe (\cite{How89}) and has found many uses within the Langlands program. For our purposes, it is sufficient to recall several properties of the local theta lift.
First we recall a property called Howe duality. It was originally conjectured by Howe (\cite{How79}) and later proved by Waldspurger, Gan and Takeda, and Gan and Sun (\cite{GS17, GT16, Wal90}).

\begin{thm}[Howe Duality]\label{thm Howe duality} 
Let $\pi_1,\pi_2\in\Pi(G_n).$
\begin{enumerate}
    \item If $\theta_{W_{2n},V_{2m}^\pm,\psi_F}(\pi_1)\neq 0$, then $\theta_{W_{2n},V_{2m}^\pm,\psi_F}(\pi_1)$ is irreducible.
    \item If $\pi_1\not\cong\pi_2$ and both $\theta_{W_{2n},V_{2m}^\pm,\psi_F}(\pi_1)$ and $\theta_{W_{2n},V_{2m}^\pm,\psi_F}(\pi_2)$ are nonzero, then \[\theta_{W_{2n},V_{2m}^\pm,\psi_F}(\pi_1)\not\cong \theta_{W_{2n},V_{2m}^\pm,\psi_F}(\pi_2).\]
\end{enumerate}
\end{thm}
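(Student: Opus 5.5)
This is the Howe duality theorem, which the paper cites rather than reproves. My plan is to follow the standard approach of Kudla, Waldspurger, Gan--Takeda and Gan--Sun. Throughout, let $\omega$ be the Weil representation of the metaplectic cover restricted to $G_n\times H_m^\pm$, and let $\Theta(\pi)$ be the big theta lift. This is the maximal $\pi$-isotypic quotient of $\omega$, which we write as $\pi\otimes\Theta(\pi)$. The representation $\theta(\pi)$ is then the maximal semisimple quotient of $\Theta(\pi)$.

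The first step is to show that $\Theta(\pi)$ is admissible of finite length; this is due to Kudla. The argument bounds Jacquet modules of $\omega$ through Kudla's filtration. It follows that $\theta(\pi)$ is a finite direct sum of irreducibles.

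The statement then reduces to a multiplicity-one claim:
\[
\dim \Hom_{G_n\times H_m^\pm}(\omega,\pi\otimes\sigma)\le 1 \quad\text{for all } \pi,\sigma,
\]
together with the corresponding statement with the roles of the two groups exchanged. Part (1) follows because $\theta(\pi)$ can have at most one irreducible summand. Part (2) follows by symmetry: if $\sigma$ were a quotient of both $\Theta(\pi_1)$ and $\Theta(\pi_2)$, then $\Theta(\sigma)$ would have two distinct irreducible quotients.

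I would prove the multiplicity-one claim in two cases.
\begin{itemize}
\item For supercuspidal $\pi$, Kudla's results apply directly. The lift is nonzero only in a controlled range, and $\Theta(\pi)$ is already irreducible in that range, since supercuspidal matrix coefficients make $\pi$ projective in the relevant category.
\item For general $\pi$, I would use the Gan--Takeda induction on parabolic induction. Embed $\pi\hookrightarrow \mathrm{Ind}(\tau\otimes\pi_0)$ with $\pi_0$ on a smaller group. Then analyze $\Hom(\omega,\pi\otimes\sigma)$ via the Kudla filtration of the Jacquet module $R_P(\omega)$. This relates $\Theta(\pi)$ to $\Theta(\pi_0)$ induced up, and bounds the multiplicity by one using the MVW involution together with the doubling see-saw identity $\Hom(\omega,\pi\otimes\sigma)\cong\Hom(\text{degenerate principal series},\ldots)$.
\end{itemize}

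The main obstacle is the tempered and non-generic boundary cases, where the inductive pieces of the Kudla filtration interfere. Gan--Sun resolve this with a more refined argument, and I would follow that argument: exploit the vanishing of certain $\Hom$ spaces via central characters, and handle the residual case through the conservation relation (first occurrences in $\mathcal{V}^+$ and $\mathcal{V}^-$ sum to $2n+2$).
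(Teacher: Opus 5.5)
The paper gives no proof of this theorem; it only cites \cite{Wal90, GT16, GS17}. Measured against that literature, your sketch breaks at the reduction step.

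Multiplicity one, $\dim\Hom_{G_n\times H_m^\pm}(\omega,\pi\otimes\sigma)\le 1$, even together with its mirror image on the $H_m^\pm$ side, only says that $\theta(\pi)$ and $\theta(\sigma)$ are multiplicity-free. It is consistent with $\theta(\pi)\cong\sigma_1\oplus\sigma_2$ for $\sigma_1\not\cong\sigma_2$, since every $\Hom$ space involved still has dimension at most one. So ``$\theta(\pi)$ can have at most one irreducible summand'' does not follow. Your argument for Part (2) is then circular: the contradiction you invoke, that $\Theta(\sigma)$ would have two non-isomorphic irreducible quotients, is Part (1) for the other member of the dual pair, which you have not established either.

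What is missing is the \emph{uniqueness} statement: if $\Hom(\omega,\pi_1\otimes\sigma)\neq 0$ and $\Hom(\omega,\pi_2\otimes\sigma)\neq 0$, then $\pi_1\cong\pi_2$, together with the same statement with $G_n$ and $H_m^\pm$ interchanged. Given that, any two irreducible quotients of $\Theta(\pi)$ are isomorphic by the interchanged statement, and multiplicity one then makes $\theta(\pi)$ irreducible. Part (2) is the uniqueness statement itself.

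Uniqueness is what the doubling see-saw is actually used for, and the identity you wrote is not the right one: the see-saw does not identify $\Hom(\omega,\pi\otimes\sigma)$ with a $\Hom$ space out of a degenerate principal series. Let $W^-$ be $W_{2n}$ with the negated form, $V=V_{2m}^\pm$, and $R(V)$ the big theta lift of the trivial representation of $H_m^\pm$ to $\Sp(W_{2n}\oplus W^-)$. The see-saw identifies $\Hom_{\Delta H_m^\pm}(\Theta(\pi_1)\otimes\Theta_{W^-,V}(\pi_2^\vee),\mathbb{C})$ with $\Hom_{G_n\times G_n}(R(V),\pi_1\otimes\pi_2^\vee)$.

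By the MVW involution, $\sigma^\vee$ occurs in the cosocle of $\Theta_{W^-,V}(\pi_2^\vee)$ with the same multiplicity $b_\sigma$ as $\sigma$ in $\theta(\pi_2)$. So the left side has dimension at least $\sum_\sigma a_\sigma b_\sigma$, where $a_\sigma$ is the multiplicity of $\sigma$ in $\theta(\pi_1)$.

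On the right, consider the $G_n\times G_n$-orbit filtration of the degenerate principal series in which $R(V)$ is realized. The open orbit contributes $\Hom_{G_n}(\pi_2,\pi_1)$. The substance of the argument is showing, via exponents of Jacquet modules, that the other orbits do not contribute, and reducing to situations where this holds. Once that is done, one gets $\sum_\sigma a_\sigma b_\sigma\le\dim\Hom_{G_n}(\pi_2,\pi_1)$. Taking $\pi_1=\pi_2$ gives Part (1), and taking $\pi_1\not\cong\pi_2$ gives Part (2).

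Your final paragraph does not supply this analysis. \cite{GS17} treats quaternionic dual pairs, not residual cases of $(\Sp_{2n},\OO_{2m}^\pm)$. The conservation relation (which in this paper's normalization reads $m^+(\pi)+m^-(\pi)=4n+4$, not $2n+2$) constrains first occurrences, but by itself it bounds no $\Hom$ space.
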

The next theorem is known as the persistence principle (or the tower property) for the local theta correspondence.
\begin{thm}[{\cite{Kud86}}]\label{thm persistence principle}
Let $\pi\in\Pi(G_n).$ If $\theta_{W_{2n},V_{2m}^\pm,\psi_F}(\pi)\neq 0$, then $\theta_{W_n,V_{2m'}^\pm,\psi_F}(\pi)\neq 0$ for any $m'\geq m.$
\end{thm}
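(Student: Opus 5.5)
The persistence principle (Theorem \ref{thm persistence principle}) is Kudla's tower property. I would prove it by induction on $m'$, using the mixed model of the Weil representation. It suffices to treat $m'=m+1$. Write $V_{2m+2}^\pm=V_{2m}^\pm\oplus \mathbb{H}$, where $\mathbb{H}=Fe\oplus Ff$ is a hyperbolic plane; this keeps the discriminant and the Hasse--Witt invariant, so we stay in the same tower $\mathcal{V}^\pm$. The vector $e$ spans an isotropic line, and its stabilizer in $H_{m+1}^\pm$ is a maximal parabolic $P=MN$ with Levi $M\cong \GL_1(F)\times H_m^\pm$.

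The plan is as follows.

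First, realize the Weil representation $\omega_{m+1}$ of $G_n\times H_{m+1}^\pm$ in the mixed model $\omega_{m+1}\cong \mathcal{S}(W_{2n}\otimes Fe^\ast)\otimes \omega_m$. Here $\omega_m$ is the Weil representation for $(G_n,H_m^\pm)$, and $W_{2n}\otimes Fe$ is a Lagrangian complement in the relevant symplectic space.

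Second, compute the Jacquet module $R_P(\omega_{m+1})$ along $N$ via Kudla's filtration. By the standard analysis, it has a two-step filtration whose quotient (or sub, depending on normalization) is $|\det|^{s}\otimes \omega_m$, with the $\GL_1$ acting by a character. The other piece is induced from a Weil representation of $(G_{n-1},H_m^\pm)$ together with functions on $\GL_1$.

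Third, suppose $\pi$ is a quotient of $\omega_m$, i.e. $\Hom_{G_n}(\omega_m,\pi)\neq 0$. The filtration piece yields a nonzero $G_n\times M$-map $R_P(\omega_{m+1})\to \chi\otimes \Theta_m(\pi)\otimes\pi$. Frobenius reciprocity then gives a nonzero $G_n\times H_{m+1}^\pm$-map
\[
\omega_{m+1}\to \pi\otimes \mathrm{Ind}_P^{H_{m+1}^\pm}(\chi\otimes\Theta_m(\pi)).
\]
This forces $\Hom_{G_n}(\omega_{m+1},\pi)\neq0$, that is, $\theta_{W_{2n},V_{2m+2}^\pm,\psi_F}(\pi)\neq0$.

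The main obstacle is the exactness issue in the third step. A nonzero map on a filtration piece of the Jacquet module need not extend to all of $R_P(\omega_{m+1})$ unless that piece is a quotient.

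So I would arrange the filtration so that the $\omega_m$-term is the top quotient, namely the restriction to the closed orbit, in Kudla's setup with $\mathcal{S}$ on $W\otimes e^\ast$ evaluated at $0$. I would check carefully that evaluation at $0$ is $P$-equivariant onto $\chi\otimes\omega_m$.

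If instead it appears as a sub, I would argue dually with smooth contragredients and the MVW involution. Alternatively, I would use the doubling see-saw, or restrict to the Schrödinger model and evaluate on the zero section directly.
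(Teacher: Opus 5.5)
Your argument is correct and is essentially the standard proof from the cited source \cite{Kud86}; the paper itself gives no proof, only the citation. Your resolution of the sub-versus-quotient worry is also right: the closed orbit $\{0\}$ gives the quotient, so $\phi\mapsto\phi(0)$ is a $G_n\times P$-equivariant surjection $\omega_{m+1}\twoheadrightarrow\chi\otimes\omega_m$, and no contragredient, MVW or see-saw argument is needed. In fact this map is already $G_n$-equivariant and surjective, so composing it with a nonzero element of $\Hom_{G_n}(\omega_m,\pi)$ gives $\Hom_{G_n}(\omega_{m+1},\pi)\neq 0$ directly. The Jacquet-module filtration and Frobenius reciprocity are only needed for the finer statement that the big theta lift at level $m+1$ maps nontrivially to $\mathrm{Ind}_P^{H_{m+1}^\pm}(\chi\otimes\Theta_m(\pi))$.
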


The persistence principle allows us to consider the first occurrence which is defined as follows. 

\begin{defn}\label{def first occurence}
    Let $\pi\in\Pi(G_n).$ The \emph{first occurrence} of $\pi$ (in $\mathcal{V}^\pm$) is
    \[
    m^{\pm}(\pi):=\min\{2m \ | \ \theta_{W_{2n},V_{2m}^\pm,\psi_F}(\pi)\neq 0\}.
    \]
\end{defn}

Note that since we have two ``target'' towers $\mathcal{V}^\pm$, we have two first occurrences. They are related by the following theorem which is known as the conservation relation.

\begin{thm}[{\cite{SZ15}}]\label{thm conservation relation}
    Let $\pi\in\Pi(G_n).$ Then
    $$
    m^{+}(\pi)+m^{-}(\pi)=4n+4.
    $$
\end{thm}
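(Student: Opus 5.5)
The conservation relation is a deep theorem of Sun and Zhu, and the paper cites it rather than proving it. I would therefore not reprove it from scratch, but the natural proof splits into two inequalities, and I would sketch it as follows.

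\emph{Lower bound.} First show $m^{+}(\pi)+m^{-}(\pi)\geq 4n+4$ by contradiction, following Harris--Kudla--Sweet and Kudla--Rallis.
\begin{enumerate}
    \item Suppose $\pi$ lifts nonzero to both $V_{2m_1}^+$ and $V_{2m_2}^-$ with $2m_1+2m_2\leq 4n+2$.
    \item Use the doubling see-saw for $G_n\times G_n\subset G_{2n}$, paired with $H(V^+)\times H(V^-)$ acting diagonally on $V^+\oplus(-V^-)$. This produces a nonzero $G_n\times G_n$-invariant pairing between $\pi\otimes\pi^\vee$ and a degenerate principal series of $G_{2n}$, namely the Siegel-parabolic induced representation.
    \item Compare with the structure of that degenerate principal series: the theta lifts from the two towers of total dimension $2m_1+2m_2$ land in specific constituents. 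The results of Kudla--Rallis on its composition series show that the two complementary towers cannot both contribute below the critical dimension $4n+4$ without forcing a nonzero invariant on a constituent that carries none.
\end{enumerate}

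\emph{Upper bound.} This is the hard direction: $m^{+}(\pi)+m^{-}(\pi)\leq 4n+4$, i.e.\ at least one tower occurs early.
\begin{enumerate}
    \item Following Sun--Zhu, show that $\pi$ is $(G_n,\chi)$-distinguished in a suitable degenerate principal series.
    \item Do this via a careful analysis of the Rallis filtration by $P\times G_n$-orbits on the doubled space.
    \item Combine this with the theory of zeta integrals of Piatetski-Shapiro--Rallis to guarantee nonvanishing of a doubling zeta integral at the relevant point.
\end{enumerate}

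I expect the main obstacle to be the upper bound, in particular the vanishing of invariant distributions on the non-open orbits. This needs the Gelfand--Kazhdan type arguments that form the technical heart of Sun--Zhu.

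In this paper it suffices to cite \cite{SZ15} and use the relation as a black box. Its role is to identify the going-up tower $\up$ as the one with $m^{\up}(\pi)\geq 2n+2$, which is what the subsequent first occurrence computations of Atobe--Gan rely on.
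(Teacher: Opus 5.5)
This matches the paper, which gives no proof of the conservation relation; it simply cites Sun--Zhu and uses the relation to single out the going-up tower via $m^{\up}(\pi)\geq 2n+2\geq m^{\down}(\pi)$. Your sketch is not needed for that purpose, but its lower-bound see-saw is misstated: the doubling see-saw $G_n\times G_n\subset G_{2n}$ involves a single quadratic space $V$ with $H(V)$ diagonal in $H(V)\times H(V)$, whereas $H(V^+)\times H(V^-)\subset H(V^+\oplus(-V^-))$ pairs with $G_n$ embedded diagonally in $G_n\times G_n$. That second see-saw gives the lower bound directly, since a nonzero lift of $\pi$ to both $V^+_{2m_1}$ and $V^-_{2m_2}$ forces the trivial representation of $G_n$ to have nonzero theta lift to the non-split space $V^+_{2m_1}\oplus(-V^-_{2m_2})$ of Witt index $m_1+m_2-2$, and this requires $m_1+m_2-2\geq 2n$.
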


As a consequence, we may choose $\up, \down\in\{\pm\}$ such that $m^{\up}(\pi)\geq 2n+2\geq m^{\down}(\pi).$ Also, if one inequality is strict, then both inequalities are strict. In this situation, we call the tower whose first occurrence is $m^{\up}(\pi)$ the ``going-up'' tower for $\pi$ and denote it by $\mathcal{V}^{\up}.$ Similarly, we call the tower whose first occurrence is $m^{\down}(\pi)$ the ``going-down'' tower for $\pi$ and denote it by $\mathcal{V}^{\down}.$ 
When $m^{\up}(\pi)=m^{\down}(\pi),$ there is an ambiguity in the choice of $\up,\down\in\{\pm\}$, but it will not matter for the Adams conjecture (see Remark \ref{rmk equal first occurences}).

It is convenient for us to repurpose our notation. Fix $m$ and $n$ and let $\alpha=2m-2n-1$ (which is the difference in the ranks of the dual groups). For $\pi\in\Pi(G_n),$ we let $\theta_{-\alpha}^{\pm}(\pi)=\theta_{W_{2n},V_{2m}^\pm,\psi_F}(\pi).$ We let $\up, \down\in\{\pm\}$ be such that $m^{\up}(\pi)\geq m^{\down}(\pi).$ This allows us to consider the local theta lifts $\theta_{-\alpha}^{\up}(\pi)$ or $\theta_{-\alpha}^{\down}(\pi)$ for the going-up and going-down towers for $\pi$, respectively. We also set $m^{\up, \alpha}(\pi)=m^{\up}(\pi)-2n-1.$

\subsection{Local Arthur packets}\label{sec local Arthur packets}

In this subsection, recall some results concerning local Arthur packets. We let $G$ denote one of $G_n=\Sp_{2n}(F)$ or $H_m^\pm=\OO^\pm_{2m}(F)$ for brevity. Note that $G_n$ is connected, but $H_m^\pm$ is disconnected. Consequently, we will consider the complex dual group and L-group for the identity component  $(H_m^\pm)^\circ=\SO_{2m}^\pm(F).$ We have that
the complex dual group $\widehat{G}(\BC)$ is given by $\SO_{2n+1}(\BC)$ or $\SO_{2m}(\BC)$, respectively. Recall that $G_n$ is split while $H_m^\pm$ is  possibly non-quasi-split. However $H_m^\pm$ splits over a quadratic extension of $F$ and hence, the L-group ${}^LG$ may be identified with $\SO_{2n+1}(\BC)$ (when $G=G_n$), $\SO_{2m}(\BC)$ (when $H_m^+$ is split) which we regard as a subgroup of $\OO_{2m}(\BC)$, or $\OO_{2m}(\BC)$ (when $H_m^+$ is quasi-split, not not split). We set ${}^LG'$ to be $\SO_{2n+1}(\BC)$ when $G=G_n$ and $\OO_{2m}(\BC)$ when $G=H_m^\pm$.

Using the standard embedding of ${}^LG\rightarrow{}^LG'\rightarrow\GL_{N}(\BC)$, where $N\in\{2n+1,2m\}$ according to whether $G=G_n$ or $H_m^\pm$, we may view
a local Arthur parameter of $G$ as
a direct sum of irreducible representations of $W_F\times\SL_2(\mathbb{C})\times\SL_2(\mathbb{C})$
\begin{equation}\label{eq decomp psi +}
  \psi = \bigoplus_{i=1}^r \phi_i|\cdot|^{x_i} \otimes S_{a_i} \otimes S_{b_i},  
\end{equation}
satisfying the following conditions:
\begin{enumerate}
    \item [(1)]$\phi_i(W_F)$ is bounded and consists of semi-simple elements, and $\dim(\phi_i)=d_i$;
    \item [(2)] $\psi$ is self-dual;
    \item [(3)] $x_i \in \R$ and $|x_i|<\frac{1}{2}$;
    \item [(4)]the restrictions of $\psi$ to the two copies of $\SL_2(\mathbb{C})$ are analytic, $S_k$ is the $k$-dimensional irreducible representation of $\SL_2(\mathbb{C})$, and 
    $$\sum_{i=1}^r d_ia_ib_i = N:= 
\begin{cases}
2n+1 & \text{ when } G=\Sp_{2n}(F),\\
2m & \text{ when } G=\OO^\pm_{2m}(F).
\end{cases}
$$ 
\end{enumerate}
We remark that the bound $|x_i|<\frac{1}{2}$ is used as it is a bound for the Ramanujan Conjecture. Furthermore, when $N=2m$, we have that $\det\psi$ determines the discriminant character of $V_{2m}^\pm$ and hence determines $H_m^\pm$.
We also note that we may equivalently view $\psi$ as a suitable homomorphism
$$\psi: W_F \times \SL_2(\mathbb{C}) \times \SL_2(\mathbb{C}) \rightarrow {}^LG.$$

With this point of view, $\psi$ is called \emph{relevant} for $G$ if any parabolic subgroup ${}^LP$ of ${}^LG$ through which $\psi$ factors is relevant for $G$ (in the sense of \cite[\S3.3]{Bor79}). We note that this condition only matters for us when $G$ is the nontrivial pure inner form of the split group $\OO_{2m}(F)$.

Two local Arthur parameters of $G$ are said to be equivalent if they are conjugate under ${}^LG'$. When  $G=G_n$, this is equivalent to the usual notion of equivalence using conjugation by the complex dual group. However, if $G=H_m^\pm,$ defining the equivalence using conjugation by the complex dual group would result in equivalence classes of local Arthur parameters for $(H_m^\pm)^\circ,$ not $H_m^\pm.$

We let $\Psi^+(G)$ denote the set of equivalence classes of relevant local Arthur parameters. We will not distinguish a local Arthur parameter $\psi$ and its equivalence class. We let $\Psi(G)$ denote the subset of equivalence classes of bounded relevant local Arthur parameters, i.e., those $\psi$ for which $x_i=0$ for any $i=1,\dots,r$ in the decomposition \eqref{eq decomp psi +}.

By the Local Langlands Correspondence for $\GL_{d}(F)$, any bounded representation $\phi$ of $W_F$ corresponds to an irreducible unitary supercuspidal representation $\rho$ of $\GL_{d}(F)$ (\cite{HT01, Hen00, Sch13}). Consequently, we may identify \eqref{eq decomp psi +} as
\begin{equation}\label{A-param decomp}
  \psi = \bigoplus_{\rho}\left(\bigoplus_{i\in I_\rho} \rho|\cdot|^{x_i} \otimes S_{a_i} \otimes S_{b_i}\right),  
\end{equation}
where the first sum runs over a finite set of
irreducible unitary supercuspidal representations $\rho$ of $\GL_d(F)$ where $d \in \mathbb{Z}_{\geq 1}$.

When $G$ is quasi-split,
for a local Arthur parameter $\psi \in \Psi(G)$, Arthur constructed a finite multi-set $\Pi_\psi$ consisting of irreducible unitary representations of $G$ that satisfy certain twisted endoscopic character identities (\cite{Art13}). We call $\Pi_\psi$ the \emph{local Arthur packet} of $\psi.$ When $G$ is a pure inner form of a quasi-split group, e.g. $G=H_m^-$, we define $\Pi_\psi$ via quasi-split transfer (as in \cite{Moe11a}).
M{\oe}glin showed that $\Pi_\psi$ is multiplicity-free (\cite{Moe11a}). We do not recall the precise definition of $\Pi_\psi$. Instead, it suffices for our purposes to recall a parameterization of $\Pi_\psi$ using extended multi-segments (see Theorem \ref{thm Arthur packets extended multi-segment parameterization}).

M{\oe}glin showed that the computation of $\Pi_\psi$ can be reduced to the ``good parity'' case (see Theorem \ref{thm red to gp} below). We recall this reduction.
\begin{defn}\label{def Arthur parameter good parity}
    Let $\psi$ be a local Arthur parameter as in \eqref{A-param decomp}. We say that $\psi$ is of \emph{good parity} if $\psi \in \Psi(G)$, i.e., $x_i=0$ for all $i$, and every summand $\rho \otimes S_{a_i} \otimes S_{b_i}$ is self-dual and orthogonal.
We let $\Psi_{gp}(G)$ denote the subset of $\Psi^+(G)$ consisting of local Arthur parameters of good parity.
\end{defn}
We explicate this condition further. 
Consider a summand $\rho|\cdot|^x\otimes S_a \otimes S_b$ of $\psi$ as in \eqref{A-param decomp}. This summand is self-dual and orthogonal if and only if $x=0,$ $\rho$ is orthogonal (resp. symplectic), and $a_i+b_i$ is even (resp. odd).

Let $\psi\in\Psi^+(G).$ Since $\psi$ is self-dual, we have a decomposition 
\[
\psi=\psi_{ngp} + \psi_{gp}+\psi_{ngp}^\vee,
\]
where $\psi_{ngp}^\vee$ denotes the dual of $\psi_{ngp}$, $\psi_{gp}\in\Psi_{gp}(G),$ and $\psi_{gp}$ is maximal for this decomposition, i.e., if we decompose $\psi_{ngp}+\psi_{ngp}^\vee$ as in \eqref{A-param decomp}, then any irreducible summand $\rho|\cdot|^x\otimes S_a\otimes S_b$ is not of good parity. Note that $\psi_{gp}$ is uniquely determined by this decomposition, but $\psi_{ngp}$ is not necessarily unique. M{\oe}glin showed that the local Arthur packet $\Pi_\psi$ can be constructed from $\Pi_{\psi_{gp}}.$

\begin{thm}[{\cite[Proposition 5.1]{Moe11b}}]\label{thm red to gp}
Let $\psi\in\Psi^+(G)$ with decomposition $\psi=\psi_{ngp}+\psi_{gp}+\psi_{ngp}^\vee$ as above. Then, there exists $\tau\in\Pi(\GL_d(F))$ (determined by $\psi_{ngp}$) such that for any $\pi_{gp}\in\Pi_{\psi_{gp}},$ the normalized parabolic induction $\tau\rtimes\pi_{gp}$ is irreducible and \begin{equation}\label{non-unitary A-packet}
    \Pi_\psi=\{\tau\rtimes\pi_{gp} \ | \ \pi_{gp}\in\Pi_{\psi_{gp}}\}.
\end{equation}
\end{thm}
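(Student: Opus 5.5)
Since this is a cited result of M{\oe}glin, the plan is to reconstruct its standard argument rather than anything special to this paper. It has two parts: irreducibility of $\tau\rtimes\pi_{gp}$, and equality of packets via the twisted endoscopic character identities.

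\emph{Construction of $\tau$.} Write $\psi_{ngp}=\bigoplus_i \rho_i|\cdot|^{x_i}\otimes S_{a_i}\otimes S_{b_i}$. To each summand attach the Speh-type unitary (up to the twist $|\cdot|^{x_i}$) representation $\tau_i=u_{\rho_i|\cdot|^{x_i}}(a_i,b_i)$ of $\GL_{d_ia_ib_i}(F)$, i.e.\ the representation in the Arthur packet of $\GL$ attached to that summand. Set $\tau=\tau_1\times\cdots\times\tau_k$. This product is irreducible by Bernstein's unitarity/irreducibility results for $\GL$, using $|x_i|<\tfrac12$. One then checks that $\tau$ depends only on $\psi_{ngp}$ up to the ambiguity $\psi_{ngp}\leftrightarrow\psi_{ngp}^\vee$. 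Swapping a summand for its dual replaces $\tau_i$ by $\tau_i^\vee$, and $\tau_i\rtimes\pi\cong\tau_i^\vee\rtimes\pi$ whenever these are irreducible, because both share a Jacquet module/Langlands data.

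\emph{Irreducibility.} Induct on the number of summands of $\psi_{ngp}$ and reduce to a single $\tau_i\rtimes\pi_{gp}$.

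\begin{itemize}
\item If $\rho_i$ is not self-dual, or is self-dual of the wrong type, then the cuspidal support of $\tau_i$ is disjoint from the relevant reducibility lines for $\pi_{gp}$. Every exponent of $\pi_{gp}$ on the $\rho$-line lies in $\tfrac12\Z$ with the good-parity parity. The standard result (Tadi\'c/Jantzen) that $\rho|\cdot|^x\rtimes\sigma$ is irreducible unless $\rho$ is self-dual and $x$ has the right parity then gives irreducibility, after passing through a Jantzen-type decomposition.
\item If $\rho_i$ is self-dual of bad parity, then $a_i+b_i$ has the wrong parity. The segments involved have exponents in $\tfrac12\Z$ of the parity opposite to that of $\pi_{gp}$, so no linking occurs, and the same argument applies.
\end{itemize}

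\emph{Packet equality.} Arthur's construction of $\Pi_\psi$ for $\psi\in\Psi^+(G)$ goes through the Levi $M=\GL_d\times G'$ through which $\psi$ factors. The character identity for $\psi$ is obtained from that of the $M$-parameter $\psi_M=\psi_{ngp}\otimes\psi_{gp}$ by parabolic induction. This works because the stable distributions and endoscopic transfers commute with induction (the descent/induction compatibility of transfer). Hence the packet of $\psi$ is the multiset of irreducible constituents of the representations $\tau\rtimes\pi_{gp}$. Irreducibility then gives the stated equality, and multiplicity-freeness transfers as well.

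\emph{Main obstacle.} The main obstacle is the irreducibility when $\rho_i$ is self-dual of bad parity and $b_i>1$. Here $\tau_i$ is a genuinely non-tempered Speh representation, and one must control the full Jacquet module of $\pi_{gp}$ rather than only its cuspidal support. I would handle this by realizing $\tau_i\rtimes\pi_{gp}$ inside $\tau_i\rtimes(\text{standard module of }\pi_{gp})$. Then I would show that any subquotient's Jacquet module along the $\rho$-line must contain the full $\tau_i$ term, using M{\oe}glin's explicit Jacquet-module computation of good-parity elements. This forces a unique irreducible subquotient.
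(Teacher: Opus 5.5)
The paper gives no argument for this statement; it simply quotes M{\oe}glin. Your outer frame is the right one: $\tau$ is attached to $\psi_{ngp}$, and $\Pi_\psi$ is identified with the constituents of the $\tau\rtimes\pi_{gp}$ through compatibility of the character identities with parabolic induction. The gaps are in the irreducibility, which is the actual content of the statement.

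\textbf{First gap: reducing to single summands.} Irreducibility of $\tau\rtimes\pi_{gp}$ does not reduce to the statements for single summands. After one step the inducing representation $\tau_1\rtimes\pi_{gp}$ is no longer of good parity, so your criterion says nothing about $\tau_2\rtimes(\tau_1\rtimes\pi_{gp})$. Moreover, the interaction between $\tau_i$ and $\tau_j^\vee$ for different summands is exactly what can cause reducibility. For instance, let $\rho$ be unitary and not an unramified twist of a self-dual representation, and let $\sigma$ be supercuspidal. Then $\rho|\cdot|^{1/2}\rtimes\sigma$, $\rho^\vee|\cdot|^{1/2}\rtimes\sigma$ and $\rho|\cdot|^{1/2}\times\rho^\vee|\cdot|^{1/2}$ are all irreducible. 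Yet $\rho|\cdot|^{1/2}\times\rho^\vee|\cdot|^{1/2}\rtimes\sigma\cong\rho|\cdot|^{1/2}\times\rho|\cdot|^{-1/2}\rtimes\sigma$ is reducible.

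You use $|x_i|<\frac12$ only for the irreducibility of $\tau$, which still holds at $x=\frac12$. So your argument would ``prove'' this false statement. What is needed, after a Jantzen-type separation by cuspidal lines, is irreducibility of the $\GL$-products obtained by replacing the summands on the line of $\rho^\vee$ by their contragredients, e.g.\ $u_\rho(a_i,b_i)|\cdot|^{x_i}\times u_\rho(a_j,b_j)|\cdot|^{-x_j}$. This is where the bound $|x_i|<\frac12$ is genuinely used. Your two bullets also never treat summands with $\rho_i$ self-dual, $\rho_i\otimes S_{a_i}\otimes S_{b_i}$ of good-parity type and $x_i\neq 0$. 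These are precisely the non-unitary summands that $\Psi^+(G)$ adds.

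\textbf{Second gap: the bad-parity case.} The Jacquet-module argument you propose here cannot work as stated. If $\rho_i$ is self-dual and $x_i=0$, then $\tau_i\cong\tau_i^\vee$. Tadi\'c's structure formula then yields $\tau_i\otimes\pi_{gp}$ at least twice in the Jacquet module of $\tau_i\rtimes\pi_{gp}$, once from $\tau_i$ and once from $\tau_i^\vee$. Hence ``every subquotient contains the full $\tau_i$-term'' is compatible with a splitting into two constituents, each containing one copy. Such splittings really occur with the same Jacquet-module count in good parity, e.g.\ $\chi\rtimes\mathbf{1}$ for $\SL_2(F)$ with $\chi$ quadratic nontrivial.

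A valid argument must use the parity in an essential way. One option is the fact that the cuspidal reducibility points of $\rho_i|\cdot|^{s}\rtimes\sigma_{cusp}$ lie in the coset of $\frac12\Z$ modulo $\Z$ opposite to the exponents of $\tau_i$, combined with an irreducibility criterion for induction supported on such a coset. Another, when $b_i=1$, is triviality of the Knapp--Stein $R$-group. Your proposal correctly names this as the main obstacle but supplies no such input.
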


\subsection{The Adams conjecture}\label{sec the Adams conjecture}

In this subsection, we recall the Adams conjecture (see Conjecture \ref{conj Adams} below) along with some relevant results. 

Following \cite[\S3.2]{GI14}, we fix a pair of characters $\chi_W,\chi_V$ associated to $W_n$ and $V_m^\pm$ respectively. More specifically, we have that $\chi_W$ is the trivial character of $F^\times$ and $\chi_V$ is the quadratic character associated to $F(\sqrt{d})/F$, where $d$ is the discriminant of $V_m^\pm$. Recall also that $\alpha=2m-2n-1.$ We recall the Adams conjecture below.

\begin{conj}[{\cite{Ada89}}]\label{conj Adams}
    Assume that $m>n.$
    Suppose that $\pi\in\Pi(G_n)$ lies in a local Arthur packet $\Pi_\psi$ for some $\psi\in\Psi^+(G_n).$ If $\theta_{-\alpha}^\pm(\pi)\neq 0,$ then $\theta_{-\alpha}^\pm(\pi)\in\Pi_{\psi_\alpha}$ where
    \begin{equation}\label{eqn psi_alpha}
    \psi_\alpha=
       (\chi_W\chi_V^{-1}\otimes\psi)\oplus \chi_W\otimes S_1\otimes S_\alpha.
    \end{equation}
\end{conj}

M{\oe}glin verified that when $\alpha$ is large, the Adams conjecture is true.
\begin{thm}[{\cite[Theorem 6.1]{Moe11c}}]\label{thm Moeglin Adams}
Suppose that $\pi\in\Pi(G_n)$ lies in a local Arthur packet $\Pi_\psi$ for some $\psi\in\Psi^+(G_n).$
    For $\alpha\gg 0,$ we have $\theta_{-\alpha}^\pm(\pi)\in\Pi_{\psi_\alpha}$.
\end{thm}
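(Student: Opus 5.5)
Throughout, $\pi\in\Pi_\psi$ is fixed and $\alpha$ is allowed to grow. The plan is to reduce first to the good parity case, and then to compare two descriptions for $\alpha\gg0$ by an induction on $\alpha$ in steps of $2$:
\begin{itemize}
\item M{\oe}glin's explicit description of the packet $\Pi_{\psi_\alpha}$, valid when the summand $\chi_W\otimes S_1\otimes S_\alpha$ is far from the rest of $\psi_\alpha$;
\item Kudla's description of the Jacquet modules of the Weil representation, which controls $\theta_{-\alpha}^\pm(\pi)$ in the stable range.
\end{itemize}

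\textbf{Reduction to good parity.} Write $\psi=\psi_{ngp}+\psi_{gp}+\psi_{ngp}^\vee$, so that $\pi=\tau\rtimes\pi_{gp}$ with $\pi_{gp}\in\Pi_{\psi_{gp}}$ by Theorem \ref{thm red to gp}. Kudla's induction principle, together with the irreducibility in Theorem \ref{thm red to gp}, should give
\[
\theta_{-\alpha}^\pm(\pi)=(\chi_V\tau)\rtimes\theta_{-\alpha}^\pm(\pi_{gp})
\]
on the smaller dual pair $(G_{n-d},H_{m-d}^\pm)$; the difference of ranks $\alpha$ is unchanged. Since $(\psi_\alpha)_{gp}=(\psi_{gp})_\alpha$ up to the twist by $\chi_V$, and Theorem \ref{thm red to gp} applies equally on the orthogonal side, it suffices to treat $\psi=\psi_{gp}$.

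\textbf{Key steps in the good parity case.}
\begin{enumerate}
\item Choose $\alpha_0$ larger than every $a_i+b_i$ appearing in $\psi$, and also past the stable range for $\pi$, so that $\theta_{-\alpha}^\pm(\pi)\neq0$ for all $\alpha\ge\alpha_0$.
\item On the Arthur side, use M{\oe}glin's construction of packets (equivalently the extended multi-segment parameterization). When $S_\alpha$ is far from all other Jordan blocks, the elements of $\Pi_{\psi_\alpha}$ are exactly the unique irreducible subrepresentations
\[
\chi_W|\cdot|^{(\alpha-1)/2}\rtimes\sigma,\qquad \sigma\in\Pi_{\psi_{\alpha-2}},
\]
where $\psi_{\alpha-2}$ replaces $S_\alpha$ by $S_{\alpha-2}$. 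In particular, membership in $\Pi_{\psi_\alpha}$ is detected by the Jacquet module along $|\cdot|^{(\alpha-1)/2}$.
\item On the theta side, apply Kudla's filtration of the Jacquet module of the Weil representation. For $\alpha\ge\alpha_0$ it should give an embedding
\[
\theta_{-\alpha}^\pm(\pi)\hookrightarrow \chi_W|\cdot|^{(\alpha-1)/2}\rtimes\theta_{-(\alpha-2)}^\pm(\pi),
\]
with the exponent $(\alpha-1)/2$ too large to interfere with the cuspidal support of $\pi$. The normalizations of $\chi_W,\chi_V$ follow \cite{GI14}.
\item Combine the two. If $\theta_{-(\alpha-2)}^\pm(\pi)\in\Pi_{\psi_{\alpha-2}}$, then steps 2 and 3 together give $\theta_{-\alpha}^\pm(\pi)\in\Pi_{\psi_\alpha}$. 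This propagates membership upward in $\alpha$.
\end{enumerate}

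\textbf{Main obstacle: the base of the induction.} The argument above only propagates membership, so one still needs a single $\alpha$ at which $\theta_{-\alpha}^\pm(\pi)\in\Pi_{\psi_\alpha}$. The most promising route is to compare characters directly instead of using Jacquet modules:
\begin{itemize}
\item Use Rallis' stable-range description of $\theta_{-\alpha}^\pm(\pi)$ as a Langlands quotient induced from $\chi_W|\cdot|^{(\alpha-1)/2}\times\cdots\times\chi_W|\cdot|^{1/2}$ (or the analogous segment) and $\pi$.
\item Check that the resulting stable distribution, summed over the packet with the appropriate signs, transfers to the twisted trace of the parameter $\psi_\alpha$ on $\GL_{2m}$. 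This is the expected compatibility of theta with endoscopic transfer in the stable range.
\end{itemize}
The hardest part is making this verification uniform across the packet and fixing the characters of the component group, in particular the sign attached to the new block $S_\alpha$, which governs the choice between $H_m^+$ and $H_m^-$. A plausible way to handle it is to reduce to the discrete elementary case by Aubert duality and Jacquet functors, as in M{\oe}glin's construction, and there compare directly with her explicit Jacquet module characterization.
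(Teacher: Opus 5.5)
The paper gives no proof of this statement; it is quoted from \cite{Moe11c}. Judged on its own, your proposal has a genuine gap, and it sits where you flag it. It is not a routine base case. Your reduction to good parity is fine.

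\textbf{The induction in $\alpha$ cannot be anchored.} Steps 2--4 only transport membership from $\Pi_{\psi_{\alpha-2}}$ to $\Pi_{\psi_\alpha}$. Step 2 is valid only once the added block dominates the rest of $\psi$. So the induction can start only at some $\alpha_1$ determined by $\psi$, and the assertion $\theta^\pm_{-\alpha_1}(\pi)\in\Pi_{\psi_{\alpha_1}}$ is itself an instance of the theorem. No value of $\alpha$ comes for free. On the going-down tower the Adams conjecture genuinely fails for small $\alpha$. The going-up results of \cite{BH22} are obtained by descending from M{\oe}glin's theorem (this is how Algorithm \ref{algo compute theta lift} is set up), so invoking them would be circular.

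\textbf{Your proposed anchor does not close the gap.} A twisted endoscopic character identity for $\theta^\pm_{-\alpha}(\Pi_\psi)$ against $\psi_\alpha$, with the correct component-group characters and tower signs, is not an available input. It is essentially the Adams conjecture in character form. Your fallback sentence is too vague to check. Any part of it that moves $\pi$ by Aubert duality breaks on the theta side, because the Aubert involution does not commute with theta. Already for $n=0$, the lift of the trivial representation is the trivial representation of $H_m^+$, not its Aubert dual.

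\textbf{What is needed is an anchor that does not come from varying $\alpha$.} One route is to fix $\alpha$ large and vary $\psi$ instead; your mention of Jacquet functors points this way, but the key point must be made precise.
\begin{itemize}
\item M{\oe}glin's packets for general $\psi$ are obtained by partial Jacquet functors $\mathrm{Jac}_x$, with $|x|$ bounded in terms of $\psi$. They come from packets of parameters with discrete diagonal restriction, and ultimately from elementary ones.
\item Kudla's filtration of the Jacquet modules of the Weil representation shows that when $(\alpha-1)/2$ is large compared with these $|x|$, the strata carrying the large exponent do not contribute. So $\mathrm{Jac}_x$ essentially commutes with $\theta^\pm_{-\alpha}$, up to the twist by $\chi_V$.
\item This reduces the problem, at fixed $\alpha$, to parameters of cuspidal type. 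There the input is the known description of theta lifts of supercuspidal representations through their Jordan blocks. This is also why the threshold in $\alpha\gg0$ depends on $\psi$.
\end{itemize}

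\textbf{A smaller, fixable point.} With normalized induction, the exponent in Steps 2 and 3 must be $-(\alpha-1)/2$, not $(\alpha-1)/2$. For $n=0$, the normalized Jacquet module of the trivial representation of $H_m^+$ is $\delta_P^{-1/2}$. By Frobenius reciprocity it therefore embeds in $|\cdot|^{-(\alpha-1)/2}\rtimes\mathbf{1}$ but not in $|\cdot|^{(\alpha-1)/2}\rtimes\mathbf{1}$. You use the same sign on both sides, so correcting it does not change the logic of the inductive step.
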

However, M{\oe}glin also showed that the Adams conjecture can and does fail in general. The failure of the Adams conjecture in this case is well-understood through the works of \cite{BH22, Haz24}. We are particularly concerned with the case of the going-up tower for $\pi$ in which case Baki{\' c} and Hanzer showed that the Adams conjecture always holds.
\begin{thm}[{\cite[Theorem 2]{BH22}}]\label{thm Adams going up tower}
Suppose that $\pi\in\Pi(G_n)$ lies in a local Arthur packet $\Pi_\psi$ for some $\psi\in\Psi^+(G_n).$
    If $\theta_{-\alpha}^{\up}(\pi)\neq 0,$ then $\theta_{-\alpha}^{\up}(\pi)\in\Pi_{\psi_\alpha}$.
\end{thm}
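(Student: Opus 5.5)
The plan is to argue by descending induction on $\alpha$ along the going-up tower. The base case $\alpha\gg0$ is Theorem \ref{thm Moeglin Adams}. The inductive step moves from $\alpha+2$ down to $\alpha$ using a Jacquet-module relation between consecutive theta lifts, together with M{\oe}glin's description of Jacquet modules of members of local Arthur packets.

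\textbf{Reduction to good parity.} Write $\psi=\psi_{ngp}+\psi_{gp}+\psi_{ngp}^\vee$. By Theorem \ref{thm red to gp}, $\pi=\tau\rtimes\pi_{gp}$ is irreducible with $\pi_{gp}\in\Pi_{\psi_{gp}}$. Kudla's induction principle, in the irreducible form available in this range (Gan--Ichino), should give
\[
\theta^{\up}_{-\alpha}(\pi)=(\chi_V\tau)\rtimes\theta^{\up}_{-\alpha}(\pi_{gp}),
\]
and the going-up tower of $\pi$ should be the going-up tower of $\pi_{gp}$. Note also that $(\psi_\alpha)_{gp}=(\psi_{gp})_\alpha$ up to the twist by $\chi_V$, and that $(\psi_\alpha)_{ngp}$ is $\chi_V\psi_{ngp}$. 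Applying Theorem \ref{thm red to gp} to $\psi_\alpha$ then reduces the claim to $\psi$ of good parity.

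\textbf{Inductive step.} Assume $\theta^{\up}_{-\alpha}(\pi)\neq0$, and suppose that $\theta^{\up}_{-\alpha-2}(\pi)\in\Pi_{\psi_{\alpha+2}}$ is already known; this holds for $\alpha\gg0$ by Theorem \ref{thm Moeglin Adams}, and above the first occurrence the lift is nonzero by Theorem \ref{thm persistence principle}. On the going-up tower, the computations of Gan--Ichino and Atobe--Gan give that $\theta^{\up}_{-\alpha-2}(\pi)$ is the unique irreducible subrepresentation of
\[
\chi_V|\cdot|^{\frac{\alpha+1}{2}}\rtimes\theta^{\up}_{-\alpha}(\pi).
\]
In particular, the partial Jacquet module $\mathrm{Jac}_{\chi_V|\cdot|^{(\alpha+1)/2}}$ of $\theta^{\up}_{-\alpha-2}(\pi)$ is, up to the highest derivative, exactly $\theta^{\up}_{-\alpha}(\pi)$.

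On the Arthur side, $\psi_{\alpha+2}$ contains the summand $\chi_W\otimes S_1\otimes S_{\alpha+2}$. M{\oe}glin's results on Jacquet modules of Arthur packets (the ``$b\mapsto b-2$'' reduction, phrased through the extended multi-segments of \cite{Ato20b,HLL22}) show that any nonzero irreducible constituent of $\mathrm{Jac}_{|\cdot|^{(\alpha+1)/2}}$ of a member of $\Pi_{\psi_{\alpha+2}}$ lies in the packet of the parameter with $S_{\alpha+2}$ replaced by $S_\alpha$. That parameter is exactly $\psi_\alpha$, and combining this with the previous paragraph gives $\theta^{\up}_{-\alpha}(\pi)\in\Pi_{\psi_\alpha}$.

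\textbf{Main obstacle.} The hard step is justifying the Jacquet-module step when other summands of $\psi$ share the exponent $(\alpha+1)/2$. In that case the derivative can interact with those blocks, and the simple $b\mapsto b-2$ statement may fail: it does fail on the going-down tower, which is precisely where Adams breaks. I would first try to handle this by writing $\theta^{\up}_{-\alpha-2}(\pi)$ via an extended multi-segment $\EE$ of $\psi_{\alpha+2}$. I would then use row operations from \cite{HLL22} to move the $[\frac{\alpha+1}{2},-\frac{\alpha+1}{2}]$ row to the top of its $\rho$-block. The going-up condition, namely the first-occurrence inequality $m^{\up}(\pi)\geq 2n+2$, should be what guarantees that this row can be placed so that its left endpoint is removable without violating the nonvanishing conditions. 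The derivative then corresponds to shrinking that row to $[\frac{\alpha-1}{2},-\frac{\alpha-1}{2}]$, which exhibits $\theta^{\up}_{-\alpha}(\pi)$ as $\pi(\EE')$ with $\EE'$ an extended multi-segment for $\psi_\alpha$.
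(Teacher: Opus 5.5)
\textbf{There is a genuine gap, and it sits at the step that carries the content of the theorem.} For reference, the paper does not prove this statement. It quotes \cite[Theorem 2]{BH22} for bounded $\psi$ and extends it to $\Psi^+(G_n)$ via \cite[Lemma 2.33]{Haz24}.

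Your framework is reasonable: descending induction from Theorem \ref{thm Moeglin Adams}, relating consecutive lifts by a $\GL_1$-derivative, plus a reduction to good parity. The failure is on the Arthur side. You assert that every constituent of the relevant Jacquet module of a member of $\Pi_{\psi_{\alpha+2}}$ lies in $\Pi_{\psi_\alpha}$. This is false in general. If another $\chi_W$-summand of $\psi_{\alpha+2}$ (one coming from the $\chi_V$-part of $\psi$) also produces that exponent, the derivative can be taken off that summand instead, and the result lands in a different packet.

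You acknowledge this, but your fix is only an expectation: the going-up condition ``should be what guarantees'' that the added row can be shrunk without violating nonvanishing. In your text the going-up hypothesis appears only as a label on a citation and in this hope. Neither the sign $\up$ (e.g.\ $\up=-\eta_{\chi_V}(0)$ for tempered $\pi$, Theorem \ref{thm going up first occurrence}) nor the conservation relation (Theorem \ref{thm conservation relation}) is ever used. Some such input is indispensable, because the Adams conjecture fails on the going-down tower.

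In the paper's language, the missing step is the following. For every $\EE$ with $\pi(\EE)=\pi$ and every $\alpha$ with $\theta^{\up}_{-\alpha}(\pi)\neq0$, you must show that Algorithm \ref{algo compute theta lift}, run with the added row of sign $\up$, produces $\EE^{\up}_\alpha$ with $\pi(\EE^{\up}_\alpha)\neq 0$. Theorem \ref{thm compute theta lift} then gives the claim. That nonvanishing analysis, compared against the first occurrence on the going-up tower, is exactly what is missing.

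\textbf{The theta-side step also needs repair.}
\begin{itemize}
\item The $\GL_1$-factor of the Levi carries $\chi_W$, not $\chi_V$; this matches the added summand $\chi_W\otimes S_1\otimes S_\alpha$.
\item With the usual normalisation the lift embeds in $\chi_W|\cdot|^{-\frac{\alpha+1}{2}}\rtimes\theta^{\up}_{-\alpha}(\pi)$. Already the trivial representation of $\OO_{2m+2}(F)$ lies in $|\cdot|^{-m}\rtimes 1_{\OO_{2m}(F)}$, not in $|\cdot|^{m}\rtimes 1_{\OO_{2m}(F)}$.
\item More seriously, such an embedding identifies $\theta^{\up}_{-\alpha}(\pi)$ with the first derivative of $\theta^{\up}_{-\alpha-2}(\pi)$ only when $\theta^{\up}_{-\alpha}(\pi)$ itself has zero derivative at that exponent. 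In the interfering cases it does not. The highest derivative of $\theta^{\up}_{-\alpha-2}(\pi)$ is then the highest derivative of $\theta^{\up}_{-\alpha}(\pi)$. Knowing which packet contains that derivative does not tell you which packet contains $\theta^{\up}_{-\alpha}(\pi)$, so your ``in particular'' does not follow.
\end{itemize}

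\textbf{The good-parity reduction is plausible but unproved.} Once the good-parity case is known, irreducibility of $\chi_V\tau\rtimes\theta^{\up}_{-\alpha}(\pi_{gp})$ follows from Theorem \ref{thm red to gp}. You still need three further facts, which you assert rather than prove:
\begin{itemize}
\item $\pi$ and $\pi_{gp}$ have the same going-up tower;
\item $m^{\up,\alpha}(\pi)=m^{\up,\alpha}(\pi_{gp})$;
\item the induction principle applies to the possibly non-unitary $\tau$.
\end{itemize}
These are supplied by the cited sources; the passage to $\Psi^+(G_n)$ in particular is \cite[Lemma 2.33]{Haz24}.
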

We remark that while \cite[Theorem 2]{BH22} is stated for $\psi\in\Psi(G_n)$; however, the extension to $\Psi^+(G_n)$ follows directly from \cite[Lemma 2.33]{Haz24}.

\begin{rmk}\label{rmk equal first occurences}
    We remark that if $m^{up}(\pi)=m^{down}(\pi)$ then the Adams conjecture is always true. That is, for any odd positive integer $\alpha$ and $\pi\in\Pi_\psi,$ we have $\theta_{-\alpha}^\pm(\pi)\in\Pi_{\psi_\alpha}$ (see \cite[p. 15]{BH22}).
\end{rmk}

\section{Extended multi-segments}\label{sec extended multi-segments}

Let $G\in\{G_n, H_m^\pm\}.$
From Theorem \ref{thm red to gp}, it is desirable to parameterize $\Pi_\psi$ for $\psi\in\Psi_{gp}(G).$ In this section, we recall such a parameterization using extended multi-segments (Theorem \ref{thm Arthur packets extended multi-segment parameterization}). Furthermore, we also recall some results on the theory of intersections of local Arthur packets from \cite{HLL22, HLL25}. We begin by recalling some notions related for extended multi-segments.

We fix the following notation throughout this subsection. Let $\psi\in\Psi_{gp}(G)$ with decomposition 
\[ \psi= \bigoplus_{\rho} \bigoplus_{i \in I_{\rho}} \rho \otimes S_{a_i} \otimes S_{b_i}. \]
We set $A_i=\frac{a_i+b_i}{2}-1$ and $B_i=\frac{a_i-b_i}{2}$ for $i\in I_\rho.$ 
 
We say that a total order $>_\psi$ on $I_\rho$ is \emph{admissible} if it satisfies:
\[
\tag{$P$}
\text{
For $i,j \in I_\rho$, 
if $A_i > A_j$ and $B_i > B_j$, 
then $i >_\psi j$.
}
\]
We often consider an order $>_\psi$ on $I_\rho$ satisfying:
\[
\tag{$P'$}
\text{
For $i,j \in I_\rho$, 
if $B_i > B_j$, 
then $i >_\psi j$.
}
\]
Note that ($P'$) implies ($P$). For brevity, we often write $>$ instead of $>_\psi$ when it is clear that we are working with a fixed admissible order. 

Suppose now that we have fixed an admissible order for $\psi.$ The \emph{support} of $\psi$ is the collection of ordered multi-sets 
$$\supp(\psi) := \cup_{\rho}\{ [A_i,B_i]_{\rho} \}_{i \in (I_\rho,>)}.
$$
Note that $\supp(\psi)$ depends implicitly on the fixed admissible order.

We recall the definition of extended multi-segments.

\begin{defn}(Extended multi-segments)\label{def multi-segment}
\begin{enumerate}
\item
An \emph{extended segment} is a triple $([A,B]_\rho, l, \eta)$,
where
\begin{itemize}
\item
$[A,B]_\rho = \{\rho|\cdot|^A, \rho|\cdot|^{A-1}, \dots, \rho|\cdot|^B \}$ is a segment 
for an irreducible unitary supercuspidal representation $\rho$ of some $\GL_d(F)$; 
\item
$l \in \Z$ with $0 \leq l \leq \frac{b}{2}$, where $b = \#[A,B]_\rho = A-B+1$; 
\item
$\eta \in \{\pm1\}$ unless $2l=b$ in which case $\eta=1$. 
\end{itemize}

\item
An \emph{extended multi-segment} for $G$ is 
an equivalence class (via the equivalence defined below) of multi-sets of extended segments 
\[
\EE = \cup_{\rho}\{ ([A_i,B_i]_{\rho}, l_i, \eta_i) \}_{i \in (I_\rho,>)}
\]
such that the following holds.
\begin{itemize}
\item
$I_\rho$ is a totally ordered finite set with a fixed admissible total order $>$.

\item
$A_i + B_i \geq 0$ for all $\rho$ and $i \in I_\rho$.

\item
We have
\[
\psi_{\EE} = \bigoplus_\rho \bigoplus_{i \in I_\rho} \rho \otimes S_{a_i} \otimes S_{b_i}. 
\]
where $(a_i, b_i) = (A_i+B_i+1, A_i-B_i+1)$,
is a local Arthur parameter for $G$ of good parity.
\item The sign condition
\begin{align}\label{eq sign condition}
\prod_{\rho} \prod_{i \in I_\rho} (-1)^{[\frac{b_i}{2}]+l_i} \eta_i^{b_i} = \epsilon_{G}
\end{align}
holds. Here $\epsilon_{G}=1$ if $G=\Sp_{2n}(F)$ or $G=\OO_{2m}^+(F)$ and $\epsilon_{G}=-1$ otherwise.
\end{itemize}

\item
We define the \emph{support} of $\EE$ to be the collection of ordered multi-sets 
\[
\supp(\EE) = \cup_{\rho}\{ [A_i,B_i]_{\rho} \}_{i \in (I_\rho,>)}.
\]
We implicitly include the admissible order $>$ in $\supp(\EE).$
\item We let $\Eseg(G)$ denote the set of all extended multi-segments of $G$ up to weak equivalence.
\end{enumerate}
\end{defn}
If the admissible order $>$ is clear in the context, for $k \in I_{\rho}$, we often let $k+1 \in I_{\rho}$ be the unique element adjacent with $k$ and $k+1>k$.

The data in an extended multi-segment can be cumbersome to list out in detail. Instead, 
we attach a symbol to each extended multi-segment by the same way in \cite[Section 3]{Ato20b}. We give an example to explain this.
\begin{exmp}\label{exmp symbol}
 Let $\rho$ be the trivial representation of $\GL_1(F)$. The symbol
\[\EE=\bordermatrix{
& -1 & 0 &1 & 2 &3 & 4\cr
& \lhd & \lhd & \oplus & \ominus & \rhd & \rhd \cr
&  &  &  & \lhd &\rhd  &  \cr
&  &  &  &  &  & \ominus \cr
}_{\rho}\]
corresponds to $\EE= \{ ([A_i,B_i]_{\rho},l_i,\eta_i)\}_{i \in (1<2<3)}$ of $\Sp_{45}(F)$ where the data is given as follows.
\begin{itemize}
    \item  $([A_1,B_1]_{\rho},[A_2,B_2]_{\rho},[A_3,B_3]_{\rho})=([4,-1]_{\rho},[3,2]_{\rho},[4,4]_{\rho})$ specifies the support of each row.
    \item  $(l_1,l_2,l_3)=(2,1,0)$ counts the number of pairs of triangles in each row. 
    \item  $(\eta_1,\eta_2,\eta_3)=(1, 1, -1)$ records the sign of the first circle in each row. Note that when there are no circles, we have $2l=b$ and so $\eta=1$ by definition.
\end{itemize}
The associated local Arthur parameter is recovered from the supports
    \[ \psi_{\EE}= \rho \otimes S_{4}\otimes S_{6} + \rho \otimes S_{6}\otimes S_{2} + \rho \otimes S_9 \otimes S_1.  \]
\end{exmp}

With the above symbol in mind, we often say that an extended segment $r=([A,B]_\rho,l,\eta)$ is a \emph{row} of $\EE$ if $r\in \EE.$ Furthermore, we define the support of $r$ to be $\supp(r)=[A,B]_\rho$ and let $A(r)=A$, $B(r)=B,$ $a(r)=A(r)+B(r)+1,$ $b(r)=A(r)-B(r)+1,$ $l(r)=l$, and $\eta(r)=\eta.$

Let $\EE\in\Eseg(G).$ If $G=\Sp_{2n}(F),$ then we attach a representation $\pi(\EE)$ of $G$ as in \cite[\S3.2]{Ato20b}. This is done using the theory of derivatives developed by Atobe and M{\'i}nguez in \cite{AM23}. We have that either $\pi(\EE)$ vanishes or $\pi(\EE)\in\Pi(G).$ If $G=\OO_{2m}^\pm(F),$ then we attach a representation $\pi(\EE)$ of $G$ as in \cite[Definition 6.3]{HLL25}. This definition avoids the use of derivatives by using the Adams conjecture as established by M{\oe}glin (see Theorem \ref{thm Moeglin Adams}).
Again, we have that either $\pi(\EE)$ vanishes or $\pi(\EE)\in\Pi(G).$

In either case, we have that extended multi-segments parameterize local Arthur packets.
\begin{thm}\label{thm Arthur packets extended multi-segment parameterization}
    Suppose $\psi= \bigoplus_{\rho} \bigoplus_{i \in I_{\rho}} \rho \otimes S_{a_i} \otimes S_{b_i}$ is a relevant local Arthur parameter of good parity of $G$. Choose an admissible order $>$ on $I_{\rho}$ for each $\rho$ that satisfies ($P'$) if $\frac{a_i-b_i}{2}<0$ for some $i \in I_{\rho}$. Then
\[ \bigoplus_{\pi \in \Pi_{\psi}} \pi= \bigoplus_{\EE} \pi(\EE),\]
where $\EE$ runs over all extended multi-segments with $\supp(\EE)= \supp(\psi)$ and $\pi(\EE) \neq 0$. 
\end{thm}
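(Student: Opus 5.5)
This statement is not proved from scratch in the paper. It is the parameterization theorem of Atobe for $G=\Sp_{2n}(F)$ (\cite[Theorem 3.6]{Ato20b}) together with its extension to even orthogonal groups in \cite{HLL25}. My plan is therefore to assemble it from these sources and to explain how the two cases fit together.

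For $G=G_n$, I would first recall Atobe's construction of $\pi(\EE)$ via the derivatives of \cite{AM23}. The order condition ($P'$) when some $B_i<0$ lets one shift every segment by large integers $T_i$ until all $B_i\ge 0$, preserving the order. This produces an extended multi-segment $\EE_T$ whose parameter is in the ``discrete diagonal restriction'' situation. For such parameters, M{\oe}glin's explicit construction identifies $\Pi_{\psi_T}$ with $\{\pi(\EE_T)\}$, which are nonzero and distinct, and the sign condition \eqref{eq sign condition} encodes the character condition on the component group.

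Next, one recovers $\Pi_\psi$ from $\Pi_{\psi_T}$ by M{\oe}glin's iterated Jacquet module description. Atobe shows this coincides with the derivative definition of $\pi(\EE)$: each $\pi(\EE)$ is the appropriate derivative of $\pi(\EE_T)$, or it vanishes. Distinct $\EE$ giving nonzero representations yield distinct elements of $\Pi_\psi$, which gives the multiplicity-free equality. Independence of the chosen admissible order follows from the invariance of $\pi(\EE)$ under the row-exchange operators.

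For $G=H_m^\pm$, I would follow \cite[Definition 6.3]{HLL25}. Choose a symplectic extended multi-segment $\EE'$ and $\alpha\gg0$ so that adding the row $\chi_W\otimes S_1\otimes S_\alpha$ (twisted by $\chi_V$) gives $\EE$, and set $\pi(\EE)=\theta^\pm_{-\alpha}(\pi(\EE'))$. By Theorem \ref{thm Moeglin Adams}, these representations lie in $\Pi_{\psi_\alpha}$. Theorem \ref{thm Howe duality} gives injectivity. Conversely, every element of $\Pi_{\psi_{\EE}}$ arises this way, which I would show by comparing sizes or by using the theta lift in the opposite direction together with relevance and the sign $\epsilon_G$ for the choice of tower.

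The main obstacle is the orthogonal case. There one must show that every member of the packet is a theta lift and that the sign condition correctly selects $H_m^+$ versus $H_m^-$. For both points I would rely directly on the verification in \cite{HLL25}.
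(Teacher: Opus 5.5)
The paper does not prove this statement; it quotes it from \cite[Theorem 3.3]{Ato20b} for $\Sp_{2n}(F)$ and from \cite[Theorem 6.10]{HLL25} for $\OO^\pm_{2m}(F)$. Deferring to the literature is therefore the right move. Note, though, that the result you cite, \cite[Theorem 3.6]{Ato20b}, is what the paper uses for the nonvanishing criterion, not for the parameterization. Your summary of the symplectic case is a reasonable gloss on Atobe's argument. The orthogonal part of your sketch, however, does not work as written. If you rely on it rather than on the citation, you have a genuine gap.

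The issue is the direction of the theta lift. You define $\pi(\EE)=\theta^\pm_{-\alpha}(\pi(\EE'))$ with $\EE=(\EE')^\pm_\alpha$ and $\alpha\gg0$. This requires $\psi_\EE$ to contain a summand $\chi_W\otimes S_1\otimes S_\alpha$ whose row $([\frac{\alpha-1}{2},-\frac{\alpha-1}{2}]_{\chi_W},\frac{\alpha-1}{2},\pm1)$ has $A$ strictly larger than the $A$ of every other $\chi_W$-row. That is the condition $\alpha\geq\alpha_0$ of Definition~\ref{defn EE_alpha}, and only then does Theorem~\ref{thm Moeglin Adams} apply.

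A general good-parity parameter of $H_m^\pm$ has no such summand. For instance, take a tempered $\psi$ whose $\chi_W$-part contains $\chi_W\otimes S_3\otimes S_1$. Since all $b_i=1$, the only candidate is $\alpha=1$, which gives $A=0$ and cannot dominate $A=1$. So your recipe defines $\pi(\EE)$ only on a thin family of packets. The step ``every element of $\Pi_{\psi_\EE}$ arises this way'' cannot be repaired by comparing sizes, because for most $\psi$ nothing in $\Pi_\psi$ arises this way.

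To cover every $\EE\in\Eseg(H_m^\pm)$, the correspondence must be used in the opposite direction:
\begin{itemize}
\item Adjoin one long row to $\EE$ to obtain an extended multi-segment of $\Sp_{2n}(F)$ with $n\gg m$, where Atobe's theorem is available.
\item Transport back through the stable range. There every irreducible representation of $H_m^\pm$ has a nonzero lift, M{\oe}glin's theorem applies in the direction $H_m^\pm\to G_n$, and Theorem~\ref{thm Howe duality} gives injectivity.
\item The sign condition \eqref{eq sign condition} keeps track of which of $H_m^\pm$ one lands on.
\end{itemize}
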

When $G=\Sp_{2n}(F),$ the above theorem was proven by Atobe (\cite[Theorem 3.3]{Ato20b}). When $G=\OO_{2m}^\pm(F)$, the above theorem was proven in \cite[Theorem 6.10]{HLL25}.
As a direct consequence of the above theorem and the fact that local Arthur packets are multiplicity-free (\cite{Moe11a}), we obtain the following corollary.
\begin{cor}\label{cor same support}
    Let $\EE_1, \EE_2\in\Eseg(G)$ and suppose that $\supp(\EE_1)=\supp(\EE_2).$ If $\frac{a_i-b_i}{2}<0$ for some $i \in I_{\rho}$, then we also require that the order on $I_\rho$ is $(P').$ If $\pi(\EE_1)=\pi(\EE_2)\neq 0,$ then $\EE_1=\EE_2.$
\end{cor}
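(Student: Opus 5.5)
The plan is to derive Corollary \ref{cor same support} directly from Theorem \ref{thm Arthur packets extended multi-segment parameterization} together with M{\oe}glin's multiplicity-freeness of $\Pi_\psi$ (\cite{Moe11a}).

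First, I identify the relevant local Arthur packet. Since $\supp(\EE_1)=\supp(\EE_2)$, the two multi-segments determine the same parameter,
\[
\psi:=\psi_{\EE_1}=\psi_{\EE_2}\in\Psi_{gp}(G).
\]
This holds because $\psi_{\EE}$ is read off from the supports alone, via $(a_i,b_i)=(A_i+B_i+1,A_i-B_i+1)$. The admissible order implicit in $\supp(\EE_1)=\supp(\EE_2)$ is the same for both. By hypothesis it satisfies $(P')$ on each $I_\rho$ containing an index with $B_i<0$. Hence the hypotheses of Theorem \ref{thm Arthur packets extended multi-segment parameterization} hold for $\psi$ with this fixed order, and we get
\[
\bigoplus_{\pi\in\Pi_\psi}\pi=\bigoplus_{\EE}\pi(\EE),
\]
where $\EE$ runs over extended multi-segments with $\supp(\EE)=\supp(\psi)$ and $\pi(\EE)\neq 0$. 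Both $\EE_1$ and $\EE_2$ appear in the right-hand index set, because $\pi(\EE_1)=\pi(\EE_2)\neq0$.

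Second, I argue by contradiction. Suppose $\EE_1\neq\EE_2$ as elements of $\Eseg(G)$, meaning they are inequivalent. Then they are distinct summands on the right-hand side. So the irreducible representation $\pi:=\pi(\EE_1)=\pi(\EE_2)$ occurs with multiplicity at least $2$ in the right-hand sum. By the displayed identity it then occurs with multiplicity at least $2$ in $\bigoplus_{\pi'\in\Pi_\psi}\pi'$. This contradicts the fact that $\Pi_\psi$ is multiplicity-free (\cite{Moe11a}).

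The only point requiring care is bookkeeping. One must check that the sum in the theorem runs over equivalence classes of extended multi-segments, so that inequivalent $\EE$ genuinely give separate summands. One must also check that the theorem's order hypothesis matches the one assumed in the corollary. Both follow from the definitions in Definition \ref{def multi-segment} and the statement of the theorem, so I expect no real obstacle.
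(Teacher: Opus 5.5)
Your proposal is correct and matches the paper's argument: the paper obtains the corollary as a direct consequence of Theorem~\ref{thm Arthur packets extended multi-segment parameterization}, applied to the common parameter $\psi_{\EE_1}=\psi_{\EE_2}$ with the shared admissible order, together with M{\oe}glin's multiplicity-freeness of $\Pi_\psi$. Your bookkeeping points, that the sum runs over equivalence classes and that the $(P')$ hypothesis matches the theorem's, are exactly what the paper's one-line justification leaves implicit.
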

Recall that $\supp(\EE)$ implicitly records the admissible order on $\EE\in\Eseg(G)$. Thus, the hypothesis $\supp(\EE_1)=\supp(\EE_2)$ in the above corollary also asserts that the admissible orders on $\EE_1$ and $\EE_2$ agree.

\subsection{Intersections}

In this subsection, we recall the theory of intersections of local Arthur packets as developed in \cite{HLL22, HLL25}. We begin by recalling various operators which are used in the classification of these intersections (see Theorem \ref{thm intersections of local Arthur packets}), along with some other useful operators and their properties.

We note that the effect of an operator often only depends on a fixed $\rho.$ To simplify the definitions of the operators, we introduce the following notation.

\begin{defn}
Let $\EE= \cup_{\rho} \{([A_i,B_i]_{\rho}, l_i, \eta_i)\}_{i \in (I_{\rho},>)}\in\Eseg(G).$
We set
\[ \EE_{\rho}=\{([A_i,B_i]_{\rho}, l_i, \eta_i)\}_{i \in (I_{\rho},>)},\  \EE^{\rho}=\cup_{\rho' \not\cong \rho}\{([A_i,B_i]_{\rho'}, l_i, \eta_i)\}_{i \in (I_{\rho'},>)}. \]
We let $\Vseg(G)$ denote the set of elements of the form \[\FF=\cup_{\rho'} \EE_{\rho'}\]
where the union is over a finite set of irreducible self-dual supercuspidal representations $\rho'$ of $\GL_d(F),$ $d\geq 1,$ and $\EE\in\Eseg(G).$ Essentially, $\mathcal{F}$ is an extended multi-segment (for some group) except that we not enforce the sign condition \eqref{eq sign condition}. We say that $\mathcal{F}$ is a \emph{virtual extended multi-segment}.
\end{defn}

Next, we recall the shift and add operators. 

\begin{defn}\label{defn shift and add}
Let $\EE = \cup_{\rho}\{ ([A_i,B_i]_{\rho}, l_i, \eta_i) \}_{i \in (I_\rho,>)}$ be an extended multi-segment. For $j \in I_{\rho'}$ and $d \in \Z$, we define the following operators: 

\begin{enumerate}
    \item [1.] $sh^{d}(\EE)= \cup_{\rho}\{ ([A_i+d,B_i+d]_{\rho}, l_i, \eta_i) \}_{i \in (I_\rho,>)}$, and
     \item [2.] $add_j^{d}(\EE)= \cup_{\rho}\{ ([A_i',B_i']_{\rho}, l_i', \eta_i) \}_{i \in (I_\rho,>)}$ with 
    \[ ([A_i',B_i']_{\rho},l_i')= \begin{cases}
    ([A_i+d,B_i-d]_{\rho},l_i+d) & \text{ if }\rho=\rho' \text{ and } i = j,\\
     ([A_i,B_i]_{\rho},l_i) & \text{ otherwise. }\end{cases} 
    \]
\end{enumerate}
We use these notations in the case that the resulting object is still an extended multi-segment.
\end{defn}

The next operator we recall is the row exchange operator. Its effect is to change the admissible order on an extended multi-segment. We first recall the definition of a symbol.
\begin{defn}\label{def symbol}
  A \emph{symbol} is a multi-set of extended segments 
\[
\EE = \cup_{\rho}\{ ([A_i,B_i]_{\rho}, l_i, \eta_i) \}_{i \in (I_\rho>)},
\] which satisfies the same conditions in Definition \ref{def multi-segment}(2) except we drop the condition $0 \leq l_i \leq \frac{b_i}{2}$, for each $i \in I_{\rho}$.
\end{defn}

Any change of admissible orders can be derived from a composition of the row exchange operators $R_k$ which we recall from \cite[Definition 3.15]{HLL22}.

\begin{defn}[Row exchange]\label{def row exchange} 
Suppose $\EE$ is a symbol where
$$\EE_{\rho}=\{([A_i,B_i]_{\rho},l_i,\eta_i)\}_{i \in (I_{\rho},>)}.$$
For $k<k+1 \in I_{\rho}$, let $\gg$ be the total order on $I_\rho$ defined by $k\gg k+1$ and if $(i,j)\neq (k,k+1)$, then $ i \gg j$ if and only if $
i >j .$ 

Suppose $\gg$ is not an admissible order on $I_{\rho}$, then we define $R_k(\EE)=\EE$. Otherwise, we define 
\[R_{k}(\EE_{\rho})=\{([A_i,B_i]_{\rho},l_i',\eta_i')\}_{i \in (I_{\rho},\gg)},\]
where $( l_i',\eta_i')=(l_i,\eta_i)$ for $i \neq k,k+1$, and $(l_k',\eta_k')$ and $(l_{k+1}', \eta_{k+1}')$ are given as follows: Denote $\epsilon=(-1)^{A_k-B_k}\eta_k\eta_{k+1}$.
\begin{enumerate}
    \item [Case 1.] $ [A_k,B_k]_{\rho} \supset [A_{k+1},B_{k+1}]_{\rho}$:
    
    In this case, we set $(l_{k+1}',\eta_{k+1}')=(l_{k+1}, (-1)^{A_k-B_k}\eta_{k+1})$, and
    \begin{enumerate}
    \item [(a)] If $\epsilon=1$ and $b_k- 2l_k < 2(b_{k+1}-2l_{k+1})$, then
    \[ (l_k', \eta_{k}')= (b_k-(l_k+ (b_{k+1}-2l_{k+1})), (-1)^{A_{k+1}-B_{k+1}} \eta_k).  \]
    \item [(b)] If $\epsilon=1$ and $b_k- 2l_k \geq  2(b_{k+1}-2l_{k+1})$, then
    \[ (l_{k}', \eta_{k}')= (l_k+ (b_{k+1}-2l_{k+1}), (-1)^{A_{k+1}-B_{k+1}+1} \eta_k).  \]
    \item [(c)] If $\epsilon=-1$, then
    \[ (l_{k}', \eta_{k}')= (l_k- (b_{k+1}-2l_{k+1}), (-1)^{A_{k+1}-B_{k+1}+1} \eta_k).  \]
\end{enumerate}
    \item [Case 2.] $ [A_k,B_k]_{\rho} \subset [A_{k+1},B_{k+1}]_{\rho}$:
    
    In this case, we set $(l_{k}',\eta_{k}')=(l_{k}, (-1)^{A_{k+1}-B_{k+1}}\eta_{k})$, and
    \begin{enumerate}
   \item [(a)] If $\epsilon=1$ and $b_{k+1}- 2l_{k+1} < 2(b_{k}-2l_{k})$, then
    \[ (l_{k+1}', \eta_{k+1}')= (b_{k+1}-(l_{k+1}+ (b_{k}-2l_{k})), (-1)^{A_{k}-B_{k}} \eta_{k+1}).  \]
    \item [(b)] If $\epsilon=1$ and $b_{k+1}- 2l_{k+1} \geq  2(b_{k}-2l_{k})$,
    then
    \[ (l_{k+1}', \eta_{k+1}')= (l_{k+1}+ (b_{k}-2l_{k}), (-1)^{A_{k}-B_{k}+1} \eta_{k+1}).  \]
    \item [(c)] If $\epsilon=-1$, then
    \[ (l_{k+1}', \eta_{k+1}')= (l_{k+1}- (b_{k}-2l_{k}), (-1)^{A_{k}-B_{k}+1} \eta_{k+1}).  \]
\end{enumerate}
\end{enumerate}
Finally, we define $R_{k}(\EE)= \EE^{\rho} \cup R_{k}(\EE_{\rho})$.
\end{defn}

We remark that there is another definition of row exchange is given in \cite[Section 4.2]{Ato20b}; however, these definitions agree when $\pi(\EE)\neq 0.$ 

The next operator we recall is known as union-intersection.
\begin{defn}[union-intersection]\label{ui def}
 Let $\EE\in\Eseg(G)$. For $k< k+1 \in I_{\rho}$, we define an operator $ui_k$, called union-intersection, on $\EE$ as follows. Write 
 \[ \EE_{\rho}= \{([A_i,B_i]_\rho, l_i,\eta_i)\}_{i \in (I_{\rho},>)}.\]
  Denote $\epsilon=(-1)^{A_k-B_k}\eta_k \eta_{k+1}.$ If $A_{k+1}>A_k$, $B_{k+1}>B_k$ and any of the following cases holds:
\begin{enumerate}
    \item [{Case 1}.] $ \epsilon=1$ and $A_{k+1}-l_{k+1}=A_k-l_k,$
    \item [{Case 2}.] $ \epsilon=1$ and $B_{k+1}+l_{k+1}=B_k+l_k,$
    \item [{Case 3}.] $ \epsilon=-1$ and $B_{k+1}+l_{k+1}=A_k-l_k+1,$
\end{enumerate}
we define
\begin{align*}
     ui_{k}(\EE_{\rho})=\{ ([A_i',B_i']_{\rho},l_i',\eta_i')\}_{i \in (I_{\rho}, >)},
\end{align*} 
where $ ([A_i',B_i']_{\rho},l_i',\eta_i')=([A_i,B_i]_{\rho},l_i,\eta_i)$ for $i \neq k,k+1$, and $[A_k',B_k']_{\rho}=[A_{k+1},B_k]_{\rho}$, $[A_{k+1}',B_{k+1}']_{\rho}=[A_k,B_{k+1}]_{\rho}$, and $( l_k', \eta_k', l_{k+1}',\eta_{k+1}' )$ are given case by case as follows:
\begin{enumerate}
    \item[$(1)$] in Case 1, $( l_k', \eta_k', l_{k+1}',\eta_{k+1}' )= (l_k,\eta_k, l_{k+1}-(A_{k+1}-A_k), (-1)^{A_{k+1}-A_k}\eta_{k+1})$;
    \item [$(2)$] in Case 2, if $b_k-2l_k \geq A_{k+1}-A_k$, then
    \[( l_k', \eta_k', l_{k+1}',\eta_{k+1}' )= (l_k+(A_{k+1}-A_k),\eta_k, l_{k+1}, (-1)^{A_{k+1}-A_k}\eta_{k+1}),\]
    if $b_k-2l_k < A_{k+1}-A_k$, then
    \[( l_k', \eta_k', l_{k+1}',\eta_{k+1}' )= (b_k-l_k,-\eta_k, l_{k+1}, (-1)^{A_{k+1}-A_k}\eta_{k+1});\]
    \item [$(3)$] in Case 3, if $l_{k+1} \leq  l_k$, then
    \[( l_k', \eta_k', l_{k+1}',\eta_{k+1}' )= (l_k,\eta_k, l_{k+1}, (-1)^{A_{k+1}-A_k}\eta_{k+1}),\]
    if $l_{k+1}> l_{k}$, then
    \[( l_k', \eta_k', l_{k+1}',\eta_{k+1}' )= (l_k,\eta_k, l_{k}, (-1)^{A_{k+1}-A_k+1}\eta_{k+1});\]
    \item [$(3')$] if we are in Case 3 and $l_k=l_{k+1}=0$, then we delete $ ([A_{k+1}',B_{k+1}']_{\rho},l_{k+1}',\eta_{k+1}')$ from $ui_k(\EE_{\rho})$.
\end{enumerate}
Otherwise, we define $ui_k(\EE_{\rho})=\EE_{\rho}$. In any case, we define $ui_k(\EE)= \EE^{\rho} \cup ui_k(\EE_{\rho})$.

We say $ui_k$ is applicable on $\EE$ or $\EE_{\rho}$ if $ui_k(\EE)\neq \EE$. We say this $ui_k$ is of type 1 (resp. 2, 3, 3') if $\EE_{\rho}$ is in Case 1 (resp. 2, 3, 3'). 
\end{defn}

We also consider the composition of the row exchange and union-intersection operators as follows.

\begin{defn} \label{def ui}
Suppose $\EE\in\Eseg(G)$ and write
\[\EE_{\rho}=\{ ([A_i,B_i]_{\rho},l_i,\eta_i)\}_{i\in (I_{\rho,>})}.\] 
Given $i,j \in I_{\rho}$, we define $ui_{i,j}(\EE_{\rho})=\EE_{\rho}$ unless
\begin{enumerate}
    \item [1.] We have $ A_i< A_j$, $B_i <B_j$ and $(j,i,>')$ is an adjacent pair for some admissible order $>'$ on $I_{\rho}$. 
    \item [2.] $ui_i$ is applicable on $\EE_{\rho,>'}$
\end{enumerate}
In this case, we define $ui_{i,j}(\EE_{\rho}):=(ui_{i}(\EE_{\rho,>'}))_{>}$, so that the admissible order of $ui_{i,j}(\EE_{\rho})$ and $\EE_{\rho}$ are the same. (If the $ui_i$ is of type 3', then we delete the $j$-th row.) Finally, we define $ui_{i,j}(\EE)= \EE^{\rho} \cup ui_{i,j}(\EE_{\rho})$.

We say $ui_{i,j}$ is applicable on $\EE$ if $ui_{i,j}(\EE) \neq \EE$. Furthermore, we say that $ui_{i,j}$ is of type 1, 2, 3, or 3' if the operation $ui_i$ is of type 1, 2, 3, or 3', respectively, in Definition \ref{ui def}.
\end{defn}

The next operator is called the dual operator.
  
\begin{defn}[dual]\label{def dual}
Let $\EE= \cup_\rho \{([A_i,B_i]_{\rho},l_i,\eta_i)\}_{i\in (I_\rho, >)}$ be an extended multi-segment such that the admissible order $>$ on $I_{\rho}$ satisfies (P') for all $\rho$. We define 
$$dual(\EE)=\cup_{\rho}\{([A_i,-B_i]_{\rho},l_i',\eta_i')\}_{i\in (I_\rho, >')}$$ as follows:
\begin{enumerate}
    \item The order $>'$ is defined by $i>'j$ if and only if $j>i.$ 
    \item We set \begin{align*}
l_i'=\begin{cases}
l_i+B_i  & \mathrm{if} \, B_i\in\mathbb{Z},\\
 l_i+B_i+\frac{1}{2}(-1)^{\alpha_{i}}\eta_i  & \mathrm{if} \, B_i\not\in\mathbb{Z},
\end{cases}
\end{align*}
and
\begin{align*}
\eta_i'=\begin{cases}
(-1)^{\alpha_i+\beta_i}\eta_i  & \mathrm{if} \, B_i\in\mathbb{Z},\\
 (-1)^{\alpha_i+\beta_i+1}\eta_i  & \mathrm{if} \, B_i\not\in\mathbb{Z},
\end{cases}
\end{align*}
where $\alpha_{i}=\sum_{j\in I_\rho, j<i}a_j,$ and $\beta_{i}=\sum_{j\in I_\rho, j>i}b_j,$ $a_j=A_j+B_j+1$, $b_j=A_j-B_j+1$.
\item When $B_i\not\in\mathbb{Z}$ and $l_i=\frac{b_i}{2}$, we set $\eta_i=(-1)^{\alpha_i+1}.$
\end{enumerate}
If $\FF= \EE_{\rho}$, we define $dual(\FF):= (dual(\EE))_{\rho}$.

As a shorthand, for each $i\in I_{\rho},$ let $r_i=([A_i,B_i]_{\rho},l_i,\eta_i)$. Then we let $\widehat{r}_i$ denote the effect of the dual operator on $\EE$ on this row, i.e., 
\[
\widehat{r}_i=([A_i,-B_i]_{\rho},l_i',\eta_i').\]
\end{defn} 

We note that if $\pi(\EE)\neq 0,$ then $\pi(dual(\EE))$ is the Aubert-Zelevinsky dual of $\pi(\EE)$ (\cite[Theorem 6.2]{Ato20b} if $G=\Sp_{2n}(F)$ and \cite[Proposition 6.11]{HLL25} if $G=\OO_{2m}^\pm(F)$). For our purposes, it is sufficient to use the following direct implication.

\begin{lemma}\label{lemma dual}
    Let $\EE\in\Eseg(G)$ be such that $\pi(\EE)\neq 0.$ Then $\pi(dual(\EE))\neq 0.$ Moreover, if $\EE'\in\Eseg(G)$ is such that $\pi(\EE')=\pi(\EE)$, then $\pi(dual(\EE'))=\pi(dual(\EE)).$
\end{lemma}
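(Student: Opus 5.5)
This follows directly from the sentence just before the statement: when $\pi(\EE)\neq 0$, the representation $\pi(dual(\EE))$ is the Aubert--Zelevinsky dual $\widehat{\pi(\EE)}$ of $\pi(\EE)$. For $G=\Sp_{2n}(F)$ this is \cite[Theorem 6.2]{Ato20b}, and for $G=\OO_{2m}^\pm(F)$ it is \cite[Proposition 6.11]{HLL25}.

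First step (nonvanishing). The Aubert--Zelevinsky involution sends an irreducible representation, up to a sign in the Grothendieck group, to an irreducible representation. So if $\pi(\EE)\in\Pi(G)$, then $\widehat{\pi(\EE)}\in\Pi(G)$ is nonzero. By the cited identification this gives $\pi(dual(\EE))\neq 0$.

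Second step (well-definedness). Suppose $\pi(\EE')=\pi(\EE)$. Since $\pi(\EE)\neq 0$, also $\pi(\EE')\neq 0$. Applying the same identification to $\EE'$ gives
\[
\pi(dual(\EE'))=\widehat{\pi(\EE')}=\widehat{\pi(\EE)}=\pi(dual(\EE)).
\]

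Technical check. $dual$ is only defined when the admissible order on each $I_\rho$ satisfies $(P')$. So before applying it to $\EE$ or $\EE'$, I will replace each by an extended multi-segment with a $(P')$ order, using row exchanges. I will then need that $\pi$ is unchanged by row exchange whenever $\pi(\EE)\neq 0$. This follows from the remark after Definition \ref{def row exchange} (agreement with Atobe's row exchange) together with \cite{Ato20b, HLL25}. I expect this hypothesis check to be the only real obstacle, and it is routine. I also have to confirm that the cited theorems are stated under the same normalization of $dual$ as in Definition \ref{def dual}, including condition (3) on $\eta_i$ when $B_i\notin\mathbb{Z}$.
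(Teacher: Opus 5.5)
Your proposal is correct and is essentially the paper's argument: the paper presents this lemma as a direct consequence of the identification of $\pi(dual(\EE))$ with the Aubert--Zelevinsky dual of $\pi(\EE)$ (\cite[Theorem 6.2]{Ato20b}, \cite[Proposition 6.11]{HLL25}). That dual sends irreducibles to irreducibles and depends only on $\pi(\EE)$, and your extra $(P')$ check is harmless: it is implicit in the statement, since $dual$ is only defined for $(P')$ orders, and if needed it follows from Theorem \ref{thm intersections of local Arthur packets}(1), because each $R_k$ is a basic operator.
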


From \cite{HLL22, HLL25}, we understand the inverse of union-intersections not of type 3'.

\begin{lemma}\label{lemma ui inv = dud}
   If $ui$ is not of type 3', then its inverse is of the form $dual\circ ui\circ dual$ of the same type. 
\end{lemma}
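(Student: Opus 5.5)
The lemma to prove is Lemma \ref{lemma ui inv = dud}: if $\EE'=ui_{i,j}(\EE)$ is a union-intersection of type 1, 2, or 3, then there are indices $i',j'$ with $\EE=dual\circ ui_{i',j'}\circ dual(\EE')$, where $ui_{i',j'}$ is of the same type. The plan is a direct computation on the symbols. First I reduce to the adjacent case. Using the row exchange operators $R_k$ I move to an admissible order $>'$ in which the two relevant rows $k<k+1$ are adjacent. I also arrange that this order satisfies ($P'$), so that $dual$ is defined. The row exchanges that commute past the untouched rows change only signs $\eta$ and multiplicities $l$ in a controlled way. Moreover, the quantities $\alpha_i,\beta_i$ in Definition \ref{def dual} for rows other than $k,k+1$ are unchanged by $ui_k$. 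This holds because $ui_k$ only swaps endpoints: $\{a_k,a_{k+1}\}$ and $\{b_k,b_{k+1}\}$ are replaced by multisets with the same sums, except in type 3'. So it suffices to work with the two-row piece $\{r_k,r_{k+1}\}$ and track the global sign contributions $(-1)^{\alpha},(-1)^{\beta}$ as parameters.

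Second, I dualize. After $ui_k$, the rows are $[A_{k+1},B_k]$ and $[A_k,B_{k+1}]$. Under $dual$ they become $[A_{k+1},-B_k]$ and $[A_k,-B_{k+1}]$, with the order reversed. In the dual picture, the pair $[A_k,-B_{k+1}]$ and $[A_{k+1},-B_k]$ is again a nested-free pair with $A_k<A_{k+1}$ and $-B_{k+1}<-B_k$. Union-intersection sends it to $[A_{k+1},-B_{k+1}]$ and $[A_k,-B_k]$, which are exactly the duals of the original rows. So the supports match, and the remaining task is to check the applicability condition and the $(l,\eta)$ data.

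Third, I check each case. For type 1, the condition $A_{k+1}-l_{k+1}=A_k-l_k$ should transform under $l\mapsto l+B$ (for integral $B$) into a condition of type 1 on the dual rows. The $\epsilon$ computation uses the parity shifts $(-1)^{\alpha_i+\beta_i}$; here $\alpha$ and $\beta$ differ between the two adjacent rows by exactly $a$ and $b$ of the neighbour, which makes $\epsilon$ agree. I then verify that the output $(l',\eta')$ recovers $\widehat r_k,\widehat r_{k+1}$. Type 2 is symmetric, and the subcase split $b_k-2l_k\gtrless A_{k+1}-A_k$ should correspond to the same split on the dual side. Type 3 is likewise symmetric, with the subcases $l_{k+1}\le l_k$ versus $l_{k+1}>l_k$. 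For half-integral $B$ the extra term $\tfrac12(-1)^{\alpha_i}\eta_i$ has to be carried along. The cleanest way to handle this is to use the fact that $\pi(\EE)$ is unchanged by these operators and invoke Lemma \ref{lemma dual} together with Corollary \ref{cor same support}. Once the supports are shown equal, injectivity pins down the $(l,\eta)$ data without a sign-by-sign check. This applies whenever $\pi(\EE)\neq0$, which is the setting where the lemma is used.

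I expect the main obstacle to be this bookkeeping of $l$ and $\eta$, especially type 2 with $b_k-2l_k<A_{k+1}-A_k$, where $\eta_k$ flips. My first choice is to sidestep it through the representation-theoretic route just described: $ui$ preserves $\pi$, hence so does $dual\circ ui\circ dual$, and the supports agree, so Corollary \ref{cor same support} forces equality. The explicit computation then serves only to confirm that the dual union-intersection is applicable and of the same type.
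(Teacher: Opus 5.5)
\textbf{There is a genuine gap: the case check in your third step would fail.} Note first that the paper does not prove this lemma; it quotes it from \cite{HLL22, HLL25}. In your plan, union-intersection types are not preserved under $dual$.

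Take adjacent rows in (P') order with integral $B$, after a type 1 $ui_k$. The dual pair is $s_1=\widehat{r}'_{k+1}$, with support $[A_k,-B_{k+1}]$ and $l=l_{k+1}-(A_{k+1}-A_k)+B_{k+1}$, followed by $s_2=\widehat{r}'_k$, with support $[A_{k+1},-B_k]$ and $l=l_k+B_k$. Then $B(s_2)+l(s_2)=l_k$ and $B(s_1)+l(s_1)=l_{k+1}-(A_{k+1}-A_k)$. So the type-1 identity $A_{k+1}-l_{k+1}=A_k-l_k$ becomes the \emph{type-2} identity on the dual pair, not a type-1 one.

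Two examples:
\begin{itemize}
\item The segment $\{([2,0],0,1),([3,1],1,1)\}$ admits only a type 1 $ui$. Its image $\{([3,0],0,1),([2,1],0,-1)\}$ dualizes to $([2,-1],1,-1)<([3,0],0,1)$. There $\epsilon=1$, $A-l$ equals $1$ and $3$, and $B+l$ equals $0$ for both rows, so the inverting union-intersection is of type 2 only.
\item In the second subcase of type 2, the flip $\eta_k\mapsto-\eta_k$ changes the sign of $\epsilon$ on the dual side, and the inverse is of type 3. For instance, $\{([2,0],1,1),([4,1],0,1)\}$ goes to $\{([4,0],2,-1),([2,1],0,1)\}$. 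Its dual $([2,-1],1,-1)<([4,0],2,-1)$ has $\epsilon=-1$ and satisfies the type-3 identity.
\end{itemize}

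So a check of the form ``type $t$ goes to type $t$, with matching subcase splits'' cannot be carried out. What is true, and all the paper uses, is weaker: the inverse is a $dual\circ ui\circ dual$ that is again not of type 3' (it preserves the number of rows), with the type labels permuted. Your appeal to Corollary~\ref{cor same support} only fixes $(l,\eta)$ once applicability is known, and only when $\pi(\EE)\neq 0$. The applicability computation is therefore the real content of the lemma, and it has to be redone with the correct correspondence.

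\textbf{The reduction to a two-row computation is also not justified.} The rows acted on by $ui_{i,j}$ need only be adjacent in \emph{some} admissible order, and in general no (P') order makes them adjacent. For example, rows with supports $[2,0]$ and $[3,2]$ are adjacent in the admissible order $[2,0]<[3,2]<[5,1]$, but every (P') order places $[5,1]$ between them. Since $dual$ is defined only for (P') orders, you must pass through row exchanges $R_k$. These change $(l,\eta)$ of the two rows being united, not just of the ``untouched'' rows. Your observation that $\sum a$ and $\sum b$ are preserved only controls $\alpha_i,\beta_i$ of the other rows in the adjacent (P') situation. To reduce to the adjacent case you need a compatibility statement between row exchanges and $dual$, on both the original and the dual side, and your sketch does not supply one.
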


In the case that some $B_i\in\frac{1}{2}+\mathbb{Z}$, another operator, called the partial dual operator and denoted by $dual_k$ (see \cite[Definition 6.5]{HLL22}), is also needed. However, for our applications we are only concerned with the cases where $B_i\in\mathbb{Z}$ and so this operator does not play a significant role for us and we omit its definition. We let $dual_k^-$ denote the inverse of $dual_k$.

We distinguish certain operators that have a key role in the intersection of local Arthur packets (see Theorem \ref{thm intersections of local Arthur packets} below).

\begin{defn}\label{def basic operators}
    The operators $R_k$, $ui_{i,j},$ $dual\circ ui_{j,i}\circ dual$, or $dual_k$ are known as \emph{basic operators}.
\end{defn}

Basic operators fully determine intersections of local Arthur packets as follows.

\begin{thm}\label{thm intersections of local Arthur packets}
    Let $\EE\in\Eseg(G)$ be such that $\pi(\EE)\neq 0.$ Then the following hold.
    \begin{enumerate}
        \item If $T$ is a basic operator, then $\pi(\EE)=\pi(T(\EE)).$
        \item If $\EE'$ is another extended multi-segment for which $\pi(\EE)=\pi(\EE'),$ then $\EE$ and $\EE'$ are related by a finite composition of basic operators and their inverses.
    \end{enumerate}
\end{thm}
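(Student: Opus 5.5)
The plan is to prove the statement first for $G=\Sp_{2n}(F)$, working inside Atobe's framework, and then transfer it to $G=\OO_{2m}^\pm(F)$. The transfer uses the definition of $\pi(\EE)$ in \cite[Definition 6.3]{HLL25}, which goes through M{\oe}glin's theorem (Theorem \ref{thm Moeglin Adams}) for $\alpha\gg0$.

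\textbf{Part (1).} I would check each basic operator separately.
\begin{itemize}
\item For $R_k$, this is Atobe's change-of-order result \cite[\S4.2]{Ato20b}. It identifies the two parameterizations of $\Pi_\psi$ attached to two admissible orders, and the definition in Definition \ref{def row exchange} agrees with Atobe's whenever $\pi(\EE)\neq 0$.
\item For $ui_{i,j}$, I would first use $R_k$ to make $j,i$ adjacent, so it suffices to treat $ui_k$. I would then argue by induction on $\sum_i(A_i-B_i)$, using the derivative description of $\pi(\EE)$ from \cite{AM23}.
\begin{itemize}
\item If all rows are far apart, $\pi(\EE)$ is computed by a single explicit induced or socle expression. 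There the cases $1,2,3,3'$ become identities of standard modules, checked directly from the definition of $l_i,\eta_i$.
\item In general, I would use $sh^d$ and $add^d_j$ to move to such a separated situation. I would then apply the derivatives that recover $\pi(\EE)$ and verify that they commute with $ui_k$.
\end{itemize}
\item For $dual\circ ui\circ dual$, the result follows from the $ui$ case together with Lemma \ref{lemma dual}.
\item For $dual_k$, I would proceed similarly, using the half-integral analogue of the Aubert dual.
\end{itemize}

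\textbf{Part (2).} I would introduce a canonical representative. Among all $\EE$ with $\pi(\EE)=\pi$, consider those to which no $ui_{i,j}$ (and no $dual_k$) applies in the ``raising'' direction. I would show that such an $\EE$ has parameter $\psi^{\max}(\pi)$, characterized intrinsically by $\pi$. To do this, I would compare with the $L$-parameter of $\pi$ and of its Aubert dual $\hat\pi$: the diagonal restriction of $\psi^{\max}$ is forced to be $\phi_{\hat\pi}$ after dualizing.

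Given $\psi^{\max}$, the extended multi-segment with that support is unique by Corollary \ref{cor same support}, once we fix the order using $R_k$. Consequently, any $\EE$ is connected to this unique canonical $\EE^{\max}$ by a chain of basic operators, which gives (2). Termination of the raising process comes from a monovariant: each $ui$ strictly increases $\sum_i (A_i-B_i)^2$ (or a similar convex quantity) while preserving $\psi$'s dimension, and only finitely many supports are possible.

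\textbf{Transfer and main obstacle.} For $\OO^\pm_{2m}$, I would write $\pi(\EE)=\theta_{-\alpha}(\pi(\EE'))$ with $\EE'$ obtained by removing the row $[\frac{\alpha-1}{2},\frac{\alpha-1}{2}]$ and twisting by $\chi_V$. For $\alpha$ large this row is far from all others, so every basic operator on $\EE$ corresponds to one on $\EE'$. Howe duality (Theorem \ref{thm Howe duality}) then transfers both parts.

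I expect the main obstacle to be the uniqueness step in (2): showing that a $ui$-saturated $\EE$ really realizes the intrinsically defined $\psi^{\max}(\pi)$. My first attempt would be induction on the number of rows, peeling off the row with maximal $A$ via a highest derivative $D_{\rho|\cdot|^{A}}$. I would then check that saturation is preserved under this derivative.
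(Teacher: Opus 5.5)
The paper gives no proof of this statement. It imports it from \cite[Theorem 1.4]{HLL22} for $\Sp_{2n}$ and \cite[Theorems 6.13, 6.15]{HLL25} for $\OO^\pm_{2m}$. Judged on its own, your Part (2) has a genuine gap exactly where you expect it, and the tool you propose to close it is wrong.

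\textbf{Part (2).} Once you know that a raising-saturated $\EE$ is determined by $\pi(\EE)$, Corollary \ref{cor same support} and your monovariant finish the argument. But that uniqueness is essentially Theorem \ref{thm psi max min}, which the paper also imports from \cite{HLL22,HLL25}. It is not an ``intrinsic'' fact you can assume. Moreover, your proposed characterization $\phi_{\widehat{\psi^{\max}(\pi)}}=\phi_{\hat\pi}$ is false. Take a supercuspidal $\pi$ with $\EE_\rho=\{([0,0]_\rho,0,\eta),([1,1]_\rho,0,-\eta)\}$ and $\rho$ orthogonal. No raising operator applies to this tempered $\rho$-part:
\begin{itemize}
\item every $ui$ outputs a row $[A_{k+1},B_k]$ of length at least two, so no $ui^{-1}$ applies;
\item $dual(\EE_\rho)$ consists of nested hats $[A_i,-A_i]$, on which no $ui$ is possible.
\end{itemize}
So the $\rho$-part of $\psi^{\max}(\pi)$ is $\rho\otimes S_1\otimes S_1\oplus\rho\otimes S_3\otimes S_1$. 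Its dual $\rho\otimes S_1\otimes S_1\oplus\rho\otimes S_1\otimes S_3$ has diagonal restriction containing $\rho|\cdot|^{\pm1}$. But $\hat\pi=\pi$ has a bounded $L$-parameter. You therefore need a different invariant, together with an actual proof that two saturated $\EE_1,\EE_2$ with $\pi(\EE_1)=\pi(\EE_2)$ coincide. Your sketched derivative induction does not yet show this. It would have to show that both have the same row to peel off, and that saturation survives the derivative.

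\textbf{Transfer to $\OO^\pm_{2m}$.} This runs in the wrong direction. By Definition \ref{defn EE_alpha} and Theorem \ref{thm theta lift large}, a large-$\alpha$ lift from $G_n$ contains the hat $([\frac{\alpha-1}{2},-\frac{\alpha-1}{2}]_{\chi_W},\frac{\alpha-1}{2},\pm1)$, not a row $[\frac{\alpha-1}{2},\frac{\alpha-1}{2}]$. So writing $\pi(\EE)=\theta_{-\alpha}(\pi(\EE'))$ only covers those $\EE\in\Eseg(H_m^\pm)$ that already contain such a hat. It misses, for example, every tempered $\EE$ whose $\chi_W$-part is not a single row $([0,0]_{\chi_W},0,\pm1)$. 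You need to lift instead from $\OO^\pm_{2m}$ to $\Sp_{2n'}$ with $n'\gg m$, adding the hat on the symplectic side. Even then, the hat's support contains every other support, so it is not ``far away'' in $\EE$, and its inertness must be checked. It has the largest $A$ and the smallest $B$, so no $ui_{i,j}$ involves it. After $dual$ it becomes an isolated circle far to the right.

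\textbf{Part (1).} Your separated base case is essentially empty: when $B_{k+1}>A_k$, only a $ui$ of type 3' can apply, never types 1, 2 or 3. All the content therefore sits in the unverified claim that the derivatives ``commute with $ui_k$''. After separating, $ui_k$ is no longer applicable, so this really means comparing two different derivative chains. That comparison is the nontrivial input from \cite{Ato20b} and \cite{HLL22}.
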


We remark the above theorem was proven when $G=\Sp_{2n}(F)$ in \cite[Theorem 1.4]{HLL22} and when $G=\OO_{2m}^\pm(F)$ in \cite[Theorems 6.13 and 6.15]{HLL25}. With the above theorem in mind, for $\EE,\EE'\in\Eseg(G),$ we write $\EE\sim\EE'$, and say that they are (strongly) equivalent, if $\EE$ and $\EE'$ are related by a finite composition of basic operators and their inverses.

A special class of the basic operators are called raising operators. They elucidate a structure on the set
\[
\Psi(\pi)=\{\psi\in\Psi(G) \ | \ \pi\in\Pi_\psi\}.
\]

\begin{defn}\label{defn raising operators}
    The operators $dual\circ ui \circ dual,$ $ui^{-1}$, and $dual_k^-$ are called \emph{raising operators}. Given local Arthur parameters $\psi_1, \psi_2\in\Psi(\pi)$, we write $\psi_1\geq_O\psi_2$ if there exists a sequence of raising operators $(T_i)_{i=1}^l$ such that \[\EE_1=(T_l\circ T_{l-1}\circ\cdots\circ T_1)(\EE_2),\] where $\EE_j\in\Eseg(G)$ is such that $\psi_{\EE_j}=\psi_j$ and $\pi(\EE_j)=\pi$ for $j=1,2.$ 
\end{defn}

Remarkably, the partial order $\geq_O$ on $\Psi(\pi)$ has unique maximal and minimal elements.

\begin{thm}\label{thm psi max min}
    Let $\pi\in\Pi(G)$ be of Arthur type. Then there exists unique maximal and minimal elements of $\Psi(\pi)$ with respect to $\geq_O.$ We denote these elements by $\psi^{max}(\pi)$ and $\psi^{min}(\pi)$, respectively.
\end{thm}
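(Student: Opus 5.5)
We reduce to the good parity case and then work entirely inside the set of extended multi-segments $\EE$ with $\pi(\EE)=\pi$. Writing $\psi=\psi_{ngp}+\psi_{gp}+\psi_{ngp}^\vee$, Theorem \ref{thm red to gp} gives $\pi=\tau\rtimes\pi_{gp}$. We will check, through the irreducibility statement in Theorem \ref{thm red to gp}, that the non-good-parity part is rigid: every $\psi'\in\Psi(\pi)$ has the same $\psi'_{ngp}$ up to equivalence. Hence $\geq_O$ on $\Psi(\pi)$ is identified with $\geq_O$ on $\Psi(\pi_{gp})$, and we may assume $\pi=\pi(\EE)$ for some $\EE\in\Eseg(G)$ with $\pi(\EE)\neq 0$. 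By Theorem \ref{thm intersections of local Arthur packets}, the set $\Psi(\pi)$ is then exactly the set of $\psi_{\EE'}$ with $\EE'\sim\EE$.

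For existence of a maximal element, note that every raising operator strictly increases a natural numerical invariant of the support. Concretely, $dual\circ ui\circ dual$ and $ui^{-1}$ replace a pair $[A_k,B_k],[A_{k+1},B_{k+1}]$ by a pair that is ``more spread out'', and $dual_k^-$ changes some $B_i$ from negative to positive. A suitable candidate is the sum over rows of $b_i^2$, or a lexicographic order on the multiset of $b_i$ (equivalently, on the partition given by the $\SL_2$ Arthur factor). Since $\Psi(\pi)$ is finite (it consists of parameters of fixed dimension $N$ with bounded data), we need only show that $\geq_O$ is antisymmetric. Any sequence of raising operators then terminates, so maximal and minimal elements exist. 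Antisymmetry follows once the invariant is shown to increase strictly, and this is a case check against Definition \ref{ui def} and Lemma \ref{lemma ui inv = dud}.

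Uniqueness is the main step. We plan to prove a diamond (local confluence) lemma: if $T_1(\EE)$ and $T_2(\EE)$ are obtained from $\EE$ by raising operators, then there exist sequences of raising operators carrying both to a common $\EE''$, up to row exchanges $R_k$, which do not change $\psi$. We then combine this with connectivity. By Theorem \ref{thm intersections of local Arthur packets}(2), any two elements of $\Psi(\pi)$ are joined by a zigzag of basic operators. By Lemma \ref{lemma ui inv = dud}, each basic operator other than row exchanges is either raising or the inverse of a raising operator (type $3'$ is handled separately, since it changes the number of rows, and there we again use the monotone invariant). Newman's lemma, applied with the termination from the previous paragraph, then gives that every connected component, and hence all of $\Psi(\pi)$, has a unique maximal element. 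Applying $dual$ swaps raising and lowering, so the same argument yields a unique minimal element, namely $\psi^{min}(\pi)=\widehat{\psi^{max}(\widehat\pi)}$, using Lemma \ref{lemma dual}.

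The expected obstacle is the diamond lemma. We must enumerate pairs of raising operators acting on overlapping rows: two $ui^{-1}$'s sharing a row, a $ui^{-1}$ interacting with a $dual\circ ui\circ dual$, and interactions with $dual_k^-$ when $B_i\in\frac12+\Z$. In each case we must verify, directly from the formulas for $l,\eta$, that the two results can be brought together. We would first try to reduce to the case where the two operators act on disjoint pairs of rows, where they commute after suitable $R_k$'s. The overlapping cases would then be handled by an explicit computation on three rows, and we expect most of the casework to sit there.
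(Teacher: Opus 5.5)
Your overall frame is logically sound. A strictly monotone invariant gives termination, though the direction is reversed: $\sum_i b_i^2$ strictly \emph{decreases} under every raising operator, while $\sum_i a_i^2$ increases. Newman's lemma then turns termination plus local confluence into unique normal forms, and Theorem \ref{thm intersections of local Arthur packets}(2) puts all of $\Psi(\pi)$ in one component. The gap is that the diamond lemma is never proved, and it is not a side lemma. Given termination and connectivity, local confluence of raising operators is equivalent to the uniqueness assertion itself, so the entire content of the theorem sits in the step you only sketch. The paper does not reprove the statement; it quotes \cite[Theorem 1.7]{HLL22} and \cite[Theorem 7.3]{HLL25}.

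The method you sketch for the diamond lemma would also not work as stated, because raising operators are not local in the sense you need.
\begin{itemize}
\item Whether $ui_{i,j}$, $dual\circ ui\circ dual$ or $dual_k^-$ applies to given rows is decided only after those rows are made adjacent by row exchanges (Definitions \ref{def row exchange} and \ref{def ui}).
\item The values of $l$ and $\eta$ the rows then carry depend on every row they pass; compare Lemmas \ref{big swap down} and \ref{big swap up}, where $l$ shifts by the total number of circles passed and $\eta$ by its parity.
\item A second raising operator acting on rows in between changes these data and the available admissible orders. It might split a row (type $3'$ changes the number of rows), merge two, or move circles between them.
\end{itemize}
So two operators on disjoint rows need not commute after row exchanges. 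The first may cease to apply or apply only in another form, and a common successor may need different operators and an unbounded number of rows. The overlapping cases therefore do not reduce to finitely many three-row computations.

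Even in this paper's very restricted tempered-block setting, the needed relations pair different operators on the two sides. For example, Theorem \ref{Commutativity of Operators} gives $D_{h',r}\circ U_{h,k}=S_{r',k}\circ D_{h,r}$. These relations are verified through the global $\mathcal{S}$-data classification of Theorem \ref{block classification}, not by a local calculation, and no such description exists for general $\EE$. To close the gap you need a genuine confluence mechanism, for instance a normal form determined by $\pi$ alone to which every $\EE$ with $\pi(\EE)=\pi$ can be raised.

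Two smaller points. First, the rigidity of $\psi_{ngp}$ does not follow from irreducibility of $\tau\rtimes\pi_{gp}$. It follows from disjointness of the cuspidal supports of $\tau$ and $\pi_{gp}$, so that $\pi$ determines $\pi_{gp}$ and $\tau$ up to contragredients of its factors, together with unique factorization for unitary representations of $\GL_d(F)$. Second, invoking the monotone invariant for type $3'$ addresses only termination, not confluence.
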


The above theorem was proven when $G=\Sp_{2n}(F)$ in \cite[Theorem 1.7]{HLL22} and when $G=\OO_{2m}^\pm(F)$ in \cite[Theorem 7.3]{HLL25}.

Recall that it is possible that $\pi(\EE)=0.$ However, there is a combinatorial description of when $\pi(\EE)\neq 0$. See \cite[Theorems 3.6, 4.4]{Ato20b} for $G=\Sp_{2n}(F)$ and \cite[Theorem 6.17]{HLL25} for $G=\OO_{2m}^\pm(F)$.
We say that $\EE\in\Rep(G)$ if $\pi(\EE)\neq 0$. Similarly, we let $\VRep(G)$ denote the set of $\mathcal{F}=\cup_{\rho'}\EE_{\rho'}\in \Vseg(G)$ such that $\EE\in\Rep(G).$ Applying the combinatorial nonvanishing conditions (\cite[Theorems 3.6, 4.4]{Ato20b} or \cite[Theorem 6.17]{HLL25}) to $\EE$, we have that $\mathcal{F}\in \VRep(G)$ if and only if $\mathcal{F}$ formally satisfies the combinatorial nonvanishing conditions. Furthermore, we set $\Vseg_\rho(G)=\{\EE_\rho \ | \ \EE\in \Vseg(G)\}$ and $\VRep_\rho(G)=\{\EE_\rho \ | \ \EE\in \VRep(G)\}$. Again, we have that $\mathcal{E}\in \VRep_\rho(G)$ if and only if $\mathcal{E}$ formally satisfies the combinatorial nonvanishing conditions.

\subsection{The Adams conjecture revisited} 
In this subsection, we reconsider the Adams conjecture (Conjecture \ref{conj Adams}) using extended multi-segments. We remark that the Adams conjecture was first understood using M{\oe}glin's parameterization of local Arthur packets (\cite{BH22, Haz24,  Moe11c}); however, these results are reinterpreted using extended multi-segments in \cite{HLL25}. We recall this reformulation here.

\begin{defn}\label{defn EE_alpha}
    Let $\EE\in\Eseg(G_n)$ and write
    \[
    \EE = \cup_{\rho}\{ ([A_i,B_i]_{\rho}, l_i, \eta_i) \}_{i \in (I_\rho>)}.
    \]
    Let $\EE'$ be the extended multisegment defined by replacing each $\rho$ with $\chi_{V^\pm}\inv\rho$.
    Set $\frac{\alpha_0-1}{2}=\max\{A_i \ | \ i \in I_{\chi_{V^\pm}} \}+1$ and let $\alpha\geq \alpha_0.$ In this case, we define
    \[
    \EE_\alpha^\pm=\EE'\cup\left\{\left(\left[\frac{\alpha-1}{2},-\frac{\alpha-1}{2}\right]_{\chi_W}, \frac{\alpha-1}{2},\pm1 \right)\right\}.
    \]
    We remark that the added extended segment should be inserted such that it is the first extended segment in the admissible order. The extended segments in $\EE'$ should be ordered similarly to $\EE.$ Note that $\EE_\alpha^\pm\in\Eseg(H_m^\pm).$
\end{defn} 

The following is a reformulation of M{\oe}glin's result on the Adams conjecture (Theorem \ref{thm Moeglin Adams}).

\begin{thm}[{\cite[Proposition 6.2]{HLL25}}]\label{thm theta lift large}
    Let $\EE\in\Eseg(G_n).$
    If $\pi=\pi(\EE)\neq0$ and $\alpha\gg0$ ($\alpha\geq\alpha_0$ is sufficient), then
    \[
    \theta_{-\alpha}^{\pm}(\pi)=\pi(\EE_\alpha^\pm).
    \]
\end{thm}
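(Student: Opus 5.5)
The plan is to compare two descriptions of the same packet: M{\oe}glin's description of the lift for $\alpha\gg0$, and the parameterization of $\Pi_{\psi_\alpha}$ by extended multi-segments in Theorem \ref{thm Arthur packets extended multi-segment parameterization}. First, $\pi=\pi(\EE)\in\Pi_{\psi_\EE}$. By Theorem \ref{thm Moeglin Adams}, for $\alpha$ large, $\theta_{-\alpha}^\pm(\pi)$ is nonzero and lies in $\Pi_{(\psi_\EE)_\alpha}$. Since $\alpha\geq\alpha_0$, the added summand $\chi_W\otimes S_1\otimes S_\alpha$ has support $[\frac{\alpha-1}{2},-\frac{\alpha-1}{2}]_{\chi_W}$, which strictly contains every other segment for the relevant $\rho$. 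Hence putting it first gives an admissible order, and one satisfying $(P')$ whenever that is needed. Moreover, $\supp(\EE_\alpha^\pm)=\supp((\psi_\EE)_\alpha)$. By Theorem \ref{thm Arthur packets extended multi-segment parameterization} we can therefore write $\theta_{-\alpha}^\pm(\pi)=\pi(\FF)$ for a unique $\FF$ with this support (Corollary \ref{cor same support}). The task is to show that $\FF=\EE_\alpha^\pm$.

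Second, I would identify $\FF$ row by row. Consider the rows of $\FF$ coming from $\EE'$. I expect that $\FF$ agrees with $\EE'$ on these rows. Since the new segment is far to the left and much longer than the others, the relevant Jacquet modules and derivatives for the rows of $\EE'$ never interact with it. To prove this, I would use M{\oe}glin's explicit construction of $\theta_{-\alpha}(\pi)$ for large $\alpha$ (\cite{Moe11c}), which realizes the lift via the same derivative data that define $\pi$. I would then translate M{\oe}glin's parameters (characters of the component group together with the $\ell,\eta$ data) into Atobe's extended multi-segment language, using the dictionary between M{\oe}glin's and Atobe's parameterizations.

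For the new row, consider the data $l=\frac{\alpha-1}{2}$ and $\eta=\pm1$. The value of $l$ is the maximal one, $b/2$ rounded down, so the row consists entirely of triangles except for one central circle. This is forced because the lift is ``as tempered as possible'' on the $\chi_W$ part, which matches the Langlands data of a large theta lift: the extra exponents form a single Speh-type piece. The sign $\eta$ is then pinned down by the sign condition \eqref{eq sign condition}. The new row contributes $(-1)^{[\alpha/2]+\frac{\alpha-1}{2}}\eta=\eta$, since $[\alpha/2]=\frac{\alpha-1}{2}$ for odd $\alpha$. The rows of $\EE'$ contribute the same product as for $\EE$, up to the effect of twisting by $\chi_V^{-1}$. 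So $\eta=\pm1$ exactly when the target is $H_m^\pm$, i.e.\ $\epsilon_G$ records the Hasse--Witt invariant.

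Third, for $G=\OO_{2m}^\pm(F)$ the map $\EE\mapsto\pi(\EE)$ is itself defined in \cite[Definition 6.3]{HLL25} via such large theta lifts. So I would check that the statement is consistent with that definition and is independent of the auxiliary large $\alpha$ used there. I would do this with the tower property (Theorem \ref{thm persistence principle}) together with the compatibility of $\EE\mapsto\EE_\alpha^\pm$ under increasing $\alpha$. The main obstacle I expect is the second step: matching M{\oe}glin's construction, phrased in her own parameters, with Atobe's derivative-based $\pi(\EE)$, especially the sign normalizations ($\chi_V$ twist, $\eta$ conventions) when some $B_i<0$. If a direct comparison is too painful, I would first reduce, via Theorem \ref{thm intersections of local Arthur packets} and row exchanges, to the case where all $B_i\geq0$ and the order satisfies $(P')$, where the dictionary is cleanest.
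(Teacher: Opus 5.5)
Your outline (M{\oe}glin's large-$\alpha$ Adams conjecture plus a dictionary into extended multi-segments) matches the source of this statement: the paper does not prove it but cites \cite[Proposition 6.2]{HLL25} as a reformulation of \cite{Moe11c}. However, the step that carries all the content is never carried out, so there is a genuine gap.

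\textbf{The identification of $\FF$ is only asserted.} Theorem \ref{thm Moeglin Adams} places $\theta_{-\alpha}^\pm(\pi)$ somewhere in $\Pi_{\psi_\alpha}$. The entire statement is that the unique $\FF$ with $\pi(\FF)=\theta_{-\alpha}^\pm(\pi)$ equals $\EE_\alpha^\pm$, and for the old rows you only write ``I expect''. Your justification, that the derivatives for the rows of $\EE'$ never interact with the new segment, is not an argument. Atobe's $\pi(\cdot)$ shifts rows (the new one in particular, since its $B<0$) and then applies derivatives to the whole representation, so nothing keeps the new row separate. What you need is M{\oe}glin's explicit identification of the lift inside $\Pi_{\psi_\alpha}$ in her $(\underline{l},\underline{\eta})$-parameters. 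You also need the dictionary between Atobe's $\pi(\EE)$ and M{\oe}glin's parametrization, with the $\chi_V$-twist and sign normalizations tracked. Without these, Steps 1--2 restate the claim.

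\textbf{The threshold $\alpha\geq\alpha_0$ is not reached.} You use $\alpha\geq\alpha_0$ only for containment of supports. Theorem \ref{thm Moeglin Adams} gives the result only for an unspecified $\alpha\gg0$. The statement, and Algorithm \ref{algo compute theta lift}, need $\alpha_0$ itself to suffice. That bound has to come from the explicit largeness hypothesis in M{\oe}glin's theorem.

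\textbf{The new row.} The heuristic ``as tempered as possible'' points the wrong way: a hat with $l=\frac{\alpha-1}{2}$ is the most non-tempered configuration, matching the exponents $\frac{\alpha-1}{2},\dots,1$ of a large lift. No heuristic is needed. In $(P')$ order, nonvanishing forces $l+B\geq 0$, since this is what makes $dual(\FF)$, with $l'=l+B$, an extended multi-segment (cf.\ Lemma \ref{lemma dual}). Hence $l=\frac{\alpha-1}{2}$. Your sign computation then does force $\eta=\epsilon_{H_m^\pm}$, but only once the old rows are known.

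\textbf{Admissible order.} Putting the new row first gives a $(P')$ order only if the order on $I_{\chi_V}$ was already $(P')$, so a preliminary row exchange is needed.

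\textbf{Circularity on the orthogonal side.} In \cite{HLL25}, both $\FF\mapsto\pi(\FF)$ and Theorem \ref{thm Arthur packets extended multi-segment parameterization} for $\OO_{2m}^\pm(F)$ are built out of M{\oe}glin's large-rank theta correspondence (Definition 6.3 and Theorem 6.10 there). Using them in Step 1 to produce $\FF$, and then ``identifying'' it, risks circularity. Step 3 does not repair this. The persistence principle gives only nonvanishing, not which extended multi-segment occurs. Relating $\theta_{-\alpha}$ to $\theta_{-\alpha-2}$ requires Kudla's filtration, which is M{\oe}glin's actual argument.
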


This provides a simple way to compute the theta lift when $\alpha$ is large. For $\alpha< \alpha_0$, the calculation is done algorithmically. This is a reformulation of a construction of \cite{BH22}.
\begin{algo}\label{algo compute theta lift}
    Let $\EE\in\Rep(G_n)$ with $\pi=\pi(\EE).$ 
    \begin{enumerate}
        \item If $\alpha\geq\alpha_0,$ then, by Theorem \ref{thm theta lift large}, we have $\theta_{-\alpha}^{\pm}(\pi)=\pi(\EE_\alpha^\pm)$. 
        \item  If $\alpha<\alpha_0$, then we proceed recursively. From $\EE^\pm_{\alpha_0}$, we construct $\EE^\pm_{\alpha_0-2}$ as follows (taking $\beta=\alpha_0$). By definition \ref{defn EE_alpha}, we have  
        \[
    \EE_\beta^\pm=\EE'\cup\left\{\left(\left[\frac{\beta-1}{2},-\frac{\beta-1}{2}\right]_{\chi_W}, \frac{\beta-1}{2},\pm1 \right)\right\}.
    \]
    We define $\EE^\pm_{\beta-2}$ by replacing the added extended segment 
    \[\left(\left[\frac{\beta-1}{2},-\frac{\beta-1}{2}\right]_{\chi_W}, \frac{\beta-1}{2},\pm1 \right)\] 
    with 
    \[\left(\left[\frac{\beta-3}{2},-\frac{\beta-3}{2}\right]_{\chi_W}, \frac{\beta-3}{2},\pm1 \right)\]
    and then row exchanging this extended segment with any extended segment $([A_i,B_i]_{\chi_W},l_i,\eta_i)$ ($i\in I_{\chi_W}$) where $B_i=-\frac{\beta-3}{2}.$ We proceed into Step 3 for the recursion.
    \item We define $\EE^\pm_{\alpha}$ by repeatedly applying Step 2. That is, we construct a sequence  $\EE_{\alpha_0}^\pm,$ $\EE_{\alpha_0-2}^\pm,$ \dots, $\EE_{\alpha}^\pm,$ by repeatedly performing an $add_j^{-1}$ operator (the inverse of the operator $add_j^1$ in Definition \ref{defn shift and add}) on the added extended segment and row exchanging so that it is as maximal as possible in the order.
    \end{enumerate}
\end{algo}

This algorithm computes the theta lift if it is nonzero. From \cite[Theorem 7.5]{HLL25}, we directly obtain the following reformulation of \cite[Theorem A]{BH22}.

\begin{thm}\label{thm compute theta lift}
    Let $\EE\in\Rep(G_n)$, $\alpha$ be any positive odd integer, and $\pi=\pi(\EE).$ If $\pi(\EE_\alpha^\pm)\neq0$, then
    \[
    \theta_{-\alpha}^{\pm}(\pi)=\pi(\EE_\alpha^\pm).
    \]
    In particular, if $\pi(\EE_\alpha^\pm)\neq0$, then $\theta_{-\alpha}^{\pm}(\pi)\in\Pi_{\psi_\alpha}.$
\end{thm}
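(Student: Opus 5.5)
We plan to derive Theorem \ref{thm compute theta lift} from the Baki\'c--Hanzer description of the theta lift below the stable range, translated into extended multi-segments, and to argue by downward induction on $\alpha$ starting from the stable range. The base case is $\alpha\ge\alpha_0$: there Algorithm \ref{algo compute theta lift} does nothing, and Theorem \ref{thm theta lift large} gives $\theta_{-\alpha}^{\pm}(\pi)=\pi(\EE_\alpha^\pm)$ directly.

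For the inductive step, fix $\beta$ with $\alpha<\beta\le\alpha_0$ and assume $\theta_{-\beta}^{\pm}(\pi)=\pi(\EE_\beta^\pm)$. We need to show that, provided $\pi(\EE_{\beta-2}^\pm)\neq0$, one also has $\theta_{-(\beta-2)}^{\pm}(\pi)=\pi(\EE_{\beta-2}^\pm)$.

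1. Recall Kudla's description of the lift one step down the tower via the Jacquet module. If $\theta_{-(\beta-2)}^\pm(\pi)=\sigma\neq0$, then $\theta_{-\beta}^\pm(\pi)$ is a subquotient (in fact the unique irreducible subrepresentation, in Baki\'c--Hanzer's analysis) of $\chi_W|\cdot|^{\frac{\beta-1}{2}}\rtimes\sigma$. Equivalently, $\sigma$ is the highest derivative $D_{\chi_W|\cdot|^{(\beta-1)/2}}$ of $\theta_{-\beta}^\pm(\pi)$, up to the boundary case where the theta lift first occurs at $\beta$. This is the content of \cite[Theorem A]{BH22}.

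2. Identify this derivative on the level of extended multi-segments. The added row $([\frac{\beta-1}{2},-\frac{\beta-1}{2}]_{\chi_W},\frac{\beta-1}{2},\pm1)$ is first in the order. Replacing it by the shorter row, which is the operation $add^{-1}$, and then row exchanging past the rows with $B_i=-\frac{\beta-3}{2}$, computes $\pi$ of the derivative. This follows from Atobe's derivative formulas for extended multi-segments: for a row placed minimally, with $l=b/2$ and full triangles, removing $\rho|\cdot|^{A}$ and $\rho|\cdot|^{-B}$ corresponds to $add^{-1}$. Row exchange preserves $\pi(\cdot)$ whenever the result is nonzero, by the remark after Definition \ref{def row exchange}. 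Here we would cite \cite[Theorem 7.5]{HLL25}, which carries out exactly this translation, including the orthogonal case handled through \cite[Definition 6.3]{HLL25}.

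3. Conclude nonvanishing and identification. If $\pi(\EE_{\beta-2}^\pm)\neq0$, this representation is a nonzero candidate. By the conservation relation and the persistence principle (Theorems \ref{thm conservation relation} and \ref{thm persistence principle}), combined with the BH22 criterion, $\theta_{-(\beta-2)}^\pm(\pi)$ is nonzero and equals it. Howe duality (Theorem \ref{thm Howe duality}) supplies the uniqueness needed to match the two.

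The final assertion is then immediate. By Theorem \ref{thm Arthur packets extended multi-segment parameterization}, $\pi(\EE_\alpha^\pm)$ lies in the packet of $\psi_{\EE_\alpha^\pm}$, and since row exchanges and $add$ on the added row do not change the parameter, $\psi_{\EE_\alpha^\pm}=\psi_\alpha$.

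The main obstacle is step 2. Matching the Baki\'c--Hanzer combinatorics, which is phrased in M{\oe}glin's parametrization, with the $add^{-1}$-plus-row-exchange procedure requires checking the sign and $l$ bookkeeping in Definition \ref{def row exchange}. It also requires handling the case where $\pi(\EE_{\beta-2}^\pm)=0$ at an intermediate step while the procedure continues. For that case we would rely on the statement of \cite[Theorem 7.5]{HLL25} rather than reprove it.
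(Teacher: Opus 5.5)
This is essentially the paper's route: the paper obtains the statement directly from \cite[Theorem 7.5]{HLL25}, the extended multi-segment reformulation of \cite[Theorem A]{BH22}, and every load-bearing step of your induction is ultimately delegated to that same result, so you may as well cite it outright (those steps are the identification in step 2, the nonvanishing in step 3, and the intermediate steps where $\pi(\EE_\beta^\pm)$ might vanish). Note, though, that the scaffolding would not stand on its own---conservation and persistence cannot produce nonvanishing of $\theta_{-(\beta-2)}^\pm(\pi)$ from $\pi(\EE_{\beta-2}^\pm)\neq 0$---and that $\psi_{\EE_\alpha^\pm}=\psi_\alpha$ holds because the support of $\EE_\alpha^\pm$ is that of $\EE'$ together with $[\frac{\alpha-1}{2},-\frac{\alpha-1}{2}]_{\chi_W}$ (giving the summand $\chi_W\otimes S_1\otimes S_\alpha$), not because $add^{-1}$ leaves the parameter unchanged.
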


\subsection{Tempered representations}

In this subsection, we recall some results related to tempered representations. Let $\Pi_{temp}(G)$ denote the subset of $\Pi(G)$ consisting of tempered representations. We begin by recalling the parameterization of $\Pi_{temp}(G)$ via the local Langlands correspondence due to Arthur.

A local Arthur parameter $\psi\in\Psi^+(G)$ is called tempered if $\psi\in\Psi(G)$ and $\psi$ is trivial on the second $\SL_2(\BC),$ i.e., if $\psi=\bigoplus_{i=1}^r \rho_i\otimes S_{a_i}\otimes S_{b_i},$ then $b_i=1$ for any $i=1,\dots,r.$ We let $\Psi_{temp}(G)$ denote the subset of $\Psi^+(G)$ consisting of tempered local Arthur parameters. 

\begin{thm}[{\cite[Theorem 1.5.1]{Art13}}]\label{thm Arthur tempered}
    We have that
    \[\Pi_{temp}(G)=\bigcup_{\psi\in\Psi_{temp}(G)} \Pi_\psi.\]
    Moreover, if $\psi_1,\psi_2\in\Psi_{temp}(G)$ and $\psi_1\neq\psi_2$, then $\Pi_{\psi_1}\cap\Pi_{\psi_2}=\emptyset.$
\end{thm}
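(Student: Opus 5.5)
The plan is to deduce both assertions from the local Langlands correspondence for $G$, as established by Arthur for the quasi-split groups and extended to the pure inner form $H_m^-$ through M{\oe}glin's quasi-split transfer. The key input is the compatibility of $L$-packets with $A$-packets for tempered parameters. Let $\psi=\bigoplus_i \rho_i\otimes S_{a_i}\otimes S_{1}\in\Psi_{temp}(G)$ and put $\phi_\psi(w,x)=\psi(w,x,1)$, a bounded $L$-parameter. I would first invoke \cite[Theorem 1.5.1]{Art13}: for such $\psi$, the packet $\Pi_\psi$ is characterized by the (twisted) endoscopic character identities attached to $\phi_\psi$. Together with Arthur's construction of tempered $L$-packets, this gives that $\Pi_\psi$ coincides with the tempered $L$-packet $\Pi_{\phi_\psi}$, and in particular consists of tempered representations. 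Hence $\bigcup_{\psi\in\Psi_{temp}(G)}\Pi_\psi\subseteq\Pi_{temp}(G)$.

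For the reverse inclusion, take $\pi\in\Pi_{temp}(G)$. By the local Langlands correspondence, $\pi$ has a bounded $L$-parameter $\phi$, and $\pi\in\Pi_\phi$. Since $\phi$ is bounded, I would regard it as the tempered Arthur parameter $\psi=\phi\otimes S_1$, for which $\phi_\psi=\phi$, so that $\pi\in\Pi_\psi$. For $H_m^\pm$ two extra points must be checked. First, equivalence is taken under ${}^LG'=\OO_{2m}(\BC)$, so one works with representations of the full orthogonal group rather than of $\SO_{2m}^\pm(F)$. Second, the relevance condition should be automatic for parameters that actually support a representation of the pure inner form. Both are built into the quasi-split transfer definition of $\Pi_\psi$ (\cite{Moe11a}).

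For disjointness, suppose $\pi\in\Pi_{\psi_1}\cap\Pi_{\psi_2}$ with $\psi_1,\psi_2$ tempered. Then $\pi$ lies in both $L$-packets $\Pi_{\phi_{\psi_1}}$ and $\Pi_{\phi_{\psi_2}}$. $L$-packets partition $\Pi(G)$, since the $L$-parameter is an invariant of $\pi$, so $\phi_{\psi_1}=\phi_{\psi_2}$ up to ${}^LG'$-conjugacy. Because $\psi_j=\phi_{\psi_j}\otimes S_1$, this forces $\psi_1=\psi_2$. As a consistency check via Theorem \ref{thm intersections of local Arthur packets}, an extended multi-segment with all $b_i=1$ has $l_i=0$, and no basic operator carries such an object to a tempered one with different support.

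The main obstacle is the identification $\Pi_\psi=\Pi_{\phi_\psi}$ for tempered $\psi$, especially for the non-quasi-split form $H_m^-$ and the full disconnected group $\OO_{2m}$. Here one must rely on the uniqueness of packets satisfying the character identities, together with M{\oe}glin's transfer; the rest is formal once this input is in place.
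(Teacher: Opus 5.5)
Your proposal is correct and takes essentially the paper's route: the paper gives no argument beyond citing \cite[Theorem 1.5.1]{Art13}, and your proof likewise reduces both assertions to that theorem, i.e.\ to the tempered local Langlands correspondence (for $\psi$ trivial on the Arthur $\SL_2(\BC)$, $\Pi_\psi$ is by definition the packet $\Pi_{\phi_\psi}$ of the bounded parameter $\phi_\psi$). The extra points you flag for $\OO_{2m}$ and $H_m^-$ are real, since Arthur works with quasi-split $\SO_{2m}$ up to $\OO_{2m}$-conjugacy; however, they come from the known extensions of Arthur's theorem to the full orthogonal group and to pure inner forms rather than from M{\oe}glin's transfer definition alone, and the paper also takes them for granted.
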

In other words, local Arthur packets associated to tempered local Arthur packets partition  $\Pi_{temp}(G)$. Note that from Theorem \ref{thm red to gp}, to construct $\Pi_\psi$ for tempered $\psi$, it suffices to assume that $\psi$ is also of good parity.
With Theorem \ref{thm Arthur packets extended multi-segment parameterization} in mind, we say that $\EE\in\Rep(G)$ is tempered if $\psi_\EE$ is tempered. We remark that $\EE\in\Eseg(G)$ is tempered if and only if
\[
\EE=\cup_\rho\{([A_i,A_i]_\rho,0,\eta_i)\}_{(i\in I_\rho, >)}\in\Eseg(G),
\]
where for any $i,j\in I_\rho$ with $A_i=A_j$, we have $\eta_i=\eta_j$.  In this setting, for $i\in I_\rho$ we write $\eta_\rho(A_i):=\eta_i$ as a shorthand. Note that $\pi(\EE)$ can be tempered even if $\EE$ is not tempered.

We now recall how to compute the first occurrence and the going-up tower for tempered representations using extended multi-segments. The following result is a straightforward adaptation of \cite[Theorem 4.1]{AG17a} to our setting. 

\begin{thm}[{\cite[Theorem 4.1]{AG17a}}]\label{thm going up first occurrence}
    Let $\EE\in\Rep(G_n)$ be tempered and write
    \[
    \EE_{\chi_V}=\cup_{i=1}^r\{([A_i,A_i]_{\chi_V},0,\eta_i)\}.
    \]
    We let $\mathcal{T}$ denote the set containing $-1$ and all odd positive integers $l$ satisfying the following conditions:
    \begin{itemize}
        \item (chain condition) The multi-set $\mathcal{A}=\cup_{i=1}^r\{A_i\}$ contains $\{0,1,\dots,\frac{l-1}{2}\}$;
        \item (oddness condition) the multiplicity of $\frac{i-1}{2}$ in $\mathcal{A}$ is odd for $i=1,3,\dots,l$;
        \item (alternating condition) $\eta_{\chi_V}(A)=-\eta_{\chi_V}(A+1)$ for $A\in\{0,1,\dots,\frac{l-3}{2}\}$.
    \end{itemize}
    Let $l(\pi(\EE))=\max\mathcal{T}.$
    Then $m^{\up}(\pi(\EE))=2n+3+l(\pi).$ Moreover, if $l(\pi)=-1,$ then $m^{\up}(\pi(\EE))=m^{\down}(\pi(\EE))=2n+2.$ Otherwise, $\up=-\eta_{\chi_V}(0).$
\end{thm}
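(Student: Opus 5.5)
The plan is to derive the statement from \cite[Theorem 4.1]{AG17a}, which is phrased in terms of the L-parameter $\phi$ of $\pi$ and its character $\eta$ on the component group. The work is therefore a dictionary check: the tempered extended multi-segment $\EE$ encodes exactly the data $(\phi,\eta)$ used there.

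First, since $\EE$ is tempered, $\psi_\EE=\bigoplus_\rho\bigoplus_{i\in I_\rho}\rho\otimes S_{2A_i+1}\otimes S_1$. This is the good-parity part of the L-parameter $\phi$ of $\pi(\EE)$, and by Theorem~\ref{thm Arthur tempered} the packet $\Pi_{\psi_\EE}$ is the tempered L-packet. Next, I will recall from \cite{Ato20b} that for tempered $\EE$, $\pi(\EE)$ is the member of the L-packet whose character satisfies
\[\eta(\rho\otimes S_{2A+1})=\eta_\rho(A),\]
possibly up to the normalization fixed by the Whittaker datum. The requirement that $\eta_i=\eta_j$ whenever $A_i=A_j$ is exactly the statement that $\eta$ is well defined on the component group.

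With this, the three conditions translate directly:
\begin{itemize}
\item The chain condition of \cite{AG17a} asks that $\phi$ contain $\chi_V\otimes S_1, \chi_V\otimes S_3,\dots,\chi_V\otimes S_l$. In our notation this says the multiset $\mathcal{A}$ contains $\{0,1,\dots,\frac{l-1}{2}\}$, since $\chi_V S_{2A+1}$ corresponds to $A$.
\item The oddness condition translates as multiplicities verbatim.
\item The alternating condition $\eta(\chi_V S_{i})=-\eta(\chi_V S_{i+2})$ becomes $\eta_{\chi_V}(A)=-\eta_{\chi_V}(A+1)$.
\end{itemize}
One subtlety is the twist by $\chi_V$ versus $\chi_W$ in the conventions for theta lifts from $\Sp$ to $\OO$ (Gan–Ichino's $\theta$ with $\chi_V$). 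I will check that the relevant $\rho$ in \cite{AG17a} for the pair $(\Sp_{2n},\OO_{2m})$ is indeed $\chi_V$.

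Then \cite{AG17a} gives the first occurrence index $m^{\up}=2n+2+(l+1)$ in dimension terms, which equals $2n+3+l$. When $\mathcal{T}=\{-1\}$, this yields $2n+2$ for both towers by the conservation relation (Theorem~\ref{thm conservation relation}). Finally, \cite{AG17a} identifies the going-up tower via the sign $\eta(\chi_V\otimes S_1)$ through the $\epsilon$-factor/Hasse invariant comparison, giving $\up=-\eta_{\chi_V}(0)$.

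The main obstacle is the sign normalization. Atobe's $\eta_i$ and the Arthur/Langlands character may differ by a twist depending on the Whittaker datum and on the $(-1)^{[b/2]+l}$ conventions. The identification of $\up$ with $-\eta_{\chi_V}(0)$ hinges on this, and I would verify it on the supercuspidal or small case $\phi=\chi_V$ against Kudla's computation.
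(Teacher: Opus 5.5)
Your route is the paper's route. The paper offers nothing beyond calling the statement a straightforward adaptation of \cite[Theorem 4.1]{AG17a}. Your dictionary (tempered $\EE\leftrightarrow(\phi,\eta)$ with $\eta(\rho\otimes S_{2A+1})=\eta_\rho(A)$, then $m^{\up}=2n+3+l$, and conservation when $l=-1$) is exactly that adaptation.

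The step that fails is your claim that the oddness condition transcribes verbatim. In \cite{AG17a}, as far as I recall the statement, odd multiplicity is demanded of $\chi_V S_{l'}$ only for $l'=1,3,\dots,l-2$. That is the same range as the alternating condition, so the top link $\chi_V S_l$ of the chain may occur with even multiplicity. The statement as printed demands oddness for $i=1,3,\dots,l$, i.e.\ also for the last column $\frac{l-1}{2}$. Whatever the exact wording in \cite{AG17a}, that printed version is false, as the following example shows.

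Let $n=1$, $\chi_V\neq\mathbbm{1}$, $\EE_{\chi_V}=\{([0,0],0,\eta),([0,0],0,\eta)\}$ and $\EE_{\mathbbm{1}}=\{([0,0],0,1)\}$. Then $\pi(\EE)$ is a constituent of the reducible unitary principal series $\mathrm{Ind}(\chi_V)$ of $\SL_2(F)$.
\begin{itemize}
\item The printed conditions give $\mathcal{A}=\{0,0\}$, so $1\notin\mathcal{T}$, and the statement predicts $m^{+}=m^{-}=4$ for both constituents.
\item Now let $V_2$ be an anisotropic plane of discriminant $\chi_V$. The $\OO(V_2)$-invariants of $S(V_2)$ are nonzero, and their unipotent Jacquet module is the line $f\mapsto f(0)$, on which the torus acts by $\chi_V|\cdot|$.
\item Hence the theta lift of the trivial representation of $\OO(V_2)$ is a constituent of $\mathrm{Ind}(\chi_V)$. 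For it, $m^{\down}=2$ and $m^{\up}=6$, which is what the range $l'\le l-2$ predicts ($l(\pi)=1$).
\end{itemize}

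The rest of the paper uses the range $l'\le l-2$ as well:
\begin{itemize}
\item In the example of \S\ref{sec case 3}, $m_1=2$, yet the added hat is $[2,-2]$, so $l=3$.
\item Lemma \ref{duds for almost block} adds the hat $[c_{\max}+1,-c_{\max}-1]$ even though $m_{c_{\max}}$ is even.
\item The proof of Theorem \ref{thm-generic-theta-count} takes $m^{\up,\alpha}=3$ whenever $\EE$ starts at zero, regardless of the parity of $m_0$.
\item The assertion $\up\neq\down$ after Lemma \ref{lemma-EE-does-not-start-at-zero} needs $l(\pi)\geq 1$ exactly when $0\in\mathcal{A}$.
\end{itemize}

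So when you do the dictionary check, transcribe the Atobe--Gan oddness condition faithfully. What you can actually prove is the theorem with the oddness range corrected to $i=1,3,\dots,l-2$. With that correction, your remaining steps go through as you describe, and your caution about the sign normalization behind $\up=-\eta_{\chi_V}(0)$ is warranted. The $\SL_2$ example above is a convenient test case for it.
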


Note that $\chi_V\otimes S_a\otimes S_b$ (and also $\chi_W\otimes S_a\otimes S_b$) is of good parity if and only if $A=\frac{a+b}{2}-1\in\Z$ and $B=\frac{b-a}{2}\in\Z.$ Consequently, with Definition \ref{defn EE_alpha} and the computation of the theta lift (Theorem \ref{thm Adams going up tower}) in mind, we often focus on the set $\Vseg^\Z(G)$ which is defined to be the set of elements $\EE=\cup_\rho\{([A_i,B_i]_\rho,l_i,\eta_i)\}_{(i\in I_\rho, >)}\in\Vseg(G)$ such that $A_i,B_i\in\Z$ for any $i\in I_\rho$ and any $\rho.$ We also set $\VRep^\Z(G)=\VRep(G)\cap\Vseg^\Z(G)$. 

\subsection{Further notation}\label{sec combinatorial nonsense}

Throughout this subsection, we let $\rho$ be an orthogonal supercuspidal representation of some $\GL_d(F)$. In this subsection, we provide a combinatorial extension of the results concerning the Adams conjecture, but replacing $\chi_V$ with $\rho.$

We have $\Vseg_\rho(G_n)=\{\EE_\rho \ | \ \EE\in\Vseg^\Z(G)\}$ and $\VRep_\rho(G_n)=\Vseg_\rho(G_n)\cap\VRep(G).$ To $\EE\in\VRep_\rho(G_n)$, we consider a formal local Arthur parameter $\psi_\EE$ defined analogously to Definition \ref{def multi-segment}(2).

As a direct consequence of \cite[Theorems 3.6, 4.4]{Ato20b}, we have a map between $\VRep_\rho(G_n)$ and $\VRep_{\chi_V}(G_{n'})$ for a suitable $n'$ given by simply replacing $\rho$ by $\chi_V.$ For  $\EE\in\VRep_\rho(G_n),$ let $\EE_{\rho\rightarrow\chi_V}$ denote its image in $\VRep_{\chi_V}(G_n)$ via this bijection. 

Consider $\EE\in\Rep(G_n)$ and recall the definition of $\EE_\alpha^\pm$ from Definition \ref{defn EE_alpha}. Formally, Definition \ref{defn EE_alpha} extends to $\EE\in\VRep_{\chi_V}(G_n).$ Given $\FF\in\VRep_\rho(G_n)$, we define $\FF_\alpha^\pm=(\FF_{\rho\rightarrow\chi_V})_\alpha^\pm.$ Similarly, we extend Algorithm \ref{algo compute theta lift} to define $\FF_\alpha^\pm$ generally.

Formally, Theorem \ref{thm going up first occurrence} applies to any tempered $\EE\in\VRep_{\chi_V}(G_n),$ i.e., the formal local Arthur parameter $\psi_\EE$ is trivial on the second $\SL_2(\BC).$
Suppose now that $\FF\in\VRep_\rho(G_n)$ is tempered. Then we can define the going-up tower (which we record its sign by $\up$) and first occurrence of $\FF$ by defining them to be those of $\FF_{\rho\rightarrow\chi_V}$. We let $\up$ and $m^{\up,\alpha}$ denote the corresponding sign and first occurrence. We further set
\[
\Theta_1(\FF):=\FF^\up_{m^{\up,\alpha}}.
\]
Note that when $\rho=\chi_V,$ $\Theta_1(\FF)$ is simply the $\chi_W$-part of the extended multi-segment associated to the first occurrence on the going-up tower of some extended multi-segment in $\Rep(G_n)$. Moreover, in this case, $\Theta_1(\FF)\in\VRep_{\chi_W}(H_{m^\up}^\up)$ by the Adams conjecture for the going-up tower (Theorems \ref{thm Adams going up tower} and \ref{thm compute theta lift}).

Later, we shall see that there may be further operators applicable on $\Theta_1(\FF)$ which result in virtual extended multi-segments which are not predicted by the Adams conjecture. These arise in a systematic manner and will be denoted by $\Theta_i(\FF)$ for $i=2,3,4$ (e.g., see \S\ref{sec theta results}).

\section{Statement of results}
\label{sec-results}

Throughout this section, we let $\rho$ be an orthogonal supercuspidal representation of some $\GL_d(F)$. 
Given $\EE\in \Vseg_\rho(G_n),$ we write $\EE= \{([A_i, B_i], \ell_i, \eta_i)\}_{i \in I}$ where we implicitly identify $I=\{1,\dots,k\}$ with the order $1<2<\cdots<k.$ That is, we suppress $\rho$ in the notation and identify the admissible order with the standard order.

We recall some notation and definitions from \cite[\S3]{HKT26}. First, we record the first and last columns of the symbols (see Example \ref{exmp symbol}) attached to extended multi-segments as follows. 

\begin{defn}
\label{def-starts-ends}
    Let $\EE\in \VRep_\rho^\mathbb{Z}(G_n)$. We set $c_{\min} := \min_{r \in \EE} B(r)$ and $c_{\max} := \max_{r \in \EE} A(r)$. We say that $\EE$ \emph{starts at column} $c_{\min}$ and that $c_{\min}$ is the \emph{first column}, and similarly that $\EE$ \emph{ends at column} $c_{\max}$ and that $c_{\max}$ is the \emph{last column}. 
\end{defn}

Recall that we are mainly interested in the case that $\EE\in \VRep_\rho^\mathbb{Z}(G_n)$ is tempered, i.e., $\supp(r)$ is a singleton for any row $r\in\EE$. In this case, we keep track of the multiplicities of the columns by adopting the following notation.

\begin{defn}
\label{def-multiplicity}
    Let $\EE\in \VRep_\rho^\mathbb{Z}(G_n)$ and fix $c\in\Z$. We denote by $m_c$ the \emph{multiplicity of $c$ in $\EE$}, which is defined to be the number of times $([c,c], 0, \eta)$ appears in $\EE$ for any $\eta$.
\end{defn}

Our main results and conjectures (and those of \cite{HKT26}) are phrased in terms of blocks and almost-blocks which we recall below. We remark that the definition of a block is largely inspired by Theorem \ref{thm going up first occurrence}.

\begin{defn}
\label{def-block}
    Let $\EE = \{([A_i, B_i], \ell_i, \eta_i)\}_{i \in I}\in \VRep_\rho^\mathbb{Z}(G_n)$ be  tempered (so that $A_i = B_i$ and $\ell_i = 0$). A \emph{block} is a multi-set $\mathcal{B} = \{([A_i, B_i], \ell_i, \eta_i)\}_{i \in J}$ with $J \subset I$ such that
    \begin{itemize}
        \item if $([A, A], 0, \eta_1)\in\mathcal{B}$ and $([A+1, A+1], 0, \eta_2)\in\mathcal{B}$ then $\eta_1 = - \eta_2$,
        \item if for some $\eta\in\{\pm1\}$, we have $([A, A], 0, \eta)\in\mathcal{B}$ and $([A+2, A+2], 0, \eta)\in\mathcal{B}$, then $([A+1, A+1], 0, -\eta)\in\mathcal{B}$,
        \item if $([A, A], 0, \eta)\in\mathcal{B}$ then it appears with an odd multiplicity,
    \end{itemize}
    and $\mathcal{B}$ is maximal for these properties (i.e. if $\mathcal{B}' = \{([A_i, B_i], \ell_i, \eta_i)\}_{i \in J'}$ with $J' \supset J$ also satisfies these conditions, then $J = J'$).

    We say that $\mathcal{B}$ is an \emph{almost-block} if it satisfies the first two conditions and satisfies the third condition for all $([A, A], 0, \eta)$ except possibly when $A$ is maximal among all $([A, A], 0, \eta)$ appearing in $\mathcal{B}$.

    More generally, we say that $\mathcal{B}$ is a block (or almost-block) if there exists some $\EE\in  \VRep_\rho^\mathbb{Z}(G_n)$ for which $\mathcal{B}$ is a block. We let $\Block_\rho(G_n)$, resp. $\ABlock_\rho(G_n)$ denote the set of all blocks, resp. almost-blocks.
\end{defn}

Tempered extended multi-segments decompose uniquely into blocks.

\begin{lemma}[{\cite[Lemma 3.4]{HKT26}}]\label{lem-unique-block-decomp}
    Any tempered  $\EE\in \VRep_\rho(G)$ has a unique decomposition into disjoint blocks.
\end{lemma}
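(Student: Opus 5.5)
We would make the statement precise by viewing $\EE$ through its column data. Since $\EE$ is tempered, it is determined (as a multi-set) by the multiplicities $m_c$ and the signs $\eta_\rho(c)$, and all copies of $([c,c],0,\eta)$ are interchangeable. So ``unique'' means unique up to permuting identical rows, i.e.\ unique as a multi-set of multi-sets. The plan is to translate the three block axioms into column conditions, build one decomposition explicitly, and show that every decomposition must agree with it.

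\emph{Step 1: translate the axioms.} A multi-set $\mathcal{B}\subset\EE$ is determined by the set $C(\mathcal{B})$ of columns it meets and the number of copies $k_c\le m_c$ it takes from each $c\in C(\mathcal{B})$. The three conditions become the following.
\begin{enumerate}
\item Adjacent columns of $C(\mathcal{B})$ carry opposite signs.
\item If $c,c+2\in C(\mathcal{B})$ have equal signs, then $c+1\in C(\mathcal{B})$.
\item Each $k_c$ is odd.
\end{enumerate}
We then prove a structural lemma: a multi-set satisfying (1)--(3) is maximal exactly when
\begin{enumerate}
\item[(a)] $C(\mathcal{B})$ is a maximal interval of consecutive integers along which the sign of $\EE$ alternates, and
\item[(b)] $k_c=m_c$ when $m_c$ is odd and $k_c=m_c-1$ when $m_c$ is even, except that a leftover copy may itself form a singleton block.
\end{enumerate}
The proof is by exhibiting an enlargement whenever (a) or (b) fails. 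For instance, if $c\notin C(\mathcal{B})$ but $c\pm1\in C(\mathcal{B})$ with compatible signs, then adding one copy of $c$ keeps (1)--(3). If $k_c+2\le m_c$, then adding two copies of $c$ keeps $k_c$ odd. Condition (2) is what forbids a gap $c, c+2$ with equal signs inside one block, which rules out non-interval supports.

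\emph{Step 2: existence.} Partition the set of columns occurring in $\EE$ into maximal alternating runs of consecutive integers; this partition is clearly unique. For each run, take the block using $m_c$ or $m_c-1$ copies as in (b). The copies left over from even-multiplicity columns are grouped into blocks by the same rule applied to the residual multi-set, recursively. The residual multiplicities are $\le 1$ after one step, so the recursion terminates. Using Step 1, we check that each piece is maximal, hence a block, and that the pieces are disjoint with union $\EE$.

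\emph{Step 3: uniqueness.} Given any decomposition into disjoint blocks, Step 1 forces each block's column set to be a maximal alternating run within the rows available to it. We compare with the construction by induction on the total number of rows. Choose the block containing a copy of the smallest column $c_{\min}$ whose multiplicity is odd (or, failing that, the run used first in Step 2). Show it must coincide with the first block of our construction, remove it, and apply induction.

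The main obstacle is Step 1: pinning down exactly what ``maximal'' forces when some $m_c$ are even. Here a block may take only an odd number of copies, and one must show that the leftover copies cannot be redistributed to produce a genuinely different multi-set of blocks. I would first settle the case where all $m_c$ are odd, where blocks are literally the maximal alternating runs. I would then handle even multiplicities by a parity argument: in any decomposition, the number of blocks meeting column $c$ has the parity of $m_c$, and the blocks meeting column $c$ are determined column by column, starting from $c_{\min}$.
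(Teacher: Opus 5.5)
\paragraph{Gap in Step 1.} Your structural lemma in Step~1 is false for Definition~\ref{def-block} as written, so Steps~2 and~3 cannot be carried out.

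Write $c^{\pm}$ for a row $([c,c],0,\pm1)$. Condition~(2) only forbids a one-column gap whose two ends carry the \emph{same} sign. It does not force interval supports.
\begin{itemize}
\item For $\EE=\{0^+,2^-\}$, all of $\EE$ satisfies (1)--(3), so it is the unique maximal such sub-multi-set. Its support is not an interval, contradicting your~(a).
\item For $\EE=\{0^+,1^+,2^-\}$, the maximal sub-multi-sets satisfying (1)--(3) are exactly $\{0^+,2^-\}$ and $\{1^+,2^-\}$. They overlap, and $\{1^+\}$ alone is not maximal. So if maximality is taken inside $\EE$, there is no decomposition into disjoint blocks at all.
\item The exception clause in~(b) fails for the same reason. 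A leftover singleton $\{c\}$ is strictly contained in $\{c,c+1\}$ whenever column $c+1$ occurs with the opposite sign. For $\EE=\{0^-,1^+,2^-,2^-,2^-,3^+,3^+,4^-,5^+\}$, every maximal sub-multi-set is $\{0^-,1^+,2^-,2^-,2^-,3^+,4^-,5^+\}$, and the leftover $\{3^+\}$ is not maximal. So your check in Step~2 that ``each piece is maximal'' fails.
\end{itemize}
Your parity observation is correct, but it says nothing about \emph{which} blocks meet an even column, and that is exactly the point at issue.

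\paragraph{Missing idea: the correct notion of maximality.} The paper gives no proof here; it cites \cite[Lemma 3.4]{HKT26}. Its usage, however, shows that the decomposition is \emph{ordered} and greedy:
\begin{itemize}
\item the blocks are runs of consecutive rows in the $(P')$ order, occupying consecutive columns (``the rows of $\BB_1$, followed by the rows of $\BB_2$'');
\item each block is the longest such run satisfying the three conditions that starts where the previous block stopped.
\end{itemize}
Maximality among all runs does not suffice. For $\{0^+,0^+,0^+,0^+,1^-\}$, both the first three rows and the last four rows are maximal runs, and they overlap.

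Under the greedy reading, a column $c$ of even multiplicity \emph{ends} the current block after $m_c-1$ copies. Its last copy \emph{starts} the next block, which then continues along the alternating run. These are the type~2 boundaries $N_{col}=H_{col}$, as in the example of Appendix~\ref{sec motivation}, where $\BB_1=\{0^-,1^+,2^-,2^-,2^-,3^+\}$ and $\BB_2=\{3^+,4^-,5^+\}$. For that multi-set your Step~2 instead produces $\{0^-,1^+,2^-,2^-,2^-,3^+,4^-,5^+\}\cup\{3^+\}$. This is not the decomposition on which Theorems~\ref{thm-count-block-temp} and~\ref{thm-count-temp}, and the case analysis of Theorem~\ref{thm-count-theta-temp}, are built.

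Once the greedy definition is fixed, existence and uniqueness follow by induction on the number of rows. The first block is forced to be the longest initial run satisfying the conditions; remove it and recurse.
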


We denote the unique block decomposition by $\BB_1 \cup \dots \cup \BB_k$, where each of the $\BB_i$ lie in $\VRep_\rho^\mathbb{Z}(G_n)$. Then $\EE = \BB_1 \cup \dots \cup \BB_k$ consists of the rows of $\BB_1$, followed by the rows of $\BB_2$, and so on. Note that the decomposition is ordered, in the sense that $\BB_1 \cup \BB_2 \neq \BB_2 \cup \BB_1$.

Next we recall the remove column operator.

\begin{defn}[{\cite[Definition 3.5]{HKT26}}]
    Let $\EE\in  \VRep_\rho(G)$ be tempered. The \emph{remove column} operator $\rc_k$ is defined to be the operator which removes the $k$th column of $\EE$ (counting empty columns). More precisely, $\rc_k(\EE)$ consists of all the extended segments of $\EE$ except those of the form $([k, k], 0, \eta)$ for any $\eta$. If no such extended segments exist, then $\rc_k(\EE) = \EE$.
\end{defn}

Let $\mathcal{B}\in\ABlock(G)$. We let $\Psi(\pi(\mathcal{B}))$ denote the set of formal local Arthur parameters $\psi_{\mathcal{F}}$ where $\mathcal{F}\in\VRep_\rho(G)$ is such that $\mathcal{F}$ and $\mathcal{B}$ are related by a sequence of basic operators. We recall how to determine the cardinality of this set for blocks.

\begin{thm}[{\cite[Theorem 3.6]{HKT26}}]
\label{thm-count-block-temp}
    Let $\mathcal{B}$ be a block starting at $c_{min}$. Let $c_{max}$ be the last column of $\BB$, and let $m_{c_{max}-1}$ be the multiplicity of $c_{max}-1$ in $\mathcal{B}$. Let
    \begin{align*}
        \BB' &:= \rc_{c_{max}}(\BB) \\
        \BB'' &:= \rc_{c_{max}-1}(\BB')
    \end{align*}
    
    First suppose that $c_{min} = 0$. Then 
    \begin{equation} 
    \label{eq first recursion}
    |\Psi(\pi(\mathcal{B}))| = \begin{cases} 3 |\Psi(\pi(\mathcal{B}'))| & \text{if } m_{c_{max}-1} = 1 \\  4 |\Psi(\pi(\mathcal{B}'))| - |\Psi(\pi(\mathcal{B}''))| & \text{if } m_{c_{max}-1} = 3, 5, \dots. \end{cases}
    \end{equation}
    On the other hand, if $c_{min} > 0$, then 
    \begin{equation}
    \label{eq second recursion}
    |\Psi(\pi(\mathcal{B}))| = \begin{cases} 2 |\Psi(\pi(\mathcal{B}'))| & \text{if } m_{c_{max}-1} = 1 \\ 3 |\Psi(\pi(\mathcal{B}'))| - |\Psi(\pi(\mathcal{B}''))| & \text{if } m_{c_{max}-1} = 3, 5, \dots. \end{cases}
    \end{equation}
\end{thm}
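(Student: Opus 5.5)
The plan is to describe $\Psi(\pi(\mathcal{B}))$ explicitly as a set of formal parameters reachable from $\mathcal{B}$ by basic operators, and then obtain the recursion by organizing this set according to what happens at the last column $c_{max}$. Since $\mathcal{B}$ is tempered, Theorem \ref{thm psi max min} gives $\psi^{min}=\psi_{\mathcal{B}}$, and every element of $\Psi(\pi(\mathcal{B}))$ arises from $\mathcal{B}$ by a sequence of raising operators: $ui^{-1}$ and $dual\circ ui\circ dual$. The $dual_k^-$ operators are irrelevant because all $B_i\in\Z$. The block conditions are what make $ui$-type moves possible. The alternating signs between adjacent columns and the odd multiplicities are exactly the conditions under which row exchanges and type 3 union-intersections glue adjacent singleton rows. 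So I would first give a combinatorial normal form. Each $\mathcal{F}\sim\mathcal{B}$ should, after row exchanges, be determined by its support, which is a multiset of segments $[A,B]$ with $B\le A$, obtained by merging chains of consecutive columns. When $c_{min}=0$, the rows touching column $0$ may additionally be flipped to negative $B$ (segments $[A,-B]$) via $dual\circ ui\circ dual$. Concretely, I would show that $\psi_{\mathcal{F}}$ is determined by a choice, column by column, of how the odd number $m_c$ of copies at column $c$ are paired with copies at column $c\pm1$.

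Next I would isolate the last column. By maximality and the block conditions, column $c_{max}$ has odd multiplicity, and in this column, apart from possible pairs, the relevant choice concerns one distinguished row. That row either stays as the singleton $[c_{max},c_{max}]$, or is merged with a row of column $c_{max}-1$ into a longer segment ending at $c_{max}$. Deleting column $c_{max}$ gives a map $\Psi(\pi(\mathcal{B}))\to\Psi(\pi(\mathcal{B}'))$, and I would compute its fibers.

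If $m_{c_{max}-1}=1$, the unique row at $c_{max}-1$ is used in whatever segment it belongs to in $\mathcal{B}'$. The last row then has a fixed number of independent options. When $c_{min}>0$ there are $2$: stay singleton, or extend the segment containing column $c_{max}-1$. When $c_{min}=0$ there is a third option coming from the dual side (the extension $[c_{max},-B]$ through $dual\circ ui\circ dual$). This yields the factors $2$ and $3$. If $m_{c_{max}-1}\ge3$, there is a spare row at $c_{max}-1$, so the last row can be attached either to the spare row or to the structure of $\mathcal{B}'$. This gives one additional option, raising the factors to $3$ and $4$. The attachments are overcounted exactly by the configurations where the spare row together with the last row decouples from everything below column $c_{max}-1$. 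Those configurations are counted by $\Psi(\pi(\mathcal{B}''))$, which explains the subtraction by inclusion–exclusion.

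The main obstacle is justifying that these counts are exact. Two things must be proved: that the options listed above really yield distinct parameters, and that no other operators (for instance type 1 or 2 $ui$'s, or interaction through nonadjacent columns) produce extra parameters. For distinctness I would use Corollary \ref{cor same support}. For exhaustiveness I would use Theorem \ref{thm intersections of local Arthur packets}(2), together with the nonvanishing criteria applied formally to $\VRep_\rho$, to show that every reachable $\mathcal{F}$ restricts, after removing column $c_{max}$, to something reachable from $\mathcal{B}'$. I would begin by proving this restriction lemma, inducting on the length of the operator sequence.
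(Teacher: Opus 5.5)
Your last-column bookkeeping has the right shape: two (resp. three) placements of column $c_{max}$, one more when $m_{c_{max}-1}\geq 3$, and a correction counted by $\BB''$. But the proposal rests on a false premise, and it omits the ingredient that carries the proof.

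\textbf{The false premise.} $\psi_{\BB}$ is $\psi^{max}(\pi(\BB))$, not $\psi^{min}(\pi(\BB))$. No raising operator applies to a tempered $\BB$. Its rows are single circles, so no $ui^{-1}$ applies. The rows $([c,-c],c,\cdot)$ of $dual(\BB)$ have nested supports, so no $ui$ exists there, and hence no $dual\circ ui\circ dual$ applies to $\BB$. Raising operators move \emph{toward} tempered forms: a $dual\circ ui\circ dual$ followed by a $ui^{-1}$ is exactly how a tempered multi-segment is reached in the proof of Theorem \ref{thm-generic-theta-count} and in Remark \ref{rmk cases 1 and 4}. The minimal element is the datum $(\{0,\overline{1},\dots,\overline{n}\})$ (cf. Lemma \ref{E min classification Z} and its type $Y_{\mathcal{M}}$ analogue). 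So elements of $\Psi(\pi(\BB))$ are reached from $\BB$ by \emph{lowering} operators, and your induction over raising sequences starting at $\BB$ is vacuous. Your own gluing of adjacent circles by type 3 $ui$ is already a lowering move. This is repairable by using basic operators in both directions (Theorem \ref{thm intersections of local Arthur packets}(2)), but it must be fixed.

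\textbf{The genuine gap: the normal form.} You never state it, and both exhaustiveness and the correction term depend on it. What is needed is Theorem \ref{block classification}: the multi-segments equivalent to $\BB$ are exactly those of type $Y_{\mathcal{M}}$. Such data consist of a valid tuple $\mathcal{S}$ of column intervals, where consecutive intervals may share a column $c$ only if $m_c>1$, subject also to condition (5) of Definition \ref{valid S}. When $c_{min}=0$, each $\mathcal{S}_i$ also carries a splitting $\mathcal{T}$ into a chain part and any number of hat parts. Each hat part is an arbitrary interval $\mathcal{T}_i^j$ of columns and produces a hat $[\max\mathcal{T}_i^j,-\min\mathcal{T}_i^j]$.

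For $c_{min}>0$ your columnwise linking picture is close to this. Still, the correction term is exactly the set of configurations where two choices give the same support, so you need the precise rules. For $c_{min}=0$ your picture is wrong, since hats do not arise by flipping the rows at column $0$. As a result your fibers are mis-described:
\begin{itemize}
\item For $c_{min}=0$, the three placements of column $c_{max}$ are a new interval $\{c_{max}\}$; extending the last part of the last interval (a chain extension, or a hat merge, which is where $[c_{max},-B]$ occurs); and appending a new hat part, i.e. the hat $[c_{max},-c_{max}]$. So your ``third option $[c_{max},-B]$'' is not the third option.
\item For $m_{c_{max}-1}\geq 3$, the overcount comes from the two dualizations $\Theta_2,\Theta_4$ of Theorem \ref{first block m_n > 1}. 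They give the same parameter exactly when $\{c_{max}-1\}$ is one of the intervals of the $\BB'$-configuration, and it is these configurations that biject with the configurations of $\BB''$.
\end{itemize}

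Your restriction lemma is not even well posed without the normal form. ``Deleting column $c_{max}$'' must act on a hat such as $[c_{max},-c_{max}]$, whose only circle sits in column $0$. Showing that every basic operator, including the non-type-3' ones, preserves type $Y_{\mathcal{M}}$ is the substance of the classification.

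Grant the classification, together with distinctness of parameters for distinct type $Y_{\mathcal{M}}$ data (this is where Corollary \ref{cor same support} enters). Then the recursion for $c_{min}=0$ follows from Theorems \ref{first block m_n = 1} and \ref{first block m_n > 1}. After reducing to $m_{c_{max}}=1$, $\BB$ agrees up to its overall sign with the tempered $\Theta_3(\BB')$. Hence $|\Psi(\pi(\BB))|$ is the number of distinct $\psi_{\Theta_i(\EE')}$ with $\EE'\sim\BB'$ and $i\in\{1,2,3\}$ or $\{1,2,3,4\}$, the only coincidences being $\psi_{\Theta_2(\EE')}=\psi_{\Theta_4(\EE')}$. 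For $c_{min}>0$ only $S$-operators act (Lemma \ref{S for E starting after 0}), and the same last-column count on $\mathcal{S}$ alone gives the factors $2$ and $3$.
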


Our first main result is a count for the ``theta lift'' of an almost-block.

\begin{thm}[count for theta lifts of almost-blocks]
\label{thm-count-block-theta-temp}
    Let $\BB$ be an almost-block starting at zero. Let $c_{max}$ be the last column of $\BB$, and let $m_{c_{max}}$ its multiplicity in $\mathcal{B}$. Let \[\BB' := \rc_{c_{max}}(\BB).\] Then 
    \[|\Psi(\pi(\Theta_1(\mathcal{B})))| = \begin{cases} 3 |\Psi(\pi(\mathcal{B}))| & \text{if } m_{c_{max}} = 1 \\ 3 |\Psi(\pi(\mathcal{B}))| - |\Psi(\pi(\mathcal{B}'))| & \text{if } m_{c_{max}} = 2, 4, \dots \\ 4 |\Psi(\pi(\mathcal{B}))| - |\Psi(\pi(\mathcal{B}'))| & \text{if } m_{c_{max}} = 3, 5, \dots. \end{cases}\]
\end{thm}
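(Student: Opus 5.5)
First I will compute $\Theta_1(\BB)$ explicitly. Because $\BB$ is an almost-block starting at $0$, Theorem \ref{thm going up first occurrence} (applied formally through $\BB_{\rho\rightarrow\chi_V}$) gives $m^{\up,\alpha}=2c_{max}+1$ when the chain, oddness and alternating conditions hold up to $c_{max}$. If $m_{c_{max}}$ is even, the oddness condition fails at $c_{max}$ and we get $2c_{max}-1$ instead. Running Algorithm \ref{algo compute theta lift} from $\alpha_0$ down to $m^{\up,\alpha}$, the added row $([\frac{\alpha-1}{2},-\frac{\alpha-1}{2}],\frac{\alpha-1}{2},\up)$ is repeatedly shrunk and row-exchanged past the tempered rows. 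Since every $B_i\ge 0$, no row exchange in Step 2 is triggered. Hence $\Theta_1(\BB)$ is $\BB$ (twisted) together with one extra row $([c,-c],c,\up)$ placed first, with $c=c_{max}$ or $c_{max}-1$. By construction its $\up$ sign makes it alternate with the $\eta$ of column $0$.

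Next I will count $\Psi(\pi(\Theta_1(\BB)))$ using Theorem \ref{thm intersections of local Arthur packets}. This means enumerating all extended multi-segments obtained from $\Theta_1(\BB)$ by basic operators. I will organize the count as in the proof of Theorem \ref{thm-count-block-temp}. The extra row is first passed through $dual$, where it becomes a short row near column $0$ in the anti-tempered picture. I will then classify the parameters by what happens to the two rows at the boundary: the new row and the rows in the last column $c_{max}$.

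The expected structure is a decomposition $\Psi(\pi(\Theta_1(\BB)))=X_1\sqcup X_2\sqcup X_3$ (and a fourth class $X_4$ in the odd $m\ge3$ case). Each class corresponds to one of $\Theta_1,\dots,\Theta_4$, and each is in bijection with $\Psi(\pi(\BB))$. The bijections come from the $ui$ operations of type 1/2/3 between the added row and the rows of $\BB$: either the added row is left alone, or it is merged via a $ui$ of some type with a row of column $c_{max}$ (or $0$ after dual). This accounts for the factor $3$ when $m_{c_{max}}=1$, matching the supercuspidal case $3|\Psi(\pi)|$. The subtracted terms $|\Psi(\pi(\BB'))|$ arise by inclusion–exclusion. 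When $m_{c_{max}}\ge2$, some parameters obtained from different classes coincide, or are forbidden because the last column can be split. These overlaps should correspond exactly to parameters in which column $c_{max}$ is inert, which are counted by $\Psi(\pi(\rc_{c_{max}}(\BB)))$. When $m_{c_{max}}$ is odd and $\ge3$, an additional class appears in which a pair of rows of column $c_{max}$ also merges with the added row. This gives the coefficient $4$, mirroring the recursion \eqref{eq first recursion}.

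The main obstacle is proving that these classes exhaust $\Psi(\pi(\Theta_1(\BB)))$ and that their overlaps are exactly in bijection with $\Psi(\pi(\BB'))$. I would first reduce to the $\psi^{max}/\psi^{min}$ structure of Theorem \ref{thm psi max min}. That is, I would show every $\EE\sim\Theta_1(\BB)$ is reachable by raising operators from a fixed minimal form. Then I would track, via nonvanishing criteria for $\VRep_\rho$, which $ui$ of the added row are applicable given the operations already performed on $\BB$. The evenness of $m_{c_{max}}$ changes the first occurrence and thus the length of the added row, and I would handle that case separately by comparison with the odd case for $\BB'$.
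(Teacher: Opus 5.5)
The overall shape of your count (classes indexed by $\Theta_1,\dots,\Theta_4$, each in bijection with $\Psi(\pi(\BB))$, with an overlap measured by $|\Psi(\pi(\BB'))|$) matches the paper. The concrete content, however, is wrong in ways that matter, starting with $\Theta_1(\BB)$ itself.

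Theorem \ref{thm going up first occurrence} gives $m^{\up}=2n+3+l(\pi)$, so $m^{\up,\alpha}=l(\pi)+2$; you have identified $m^{\up,\alpha}$ with $l(\pi)$. For a block starting at zero, $l(\pi)=2c_{max}+1$, so the added row is $([c_{max}+1,-c_{max}-1],c_{max}+1,-\eta(\BB))$, as in Definition \ref{defn Theta_1(EE)}.

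In the even case the paper uses the \emph{same} hat (Lemma \ref{duds for almost block}; compare the example in \S\ref{sec case 3}, where $m_1=2$ and the added hat is $[2,-2]$). The oddness condition is only imposed on columns strictly below the top of the chain, which is exactly what the almost-block definition encodes. So evenness of $m_{c_{max}}$ does not shorten the hat. Your plan to treat that case via a changed first occurrence and ``comparison with the odd case for $\BB'$'' rests on a false premise. Also, under $dual$ the hat becomes a single circle in column $c_{max}+1$, placed last, not a short row near column $0$. This is why the only new interactions are with rows whose support ends at $c_{max}$.

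The new operations are not $ui$'s of types 1/2/3. They are:
\begin{itemize}
\item the $dual\circ ui\circ dual$ of type 3' merging the hat with the first row whose support ends at $c_{max}$, giving $\Theta_2$;
\item the same operation with the last such row, giving $\Theta_4$, which exists only when $m_{c_{max}}\ge 2$;
\item a subsequent $ui^{-1}$ splitting off the circle in column $c_{max}+1$, giving $\Theta_3$.
\end{itemize}
There is no class in which a pair of rows of column $c_{max}$ merges with the hat.

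So the classes are $\Theta_1,\Theta_2,\Theta_3$ when $m_{c_{max}}=1$, and all four when $m_{c_{max}}\ge 3$ is odd. When $m_{c_{max}}$ is even they are only $\Theta_1,\Theta_2,\Theta_4$: the extra multiple in column $c_{max}$ obstructs the $ui^{-1}$, since moving the row ending at $c_{max}+1$ to the bottom turns it into a pair of triangles. In both cases with $m_{c_{max}}\ge 2$, the subtraction comes from the coincidence $\psi_{\Theta_2(\EE')}=\psi_{\Theta_4(\EE')}$. This happens exactly when $\{c_{max}\}$ is one of the sets $\mathcal{S}_i$ of $\EE'$, and that is the source of $|\Psi(\pi(\BB'))|$.

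Finally, the real content has three parts: each class is in bijection with $\Psi(\pi(\BB))$, the classes are otherwise disjoint, and nothing else is reachable. All three need an explicit parametrization of everything equivalent to $\BB$, namely the type $Y_{\mathcal{M}}$ $\mathcal{S}$-data description (Theorem \ref{block classification}, Lemma \ref{lem SMUD operators}). From it, the odd cases are Theorems \ref{first block m_n = 1} and \ref{first block m_n > 1}. The even case is the closure argument of Lemma \ref{Almost block classification} together with the distinctness argument of \cite[Lemma 6.7]{HKT26}. Reducing to $\psi^{min}$ and tracking nonvanishing criteria, as you propose, does not give such a parametrization. As written, neither the bijections nor the identification of the overlap with $\Psi(\pi(\BB'))$ is established.
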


We will prove the above theorem in \S\ref{sec Theta Correspondence for Almost-Blocks}. Note that the case that $\pi$ is supercuspidal (mentioned in \S\ref{sec intro}) is already covered by Theorem \ref{thm-count-block-theta-temp}. Indeed, in this case we have that $\EE_{\chi_V}$ is a block and $m_{c_{\max}}=1$ (for $\EE_{\chi_V}$). From Theorem \ref{thm-count-block-theta-temp} (and Theorems \ref{thm compute theta lift} and \ref{thm going up first occurrence}), we obtain that $|\Psi(\theta_{-m^{\up,\alpha}(\pi)}^\up(\pi))|=3|\Psi(\pi)|$ as stated in \S\ref{sec intro}, but with a different justification.

Next we recall how the numbered of local Arthur parameters containing a fixed tempered representation is determined by its block decomposition.

\begin{thm}[{\cite[Theorem 3.7]{HKT26}}]
\label{thm-count-temp}
    Let $\EE\in\VRep_\rho(G)$ be a tempered extended multi-segment. Suppose that $\EE$ decomposes into blocks $\mathcal{B}_1, \dots, \mathcal{B}_k$ in that order. Then \[|\Psi(\pi(\EE))| = |\Psi(\pi(\BB_1))| \cdot \prod_{i=2}^k |\Psi(\pi(sh^1(\BB_i)))|.\]
\end{thm}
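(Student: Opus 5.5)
The natural approach is to show that the equivalence class of $\EE$ under basic operators (Theorem \ref{thm intersections of local Arthur packets}) splits as a product of the equivalence classes of its blocks. We would then identify the effect of the earlier blocks on each later block $\BB_i$ with replacing $\BB_i$ by $sh^1(\BB_i)$. Throughout we work with a fixed $\rho$ and $\VRep_\rho^\Z(G)$, since the other $\rho'$-parts are untouched by the relevant operators. The formula $|\Psi(\pi(\EE))|$ is then the number of distinct supports $\psi_\FF$ over $\FF\sim\EE$. By Corollary \ref{cor same support}, it suffices to count these supports and not the multi-segments themselves.

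\textbf{Step 1 (locality of operators).} First we would show that for any $\FF\sim\EE$ the rows of $\FF$ can be grouped as $\FF=\FF_1\cup\cdots\cup\FF_k$, with $\FF_j\sim\BB_j$, such that every basic operator applicable to $\FF$ acts within a single group. The key claims are:
\begin{itemize}
\item Row exchanges $R_k$ between rows of different groups do not change the support. So, after reordering via $R_k$, we may assume the groups occupy consecutive positions in the admissible order.
\item A union-intersection $ui_{i,j}$, or its dual $dual\circ ui\circ dual$, whose two rows come from different groups is never applicable.
\end{itemize}
For the second claim we would use the maximality in Definition \ref{def-block}. At a boundary between $\BB_j$ and $\BB_{j+1}$, either the alternating sign condition fails, or the oddness (multiplicity) condition fails. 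In either case the value $\epsilon=(-1)^{A_k-B_k}\eta_k\eta_{k+1}$ and the endpoint equalities in Cases 1--3 of Definition \ref{ui def} are incompatible. We then propagate this through the operators by induction on the number of basic operators applied. Here the invariants are the quantities $A-l$, $B+l$ and $\eta$ at the extremal circles of each group, which are exactly what the $ui$ conditions test.

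\textbf{Step 2 (effect of preceding blocks).} The first block $\BB_1$ may start at column $0$, where $dual$ and the non-vanishing conditions of \cite[Theorems 3.6, 4.4]{Ato20b} permit additional moves (e.g. segments with $B<0$ after dualizing). A later block $\BB_i$ sits above earlier material. The non-vanishing conditions (applied with the admissible order ($P'$)) forbid the rows of $\FF_i$ from extending to the left in the same way they would if $\BB_i$ were isolated and started at $0$. We would show that this constraint is equivalent to the constraint on an isolated block starting at a positive column. That gives a bijection between the supports reachable for the $i$-th group inside $\EE$ and $\Psi(\pi(sh^1(\BB_i)))$. The shift by $1$ is a normalization: it guarantees $c_{\min}\ge 1$, so the regime of \eqref{eq second recursion} applies. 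It also preserves the relative shape of the block, including signs and multiplicities.

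\textbf{Step 3 (product).} Combining Steps 1 and 2, the map $\psi_\FF\mapsto(\psi_{\FF_1},\dots,\psi_{\FF_k})$ is a bijection onto $\Psi(\pi(\BB_1))\times\prod_{i\ge 2}\Psi(\pi(sh^1(\BB_i)))$.
\begin{itemize}
\item \emph{Surjectivity.} Independent operator sequences on each group commute and can be concatenated.
\item \emph{Injectivity.} The grouping of $\supp(\FF)$ is recoverable, since each group's support has column range determined by its block.
\end{itemize}
For injectivity we would use Theorem \ref{thm psi max min}: each group's supports lie between those of $\psi^{min}$ and $\psi^{max}$, which lie in disjoint column windows.

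The main obstacle is Step 1, and in particular ruling out cross-block $dual\circ ui\circ dual$ moves after many operators have been applied, when the rows no longer look tempered. I would first try to prove a stronger invariant by induction: for every $\FF\sim\EE$ the grouped rows satisfy a fixed inequality at each boundary (say $B+l$ of the lowest circle of $\FF_{j+1}$ versus $A-l$ of the highest circle of $\FF_j$, together with the sign relation), and this inequality excludes all three $ui$ cases.
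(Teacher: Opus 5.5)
Your architecture matches the argument of \cite[\S7]{HKT26} (Lemma~\ref{lem-independence-of-blocks}, Proposition~\ref{prop-independence-of-blocks}, and the parallel argument in \S\ref{sec case 5}). That architecture is: write every $\FF\sim\EE$ as a union of parts equivalent to the blocks, show that no operator involves two parts, show that every operator on a part can be realized inside $\FF$, and multiply. The gap is in your independence step, and it is exactly the part that produces the $sh^1$.

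Equivalence also uses the inverses of basic operators. The type 3' inverses $ui^{-1}$ and $dual\circ ui^{-1}\circ dual$ act on a \emph{single} row, which must first be row exchanged past rows of other groups so that the split leaves an admissible order. Your Step 1 only excludes two-row $ui$ and $dual\circ ui\circ dual$ via endpoint equalities, and never addresses these.

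A minimal example: take $\EE=\{([0,0],0,1),([0,0],0,1),([1,1],0,-1)\}$, with blocks $\BB_1=\{([0,0],0,1)\}$ and $\BB_2=\{([0,0],0,1),([1,1],0,-1)\}$.
\begin{itemize}
\item In isolation $|\Psi(\pi(\BB_2))|=3$. The third parameter comes from merging $\BB_2$ into $([1,0],0,1)$ and then splitting off a hat with support $[1,-1]$ by a $dual\circ ui^{-1}\circ dual$.
\item Inside $\EE$ the merge is still possible. In $dual$, however, the row $([1,0],0,-1)$ must now be exchanged past $([0,0],0,1)$, the dual of $\BB_1$, before it can be split.
\item That exchange is Case 1(b) of Definition~\ref{def row exchange} and raises $l$ from $0$ to $1$. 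So no split exists, and the factor drops to $|\Psi(\pi(sh^1(\BB_2)))|=2$.
\end{itemize}
No endpoint comparison between two rows of different groups detects this. Nor do the non-vanishing conditions explain it, as your Step 2 proposes. Non-vanishing of a candidate $\FF$ says nothing about whether $\pi(\FF)=\pi(\EE)$; by Theorem~\ref{thm intersections of local Arthur packets}, membership in the class is decided by applicability of operators. So ``this constraint is equivalent to that of an isolated block starting at a positive column'' is an unsupported restatement of the theorem.

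What is needed is the argument of \cite[Lemma 7.12]{HKT26}. For a row $r$ of a later part, $l(\widehat r)$ starts at $B(r)$. Each exchange with the dual of a single-circle row lowers it by at most one. Across a boundary between two parts the net decrease falls strictly short of one per column, so $l(\widehat r)$ never reaches the value $0$ that a type 3' $ui^{-1}$ requires.

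Because the earlier parts of an arbitrary $\FF\sim\EE$ are not tempered, making this rigorous requires three further inputs: the type $Y_{\mathcal M}$ classification (Theorem~\ref{block classification}), Lemma~\ref{cor Alex} to replace the crossed rows by an equivalent tempered truncation, and the truncation Lemma~\ref{lem-truncations}. Your proposed inductive inequality at the extremal circles of adjacent groups cannot capture this. Those quantities are not even preserved by the row exchanges that bring a row to the boundary (Lemmas~\ref{big swap down} and~\ref{big swap up}).

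Three smaller points.
\begin{itemize}
\item Your boundary dichotomy (alternation fails or oddness fails) omits gap boundaries.
\item At a boundary where two blocks share a column, the column windows are not disjoint, so injectivity via $\psi^{max},\psi^{min}$ and ``disjoint windows'' fails as stated. Use instead the staircase property \cite[Lemma 7.4]{HKT26}: $A(r_1)\le B(r_2)$ for $r_1$ in an earlier part and $r_2$ in a later one. Together with the fixed number of circles per column in each part, this recovers how the supports split.
\item The same staircase property is what your surjectivity step needs: it guarantees that a $ui^{-1}$ or $dual\circ ui^{-1}\circ dual$ valid on one part keeps the whole order admissible.
\end{itemize}
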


\begin{rmk}
    Note that by combining Equations \eqref{eq first recursion} (to compute $|\Psi(\pi(\BB_1))|$) and \eqref{eq second recursion}  (to compute $|\Psi(\pi(sh^1(\BB_i)))|$), the above theorems provide a complete computation of $|\Psi(\pi(\EE))|$ for any tempered $\EE\in\VRep_\rho(G)$.
\end{rmk}

\begin{rmk}
    In general, given $\EE\in\Rep^\Z(G)=\Rep(G)\cap\Vseg^\Z(G),$ there is a unique decomposition $\EE=\EE_{\rho_1}\cup\cdots\cup\EE_{\rho_k}$ where $\rho_i\neq\rho_j$ for any $i\neq j.$ By Theorem \ref{thm intersections of local Arthur packets}, we obtain $\Psi(\pi(\EE))=\prod_{i=1}^k\Psi(\pi(\EE_{\rho_i})).$ Applying Theorem \ref{thm-count-temp} to determine each $|\Psi(\pi(\EE_{\rho_i}))|$ gives the formula for computing $|\Psi(\pi(\EE))|$.
\end{rmk}

Finally, we give our main result on counting the number of local Arthur packets containing a given theta lift of a tempered representation to its first occurrence in the going-up tower (in the sense of Remark \ref{rmk theta count}).

\begin{thm}
\label{thm-count-theta-temp}
    Let $\EE = ([A_i, B_i], l_i, \eta_i)\in\VRep_\rho(G_n)$ be a tempered extended multi-segment. 
    Let $\mathcal{B}_1, \dots, \mathcal{B}_k$ denote the block decomposition of $\EE$. We consider various cases determined by the columns near the end of the first block.
    Let $H_{col}$ be the last column of $\mathcal{B}_1$ and let $N_{col}$ be the first column of $\mathcal{B}_2$.
    \begin{enumerate}

    \item[Case 1.] Assume that $\EE$ does not start at zero. Then $\Theta_1(\EE)$ is tempered and so, by Theorem \ref{thm-count-temp}, we can compute $|\Psi(\pi(\Theta_1(\EE)))|$.

    \item [Case 2.] \underline{$N_{col} > H_{col}+1$ and $\EE$ starts at zero:} 

         Then $\Theta_1(\EE)$ is tempered and $|\Psi(\pi(\Theta_1(\EE)))|$ is computed by Theorem \ref{thm-count-temp}.
    \end{enumerate}
    Now suppose $N_{col} \leq H_{col}+1$. Let $\eta_H$ be such that $([H_{col}, H_{col}], 0, \eta_H)$ appears in $\EE$, and let $\eta_N$ be such that $([H_{col}+1, H_{col}+1], 0, \eta_N)$ appears in $\EE$. (These are well-defined since $\EE$ is tempered and $N_{col} \leq H_{col}+1$, so that an extended segment in column $H_{col}+1$ appears in $\EE$.) Also let $m_H$ be the multiplicity of $H_{col}$ in $\EE$ and let $m_N$ be the multiplicity of $H_{col}+1$ in $\EE$.

    \begin{enumerate}
        \item [Case 3.] \underline{$m_H \equiv 0 \bmod{2}$, $\eta_N = \eta_H$, $m_N \equiv 1 \bmod{2}$, and $\EE$ starts at zero:} 
        
        We have that
        \[|\Psi(\pi(\Theta_1(\EE)))| = |\Psi(\pi(\Theta_1(\BB_1 \cup \BB_2)))| \cdot \prod_{i=3}^k |\Psi(\pi(\BB_i))|.\]
        
        Suppose $\EE$ has at most two blocks in its block decomposition. Then we have the following recursive formulas:

        \begin{enumerate}
            \item [Case 3.1.] \underline{$\EE$ ends at $H_{col}+1$:}

            Let $\EE' = \rc_{H_{col}+1}(\EE)$. Then \[|\Psi(\pi(\Theta_1(\EE)))| = |\Psi(\pi(\Theta_1(\EE')))|.\]

            \item [Case 3.2.] \uline{$\EE$ ends at $H_{col}+2$:}

            Let $\EE' = \rc_{H_{col}+2}(\EE)$. Then \[|\Psi(\pi(\Theta_1(\EE)))| = 4 |\Psi(\pi(\Theta_1(\EE')))|.\]

            \item [Case 3.3.] \underline{otherwise:}

            We apply similar recursive formulas as Equation \ref{eq first recursion}. More precisely, let $c_{max}$ be the last column of $\EE$, and let $m_{c_{max}-1}$ be the multiplicity of $c_{max}-1$. Let $\EE' = \rc_{c_{max}}(\EE)$ and $\EE'' = \rc_{c_{max}-1}(\EE')$. Then
            \[|\Psi(\pi(\Theta_1(\EE)))| = \begin{cases} 3 |\Psi(\pi(\Theta_1(\EE')))| & \text{if } m_{c_{max}-1} = 1 \\ 4 |\Psi(\pi(\Theta_1(\EE')))| - |\Psi(\pi(\Theta_1(\EE'')))| & \text{if } m_{c_{max}-1} = 3, 5, \ldots. \end{cases}\]
        \end{enumerate}

        \item [Case 4.] \underline{$m_H \equiv 1 \bmod 2$, $m_N \equiv 0 \bmod{2}$, $\eta_N = \eta_H$, and $\EE$ starts at zero:} 

        Then $\Theta_1(\EE)$ is tempered and $|\Psi(\pi(\Theta_1(\EE)))|$ is computed by Theorem \ref{thm-count-temp}.

        \item [Case 5.] \underline{$m_H \equiv 0 \bmod{2}$, $m_N \equiv 0 \bmod{2}$, $\eta_N = \eta_H$, and $\EE$ starts at zero:}
        \[|\Psi(\pi(\Theta_1(\EE)))| = |\Psi(\theta_{-m^{\up, \alpha}}^\up(\pi(\mathcal{B}_1 \cup \mathcal{B}_2)))| \cdot \prod_{i=3}^k |\Psi(\pi(\mathcal{B}_i))|.\]

        \item [Case 6.] \underline{$m_H \equiv 1 \bmod{2}$, $m_N \equiv 1 \bmod{2}$, $\eta_N = \eta_H$, and $\EE$ starts at zero:} Then $\Theta_1(\EE)$ is tempered, and moreover 
        \[|\Psi(\pi(\Theta_1(\EE)))| = |\Psi(\pi(\Theta_1(\mathcal{B}_1)))| \cdot \prod_{i=2}^k |\Psi(\pi(\mathcal{B}_i))|.\]
    \end{enumerate}
\end{thm}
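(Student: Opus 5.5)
The plan is to compute $\Theta_1(\EE)$ explicitly in each case and then reduce the count to the block counts already available, Theorems \ref{thm-count-block-temp}, \ref{thm-count-temp} and \ref{thm-count-block-theta-temp}. First we determine $m^{\up,\alpha}$ from Theorem \ref{thm going up first occurrence} applied to $\EE_{\rho\rightarrow\chi_V}$.

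If $\EE$ does not start at zero, the chain condition fails, so $l=-1$ and $\alpha=1$. The added row $([0,0]_{\chi_W},0,\pm1)$ is a tempered singleton, so $\Theta_1(\EE)$ is tempered. This is Case 1.

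If $\EE$ starts at zero, the chain, oddness and alternating conditions single out a maximal initial run of columns $0,\dots,\frac{l-1}{2}$. This run lies in $\BB_1$, and we compare $\frac{l-1}{2}$ with $H_{col}$ using the parities $m_H,m_N$ and the signs $\eta_H,\eta_N$. We then run Algorithm \ref{algo compute theta lift} from $\alpha_0$ down to $\alpha=l+2$. Each step performs $add^{-1}$ on the added row $([\frac{\beta-1}{2},-\frac{\beta-1}{2}],\frac{\beta-1}{2},\up)$ and row exchanges it past the singletons $[\frac{\beta-3}{2},\frac{\beta-3}{2}]$. We track $l,\eta$ with Definition \ref{def row exchange}. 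Because all rows have $l=0$ and $B\ge 0$, these are Case 1 exchanges, and they only flip signs.

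The outcome is that the added row ends as $([\frac{l+1}{2},-\frac{l+1}{2}],\frac{l+1}{2},\up)$, which we rewrite as a tempered-type object. In Cases 2, 4 and 6 the parity and sign data make the new column $\frac{l+1}{2}$ merge cleanly with the remaining singletons into blocks. We check this using the block definition and Lemma \ref{lem-unique-block-decomp}, which shows $\Theta_1(\EE)$ is tempered. Theorem \ref{thm-count-temp} then gives the count. In Case 6 the new block decomposition is $\Theta_1(\BB_1)$ followed by the unchanged $\BB_i$, and since $\BB_2$ starts adjacent, the shifted blocks $sh^1$ coincide with the original $\BB_i$. This gives the stated product.

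For Cases 3 and 5, the almost-block $\BB_1\cup\{\text{column }H_{col}+1\}$ interacts with $\BB_2$, so the new row cannot be separated from $\BB_2$. For the product formula, we show that every basic operator applicable to any $\FF\sim\Theta_1(\EE)$ acts either inside the $\Theta_1(\BB_1\cup\BB_2)$ part or inside a single $\BB_i$ with $i\ge 3$. The argument mirrors the proof of Theorem \ref{thm-count-temp} in \cite{HKT26}: a $ui$ between rows in different groups requires the consecutiveness and sign conditions of Definition \ref{ui def}, and these fail at block boundaries by maximality of blocks. This gives a bijection $\Psi(\pi(\Theta_1(\EE)))\cong\Psi(\pi(\Theta_1(\BB_1\cup\BB_2)))\times\prod_{i\ge3}\Psi(\pi(\BB_i))$.

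For the recursions 3.1–3.3 we imitate Theorem \ref{thm-count-block-temp}. We enumerate the parameters in $\Psi$ by which operators ($ui$ of types 1, 2, 3, or $dual\circ ui\circ dual$) can involve the last column $c_{max}$. We then remove that column via $\rc$ and show that each parameter for $\EE'$ extends in exactly 3, or 4, or 1 ways. In the case $m_{c_{max}-1}\ge3$ we correct for overcounting with the $\EE''$ term, as in \eqref{eq first recursion}. In Case 3.1 the last column is absorbed into the long added row, which forces uniqueness and gives factor 1. In Case 3.2 the added row together with column $H_{col}+2$ gives the extra factor 4.

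I expect the main obstacle to be the decoupling statement in Cases 3 and 5. Equivalences in Theorem \ref{thm intersections of local Arthur packets} may pass through non-tempered intermediate multi-segments, where rows from different blocks could become adjacent after row exchanges. My first approach would be to use the uniqueness of $\psi^{max}$ and $\psi^{min}$ (Theorem \ref{thm psi max min}) to normalize representatives, and to argue that the set of raising operators acts locally on each block in those normal forms.
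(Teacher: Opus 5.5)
Case 1 and your overall reduction are fine. However, there are genuine gaps: the decoupling you sketch for the product formulas does not go through, the key idea for the Case 3 recursions is missing, and Cases 2, 4, 6 need repair.

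\textbf{Cases 3 and 5.} Your skeleton for Case 5 (decouple, then multiply) is the paper's. But ``the $ui$ conditions fail at block boundaries by maximality of blocks'' is not available here, for three reasons.
\begin{itemize}
\item The first part $\CC_1\sim\Theta_1(\BB_1\cup\BB_2)$ is the lift of an almost-block and is not equivalent to any block. Here $\BB_2$ is the lone extra circle in column $H_{col}$, not column $H_{col}+1$.
\item Its support reaches column $H_{col}+1$, where the next part starts with $\eta_N=\eta_H$.
\item Its rows pushed to the bottom can have $l=1$.
\end{itemize}
The paper needs new input for exactly this situation. Lemma \ref{lem-aj-almost-block} says such a row is either the lone circle in column $H_{col}$, or has $l=1$ and last-circle sign opposite to that of $\BB_1$; this excludes $ui$'s with the following parts. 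The truncation Lemma \ref{lem-truncations-theta-lifts} excludes $dual\circ ui^{-1}\circ dual$ of type 3'. Your argument only addresses $ui$.

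Normalizing by $\psi^{max},\psi^{min}$ does not address the real problem, which is that a chain of basic operators passes through arbitrary intermediate multi-segments. What works is an invariant. For \emph{every} $\FF$ admitting a decomposition $\CC_1\cup\cdots\cup\CC_{k-1}$ with $\CC_1\sim\Theta_1(\BB_1\cup\BB_2)$ and $\CC_i\sim\BB_{i+1}$, each single basic operator involves only one part (Lemma \ref{lem-case5-independence}). Hence the decomposition propagates along any chain (Proposition \ref{prop-case5-decomposition}). Conversely, every operator valid on a part is valid on $\FF$ (Lemma \ref{lem-existence-theta-lift}); your ``bijection'' needs this for surjectivity.

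For 3.1--3.3, enumerating which operators involve the last column presupposes an explicit description of the whole class of $\Theta_1(\EE)$ from which that column can be deleted, and you give none. The paper builds one, type $Z_{k_1,k_2}$:
\begin{itemize}
\item it is type $Y_\mathcal{M}$ with even multiplicities at $k_1$ and $k_1+1$;
\item chains starting at $k_1$ carry $l=1$, and chains starting at $k_1+1$ do not alternate;
\item no $\mathcal{T}_i^0=\{k_1\}$ is allowed.
\end{itemize}
It then checks that $S,M,U,D$ survive and proves Theorem \ref{Z Classification} and Lemma \ref{packets distinct Z}. Only then does 3.1 follow, because the extra singletons in column $H_{col}+1$ are never touched by $S,M,U,D$. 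The factor $4$ in 3.2 is Lemma \ref{times 4 relation}. The circle in column $H_{col}+2$ acts as a second hat on $\Theta_1(\EE')$, whose column $H_{col}+1$ has even multiplicity, so there are two dualizations. In type $Z$ these never give the same parameter, so the correction term of Theorem \ref{first block m_n > 1} disappears. Your heuristics for 3.1 and 3.2 do not identify these mechanisms.

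\textbf{Cases 2, 4, 6.} Algorithm \ref{algo compute theta lift} exchanges the added row only with rows having $B_i=-\frac{\beta-3}{2}$. For $\beta\ge 5$ no tempered row qualifies, so no exchanges occur: $\Theta_1(\EE)$ is just the hat $([\frac{l+1}{2},-\frac{l+1}{2}],\frac{l+1}{2},\up)$ placed first.

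Since that hat is not tempered, the block definition and Lemma \ref{lem-unique-block-decomp} cannot certify temperedness. Your ``rewriting as a tempered-type object'' is the actual content, and you leave it unspecified. It is the $dual\circ ui\circ dual$ followed by $ui^{-1}$ of Theorems \ref{first block m_n = 1} and \ref{first block m_n > 1}, applied to the $\Theta_1(\BB_1)$ part (possible since $\BB_2$ starts after $H_{col}$). Nonvanishing then forces equal signs in any shared column.

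In Case 6 the resulting tempered multi-segment has $m_N+1$ equal-sign circles in column $H_{col}+1$, an even number. So its blocks are not ``$\Theta_1(\BB_1)$ followed by the unchanged $\BB_i$'': $m_N$ of these circles join the first block and one stays with the second. The stated formula survives only because, by Theorem \ref{thm-count-block-temp}, a block's count ignores the multiplicity of its last column and, for blocks not starting at $0$, of its first. The $sh^1$ of Theorem \ref{thm-count-temp} is harmless because those blocks do not start at $0$, not because $\BB_2$ is adjacent.
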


Note that the case that $\eta_N\neq\eta_H$ is excluded from the above theorem. This case is covered by the following conjecture.

\begin{conj}
\label{conj-count-theta-temp}
    Let $\EE = ([A_i, B_i], l_i, \eta_i)\in\VRep_\rho(G_n)$ be a tempered extended multi-segment. Suppose further that $\EE$ starts at zero and consider the (ordered) decomposition of $\EE$ into blocks $\mathcal{B}_1, \dots, \mathcal{B}_k$. 

    Let $H_{col}$ be the last column of $\mathcal{B}_1$ and let $N_{col}$ be the first column of $\mathcal{B}_2$.
    Now suppose $N_{col} \leq H_{col}+1$. Let $\eta_H$ be such that $([H_{col}, H_{col}], 0, \eta_H)$ appears in $\EE$, and let $\eta_N$ be such that $([H_{col}+1, H_{col}+1], 0, \eta_N)$ appears in $\EE$. (These are well-defined since $\EE$ is tempered and $N_{col} \leq H_{col}+1$, so that an extended segment in column $H_{col}+1$ appears in $\EE$.) Also let $m_H$ be the multiplicity of $H_{col}$ in $\EE$. 
    \begin{enumerate}
        \item [Case 7.] \underline{$\eta_N \neq \eta_H$:} 

        Let $\BB_1' := \rc_{H_{col}}(\BB_1)$, $\BB_2' := \rc_{H_{col}}(\BB_2)$, and $\BB_2'' := \rc_{H_{col}+1}(\BB_2')$. Let
        \[D = \begin{cases}
            \parbox[t]{3.4in}{%
            $|\Psi(\pi(\mathcal{B}_1))| \cdot |\Psi(\pi(\mathcal{B}_2))| + |\Psi(\pi(\mathcal{B}_1))| \cdot |\Psi(\pi(\mathcal{B}_2'))|$ \\
            \null\hfill${}- |\Psi(\pi(\mathcal{B}_1'))| \cdot |\Psi(\pi(\mathcal{B}_2'))|$} & \text{if } m_H - 1 = 1 \\
            2 |\Psi(\pi(\mathcal{B}_1))| \cdot |\Psi(\pi(\mathcal{B}_2))| - |\Psi(\pi(\mathcal{B}_1'))| \cdot |\Psi(\pi(\mathcal{B}_2))| & \text{if } m_H-1 > 1.
        \end{cases}\]
        If $\mathcal{B}_2''$ is empty then let $U=0$, and otherwise let \[U = |\Psi(\pi(\mathcal{B}_1))| \cdot (|\Psi(\pi(\mathcal{B}_2'))| - |\Psi(\pi(\mathcal{B}_2''))|).\]
        Then \[|\Psi(\pi(\Theta_1(\EE)))| = \big( |\Psi(\pi(\mathcal{B}_1))| \cdot |\Psi(\pi(\mathcal{B}_2))| + D + U \big) \cdot \prod_{i=3}^k |\Psi(\pi(\mathcal{B}_i))|.\]
    \end{enumerate}
\end{conj}

We prove Case 1 of Theorem \ref{thm-count-theta-temp} in Lemma \ref{lemma-EE-does-not-start-at-zero}, Cases 2, 4, and 6 in \S\ref{sec Cases 1, 2, 4}, Case 3 in \S\ref{sec case 3}, and Case 5 in \S\ref{sec case 5}. Most of the cases of Theorem \ref{thm-count-theta-temp} follow from the theory of blocks and almost blocks developed in \cite{HKT26} in the course of the proof of Theorems \ref{thm-count-block-temp}, along with the theory developed here in proving Theorem \ref{thm-count-block-theta-temp} (except Case 5 which requires further work, see \S\ref{sec case 5}).

We do not prove Conjecture \ref{conj-count-theta-temp}; however, we comment on its motivation in Appendix \ref{sec motivation}.
Collectively, the previous theorem and conjecture allow us to compute $|\Psi(\theta_{-m^{\up, \alpha}}^{\up}(\pi(\EE)))|$ from $|\Psi(\pi(\EE))|$ in the following sense.

\begin{rmk}\label{rmk theta count}
    Let $\EE\in\Rep(G_n).$
    Note that $\Psi(\pi(\EE))=\prod_{\rho'}\Psi(\pi(\EE_{\rho'}))$ and by Theorems \ref{thm Adams going up tower} and \ref{thm compute theta lift}, we have \[\Psi(\theta_{-m^{\up, \alpha}}^{\up}(\pi(\EE)))=\Psi(\pi(\EE_{m^{\up,\alpha}}^\up))=\prod_{\rho''}\Psi(\pi((\EE_{m^{\up,\alpha}}^\up)_{\rho''}).\] Moreover, if $\rho\neq\chi_V,$ then $|\Psi(\pi(\EE_{\rho'}))|=|\Psi(\pi((\EE_{m^{\up,\alpha}}^\up)_{\chi_W\chi_V\inv\rho'})|.$ Thus, we understand that $|\Psi(\theta_{-m^{\up, \alpha}}^{\up}(\pi(\EE)))|$ can be computed from $|\Psi(\pi(\EE))|$ provided that we understand $|\Psi(\pi((\EE_{m^{\up,\alpha}}^\up)_{\chi_W})|=|\Psi(\pi(\Theta_1(\EE_{\chi_V})))|$ which is described by Theorem \ref{thm-count-theta-temp} or Conjecture \ref{conj-count-theta-temp}.
\end{rmk}

We give several remarks on the above theorem and conjecture.

\begin{rmk}
    The fives cases for $\EE$ starting at zero given  in Theorem \ref{thm-count-theta-temp}, plus the additional case given in Conjecture \ref{conj-count-theta-temp} correspond to the various ways the first block could end, illustrated below. Case 2 of Theorem \ref{thm-count-theta-temp} corresponds to the situation where there is a gap between two columns. Conjecture \ref{conj-count-theta-temp}  corresponds to the situation where there is a column with an even number of circles, and the signs of the circles in the next column are opposite. Cases 3, 4, 5, and 6 of Theorem \ref{thm-count-theta-temp} correspond to the situation where the circles in two consecutive columns have the same sign. 

    \begin{center}
    \begin{tabular}{cc} 
    Case 2 & Case 3 \\
    $\begin{tikzpicture}[baseline=(current bounding box.north)]
      \matrix (m) [matrix of math nodes, nodes in empty cells] {
        0 & \cdots & H_{col} & \cdots & N_{col} & \cdots \\
        & \ddots & \phantom{\oplus} & & & \\
        \phantom{\oplus} & & \oplus & \cdots & & \\
        & & & & \ominus & \phantom{\oplus} \\
        & & & & \phantom{\oplus} & \ddots \\
      } ;
      \draw (m-3-3.south east) -- (m-3-1.south west);
      \draw (m-3-3.south east) -- (m-2-3.north east);
      \draw (m-4-5.north west) -- (m-4-6.north east);
      \draw (m-4-5.north west) -- (m-5-5.south west);
    \end{tikzpicture}$ & 
    $\begin{tikzpicture}[baseline=(current bounding box.north)]
      \matrix (m) [matrix of math nodes, nodes in empty cells] {
        0 & \cdots & H_{col} & H_{col}+1 & \cdots \\
        & \ddots & \phantom{\oplus} & & \\
        & \phantom{\oplus} & \oplus & & \\
        & & \oplus & & \\
        & & & \oplus & \phantom{\oplus}\\
        & & & \phantom{\oplus} & \ddots \\
      } ;
      \draw (m-3-3.south east) -- (m-3-1.south west);
      \draw (m-3-3.south east) -- (m-2-3.north east);
      \draw (m-4-3.north west) rectangle (m-4-3.south east);
      \draw (m-5-4.north west) -- (m-5-5.north east);
      \draw (m-5-4.north west) -- (m-6-4.south west);
    \end{tikzpicture}$ 
    \end{tabular}
    \end{center}
    \begin{center}
    \begin{tabular}{cc}
    Case 4 & Case 5 \\
    $\begin{tikzpicture}[baseline=(current bounding box.north)]
      \matrix (m) [matrix of math nodes, nodes in empty cells] {
        0 & \cdots & H_{col} & H_{col}+1 & \cdots \\
        & \ddots & \phantom{\oplus} & & \\
        & \phantom{\oplus} & \oplus & & \\
        & & & \oplus & \\
        & & & \oplus & \phantom{\oplus} \\
        & & & \phantom{\oplus} & \ddots \\
      } ;
      \draw (m-3-3.south east) -- (m-3-1.south west);
      \draw (m-3-3.south east) -- (m-2-3.north east);
      \draw (m-4-4.north west) rectangle (m-4-4.south east);
      \draw (m-5-4.north west) -- (m-5-5.north east);
      \draw (m-5-4.north west) -- (m-6-4.south west);
    \end{tikzpicture}$ & 
    $\begin{tikzpicture}[baseline=(current bounding box.north)]
      \matrix (m) [matrix of math nodes, nodes in empty cells] {
        0 & \cdots & H_{col} & H_{col}+1 & \cdots \\
        & \ddots & \phantom{\oplus} & & \\
        & \phantom{\oplus} & \oplus & & \\
        & & \oplus & & \\
        & & & \oplus & \phantom{\oplus}\\
        & & & \oplus & \phantom{\oplus}\\
        & & & \phantom{\oplus} & \ddots \\
      } ;
      \draw (m-3-3.south east) -- (m-3-1.south west);
      \draw (m-3-3.south east) -- (m-2-3.north east);
      \draw (m-4-3.north west) rectangle (m-4-3.south east);
      \draw (m-5-4.north west) rectangle (m-5-4.south east);
      \draw (m-6-4.north west) -- (m-6-5.north east);
      \draw (m-6-4.north west) -- (m-7-4.south west);
    \end{tikzpicture}$ 
    \end{tabular}
    \end{center}
    \begin{center}
    \begin{tabular}{cc}
    Case 6 & Case 7 \\
    $\begin{tikzpicture}[baseline=(current bounding box.north)]
      \matrix (m) [matrix of math nodes, nodes in empty cells] {
        0 & \cdots & H_{col} & H_{col}+1 & \cdots \\
        & \ddots & \phantom{\oplus} & & \\
        & \phantom{\oplus} & \oplus & & \\
        & & & \oplus & \phantom{\oplus} \\
        & & & \phantom{\oplus} & \ddots \\
      } ;
      \draw (m-3-3.south east) -- (m-3-1.south west);
      \draw (m-3-3.south east) -- (m-2-3.north east);
      \draw (m-4-4.north west) -- (m-4-5.north east);
      \draw (m-4-4.north west) -- (m-5-4.south west);
    \end{tikzpicture}$ & 
    $\begin{tikzpicture}[baseline=(current bounding box.north)]
      \matrix (m) [matrix of math nodes, nodes in empty cells] {
        0 & \cdots & H_{col} & H_{col}+1 & \cdots \\
        & \ddots & \phantom{\oplus} & & \\
        & \phantom{\oplus} & \oplus & & \\
        & & \oplus & & \phantom{\oplus}\\
        & & & \ominus & \\
        & & \phantom{\oplus} & & \ddots \\
      } ;
      \draw (m-3-3.south east) -- (m-3-1.south west);
      \draw (m-3-3.south east) -- (m-2-3.north east);
      \draw (m-4-3.north west) -- (m-4-5.north east);
      \draw (m-4-3.north west) -- (m-6-3.south west);
    \end{tikzpicture}$
    \end{tabular}
    \end{center}
\end{rmk}

\begin{rmk}\label{rmk going up packets calculation}
    In each of the six cases for $\EE$ in Theorem \ref{thm-count-theta-temp}, plus the additional case given in Conjecture \ref{conj-count-theta-temp}, each of the parts of the formulas can be computed using the previously stated results. We explicate this as follows.
    \begin{itemize}
    \item In Case 1, since $\Theta_1(\EE)$ is tempered, we can compute $|\Psi(\pi(\Theta_1(\EE)))|$ using Theorem \ref{thm-count-temp}.
        \item In Case 2, again $\Theta_1(\EE)$ is tempered and so we can compute $|\Psi(\pi(\Theta_1(\EE)))|$ by Theorem \ref{thm-count-temp}.
        \item In Case 7, $\BB_1'$, $\BB_2'$, and $\BB_2''$ are all blocks so we can compute $|\Psi(\pi(\BB_1'))|$, $|\Psi(\pi(\BB_2))|$, and $|\Psi(\pi(\BB_2''))|$ using Theorem \ref{thm-count-block-temp}.
        \item Suppose that we are in Case 3.
        \begin{itemize}
            \item In Case 3.1, since $\EE$ contains circles in only one extra column after the first block, namely $H_{col}+1$, $\EE'$ is almost-block, since it is $\BB_1$ together with an extra circle in column $H_{col}$. So we can compute $|\Psi(\pi(\Theta_1(\EE')))|$ using Theorem \ref{thm-count-block-theta-temp}.
            \item In Case 3.2, we note that the going-up tower and $m^{\up, \alpha}$ are the same for $\EE$ and $\EE'$, since they only differ in columns after the first block, so in fact $\EE'$ falls directly into Case 3.1.
            \item In Case 3.3, we similarly note that $\EE$, $\EE'$, and $\EE''$ all have the same up tower and value of $m^{up, \alpha}$, so again the recursive formulas eventually reduce to Case 3.1 or Case 3.2.
        \end{itemize}
        \item In Case 4, as in Case 1, $\Theta_1(\EE)$ is tempered, so we can apply Theorem \ref{thm-count-temp}.
        \item In Case 5, note that since $m_H \equiv 0 \bmod{2}$, the second block $\BB_2$ consists of a single extended segment $([H_{col}, H_{col}], 0, \eta_H)$. So $\BB_1 \cup \BB_2$ is an almost-block, and hence $|\Psi(\pi(\Theta_1(\BB_1 \cup \BB_2)))|$ can be computed by Theorem \ref{thm-count-block-theta-temp}. Note that the going-up tower and $m^{\up, \alpha}$ are the same for $\EE$ as for $\BB_1 \cup \BB_2$, so we can apply Theorem \ref{thm-count-block-theta-temp} directly. The other terms are just blocks and can be computed by Theorem \ref{thm-count-block-temp}.
        \item In Case 6, by similar reasoning, the first term can be computed by Theorem \ref{thm-count-block-theta-temp} and the other terms by Theorem \ref{thm-count-block-temp}.
    \end{itemize}
\end{rmk}

\begin{rmk}\label{rmk cases 1 and 4}
    In Cases 2 and 4 of Theorem \ref{thm-count-theta-temp}, while it is possible to compute the number of packets the theta lift lies in, there is no easy formula like in Case 6. This is because it is possible for the theta lift to cause two blocks to interact. For example, suppose 
    \[\EE = \bordermatrix{
    & 0 & 1 & 2 & 3 & 4 \cr
    & \ominus & & & & \cr
    & & \oplus & & & \cr
    & & & & \ominus & \cr
    & & & & \ominus & \cr
    & & & & \ominus & \cr
    & & & & & \oplus \cr
    }_{\chi_V}.\] The extended multi-segment corresponding to $\theta_{-m^{\up, \alpha}}^\up(\pi(\EE))$ is
    \[\bordermatrix{
    & -2 & -1 & 0 & 1 & 2 & 3 & 4 \cr
    & \lhd & \lhd & \oplus & \rhd & \rhd & & \cr
    &&& \ominus & & & & \cr
    &&& & \oplus & & & \cr
    &&& & & & \ominus & \cr
    &&& & & & \ominus & \cr
    &&& & & & \ominus & \cr
    &&& & & & & \oplus \cr
    }_{\chi_W}.\] This is equivalent to (via a $dual \circ ui \circ dual$ and a $ui\inv$) the extended multi-segment
    \[\bordermatrix{
    & 0 & 1 & 2 & 3 & 4 \cr
    & \oplus & & & & \cr
    & & \ominus & & & \cr
    & & & \oplus & & \cr
    & & & & \ominus & \cr
    & & & & \ominus & \cr
    & & & & \ominus & \cr
    & & & & & \oplus \cr
    }_{\chi_W}.\] Note that this extended multi-segment now consists of a single block, not two. In particular, since the blocks $\BB_1$ and $\BB_2$ making up $\EE$ have become one block, the number of local Arthur packets the theta lift lies in does not depend directly on $|\Psi(\pi(\BB_1))|$ or $|\Psi(\pi(\BB_2))|$ or other related quantities.
\end{rmk}

Finally, an analogous result, to be proven in \S\ref{sec Anti-Tempered Extended Multi-Segments}, follows from the above for the case where $\pi$ is an anti-tempered representation.

\begin{thm}
\label{theta-antitempered-count}
    Suppose $\mathcal{E}$ is an anti-tempered extended multi-segment. Denote its first row by $([n, -n], n, \eta(\mathcal{E}))$ and let $m_n$ be the multiplicity of the $n$th column of $dual(\mathcal{E}).$ Then $|\Psi(\theta_{-m^{\up, \alpha}}^\up (\pi(\mathcal{E})))|$ is equal to:
    $$\begin{cases}
        4|\Psi(\pi(\mathcal{E}))| - |\Psi(\pi(\EE'))| & m_n > 1, ~dual(\mathcal{E}) \text{ is a block starting at $0$},\\
        3 |\Psi(\pi(\mathcal{E}))| & m_n = 1, ~dual(\mathcal{E}) \text{ is a block starting at $0$},  \\
        3|\Psi(\pi(\mathcal{E}))| - |\Psi(\pi(\EE'))| & m_n > 1 \text{ odd}, ~dual(\mathcal{E}) \text{ not a block starting at $0$},\\
        2 |\Psi(\pi(\mathcal{E}))| & \text{otherwise},
    \end{cases}$$
    where $\EE'=(dual \circ \rc_n\circ dual)(\EE).$
\end{thm}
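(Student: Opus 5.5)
The plan is to reduce the anti-tempered case to the tempered results via the Aubert--Zelevinsky dual. Write $\EE$ anti-tempered, so every row has the form $([A_i,-A_i],A_i,\eta_i)$ and $dual(\EE)$ is tempered with rows $([A_i,A_i],0,\eta_i')$. The first row $([n,-n],n,\eta(\EE))$ has the largest $A$, so column $n$ is the last column of $dual(\EE)$.

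\textbf{Step 1: first occurrence and the lift.} By Lemma \ref{lemma dual} and Theorem \ref{thm intersections of local Arthur packets}, $dual$ commutes with basic operators up to inverting raising and lowering. Hence $\psi\mapsto\widehat\psi$ gives a bijection $\Psi(\pi(\EE))\cong\Psi(\pi(dual(\EE)))$, and $|\Psi(\pi(\EE))|$ is computed by Theorems \ref{thm-count-block-temp} and \ref{thm-count-temp}. Next I compute $m^{\up,\alpha}$ for $\pi(\EE)$. Since $\pi(\EE)$ is not tempered, Theorem \ref{thm going up first occurrence} does not apply directly. Instead I would run Algorithm \ref{algo compute theta lift} on $\EE$: start from $\EE^\pm_{\alpha_0}$, lower the added row $([\frac{\beta-1}{2},-\frac{\beta-1}{2}],\frac{\beta-1}{2},\pm1)$ step by step, and test nonvanishing using \cite{Ato20b}/\cite[Theorem 6.17]{HLL25}. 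Every row of $\EE$ is centered at $B=-A$, and the added row is also anti-tempered. So the row exchanges in Step 2 of the algorithm stay inside the anti-tempered world, and the lowering can continue only down to about $\alpha=1$ or $3$. My expectation is that $\EE^{\up}_{m^{\up,\alpha}}$ is again anti-tempered: it is $\EE'$ plus a short anti-tempered row (of length $1$ or $3$). Its dual is then the tempered $dual(\EE)$ with an extra circle added in column $0$ (and possibly column $1$). This should match $dual(\Theta_1(dual(\EE)))$ in the sense of Theorem \ref{thm-count-block-theta-temp}.

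\textbf{Step 2: count in the dual picture.} Applying $dual$ once more, $|\Psi(\theta(\pi(\EE)))|=|\Psi(\pi(\FF))|$, where $\FF$ is a tempered virtual multi-segment. It equals $dual(\EE)$ with one extra column-$0$ circle, which then either extends the first block or forms its own block. I then split into the four cases of the statement.

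If $dual(\EE)$ is a block starting at $0$, then $dual(\EE)$ with its last column $n$ treated as an almost-block end falls under Theorem \ref{thm-count-block-theta-temp}. This gives $3|\Psi|$ when $m_n=1$ and $4|\Psi|-|\Psi(\pi(\rc_n(dual\EE)))|$ when $m_n>1$. The even/odd splitting for $m_n$ collapses because in a genuine block $m_n$ is odd. Dualizing $\rc_n$ back gives $\EE'$.

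If $dual(\EE)$ is not a block starting at $0$, I expect the added part to attach as a block starting at column $0$ of a new shifted configuration. The count then follows from the $c_{min}>0$ recursion \eqref{eq second recursion}, which gives $2|\Psi|$ or $3|\Psi|-|\Psi(\pi(\EE'))|$ according to whether $m_n=1$ or $m_n>1$ is odd. The "otherwise" case, with $m_n$ even, should yield $2|\Psi|$.

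\textbf{Main obstacle.} The hardest step is Step 1: showing that the theta lift of an anti-tempered representation is dual to the (suitably shifted) going-up lift of its tempered dual. I must also pin down which configuration arises in each case, and verify that the sign $\up$ and the value $m^{\up,\alpha}$ agree. I would first check directly, via the nonvanishing criterion, when the added anti-tempered row can be shrunk, using the alternating signs of $dual(\EE)$ near column $0$. If the direct approach stalls, I would instead use the known relation between theta lifts and Aubert duality at first occurrence (Adams conjecture on the going-up tower, Theorem \ref{thm Adams going up tower}) to reduce everything to Theorem \ref{thm-count-theta-temp}, Case 6 and Theorem \ref{thm-count-block-theta-temp}.
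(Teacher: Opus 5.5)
There is a genuine gap: Step 1 identifies the wrong theta lift, and the error carries into Step 2.

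\textbf{The lift.} On the going-up tower no lowering takes place at all. Go one step below $\alpha_0=2n+3$ in Algorithm \ref{algo compute theta lift}. At $\alpha=2n+1$ the added hat $([n,-n],n,\pm1)$ has the same support as the first row of $\EE$. Every row here has the form $([A,-A],A,\eta)$, so the sign twist $(-1)^{\alpha_i+\beta_i}$ in $dual$ is the same for all rows. Hence the dual of this multi-segment is $dual(\EE)$ with one extra circle in column $n$. That circle has the same sign as the existing column-$n$ circles exactly when the added sign is $\eta(\EE)$; otherwise column $n$ carries opposite signs and the representation vanishes.

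Therefore $\up=-\eta(\EE)$ and $m^{\up,\alpha}=2n+3$. The lift is $\pi(\{([n+1,-n-1],n+1,-\eta(\EE))\}\cup\EE)$, which is again anti-tempered. Its dual is $dual(\EE)$, up to a global sign, with one new circle in column $n+1$ whose sign is opposite to column $n$. So the new circle sits at the \emph{end} of $dual(\EE)$, not in column $0$. Your expectation that the hat shrinks to a row of length $1$ or $3$ is wrong.

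\textbf{The count.} Step 2 cannot be carried out as you wrote it.

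In the block case you reach the right numbers only by invoking Theorem \ref{thm-count-block-theta-temp}, which counts $\Theta_1(dual(\EE))$. That is a different object from the one you described, and your claimed match with $dual(\Theta_1(dual(\EE)))$ has an extra $dual$. The numbers agree only because $\Theta_1(\BB)\sim\Theta_3(\BB)$ is $\BB$ with a circle appended in column $c_{\max}+1$. That is precisely the correct dual picture, but you never established it.

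In the non-block case, ``attach as a block starting at column $0$'' followed by the $c_{\min}>0$ recursion is not coherent. The terms $|\Psi(\pi(\EE'))|$ with $\EE'=(dual\circ\rc_n\circ dual)(\EE)$ come from stripping columns $n+1$ and $n$ off the \emph{last} block.

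With the correct lift everything follows directly. Transport the count through $dual$ (Lemma \ref{lemma dual}) and split into cases.
\begin{itemize}
\item If $dual(\EE)$ is a block starting at $0$, the new circle extends it. The first recursion of Theorem \ref{thm-count-block-temp} then gives $3|\Psi(\pi(\EE))|$ or $4|\Psi(\pi(\EE))|-|\Psi(\pi(\EE'))|$.
\item Otherwise, use Theorem \ref{thm-count-temp} with the second recursion. If $m_n$ is odd, the circle extends the last block, giving $2|\Psi(\pi(\EE))|$ or $3|\Psi(\pi(\EE))|-|\Psi(\pi(\EE'))|$.
\item If $m_n$ is even, the new circle joins the lone leftover circle of column $n$ into a two-column block, which contributes a factor of $2$.
\end{itemize}
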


\subsection{Applications}
In this section, we apply the previously stated results to several families of representations. In particular, we obtain the counts for the number of local Arthur packets containing the first occurrence of the theta lift to the going-up tower for generic representations of Arthur type (Theorem \ref{thm-generic-theta-count}) and unramified representations of Arthur type (Theorem \ref{thm-unram-theta-count}).

We begin with the generic case (and explain why this case covers generic representations in Remark \ref{rmk generic}).

\begin{thm}\label{thm-generic-theta-count}
    Suppose that $\EE\in\VRep_\rho(G_n)$ with
    \[
    \EE=\{([A_i,A_i]_\rho,0,1)\}_{i\in I_\rho,>}.
    \]
    Then
    \[
    |\Psi(\pi(\Theta_1(\EE))|=\begin{cases}
        1 & \text{if $\min\{A_i\}\neq 1$ or $\min\{A_i\}=1$ and $\up=1$}, \\
        2 & \text{if $\EE$ is in Cases 3 or 5}, \\
        3 & \text{otherwise},
    \end{cases}
    \]
    where the cases refer to those of Theorem \ref{thm-count-theta-temp}. Moreover, in each case, $\Psi(\pi(\Theta_1(\EE))$ can be determined concretely (e.g., see the below proof).
\end{thm}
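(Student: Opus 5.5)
The plan is to reduce everything to the case analysis of Theorem \ref{thm-count-theta-temp}, after first determining the block structure of $\EE$, and then to evaluate each formula using the base cases of Theorems \ref{thm-count-block-temp} and \ref{thm-count-block-theta-temp}.

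\emph{Block structure.} Since every sign is $\eta_i=1$, the first condition of Definition \ref{def-block} prevents two consecutive columns from lying in the same block. The second condition is vacuous, because $([A+1,A+1],0,-1)$ never occurs. Hence every block (and every almost-block) is supported on a single column. By the oddness condition together with maximality and the uniqueness in Lemma \ref{lem-unique-block-decomp}, a column of multiplicity $m_c$ breaks into one block consisting of a single row if $m_c$ is odd. If $m_c$ is even, it breaks into singleton blocks in the pattern dictated by the unique decomposition. I will check this against the ordering conventions of \cite{HKT26}. For a single-column block, Theorem \ref{thm-count-block-temp} is at its base case ($\BB'$ is empty), so $|\Psi(\pi(\BB))|=1$ and $|\Psi(\pi(sh^1(\BB)))|=1$. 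Theorem \ref{thm-count-temp} then gives $|\Psi(\pi(\FF))|=1$ for every tempered $\FF$ with all signs equal, which is consistent with genericity.

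\emph{Case split.} Because all signs agree, we always have $\eta_N=\eta_H$, so Case 7 (Conjecture \ref{conj-count-theta-temp}) never arises and Theorem \ref{thm-count-theta-temp} covers everything. The first occurrence is read off from Theorem \ref{thm going up first occurrence}. The alternating condition forces $l(\pi)\le 1$, so the going-up data depend only on whether column $0$ occurs and with what parity; I will match this with the dichotomy ``$\min\{A_i\}\neq 1$, or $\up=1$'' in the statement. In Case 1 and in Cases 2, 4 and 6, $\Theta_1(\EE)$ is tempered. I will compute it directly from Algorithm \ref{algo compute theta lift}: the added row $([\tfrac{\alpha-1}{2},-\tfrac{\alpha-1}{2}],\tfrac{\alpha-1}{2},\up)$ with $\alpha\in\{-1,1\}$ is either absent or a single circle in column $0$. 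I then check that the resulting tempered multi-segment again has all signs equal, or at least consists only of single-column blocks. Theorem \ref{thm-count-temp} then yields the value $1$ in some subcases, and in the Case 6 subcase the value $|\Psi(\pi(\Theta_1(\BB_1)))|\cdot 1$.

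\emph{Remaining cases.} In Case 6, $\BB_1$ is a single circle in column $0$, so Theorem \ref{thm-count-block-theta-temp} with $m_{c_{\max}}=1$ gives $3\cdot 1=3$. In Cases 3 and 5, the relevant almost-block $\BB_1\cup\BB_2$ is column $0$ with even multiplicity. Theorem \ref{thm-count-block-theta-temp} then gives $3\cdot 1-|\Psi(\pi(\emptyset))|=2$, and the recursions of Case 3 (3.1 is the identity, 3.2 and 3.3 multiply by $1$ because the later columns are single blocks) preserve this value. Finally, to get $\Psi(\pi(\Theta_1(\EE)))$ concretely, I will exhibit the $2$ or $3$ extended multi-segments explicitly, as in Remark \ref{rmk cases 1 and 4}: $\Theta_1(\EE)$ itself, its image under $dual\circ ui\circ dual$, and in the count-$3$ case a further $ui^{-1}$.

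\emph{Main obstacle.} The step I expect to be hardest is aligning the hypothesis ``$\min\{A_i\}\neq1$ or $\up=1$'' with Cases 1, 2 and 4 of Theorem \ref{thm-count-theta-temp}. This requires a careful check, via Theorem \ref{thm going up first occurrence} and the sign $\up=-\eta_{\chi_V}(0)$, that the added row produces no new non-tempered equivalents. It also requires confirming that in Cases 2 and 4 the blocks do not merge after the lift in the way exhibited in Remark \ref{rmk cases 1 and 4}; merging cannot happen here, since all signs are equal.
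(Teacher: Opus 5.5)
Your overall strategy is the paper's: run through the cases of Theorem \ref{thm-count-theta-temp} and evaluate the formulas for single-column blocks. But your plan computes the wrong lift when $\EE$ starts at zero, and as a result it gets Cases 2 and 4 wrong.

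\textbf{The lift when $\EE$ starts at zero.} If $([0,0]_\rho,0,1)\in\EE$, then $l(\pi)=1$ in Theorem \ref{thm going up first occurrence}. This holds for every $m_0\geq 1$, which is how the paper's proof uses it, including in Cases 3 and 5. Hence $\up=-1$ and $m^{\up,\alpha}=l(\pi)+2=3$.
\begin{itemize}
\item The row added by Algorithm \ref{algo compute theta lift} is therefore the hat $([1,-1]_{\chi_W},1,-1)$. It is not ``absent or a single circle in column $0$''; you have conflated $l(\pi)\in\{-1,1\}$ with $\alpha$.
\item ``$\Theta_1(\EE)$ is tempered'' in Theorem \ref{thm-count-theta-temp} only says that $\pi(\Theta_1(\EE))$ is tempered.
\item To find the tempered representative, the paper applies a $dual\circ ui\circ dual$ merging the hat with a column-$0$ circle into $([1,0],0,\eta)$. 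It then applies a $ui^{-1}$ splitting this into $([0,0],0,\eta)$ and $([1,1],0,-\eta)$. The original circles in columns $1$ and $2$ then carry sign $-\eta$.
\item So the representative has opposite signs in columns $0$ and $1$, with multiplicities $(m_0,m_1+1)$.
\item In Case 2 ($m_1=0$) and Case 4 ($m_1$ even) both multiplicities are odd. Columns $0$ and $1$ therefore form one block starting at zero.
\item Theorem \ref{thm-count-block-temp} then gives $3$: namely $3\cdot 1$ if $m_0=1$, and $4\cdot1-1$ if $m_0\geq 3$.
\end{itemize}
So the count in Cases 2 and 4 is $3$, not $1$. Your claim that blocks cannot merge ``since all signs are equal'' is backwards: the lift changes the signs so that $\BB_1$ and $\BB_2$ do merge, as in Remark \ref{rmk cases 1 and 4}.

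\textbf{The first line of the statement.} The underlying misreading is of the first line of the displayed formula. It applies only in Case 1, i.e.\ when $\EE$ does not start at zero. Read literally, it would also cover Cases 3 and 5, where $\min\{A_i\}=0$, and contradict the second line.

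Even inside Case 1, your check that the lift consists only of single-column blocks fails when $\min\{A_i\}=1$ and $\up=-1$. There the added $([0,0]_{\chi_W},0,-1)$ and column $1$ form a two-column block, giving $3$. Your plan never derives this ``otherwise'' value.

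\textbf{Smaller points.}
\begin{itemize}
\item Case 3.2 multiplies by $4$, not $1$. In the generic setting, however, Cases 3.1--3.3 never apply, because column $0$ (even multiplicity) already splits into two blocks. One should instead use the product formula $|\Psi(\pi(\Theta_1(\BB_1\cup\BB_2)))|\cdot\prod_{i\geq 3}|\Psi(\pi(\BB_i))|=2\cdot 1$.
\item In Case 6, $\BB_1$ is column $0$ with odd multiplicity $m_0$, not necessarily a single circle. Theorem \ref{thm-count-block-theta-temp} still gives $3$, as $3\cdot1$ or $4\cdot 1-1$.
\end{itemize}
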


\begin{proof}
    The proof is a case-by-case application of Theorem \ref{thm-count-theta-temp} and we adopt its terminology. 
    
    Suppose first that $\EE$ is in Case 1. By Theorem \ref{thm going up first occurrence}, the first occurrence is at equal rank and so the choice of going-up tower is ambiguous (see also Remark \ref{rmk equal first occurences}). In either tower, the first occurrence is tempered, by Theorem \ref{thm-count-theta-temp}, and so we apply Theorem \ref{thm-count-temp}. If $\min\{A_i\}\neq 1,$ then in either tower, we obtain that $|\Psi(\pi(\Theta_1(\EE))|=1$ directly from Theorem \ref{thm-count-temp}. If $\min\{A_i\}=1,$ then the choice of tower does matter.
    \begin{enumerate}
        \item If we choose $\up=1$, then Theorem \ref{thm-count-temp} implies that $|\Psi(\pi(\Theta_1(\EE))|=1$. Indeed, the added extended segment $([0,0]_{\chi_W},0,1)$ forms its own block and the rest of block decomposition of $\Theta_1(\EE)$ is obtained from $\EE$.
        \item If we choose $\up=-1$, then Theorem \ref{thm-count-temp} implies that $|\Psi(\pi(\Theta_1(\EE))|=3$. Indeed, in this setting, the added extended segment $([0,0]_{\chi_W},0,-1)$ gets grouped with $([1,1]_{\chi_W},0,1)$ into a block $\BB_1$. It is straightforward to verify that $|\Psi(\pi(\BB_1))|=3$ (either by direct calculation or by applying Theorem \ref{thm-count-block-temp}) from which, we obtain that $|\Psi(\pi(\Theta_1(\EE))|=3$ by Theorem \ref{thm-count-temp}.
    \end{enumerate}
This completes the proof for Case 1.

Hereinafter, we assume that $\EE$ starts at zero. That is, $([0,0]_\rho,0,1)\in\EE$. By Theorem \ref{thm going up first occurrence}, we see that $\up=-1$ and $m^{\up,\alpha}=3$. Thus we have
\[
\Theta_1(\EE)=\{([1,-1]_{\chi_W},1,-1)\}\cup\{([A_i,A_i]_{\chi_W},0,1)\}_{i\in I_\rho,>}
\]
which is equipped with the unique $(P')$ ordering.

Suppose that $\EE$ is in Case 2. That is, $([0,0]_\rho,0,1)$ occurs with odd multiplicity in $\EE$ and $([1,1]_{\rho},0,1)\not\in\EE$.  Then $\Theta_1(\EE)$ is tempered by Theorem \ref{thm-count-theta-temp}. Moreover, we can perform a $dual\circ ui\circ dual$ on $\Theta_1(\EE)$ to combine $([1,-1]_{\chi_W},1,-1)$ and $([0,0]_{\chi_W},0,1)$ into $([1,0]_{\chi_W},0,\eta)$ for some $\eta\in\{\pm1\}$. Then we perform a $ui^{-1}$ to split $([1,0]_{\chi_W},0,\eta)$ into $([0,0]_{\chi_W},0,\eta)$ and $([1,1]_{\chi_W},0,-\eta)$ which results in a tempered extended multi-segment. We note that if $([2,2]_{\chi_W},0,1)\in\Theta_1(\EE)$, then after performing the same $dual\circ ui\circ dual$ and $ui^{-1}$, it becomes $([2,2]_{\chi_W},0,-\eta)$. Indeed this follows directly from the definition of $dual$ (Definition \ref{def dual}) and the hypothesis that $([0,0]_\rho,0,1)$ occurs with odd multiplicity in $\EE$.
We complete this case by applying Theorem \ref{thm-count-temp}. This shows that we have performed all possible operators on $\Theta_1(\EE)$, i.e., $|\Psi(\pi(\Theta_1(\EE))|=3$.

Suppose next that $\EE$ is in Case 4. In this case, we have that $([0,0]_{\rho},0,1)\in\EE$ with odd multiplicity and $([1,1]_{\rho},0,1)\in\EE$ with even multiplicity. As in Case 2, $\Theta_1(\EE)$ is tempered and we can apply $dual\circ ui \circ dual$ and then a $ui^{-1}.$ Indeed, we first apply $dual\circ ui \circ dual$ on $\Theta_1(\EE)$ to combine $([1,-1]_{\chi_W},1,-1)$ and $([0,0]_{\chi_W},0,1)$ into $([1,0]_{\chi_W},0,\eta)$ for some $\eta\in\{\pm1\}$. Then we perform a $ui^{-1}$ to split $([1,0]_{\chi_W},0,\eta)$ into $([0,0]_{\chi_W},0,\eta)$ and $([1,1]_{\chi_W},0,-\eta)$ which results in a tempered extended multi-segment. We note that this $ui^{-1}$ is indeed possible since if $([1,1]_{\chi_W},0,\eta')\in dual\circ ui \circ dual(\EE)$ for some $\eta'$, then it follows from our hypothesis and the definition of $dual$ (Definition \ref{def dual}) that $\eta'=-\eta$. Next, we observe that if $([2,2]_{\chi_W},0,1)\in\Theta_1(\EE)$, then after performing the same $dual\circ ui\circ dual$ and $ui^{-1}$, it becomes $([2,2]_{\chi_W},0,-\eta)$. Indeed this follows directly from the definition of $dual$ (Definition \ref{def dual}) and our hypothesis.
We complete this case by applying Theorem \ref{thm-count-temp}. This shows that we have performed all possible operators on $\Theta_1(\EE)$, i.e., $|\Psi(\pi(\Theta_1(\EE))|=3$.

If $\EE$ is in Cases 3, 5 or 6, then the result follows directly from applying Theorem \ref{thm-count-theta-temp}. Similarly to the previous cases, it is straightforward to determine $\Psi(\pi(\Theta_1(\EE))$ explicitly using $dual\circ ui \circ dual$ and $ui^{-1}$ operators, but we omit this.
Finally, we note that we are never in the situation of Case 7 of Conjecture \ref{conj-count-theta-temp}. This completes the proof of the theorem.
\end{proof}

\begin{rmk}\label{rmk generic}
    Let $\EE\in\Rep(G_n)$ be tempered with $B_i=A_i$ and $\eta_i=1$ for any $i\in I_\rho$ and any $\rho.$ Then $\pi(\EE)$ is a generic representation and $|\Psi(\pi)|=1$ (\cite[Theorem 1.2]{HLL24}). In this case, $\EE_{\chi_V}$ is in the setting of Theorem \ref{thm-generic-theta-count}. From Theorem \ref{thm-generic-theta-count}, we see that there often exists a non-$\theta$-relevant pair for $(\pi,\theta_{-m^{\up,\alpha}(\pi)}^\up(\pi)).$
\end{rmk}

Next, we consider the opposite extremal case: $\EE$ is anti-tempered with each $\eta_i=1$.

\begin{thm}\label{thm-unram-theta-count}
    Suppose that $\EE\in\VRep_\rho(G_n)$ with
    \[
    \EE=\{([A_i,-A_i]_\rho,A_i,1)\}_{i\in I_\rho,>}.
    \]
    Then
    \[
    |\Psi(\pi(\Theta_1(\EE))|=\begin{cases}
        3 & \mathrm{if} \ dual(\EE) \ \text{is a block starting at zero and} \ m_{A_1} \ \text{is odd,}  \\
        2 & \mathrm{otherwise.} 
    \end{cases}
    \]
\end{thm}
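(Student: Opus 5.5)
The natural route is to deduce Theorem \ref{thm-unram-theta-count} from the anti-tempered count, Theorem \ref{theta-antitempered-count}, and the fact that $|\Psi(\pi)|=1$ in this situation. The plan has three steps.

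\textbf{Step 1: identify the Aubert--Zelevinsky dual.} For $\EE=\{([A_i,-A_i]_\rho,A_i,1)\}$, compute $dual(\EE)$ directly from Definition \ref{def dual}. Here $B_i=-A_i\in\Z$, so $l_i'=l_i+B_i=0$ and the new segments are $[A_i,A_i]_\rho$. Thus $dual(\EE)$ is tempered, with signs $\eta_i'=(-1)^{\alpha_i+\beta_i}$. Since every $\eta_i=1$, the representation $\pi(\EE)$ is unramified-type, and $|\Psi(\pi(\EE))|=1$ by \cite[Proposition 6.4]{Moe09b} (see also \cite[Proposition 5.5]{HLL24}). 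Equivalently, using Lemma \ref{lemma dual} together with Theorem \ref{thm-count-temp}, $|\Psi(\pi(dual(\EE)))|=1$.

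\textbf{Step 2: rule out the branches of Theorem \ref{theta-antitempered-count} involving $\EE'$.} The branches with $m_n>1$ each produce a term $|\Psi(\pi(\EE'))|$. I plan to show these branches cannot occur here, so that no $\EE'$ term ever appears. The argument goes through the block structure of $dual(\EE)$.

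1. The condition $|\Psi|=1$ forces every block of $dual(\EE)$ to contribute the factor $1$ in Theorem \ref{thm-count-temp}.
2. By Theorem \ref{thm-count-block-temp}, a block starting at $0$ with more than one column contributes at least $3$.
3. A shifted block with more than one column contributes at least $2$.
4. Hence every block of $dual(\EE)$ is a single column. The only exception is a first block consisting of column $0$ alone, which contributes $1$.

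In particular, if $dual(\EE)$ is itself a block starting at zero, it must be the single column $0$. In that case the highest column is $n=A_1=0$, so $m_n=m_{A_1}$. I expect the parity of this multiplicity to match the hypothesis in the statement, with "odd" being automatic for a block by Definition \ref{def-block}.

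\textbf{Step 3: read off the count.} By Step 2, only the branches "$m_n=1$, block starting at $0$" (giving $3$) and "otherwise" (giving $2$) remain, since $|\Psi(\pi(\EE))|=1$.

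The main obstacle is that "$m_n>1$" branches can genuinely arise, for example column $0$ with multiplicity $3$. There $dual(\EE)$ is a single block at column $0$ with $m_0=3$. Then $\EE'=(dual\circ\rc_0\circ dual)(\EE)$ is empty, and I would set $|\Psi(\pi(\EE'))|=1$ by convention. The first branch then gives $4-1=3$, and the third branch gives $3\cdot1-1=2$. Both agree with the claimed values $3$ and $2$.

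So the key is to check, branch by branch, the following.

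(a) Whenever $m_n>1$ occurs, $\EE'$ is empty or has $|\Psi|=1$.

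(b) The "block starting at zero" condition together with odd $m_{A_1}$ matches exactly the branches yielding $3$.

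Condition (b) needs careful bookkeeping of which index $A_1$ denotes in the admissible order versus the column $n$ of the first row of Theorem \ref{theta-antitempered-count}, and of the signs $\eta_i'$, to confirm when $dual(\EE)$ really forms a block. I would handle this by computing $\eta_i'$ explicitly via the parities of $\alpha_i$ and $\beta_i$.
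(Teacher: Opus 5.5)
Your route is the paper's: feed Theorem \ref{theta-antitempered-count} with information about $dual(\EE)$ and with $|\Psi(\pi(\EE))|=1$. As written, however, the proposal has a genuine gap. Step 2 aims at a false claim, since the branches with $m_n>1$ do occur, as you later notice. What replaces it, checks (a) and (b), is only announced and never carried out.

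The missing fact is the sign computation you postpone to the end. Here $a_j=1$ and $b_j=2A_j+1$ is odd, so Definition \ref{def dual} gives $\alpha_i+\beta_i\equiv (i-1)+(|I_\rho|-i)=|I_\rho|-1 \pmod 2$. Hence $dual(\EE)=\{([A_i,A_i]_\rho,0,\eta)\}$ with one common sign $\eta=(-1)^{|I_\rho|-1}$. The columns of a block are consecutive with alternating signs, so every block of $dual(\EE)$ lies in a single column. It follows that $dual(\EE)$ is a block iff all $A_i$ coincide and the multiplicity is odd, so the hypothesis ``$m_{A_1}$ odd'' is redundant, and ``block starting at zero'' forces $n=A_1=0$. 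This is exactly the observation in the paper's proof. It also gives $|\Psi(\pi(\EE))|=1$ directly from Theorem \ref{thm-count-temp}. You then need neither \cite{Moe09b}, which concerns genuine unramified representations rather than arbitrary $\EE\in\VRep_\rho(G_n)$, nor your backwards inference from $|\Psi|=1$ to the shape of the blocks.

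The same computation closes (a), which your proposal leaves open. We have $dual(\EE')=\rc_n(dual(\EE))$, a tempered multi-segment whose signs are still all equal. So all its blocks are single columns, and $|\Psi(\pi(\EE'))|=1$ by Lemma \ref{lemma dual} and Theorem \ref{thm-count-temp}; it is also $1$ when $\EE'$ is empty. The unramified citation cannot be reused here. Redualizing gives $\EE'=\{([A_i,-A_i]_\rho,A_i,(-1)^{m_n})\}_{A_i<n}$, so in the third branch all its signs are $-1$.

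With this, the four branches give $4-1=3$, $3$, $3-1=2$ and $2$. Your test example (column $0$ with multiplicity $3$) lies only in the first branch. The third branch arises, e.g., for columns $0$ and $2$ with $m_2=3$, where $\EE'$ is nonempty, so verifying the empty case alone does not suffice.
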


\begin{proof}
    This is a straightforward application of Theorem \ref{theta-antitempered-count}. We note that $dual(\EE)=\{([A_i,A_i]_\rho,0,\eta)\}_{i\in I_\rho,>}$ for some $\eta\in\{\pm1\}.$ In particular, $dual(\EE)$ is a block if and only if $A_1=A_2=\cdots=A_{|I_\rho|}$ and $m_{A_1}$ is odd. 
\end{proof}

\begin{rmk}
Let $\pi$ be an irreducible unramified representation of Arthur type of $G_n$. Then $\pi=\pi(\EE)$ for some extended multi-segment $\EE$. By \cite[Proposition 6.4]{Moe09b} (see also \cite[Theorem 5.4 and Proposition 5.5]{HLL24}), we have that $|\Psi(\pi)|=1$,  $\psi_\EE$ is trivial on the Arthur-$\SL_2(\mathbb{C})$, and $\EE_{\chi_V}$ is in the setting of Theorem \ref{thm-unram-theta-count}. Applying Theorem \ref{thm-unram-theta-count}, we see that there always exists a non-$\theta$-relevant pair for $(\pi,\theta_{-m^{\up,\alpha}(\pi)}^\up(\pi)).$
\end{rmk}

\section{Basic notions and lemmas}\label{sec basic notions and lemmas}

In this section, we recall from \cite[\S 4]{HKT26} some basic notions and lemmas which will be helpful in the proofs of our results.

\subsection{Relations between operators}

First, we recall two facts. The first says that certain operators are ``local'' operators.

\begin{lemma}[{\cite[Lemma 4.3]{HKT26}}]
\label{lem-local}
    Suppose that $\EE\in\VRep_\rho(G_n)$. 
    Then the operations $ui$ and $dual \circ ui \circ dual$ are ``local'' operations. More specifically, if a certain row is not involved in the union-intersection or any row exchanges, then it is fixed by these operations.
\end{lemma}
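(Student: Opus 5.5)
The plan is to read the statement off the explicit definitions of the operators, since it is a locality assertion. I first fix notation. Let $\EE\in\VRep_\rho(G_n)$ and suppose $ui_{i,j}$ (or $dual\circ ui_{j,i}\circ dual$) is applicable. Let $S\subset I_\rho$ be the set of indices that are either one of the two rows $i,j$ being united/intersected, or are moved past by the row exchanges $R_k$ needed to make $(j,i)$ adjacent under the auxiliary admissible order $>'$ of Definition \ref{def ui}. The claim is that every row $r_t$ with $t\notin S$ is returned unchanged, with the same support, $l_t$ and $\eta_t$.

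\textbf{Step 1: the operator $ui$.} By Definition \ref{def ui}, $ui_{i,j}(\EE_\rho)=(ui_i(\EE_{\rho,>'}))_{>}$. Each row exchange $R_k$ (Definition \ref{def row exchange}) changes only the data $(l_k,\eta_k,l_{k+1},\eta_{k+1})$ of the two swapped rows. It leaves $(l_t,\eta_t)$ fixed for $t\ne k,k+1$. Likewise, $ui_i$ (Definition \ref{ui def}) alters only rows $i,i+1$.

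I then write the passage $>\ \to\ >'$ as a composition of adjacent exchanges and choose it to be minimal. Only rows of $S$ ever occupy a swapped pair, because a row $t\notin S$ keeps its relative position with respect to all other rows. Hence $r_t$ is untouched at each step, including by the exchanges that restore the order $>$ at the end. The rows of $\EE^\rho$ are untouched by definition.

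\textbf{Step 2: the operator $dual\circ ui\circ dual$.} This is the expected obstacle. The formula for $dual$ (Definition \ref{def dual}) is not row-local: $l_t'$ and $\eta_t'$ depend on $\alpha_t=\sum_{s<t}a_s$ and $\beta_t=\sum_{s>t}b_s$. I would handle this in two parts.

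First, since all $B_t\in\Z$ in $\VRep^\Z$, the $l$-part of $dual$ is $l_t\mapsto l_t+B_t$, which is purely local. So any change to $l_t$ must come from $ui$, and Step 1 shows there is none for $t\notin S$.

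Second, for $\eta_t$, I compose the two duals. This gives $\eta_t\mapsto(-1)^{\alpha_t+\beta_t+\alpha_t'+\beta_t'}\eta_t$, where $\alpha_t',\beta_t'$ are computed in the intermediate multi-segment with the reversed order. The task is to show that this parity is even whenever $t\notin S$. This reduces to the following observation. A union-intersection replaces $[A_i,B_i],[A_j,B_j]$ by $[A_j,B_i],[A_i,B_j]$. Such a replacement preserves the total $\sum a$ and the total $\sum b$ over the pair, and it preserves them on each side of any row outside the pair, because the two rows stay adjacent in $>'$. Row exchanges only permute rows within $S$, which lie on one side of $t$.

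Type $3'$ deletes a row, but that row has $a+b$ constrained by the Case 3 equation. I would check the parity of the deleted $a$ and $b$ directly from $B_{k+1}+l_{k+1}=A_k-l_k+1$ with $l=0$, which gives $a_{k+1}$ even-sized contributions.

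\textbf{Step 3: assembly.} Combining the two steps shows that $(\mathrm{supp}(r_t),l_t,\eta_t)$ is preserved for all $t\notin S$. The main care needed is the parity bookkeeping in Step 2, where the order reversal of $dual$ and the re-ordering by $R_k$ must be tracked simultaneously.
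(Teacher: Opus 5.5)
Your argument is correct. The paper gives no proof of this lemma; it imports it from \cite[Lemma 4.3]{HKT26}. Your direct check from the definitions is the expected verification. Row exchanges and $ui_k$ only alter the two rows they act on. For $dual\circ ui\circ dual$, the $l$-part of $dual$ is the local map $l_t\mapsto l_t+B_t$, since all $B_t\in\Z$ here, and the two sign twists cancel.

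A few places should be tightened.

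\emph{Involutivity of $dual$.} Your reduction of the sign question to ``$ui$ preserves the pair totals of $a$ and $b$'' silently uses that $dual$ is an involution. State it explicitly: $dual$ reverses the order and interchanges $a_s\leftrightarrow b_s$, so $\alpha_t^{dual(\EE)}=\beta_t$ and $\beta_t^{dual(\EE)}=\alpha_t$. Hence, once $\alpha_t+\beta_t$ is unchanged mod $2$ by the $ui$ performed in the dual frame, the total exponent in $\eta_t\mapsto(-1)^{(\cdot)}\eta_t$ is congruent to $2(\alpha_t+\beta_t)$, which is even.

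\emph{The type $3'$ bookkeeping.} The deleted row is the post-$ui$ row $[A_k,B_{k+1}]=[A_k,A_k+1]$, which has $b=0$ and $a=2A_k+2$. So the pair's total $b$ is unchanged and its total $a$ drops by an even number. The quantity to check is not the original $a_{k+1}=A_{k+1}+A_k+2$, which need not be even.

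\emph{Position of $t$.} The reason $i,j$ lie on the same side of $t$ in the restored order is not that they are adjacent in $>'$ alone. It is that $t$'s relative position to $i$ and to $j$ never changes, and they are adjacent in $>'$. You do not need all of $S$ to lie on one side of $t$, since row exchanges never change supports and the order is restored at the end. In fact even the position of $i,j$ relative to $t$ is immaterial: if $t$ sat between them, the resulting changes to $\alpha_t$ and $\beta_t$ would cancel modulo $2$.
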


The second fact is about row swaps commuting with certain operations, and will be used for certain reductions in several technical arguments later.
\begin{lemma}[{\cite[Corollary 4.5]{HKT26}}]
\label{cor Alex}
    Suppose $r$ is a row in $\mathcal{E}\in\VRep_\rho^\mathbb{Z}(G_n)$ immediately followed by some consecutive rows consisting a sub-multi-segment $\mathcal{E}_1 \subset \mathcal{E},$ $\mathcal{E}_2$ is an extended multi-segment equivalent to $\EE_1$. 
    We suppose further that
    $\supp(r)\supseteq\supp(s)$ for any $s\in\EE_1\cup\EE_2$.    
    Then, the result after $r$ is exchanged with $\EE_1$ is the same as the result after $r$ is exchanged with $\EE_2$. 
\end{lemma}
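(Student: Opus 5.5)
The statement to prove is Lemma \ref{cor Alex}. I would argue directly from the definition of row exchange (Definition \ref{def row exchange}), and the key structural fact is that the hypothesis puts us always in Case 1 of that definition. Exchanging $r$ with a block of consecutive rows $\EE_1=\{s_1,\dots,s_t\}$ means applying $R_k$ successively to move $r$ past $s_1$, then past $s_2$, and so on.

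Since $\supp(r)\supseteq\supp(s)$ for every $s\in\EE_1$, each step has $[A_k,B_k]\supset[A_{k+1},B_{k+1}]$, so only Case 1 occurs. In Case 1 the row $s$ being passed over changes only by the sign $(-1)^{A(r)-B(r)}$ on $\eta(s)$. The new data of $r$ depends only on three quantities:
\begin{itemize}
\item the current $(l(r),\eta(r))$;
\item $b(s)-2l(s)$, i.e.\ the number of circles of $s$;
\item the parity of $A(s)-B(s)$, together with $\eta(s)$.
\end{itemize}

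First, I would record how a single Case~1 exchange acts on $r$. Write $c(r)=b(r)-2l(r)$ for the number of circles in $r$. One checks from (a),(b),(c) that the exchange is equivalent to the following: $c(r)$ is replaced by $|c(r)-2c(s)|$-type data, and a sign update depends on $\epsilon$. More precisely, the pair $(c(r),\text{first sign of } r)$ transforms as composition with a quantity determined by the ``signed circle content'' of $s$. Concretely, I would encode row $s$ by the alternating sequence of circle signs it carries. Passing $r$ over $s$ then amounts to cancelling or concatenating $c(s)$ alternating circles into the circle string of $r$, with the sign normalization $(-1)^{A(s)-B(s)}$. So the total effect of moving $r$ past all of $\EE_1$ is determined by an invariant $\Lambda(\EE_1)$: the total number of circles and the product of signs, suitably twisted by parities of $b(s)$ and positions.

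Second, I would show that this invariant is preserved by every basic operator applied within $\EE_1$. By Theorem \ref{thm intersections of local Arthur packets} and the definition of $\sim$, $\EE_2$ is obtained from $\EE_1$ by finitely many basic operators and their inverses, so it suffices to check $R_k$, $ui_{i,j}$, and $dual\circ ui\circ dual$ (integral case, so no $dual_k$). This is the hard step. For $R_k$ inside $\EE_1$ it is the braid-type consistency of row exchanges, i.e.\ exchanging $r$ with the pair in either order gives the same result. For $ui$, one verifies case by case (types 1, 2, 3, 3') that the quantities entering the Case~1 formulas (circle counts and sign products, with parities of $A-B$) agree before and after. The containment hypothesis, applied to $\EE_2$ as well, guarantees we never leave Case~1 for intermediate or final segments.

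As a cleaner alternative for this step, I would try to use that row exchange preserves $\pi$ and that the whole configuration $\{r\}\cup\EE_1$ is equivalent to $\{r\}\cup\EE_2$ via basic operators not touching $r$ (using Lemma \ref{lem-local}). After exchanging, the rows coming from $\EE_1$ and $\EE_2$ lie to the left of $r$. One would then argue that the resulting row $r'$ is determined by $\pi$ and the supports, through Corollary \ref{cor same support} applied to the sub-configuration. However, the supports of $\EE_1$ and $\EE_2$ differ, so I expect the direct combinatorial invariant check for $ui$ to be the main obstacle, and I would attempt it first via the circle-string encoding.
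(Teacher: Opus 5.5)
Your reduction to Case~1 of Definition~\ref{def row exchange} is correct. However, neither of the two steps that carry the argument is done, and the first is misstated.

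Set $x(r)=(-1)^{C(r)-1}\eta(r)\,C(r)$, the number of circles times the sign of the last circle. Cases (a), (b), (c) then collapse to the single rule $x\mapsto(-1)^{C(s)}\bigl(x-2\eta(s)C(s)\bigr)$ for passing $r$ over $s$. Note that in case (c) the circle count becomes $C(r)+2C(s)$, not an $|C(r)-2C(s)|$-type quantity. Since $b(r)$ is fixed, $x$ recovers $(l(r),\eta(r))$ when $x\neq 0$. Composing, passing over $\EE_1$ acts by $x\mapsto(-1)^{N}\bigl(x+2Q(\EE_1)\bigr)$. Here $N$ is the number of circles in $\EE_1$, and $Q(\EE_1)=\sum_{k=1}^{N}(-1)^k\sigma_k$, where $\sigma_1,\dots,\sigma_N$ are the circle signs read row by row.

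For fixed $N$, $Q$ takes $N+1$ values, so no invariant built from the circle count and a (twisted) product of signs can work. Concretely, the single row $\oplus\ominus\oplus\ominus$ sends $x\mapsto x-8$. The four one-circle rows $\oplus,\oplus,\ominus,\ominus$ leave $r$ unchanged (Lemma~\ref{lem-multiplicity-cancel} twice, when $C(r)\neq 0$). Yet both have four circles and sign product $+1$. So the lemma reduces precisely to $Q(\EE_1)=Q(\EE_2)$.

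That invariance is the entire content, and you only announce a case-by-case check. Your claim that containment for $\EE_1,\EE_2$ keeps every intermediate step in Case~1 is also false, because lowering operators enlarge supports. For example, undoing the hat merge $([1,0],0,-1)\mapsto\{([1,-1],1,-1),([0,0],0,1)\}$ produces a row with $B=-1$. So an argument that drags $r$ along an arbitrary chain of basic operators breaks down.

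The missing idea is to use an invariant intrinsic to the block. From the definitions one checks the following.
\begin{itemize}
\item $Q$ is unchanged by $R_k$, in both Cases 1 and 2.
\item $Q$ is unchanged by $ui_k$ of types 1, 2, 3, 3'. In each type, the defining equality together with the value of $\epsilon$ gives exactly the required cancellation.
\item $Q(dual(\EE))=Q(\EE)$. This holds because $dual$ reverses the order, preserves each $C(r_i)$, and satisfies $\alpha_i\equiv\sum_{j<i}C(r_j)$ and $\beta_i\equiv\sum_{j>i}C(r_j) \pmod 2$, since $a_j\equiv b_j\equiv C(r_j)$.
\end{itemize}
Hence $Q$ is constant on equivalence classes. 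Containment is then needed only for $\EE_1$ and $\EE_2$ themselves, to apply the composition formula, and the lemma follows.

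Your ``cleaner alternative'' fails as stated, for the reason you give yourself: Corollary~\ref{cor same support} needs identical supports.
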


We briefly recall definitions and lemmas relating to the alternating sign condition.

If $r = ([A, B]_\rho, l, \eta)$ is a row, the \emph{number of circles in $r$} is $C(r) := b - 2l,$ and for $\EE$ a virtual extended multi-segment, the \emph{total number of circles in $\EE$} is $C(\EE) := \sum_{r \in \EE} C(r)$. The \emph{sign} $\eta(\EE)$ of an extended multi-segment $\EE$ is $\eta(\EE) := \eta(r_1),$ where $r_1$ is the first row in $\EE$ in the admissible order.

\begin{defn}[{\cite[Definition 4.8]{HKT26}}]
    If $r_1$, $r_2$ are rows with $r_1 < r_2$ in some admissible order, we say that $r_1$ and $r_2$ (in that order) satisfy the \emph{alternating sign condition} if \[\eta(r_2) = (-1)^{C(r_1)} \eta(r_1).\]
    We say that an extended multi-segment $\EE$ is \emph{alternating} if any two consecutive rows of $\EE$ satisfy the alternating sign condition.
\end{defn}

If $r$ is a row, we call $(-1)^{C(r)-1} \eta(r)$ the \emph{sign of the last circle of $r.$} With this definition, note that two rows $r_1 < r_2$ satisfy the alternating sign condition if and only if the sign of the last circle of $r_1$ is opposite the sign of $r_2$.

Now, much of the substance of the ensuing proofs will depend on understanding how a row changes under certain row exchanges. In particular, we are interested in analyzing what happens when we swap a row with a series of consecutive rows satisfying the alternating sign condition. The following lemmas answer this question.

\begin{lemma}[{\cite[Lemma 4.15]{HKT26}}]
\label{big swap down}
    Suppose that $r_0 < r_1 < \cdots < r_k$ are consecutive rows satisfying the alternating sign condition with $\supp(r_0) \supset \supp(r_i)$ for $i > 0$. Let $r_1' < \dots < r_k' < r_0^{(k)}$ be their images after $r_0$ is row exchanged $k$ times. Then we have
    \begin{align*}
        (l(r_0^{(k)}), \eta(r_0^{(k)})) &= \left(l(r_0) - \sum_{i=1}^k C(r_i), (-1)^{\sum_{i=1}^k C(r_i)} \eta(r_0) \right), \\
        (l(r_i'), \eta(r_i')) &= (l(r_i'), (-1)^{C(r_0)+1} \eta(r_i)) \text{ for } i >0.
    \end{align*}
    Moreover, after the row exchanges every pair of adjacent rows satisfies the alternating sign condition except $r_k'$ and $r_0^{(k)}$.
\end{lemma}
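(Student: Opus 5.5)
The plan is to argue by induction on $k$, applying Case 1(c) of Definition \ref{def row exchange} once at each step. The point is that the alternating sign condition forces $\epsilon=-1$ at every stage, and the nesting $\supp(r_0)\supset\supp(r_i)$ both makes every exchange legitimate and keeps us in Case 1.

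Before starting, note that since $\supp(r_0)\supset \supp(r_i)$, moving $r_0$ past $r_i$ never breaks condition ($P$). Indeed, neither segment has both endpoints strictly larger than the other's, so the swapped order is again admissible and $R$ acts nontrivially. The intermediate objects may have negative $l$, but Definition \ref{def row exchange} is stated for symbols, so this is harmless.

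Base step ($k=1$). Write $\epsilon=(-1)^{A(r_0)-B(r_0)}\eta(r_0)\eta(r_1)$. The alternating sign condition says $\eta(r_1)=(-1)^{C(r_0)}\eta(r_0)$. Since $C(r_0)=b(r_0)-2l(r_0)\equiv A(r_0)-B(r_0)+1 \pmod 2$, this gives $\epsilon=-1$. We are in Case 1, so Case 1(c) applies and yields
\[
l(r_0')=l(r_0)-C(r_1),\qquad \eta(r_0')=(-1)^{A(r_1)-B(r_1)+1}\eta(r_0)=(-1)^{C(r_1)}\eta(r_0).
\]
For the other row, $r_1'$ keeps $l(r_1)$ and has $\eta(r_1')=(-1)^{A(r_0)-B(r_0)}\eta(r_1)=(-1)^{C(r_0)+1}\eta(r_1)$.

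Inductive step. The key check is that the new row $r_0'$ and $r_2$ again satisfy the alternating sign condition. First, $C(r_0')=C(r_0)+2C(r_1)\equiv C(r_0)\pmod 2$. Second, using the alternating condition twice,
\[
\eta(r_2)=(-1)^{C(r_1)}\eta(r_1)=(-1)^{C(r_1)+C(r_0)}\eta(r_0)=(-1)^{C(r_0')}\eta(r_0').
\]
The support of $r_0'$ is unchanged, so it still contains $\supp(r_2),\dots,\supp(r_k)$. Hence the hypotheses hold for $r_0'<r_2<\cdots<r_k$, and induction gives the formula for $r_0^{(k)}$, with the $C(r_i)$ summing in both $l$ and the sign. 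Each $r_i'$ is touched exactly once, by a copy of $r_0$ whose circle count has the parity of $C(r_0)$. So each $r_i'$ gets the same sign flip $(-1)^{C(r_0)+1}$ and keeps its $l$; the $l(r_i')$ in the statement should read $l(r_i)$.

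Final assertion. Consider consecutive $r_i',r_{i+1}'$ with $i\ge1$. Both signs were multiplied by the same factor and the $C$'s are unchanged, so the alternating sign condition between $r_i$ and $r_{i+1}$ is inherited. The only new adjacency is $r_k'<r_0^{(k)}$, which is excluded in the statement. The main (mild) obstacle is just the bookkeeping of parities, namely $(-1)^{A-B}=(-1)^{C+1}$ and the invariance of $C(r_0^{(j)})$ mod $2$, to confirm that every exchange lands in Case 1(c) rather than 1(a) or 1(b).
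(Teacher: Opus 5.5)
Your proof is correct and is the natural argument for this statement, which the paper takes from \cite[Lemma 4.15]{HKT26} without reproving it. The alternating sign condition together with $(-1)^{A-B}=(-1)^{C+1}$ forces $\epsilon=-1$, so every exchange falls in Case 1(c) of Definition \ref{def row exchange}, and the parity invariance $C(r_0^{(j)})\equiv C(r_0) \pmod 2$ carries the hypothesis through the induction; you are also right that $l(r_i')$ should read $l(r_i)$. One small addition: Lemma \ref{big swap up} reads ``except'' as saying the exceptional pair \emph{fails} the condition, and that holds here too, since iterating the alternating condition gives $\eta(r_k)=(-1)^{\sum_{i=0}^{k-1}C(r_i)}\eta(r_0)$ and hence $(-1)^{C(r_k')}\eta(r_k')=(-1)^{C(r_k)+C(r_0)+1}\eta(r_k)=-\eta(r_0^{(k)})$.
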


\begin{lemma}[{\cite[Lemma 4.16]{HKT26}}]
\label{big swap up}
    Suppose that $r_1 < \cdots < r_k < r_0$ are consecutive rows such that every pair of consecutive rows satisfies the alternating sign condition except $r_k$ and $r_0$. Suppose further that $\supp(r_0) \supset \supp(r_i)$ for $i > 0$, and that $C(r_0) \geq 2 \sum_{i = 1}^k C(r_i)$. Let $r_0^{(k)} < r_1' < \cdots < r_k'$ be their images after $r_0$ is exchanged up $k$ times. Then:
    \begin{align*}
        (l(r_0^{(k)}), \eta(r_0^{(k)})) &= \left(l(r_0) + \sum_{i=1}^k C(r_i), (-1)^{\sum_{i=1}^k C(r_i)} \eta(r_0) \right) \\
        (l(r_i'), \eta(r_i')) &= (l(r_i), (-1)^{C(r_0) + 1} \eta(r_i)) \text{ for } i > 0.
    \end{align*}
    Moreover, after the row exchanges every pair of adjacent rows satisfies the alternating sign condition.
\end{lemma}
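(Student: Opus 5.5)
We argue by induction on $k$, exchanging $r_0$ upward one row at a time. Each single step is an instance of Case 2 of Definition \ref{def row exchange}, applied to the adjacent pair $(r_j, r_0^{(\cdot)})$ with $r_j$ the smaller row. This is the right case because $\supp(r_j)\subseteq\supp(r_0)$. The same containment shows that the new order is still admissible, so $R$ acts nontrivially. The plan is to check at each step three things: which subcase occurs, that the inductive hypotheses are preserved, and how the signs propagate.

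\emph{Base step ($k=1$).} Write $\epsilon=(-1)^{A(r_1)-B(r_1)}\eta(r_1)\eta(r_0)$. Note that $C(r_1)=b(r_1)-2l(r_1)\equiv A(r_1)-B(r_1)+1 \pmod 2$. Since $r_1,r_0$ fail the alternating sign condition, we have $\eta(r_0)=-(-1)^{C(r_1)}\eta(r_1)=(-1)^{A(r_1)-B(r_1)}\eta(r_1)$, and hence $\epsilon=1$. The hypothesis $C(r_0)\ge 2C(r_1)$ is exactly $b(r_0)-2l(r_0)\geq 2(b(r_1)-2l(r_1))$, which puts us in subcase 2(b). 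That subcase gives:
\begin{itemize}
\item $l(r_0^{(1)})=l(r_0)+C(r_1)$;
\item $\eta(r_0^{(1)})=(-1)^{A(r_1)-B(r_1)+1}\eta(r_0)=(-1)^{C(r_1)}\eta(r_0)$;
\item $l(r_1')=l(r_1)$ and $\eta(r_1')=(-1)^{A(r_0)-B(r_0)}\eta(r_1)=(-1)^{C(r_0)+1}\eta(r_1)$.
\end{itemize}
The same parity identity, applied to $r_0$, gives the last equality.

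\emph{Inductive step.} Apply the base computation to the pair $(r_k,r_0)$. We must verify that $r_1<\dots<r_{k-1}<r_0^{(1)}$ again satisfies the hypotheses.
\begin{itemize}
\item Supports are unchanged, so the containments persist.
\item The circle count drops by the amount the $l$-value rises: $C(r_0^{(1)})=C(r_0)-2C(r_k)\ge 2\sum_{i=1}^{k-1}C(r_i)$. In particular $0\le l\le b/2$ is maintained at every step, so we stay inside extended multi-segments.
\item The pair $(r_{k-1},r_0^{(1)})$ still fails the alternating condition. Indeed $\eta(r_0^{(1)})=(-1)^{C(r_k)}\eta(r_0)=-\eta(r_k)$. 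Since $\eta(r_k)=(-1)^{C(r_{k-1})}\eta(r_{k-1})$ by alternation, we get $\eta(r_0^{(1)})=-(-1)^{C(r_{k-1})}\eta(r_{k-1})$.
\end{itemize}
Induction then yields the stated formula for $r_0^{(k)}$: the increments of $l$ add up to $\sum_{i=1}^k C(r_i)$, and the sign factors multiply to $(-1)^{\sum_{i=1}^k C(r_i)}$. For each $r_i'$, the factor $(-1)^{C(\cdot)+1}$ is computed with the current circle count of the moving row. That count has the same parity as $C(r_0)$, since it changes by even amounts. This yields the uniform factor $(-1)^{C(r_0)+1}$.

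\emph{Alternation at the end.} Consecutive pairs among $r_1',\dots,r_k'$ stay alternating: both rows' $\eta$ are multiplied by the same sign and their circle counts are unchanged. For the pair $(r_0^{(k)},r_1')$ we need $\eta(r_1')=(-1)^{C(r_0^{(k)})}\eta(r_0^{(k)})$. Using $C(r_0^{(k)})\equiv C(r_0)$, this is a direct parity check. It combines $\eta(r_0)=-(-1)^{C(r_k)}\eta(r_k)$ with the alternation relations $\eta(r_k)=(-1)^{\sum_{i<k}C(r_i)}\eta(r_1)$.

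The main point of care is the sign bookkeeping in the inductive step. Specifically, we must confirm that after each exchange the moving row again fails alternation with its new upper neighbour, which is what keeps us in $\epsilon=1$, subcase (b). We also need the hypothesis $C(r_0)\ge 2\sum C(r_i)$ to propagate to the remaining rows, so that subcase (a) never occurs. Everything else is routine substitution into Definition \ref{def row exchange}.
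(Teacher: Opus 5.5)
Your proof is correct and is the natural argument. Each upward exchange is Case 2 of Definition \ref{def row exchange} with $\epsilon=1$, because the moving row always fails alternation with its new upper neighbour, and it lands in subcase (b) by the propagated bound $C(r_0^{(j)})\ge 2\sum_{i\le k-j}C(r_i)$. This repeated application of the row-exchange formulas is what underlies the cited lemma, which the paper quotes from \cite{HKT26} without reproving it. One small simplification is available: the factor $(-1)^{A(r_0)-B(r_0)}$ picked up by each $r_i$ depends only on the support of the moving row, so its uniformity needs no tracking of the parity of the current circle count.
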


We also recall the following two lemmas, which demonstrate cases in which swapping a row $h$ with two rows $h_1$ and $h_2$ leaves $h$ unchanged. These results will allow us to simplify future proofs by removing such pairs $h_1$ and $h_2$ from a virtual extended multi-segment.

\begin{lemma}[{\cite[Lemma 4.17]{HKT26}}]
\label{lem-multiplicity-cancel}
    Suppose $h < h_1 < h_2$ are consecutive rows and that $\supp(h) \supset \supp(h_1)$, $\supp(h_2)$. Further suppose $C(h) \neq 0$, and that $C(h_1) = C(h_2) = 1$, $l(h_1) = l(h_2),$ and $\eta(h_1) = \eta(h_2).$ Then $h$ is unchanged after being exchanged with $h_1$ and $h_2$. Moreover, $h_1$ and $h_2$ are unchanged except that their sign is multiplied by $(-1)^{C(h) + 1}$.
\end{lemma}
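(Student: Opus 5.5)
The plan is a direct computation with the row exchange operator of Definition \ref{def row exchange}. Because $\supp(h)\supset\supp(h_1),\supp(h_2)$, exchanging $h$ past $h_1$ and then past $h_2$ always falls in Case 1 of that definition. The containment also keeps the swapped order admissible, so each $R_k$ is genuinely applied. It helps to translate the data first. Write $C(h)=b_h-2l_h$ and $\eta=\eta(h)$. The hypothesis $C(h_i)=1$ gives $b_i=2l_i+1$, so $A_i-B_i=2l_i$ is even and $(-1)^{A_i-B_i}=1$. Note also that $(-1)^{C(h)+1}=(-1)^{b_h-1}=(-1)^{A_h-B_h}$.

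\emph{The rows $h_1,h_2$.} Case 1 of Definition \ref{def row exchange} leaves $l(h_i)$ unchanged and multiplies $\eta(h_i)$ by $(-1)^{A_h-B_h}=(-1)^{C(h)+1}$. This is independent of which subcase applies to $h$, and it gives the second assertion immediately.

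\emph{The row $h$, first exchange.} Put $\epsilon=(-1)^{A_h-B_h}\eta\,\eta(h_1)$ and compare $C(h)$ with $2(b_1-2l_1)=2$. There are three subcases.
\begin{enumerate}
\item[(a)] $\epsilon=1$ and $C(h)=1$. Then $l'=b_h-l_h-1=l_h$, since $b_h=2l_h+1$, and $\eta'=\eta$.
\item[(b)] $\epsilon=1$ and $C(h)\ge 2$. Then $l'=l_h+1$ and $\eta'=-\eta$.
\item[(c)] $\epsilon=-1$. Then $l'=l_h-1$ and $\eta'=-\eta$.
\end{enumerate}
We are working with symbols, which drop the constraint $0\le l\le b/2$, so the value $l_h-1$ in subcase (c) is not an issue.

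\emph{The row $h$, second exchange.} The intermediate row $h^{(1)}$ is now adjacent to the still unchanged $h_2$. Since $\eta(h_2)=\eta(h_1)$, the new sign is $\epsilon'=(-1)^{A_h-B_h}\eta(h^{(1)})\eta(h_2)$. We track each subcase.
\begin{enumerate}
\item[(a)] We have $\epsilon'=\epsilon=1$ and $C(h^{(1)})=1$. We land in (a) again, and $h$ is unchanged.
\item[(b)] We have $\epsilon'=-1$. Subcase (c) applies and returns $(l_h,\eta)$.
\item[(c)] We have $\epsilon'=1$ and $C(h^{(1)})=C(h)+2\ge 2$. Subcase (b) applies and returns $(l_h,\eta)$.
\end{enumerate}
In every case $h$ returns to its original data, which gives the first assertion.

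The only genuinely delicate point is the bookkeeping of $\epsilon'$ after the first exchange. One must use the \emph{updated} $\eta(h^{(1)})$ together with the \emph{not yet modified} $\eta(h_2)$. One must also check that the threshold $C(h)$ versus $2$ flips in exactly the right way between subcases (b) and (c); this works because $C$ changes by $\mp2$ when $l$ changes by $\pm1$. The hypothesis $C(h)\neq 0$ is used in subcase (a): it ensures that $C(h)<2$ forces $C(h)=1$.
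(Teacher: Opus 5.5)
Your proposal is correct. The three subcases of Case 1 of Definition \ref{def row exchange} are tracked accurately, including pairing the updated $\eta(h^{(1)})$ with the unmodified $\eta(h_2)$. Your implicit use of $C(h)\ge 0$ in subcase (a) is legitimate, since $h$ is an honest extended segment with $0\le l(h)\le b(h)/2$.

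The paper does not prove this lemma itself; it quotes it from \cite{HKT26}. A direct check against the row-exchange formulas, as you give, is the natural argument.
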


\begin{lemma}[{\cite[Lemma 4.18]{HKT26}}]
\label{lem-multiplicity-cancel-up}
    Suppose $h_1 < h_2 < h$ are consecutive rows and that $\supp(h) \subset \supp(h_1)$, $\supp(h_2)$. Further suppose $C(h) \neq 0$, and that $C(h_1) = C(h_2) = 1$, $l(h_1) = l(h_2)$ and $\eta(h_1) = \eta(h_2).$ Then $h$ is unchanged after being exchanged with $h_1$ and $h_2$. Moreover, $h_1$ and $h_2$ are unchanged except that their sign is multiplied by $(-1)^{C(h)}$.
\end{lemma}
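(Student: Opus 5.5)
The proof is a direct computation: apply the row exchange of Definition \ref{def row exchange} twice. First exchange $h$ with $h_2$, then exchange the resulting image of $h$ with $h_1$. In each exchange the pair is in Case 1, since the upper row $h_i$ has support containing $\supp(h)$. Concretely, for the pair $(k,k+1)=(h_2,h)$ the row $h$ plays the role of row $k+1$, so its data transform by the simple rule
\[
(l_{k+1}',\eta_{k+1}')=\bigl(l_{k+1},(-1)^{A_k-B_k}\eta_{k+1}\bigr),
\]
while $h_2$ receives the case-dependent update. So the plan is:
\begin{enumerate}
\item Record what happens to $h$.
\item Record what happens to $h_2$.
\item Repeat both with $h_1$.
\item Compare the outputs.
\end{enumerate}

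\textbf{Effect on $h$.} After both exchanges, $l(h)$ is untouched. Its sign is multiplied by $(-1)^{(A(h_2)-B(h_2))+(A(h_1)-B(h_1))}=(-1)^{b(h_1)+b(h_2)}$. Now $C(h_1)=C(h_2)=1$ and $l(h_1)=l(h_2)$ give $b(h_1)=2l(h_1)+1=b(h_2)$, so this factor is $+1$ and $h$ is unchanged. I will make sure the ordering hypotheses make the new order admissible, so that $R_k$ actually acts; otherwise the statement is vacuous.

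\textbf{Effect on $h_1,h_2$.} Let $\epsilon=(-1)^{A(h_2)-B(h_2)}\eta(h_2)\eta(h)$. Since $b(h_2)-2l(h_2)=1<2C(h)$ (here we use $C(h)\neq0$), only subcases (a) (when $\epsilon=1$) and (c) (when $\epsilon=-1$) can occur. Subcase (b) is excluded. I will compute $l'$ and $\eta'$ in each subcase. I will then convert them into the invariants that matter, namely the number of circles $C$ and the sign $\eta$, using the symbol conventions; the symbol interpretation of $l$ is where I expect the bookkeeping to need care. The claim to verify is that $h_2$ keeps $C(h_2)=1$ and keeps its $l$ (the row is a single circle flanked by the same triangle pairs), and that its sign is multiplied by $(-1)^{C(h)}$.

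\textbf{Second exchange and the main obstacle.} The same computation applies to $h_1$, with $\eta(h)$ unchanged after the first exchange, since its sign factor $(-1)^{b(h_2)-1}$ is compensated at the second step. Since $\eta(h_1)=\eta(h_2)$ and $b(h_1)=b(h_2)$, the same subcase occurs, and $h_1$ is modified identically. This is the main obstacle: the intermediate sign of $h$ determines $\epsilon$ at the second step. I must check that, because $\eta(h_1)=\eta(h_2)$ and $b(h_1)=b(h_2)$, the value of $\epsilon$ at the second exchange equals the value at the first, so the two modifications are parallel rather than different.

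\textbf{Fallback.} If the direct formula for the bigger row appears to change $l(h_i)$, I would instead deduce the statement from Lemma \ref{lem-multiplicity-cancel}. The idea is to conjugate by $dual$ via Lemma \ref{lemma dual}, which reverses the admissible order and turns exchanging up into exchanging down. I would then verify that the hypotheses transform correctly, in particular that $C$ is preserved, and that the sign factor $(-1)^{C(h)+1}$ becomes $(-1)^{C(h)}$ because of the extra parity shift in $\eta'_i$ coming from $\beta_i$ in Definition \ref{def dual}.
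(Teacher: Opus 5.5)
Your treatment of $h$ is fine. Each factor $(-1)^{A(h_i)-B(h_i)}$ is already $1$, because $b(h_i)=2l(h_i)+1$ is odd, so nothing needs to be ``compensated''. The problem is the step you postpone, namely that $h_2$ keeps its $l$ and its single circle. Under your reading $\supp(h)\subset\supp(h_i)$ that step is false. In Case 1 of Definition \ref{def row exchange} the case-dependent update falls on the \emph{wider} row $k=h_2$, with $b_k-2l_k=1$ and $b_{k+1}-2l_{k+1}=C(h)$. Subcase (a) gives $(l(h_2),\eta(h_2))\mapsto(l(h_2)+1-C(h),(-1)^{C(h)-1}\eta(h_2))$, and subcase (c) gives $(l(h_2)-C(h),(-1)^{C(h)}\eta(h_2))$. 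So the new row carries $2C(h)\mp1$ circles, and since $h$ is untouched, $h_1$ is then changed identically. For instance, take $h_1=h_2=([1,-1],1,1)$ and $h=([0,0],0,\eta)$; this is in $(P')$ order with tempered dual, hence nonvanishing. When $\eta=1$ nothing changes at all, so there is no sign flip by $(-1)^{C(h)}=-1$. When $\eta=-1$ both $h_i$ become $([1,-1],0,-1)$, with three circles. Since $R_k$ is an explicit formula, no other route can rescue this reading. In particular the $dual$ fallback does not land in Lemma \ref{lem-multiplicity-cancel}: $dual$ sends $[A,B]$ to $[A,-B]$, so segments nested with $B(h)>B(h_i)$ cease to be nested.

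The lemma has to be read with $\supp(h)\supset\supp(h_1),\supp(h_2)$, the mirror image of Lemma \ref{lem-multiplicity-cancel}. This is also how the paper applies it, e.g.\ in Lemmas \ref{S existence Z} and \ref{D existence Z}, to discard pairs of single-circle multiples lying inside a wider row. Then both exchanges are in Case 2. The simple rule hits $h_i$, so $l(h_i)$ is kept and $\eta(h_i)$ is multiplied by $(-1)^{A(h)-B(h)}$; this is the same factor both times, since the support of $h$ does not move. The case-dependent update hits $h$. The missing idea is a two-step cancellation in which $\epsilon=\eta(h_i)\eta(h)$ is \emph{not} constant, which is the opposite of the invariance you single out as the main obstacle. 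Subcase (a) needs $C(h)<2$, hence $C(h)=1$ by hypothesis; it then fixes $h$, so it recurs at the second step. Subcase (b) sends $(l(h),\eta(h))$ to $(l(h)+1,-\eta(h))$, which flips $\epsilon$ and forces (c) at the second step, undoing it. Subcase (c) sends it to $(l(h)-1,-\eta(h))$, which flips $\epsilon$ and raises the circle count by $2$, forcing (b) and again undoing it. Finally, this computation yields the factor $(-1)^{A(h)-B(h)}=(-1)^{C(h)+1}$ on $\eta(h_1),\eta(h_2)$, consistent with Lemmas \ref{big swap up} and \ref{lem-multiplicity-cancel}. It does not give the stated $(-1)^{C(h)}$, so the exponent in the statement should be rechecked too.
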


Next, we recall a specific kind of extended segment that often appears in extended multi-segments corresponding to tempered representations.

\begin{defn}[{\cite[Definition 4.12]{HKT26}}]
\label{def-hat}
    A \emph{hat} is an extended segment of the form $([A, B]_\rho, l, \eta)$ where $B = -l$.
\end{defn}

The alternating sign condition helps provide an important criterion for whether consecutive hats can interact with each other.

\begin{lemma}[{\cite[Lemma 4.19]{HKT26}}]
\label{merge}
    Given two consecutive hats $h_1 < h_2$ in $(P')$ order written as $h_1 = ([A_1, -B_1], B_1, \eta_1)$ and $h_2 = ([A_2, -B_2], B_2, \eta_2),$
    it is possible to apply a nontrivial $dual \circ ui \circ dual$ to $h_1$ and $h_2$ if and only if $A_2 = B_1 - 1$ and $h_1$ and $h_2$ satisfy the alternating sign condition.
    The result has only one row and it is of the form \[h = ([A_1, -B_2], B_2, \eta_1).\]
\end{lemma}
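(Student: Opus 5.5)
The plan is to prove the statement directly from the definitions of $dual$ and $ui$, by conjugating the union-intersection through the dual operator and reading off the result row by row.

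\textbf{Step 1: dualize.} Since the rows are in $(P')$ order, the dual operator applies. It reverses the order and sends each hat $h_i=([A_i,-B_i],B_i,\eta_i)$ to a row with support $[A_i,B_i]$. By Definition \ref{def dual} with $B_i\in\Z$, the new length parameter is
\[
l_i'=l_i+(-B_i)=0,
\]
so $\widehat{h}_i$ is a row $([A_i,B_i],0,\eta_i')$ consisting entirely of circles. Because $dual$ is an involution, a nontrivial $dual\circ ui\circ dual$ on $h_1,h_2$ is the same as a nontrivial $ui$ on the adjacent pair $\widehat{h}_2<\widehat{h}_1$ in the dual. By Lemma \ref{lem-local}, the other rows play no role beyond the global sign bookkeeping $\alpha_i,\beta_i$ in $\eta_i'$.

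\textbf{Step 2: find which case of $ui$ can occur.} Set $k=\widehat{h}_2$ and $k+1=\widehat{h}_1$. The $(P')$ order gives $-B_1\le -B_2$, hence $B_2\le B_1$. A $ui$ on this pair requires $A_1>A_2$ and $B_1>B_2$. Since both $l$'s are $0$:
\begin{itemize}
\item Case 1 would need $A_1=A_2$, which is impossible.
\item Case 2 would need $B_1=B_2$, which is impossible.
\item Case 3 needs $\epsilon=-1$ and $B_1+0=A_2-0+1$, that is $A_2=B_1-1$. Since $l_k=l_{k+1}=0$, this is of type $3'$.
\end{itemize}
So only type $3'$ can occur. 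The adjacency condition $A_2=B_1-1$ is exactly the stated one.

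\textbf{Step 3: translate the sign condition.} The remaining task is to show that $\epsilon=(-1)^{A_k-B_k}\eta_k'\eta_{k+1}'=-1$ is equivalent to the alternating sign condition for $h_1,h_2$. I would expand $\eta_i'=(-1)^{\alpha_i+\beta_i}\eta_i$ for the two rows. Then I would check that the difference of the $\alpha$ and $\beta$ exponents between the two adjacent rows contributes $(-1)^{a_1+b_2}$ or a similar quantity. Reducing with $a_i+b_i=2A_i+2$, this should collapse to
\[
\eta_2=(-1)^{C(h_1)}\eta_1, \qquad C(h_1)=A_1+B_1+1-2B_1=A_1-B_1+1.
\]
\textbf{This parity bookkeeping is the main obstacle.} I would carry it out carefully and sanity-check it against the example in Remark \ref{rmk cases 1 and 4}.

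\textbf{Step 4: compute the result.} Type $3'$ produces $[A_{k+1},B_k]=[A_1,B_2]$ with $l=0$ and $\eta=\eta_k'$, and deletes the other row. Dualizing back gives support $[A_1,-B_2]$ with $l=B_2$, so the result is again a hat. Its sign is $\eta_k'$ multiplied by the $\alpha,\beta$ correction for the merged row. Since this row now sits where $h_1$ sat, and its $b$ equals the combined contributions, the correction should cancel and leave $\eta_1$. I would verify this again by tracking parities, using that $A_2=B_1-1$ makes the total length additive.
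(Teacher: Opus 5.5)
Your plan is correct, and it is the same direct computation from Definitions \ref{def dual} and \ref{ui def} that underlies the cited \cite[Lemma 4.19]{HKT26}; the paper itself only quotes the result. There is no real gap, but three points need fixing when you write it out.

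First, in the ``if'' direction you must also check the strict inequalities $A_1>A_2$ and $B_1>B_2$ that $ui$ requires. They are automatic. The condition $A+B\geq 0$ applied to each hat gives $A_i\geq B_i$, so $A_1\geq B_1>B_1-1=A_2\geq B_2$. This also gives $C(h_i)=A_i-B_i+1\geq 1$, so the convention for rows without circles never interferes with the signs.

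Second, the parity in Step 3 is short:
\begin{itemize}
\item Because $h_1<h_2$ are consecutive, $\alpha_{h_2}=\alpha_{h_1}+a(h_1)$ and $\beta_{h_1}=\beta_{h_2}+b(h_2)$.
\item Here $a(h_1)=A_1-B_1+1=C(h_1)$ and $b(h_2)=A_2+B_2+1$.
\item Hence $\epsilon=(-1)^{A_2-B_2}(-1)^{a(h_1)+b(h_2)}\eta_1\eta_2=-(-1)^{C(h_1)}\eta_1\eta_2$.
\end{itemize}
So $\epsilon=-1$ is exactly $\eta_2=(-1)^{C(h_1)}\eta_1$.

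Third, the reason you give in Step 4 is not the right one. In $ui(dual(\EE))$, the merged row is preceded by the duals of the rows after $h_2$ and followed by the duals of the rows before $h_1$, and $dual$ swaps $a$ and $b$. So undualizing multiplies its sign $(-1)^{\alpha_{h_2}+\beta_{h_2}}\eta_2$ by $(-1)^{\beta_{h_2}+\alpha_{h_1}}$, which gives $(-1)^{C(h_1)}\eta_2$. This equals $\eta_1$ only because of the alternating sign condition from Step 3, not because lengths are additive.

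Length additivity is what you need for a different purpose. When $A_2=B_1-1$, you have $a(h_1)+a(h_2)=a(h)$ and $b(h_1)+b(h_2)\equiv b(h)\pmod 2$. This is what shows that every other row returns from the round trip with its sign unchanged.
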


\begin{defn}[{\cite[Definition 4.20]{HKT26}}]
    We refer to the act of performing a $dual \circ ui \circ dual$ of type 3' on two consecutive hats $h_1$, $h_2$ as described above as \emph{merging} the hats. We say two hats are \emph{mergeable} if they satisfy the conditions in Lemma \ref{merge}. We denote the merged hat by $h_1 * h_2.$
\end{defn}

We observe from Lemma \ref{merge} that $C$ is additive under mergings. Formally, if $h_1$ and $h_2$ are two hats, then $C(h_1 * h_2) = C(h_1) + C(h_2).$ It follows immediately from the formulas in Lemmas \ref{big swap down} and \ref{big swap up} that row exchanges commute with the action of merging hats. We state this explicitly in the following corollary.

\begin{cor}[{\cite[Lemma 4.21]{HKT26}}]
\label{swap commutes with M}
    Let $r_1 < r_2 < r_3$ be three consecutive rows in a virtual extended multi-segment.
    \begin{itemize}
        \item Suppose $r_2$ and $r_3$ are mergeable hats, and  $\supp(r_2), \supp(r_3) \subset \supp(r_1)$. Then the image of $r_1$ after being exchanged with $r_2$ and $r_3$ is the same as the image of $r_1$ after being exchanged with $r_2 * r_3$.
        \item Suppose $r_1$ and $r_2$ are mergeable hats, and  $\supp(r_1), \supp(r_2) \subset \supp(r_3)$. Then the image of $r_3$ after being exchanged with $r_2$ and $r_1$ is the same as the image of $r_3$ after being exchanged with $r_1 * r_2$.
    \end{itemize}
\end{cor}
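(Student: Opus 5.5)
The plan is to compare, in each bullet, the two ways of moving the big row past the hats, using only the explicit row-exchange formulas. Exchanging with $r_2,r_3$ separately is handled by Lemma \ref{big swap down} (resp.\ Lemma \ref{big swap up}) with $k=2$. Exchanging with the merged hat is the same lemma with $k=1$. The key input is the additivity $C(r_2*r_3)=C(r_2)+C(r_3)$ from Lemma \ref{merge}. We also use that $\supp(r_2*r_3)=[A_2,-B_3]$ is the union of the two hat supports, since mergeability forces $A_3=B_2-1$. Hence $\supp(r_2*r_3)\subset\supp(r_1)$ whenever both $\supp(r_2),\supp(r_3)\subset\supp(r_1)$, so all exchanges below are of Case 1 (resp.\ Case 2) type in Definition \ref{def row exchange}.

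For the first bullet, mergeability of $r_2,r_3$ gives the alternating sign condition between $r_2$ and $r_3$ by Lemma \ref{merge}. Lemma \ref{big swap down} with $k=2$ then gives the image of $r_1$ as
\[
\bigl(l(r_1)-C(r_2)-C(r_3),\ (-1)^{C(r_2)+C(r_3)}\eta(r_1)\bigr).
\]
With $k=1$ and the single row $r_2*r_3$, it gives $\bigl(l(r_1)-C(r_2*r_3),\ (-1)^{C(r_2*r_3)}\eta(r_1)\bigr)$. These agree by additivity of $C$. The support of $r_1$ is unchanged by row exchange, so the two images coincide.

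For the second bullet, I argue symmetrically with Lemma \ref{big swap up}, exchanging $r_3$ up past $r_2$ and then $r_1$. This gives
\[
\bigl(l(r_3)+C(r_1)+C(r_2),\ (-1)^{C(r_1)+C(r_2)}\eta(r_3)\bigr),
\]
versus the corresponding expression with $C(r_1*r_2)$, and again additivity finishes. The alternating sign condition for $r_1<r_2$ is supplied by mergeability.

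The main obstacle is that Lemmas \ref{big swap down} and \ref{big swap up} carry extra hypotheses. These are the alternating condition between the moving row and its first neighbour, and, going up, the inequality $C(r_3)\ge 2\bigl(C(r_1)+C(r_2)\bigr)$. Neither is guaranteed by the statement. In the cases where they fail, I fall back on Definition \ref{def row exchange} directly and split into subcases (a), (b), (c) according to $\epsilon$ and the circle-count comparison.

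For each subcase, I check two things. First, the sign flip of the moved row depends only on the parity of $b-2l$ of the rows passed, which is additive. Second, the $l$-update after two successive exchanges equals the $l$-update for one exchange with the combined circle count $C(r_2)+C(r_3)$. The one delicate point is the threshold comparison $b_k-2l_k$ versus $2(b_{k+1}-2l_{k+1})$ in case (a) versus (b). Here I expect to use that after the first exchange the parameter $\epsilon$ for the second exchange is determined via the alternating condition between the hats. The goal is to show that the composite lands on the same branch as the single exchange with $r_2*r_3$. This bookkeeping is where I expect most of the work.
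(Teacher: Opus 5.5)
Your argument is the paper's own: the paper justifies this corollary in one line, by additivity $C(h_1*h_2)=C(h_1)+C(h_2)$ from Lemma \ref{merge} plus the formulas of Lemmas \ref{big swap down} and \ref{big swap up}, exactly as in your first two paragraphs. You are right that those lemmas carry sign and size hypotheses that the statement does not impose; the paper's one-line justification does not address this. Your fallback does close it.

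The merged hat carries the sign of the first hat, and the two hats alternate. Hence the first of the two exchanges has the same $\epsilon$ as the single exchange with the merged hat. The second exchange has that $\epsilon$ again, unless the first was of type (a), in which case $\epsilon$ is flipped.

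Write $C_i=C(r_i)$ in the first bullet. The composite is (c)(c), (b)(b), (b)(a), (a)(c) respectively, exactly when the single exchange is:
\begin{itemize}
\item (c);
\item (b) with $C_1\ge 2(C_2+C_3)$;
\item (a) with $2C_2\le C_1<2(C_2+C_3)$;
\item (a) with $C_1<2C_2$.
\end{itemize}
Both sides then give $l=l(r_1)-C_2-C_3$, $l(r_1)+C_2+C_3$, $b(r_1)-l(r_1)-C_2-C_3$, $b(r_1)-l(r_1)-C_2-C_3$ respectively. The sign factors are $(-1)^{C_2+C_3}$, $(-1)^{C_2+C_3}$, $(-1)^{C_2+C_3-1}$, $(-1)^{C_2+C_3-1}$. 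The second bullet is the same computation in Case 2 of Definition \ref{def row exchange}, with $r_3$ first passing $r_2$.

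Two small corrections.
\begin{itemize}
\item In case (a) the moving row's sign is multiplied by $(-1)^{C-1}$, not $(-1)^{C}$, where $C$ is the circle count of the row passed. So the total sign is not determined by $\sum C$ alone. What makes it match is that exactly one step of the composite is of type (a) precisely when the single exchange is of type (a).
\item $\supp(r_2*r_3)=[A_2,-B_3]$ is not the union of the two hat supports. That union is $\supp(r_2)$, since $\supp(r_3)\subseteq\supp(r_2)$. Still, $[A_2,-B_3]\subseteq\supp(r_2)\subseteq\supp(r_1)$ because $B_3\le B_2$ in $(P')$ order, and this containment is all you actually use.
\end{itemize}
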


\subsection{\texorpdfstring{Type $Y_\mathcal{M}$}{}} In this section, we recall the notion of type $Y_\mathcal{M}$ (Definition \ref{defn type Y M}) and various definitions and results associated to it from \cite{HKT26}. This notion is used in adapting Theorem \ref{thm-count-temp} to the setting of the local theta correspondence.

Let $\BB\in \Block_\rho(G_n)$.
Recall from Definition \ref{def-starts-ends} that $c_{\min} = \min_{r \in \mathcal{B}} B(r)$ and $c_{\max} = \max_{r \in \mathcal{B}} A(r)$. Also recall from Definition \ref{def-multiplicity} that $m_c$ denotes the number of rows in $\mathcal{B}$ with support $[c, c]$, which we refer to as the multiplicity of column $c$. We let 
$\mathcal{M}_\B = (m_{c_{\min}}, m_{c_{\min} + 1}, \dots, m_{c_{\max}})$ denote the tuple of multiplicities of all rows in $\mathcal{B}.$ When $\B$ is fixed, we often write $\mathcal{M}=\mathcal{M}_\B$ for brevity. Note that each entry in $\mathcal{M}$ is a positive odd integer by Definition \ref{def-block}. More generally, we consider two fixed integers $c_{\min},c_{\max}\in\Z_{\geq 0}$ with $c_{\max}\geq c_{\min}$. A tuple \[\mathcal{M}(c_{\min}, c_{\max})=(m_{c_{\min}}, m_{c_{\min}+1},\dots,m_{c_{\max}})\in\Z^{c_{\max} - c_{\min} +1}\] is called a \emph{block-tuple} if each $m_{i}$ is a positive odd integer. We note that if $\B$ is a block, then $\mathcal{M}_\B$ is a block-tuple.

\begin{defn}[{\cite[Definition 5.1]{HKT26}}]
\label{valid S}
    A \emph{valid} tuple $\mathcal{S}$ for a block-tuple $\mathcal{M}(c_{\min},c_{\max})$ is a tuple $(\mathcal{S}_1, \dots, \mathcal{S}_k)$ of subsets of $\{c_{\min}, \dots, c_{\max}\}$ satisfying the following conditions.
    \begin{enumerate} 
        \item Each $\mathcal{S}_i \subset \{c_{\min}, \dots, c_{\max} \}$ is a nonempty set of consecutive integers.
        \item $\bigcup_i \mathcal{S}_i = \{c_{\min}, \dots, c_{\max}\}.$
        \item If $i < j$ and $s_i \in \mathcal{S}_i, s_j \in \mathcal{S}_j,$ then $s_i \leq s_j$.
        \item If $i < j$ and $c \in \mathcal{S}_i \cap \mathcal{S}_j,$ then $j - i = 1,$ $|\mathcal{S}_j| \geq 2,$ and $m_c > 1.$
        \item If $i\geq 2$, $|\mathcal{S}_i| \geq 2$, and $c = \min \mathcal{S}_i$ has $m_c > 1,$ then $c \in \mathcal{S}_{i - 1}.$
\end{enumerate}
\end{defn}

Fix a block $\B\in\Block_\rho(G_n)$ and let $\mathcal{M}=\mathcal{M}_\B.$
To any valid tuple $\mathcal{S}$ for $\mathcal{M}$, a virtual extended multi-segment $\mathcal{E}(\mathcal{M}, \mathcal{S}, \eta)$ is associated as follows.

\begin{defn}[{\cite[Definition 5.2]{HKT26}}]\label{defn E(M,S)}
    Let $\mathcal{M}= (m_{c_{\min}}, m_{c_{\min} + 1}, \dots, m_{c_{\max}})$ be a block-tuple. Given a sign $\eta \in \{\pm 1\}$ and a valid tuple $\mathcal{S}=(\mathcal{S}_1,\dots,\mathcal{S}_k)$ for $\mathcal{M},$ we define a virtual extended multi-segment $\EE(\mathcal{M}, \mathcal{S}, \eta)$ consisting of the following rows.
    \begin{enumerate}
    \item For each $\mathcal{S}_i,$ we include a row of circles (i.e. a row with $l=0$) with support 
    $[\max \mathcal{S}_i, \min \mathcal{S}_i].$
    We refer to these rows as \emph{chains}.
    \item For each $c \in \{c_{\min}, \dots, c_{\max}\},$ we add $m_c - |\{i \mid c \in \mathcal{S}_i\}|$ copies of a row of circles with support
    $[c, c].$
    We refer to these rows as \emph{multiples}.
\end{enumerate}

The rows are ordered in a $(P')$ ordering such that if $A(r) < A(r')$ then $r < r'$. In the case where $r$ and $r'$ both have $supp(r) = supp(r') = [c, c]$ but $r$ is a chain and $r'$ is a multiple, we choose the order so that $r < r'.$

The signs $\eta$ are chosen to be ``odd-alternating'': i.e., the following conditions hold.
    \begin{enumerate}
        \item $\eta(\mathcal{E}(\mathcal{M}, \mathcal{S}, \eta)) = \eta.$
        \item If $r_i$ and $r_{i + 1}$ are consecutive rows and neither is a multiple, then $r_i$ and $r_{i + 1})$ satisfy the alternating sign condition.
        \item If $r_i$ and $r_{i + 1}$ are consecutive rows and $r_{i + 1}$ is a multiple, then $r_i$ and $r_{i + 1}$ fail the alternating sign condition.
        \item If $r_i$ and $r_{i + 1}$ are consecutive rows and only $r_{i}$ is a multiple, then $r_i$ and $r_{i+1}$ satisfy the alternating sign condition if and only if $B(r_{i + 1}) > B(r_i).$
    \end{enumerate}
\end{defn}

\begin{rmk}[{\cite[Remark 5.3]{HKT26}}]
    Note that the number of multiples $m_c - |\{\mathcal{S}_i \mid c \in \mathcal{S}_i\}|$ is chosen such that the number of circles in column $c$ is always equal to $m_c.$
\end{rmk}

We recall some terminology related to the study of the virtual extended multi-segments $\EE(\mathcal{M},\mathcal{S},\eta).$

\begin{defn}[{\cite[Definition 5.4]{HKT26}}]
\label{def z-chain}
    We say that two chains $r_1$ and $r_2$ are \emph{consecutive chains} if they are associated to two consecutive sets $\mathcal{S}_i$ and $\mathcal{S}_{i + 1}$. If $S_1 \cap S_{i + 1} \neq \emptyset,$ then we say that $r'$ is a \emph{$z$-chain}. An example is given in Figure \ref{fig: z-chain}.
\end{defn}

\begin{figure}[h]
    \centering
    $$\bordermatrix{&2 &3 & 4 & 5 & 6 \cr & \oplus & \ominus & \oplus \cr & & & \oplus \cr && & \oplus & \ominus & \oplus & \ominus}$$
    \caption{The $\mathcal{S}$ data for the above multi-segment is $(\{2, 3, 4\}, \{4, 5, 6\}).$ Since $\mathcal{S}_1 \cap \mathcal{S}_2 = \{1\},$ the chain with support $[4, 2]$ is a $z$-chain.}
    \label{fig: z-chain}
\end{figure}
    
\begin{defn}[{\cite[Definition 5.6]{HKT26}}]
\label{def multiples belonging to a chain}
    Let $r_1$ and $r_2$ be consecutive chains. If $r_3$ is a multiple such that $r_1 < r_3 < r_2,$ then we say that $r_3$ \emph{belongs} to $r_1.$ Note that from the sign condition in Definition \ref{defn E(M,S)}, we have that all multiples belonging to some chain $r$ have the same sign $\eta(r) \cdot (-1)^{C(r) - 1}.$
\end{defn}

\begin{rmk}[{\cite[Remark 5.8]{HKT26}}]\label{rmk odd signs}
    Suppose $\mathcal{E} = \mathcal{E}(\mathcal{M}, \mathcal{S}, \eta)$ as in Definition \ref{defn E(M,S)}. Suppose the circles of $\mathcal{E}$ are read out in order, row by row, then the symbols $\oplus$ or $\ominus$ always appear an odd number of times consecutively. For example, in Figure \ref{fig: z-chain}, there is one $\oplus,$ then three $\ominus$s, then one $\oplus,$ and then one $\ominus.$
\end{rmk}

For blocks $\mathcal{B}$ starting at zero (i.e., $c_{\min} = 0$), more data is required to describe all the virtual extended multi-segments equivalent to $\mathcal{B}.$ 
The extra data is recovered by introducing a new parameter $\mathcal{T}.$

\begin{defn}[{\cite[Definition 5.9]{HKT26}}]
\label{valid T}
    Suppose $\BB$ starts at zero and let $\mathcal{M} := \mathcal{M}_\mathcal{B}$ be the corresponding block-tuple. Let $\mathcal{S} = (\mathcal{S}_1, \dots, \mathcal{S}_k)$ be a valid tuple for $\mathcal{M}$. Then we say a collection of partitions of each $\mathcal{S}_i$ of the form $\mathcal{T}=(\mathcal{T}_i^0, \mathcal{T}_i^1, \dots,  \mathcal{T}_i^{\ell_i})$ is \emph{valid} if
    \begin{enumerate}
        \item each $\mathcal{T}_i^j$ is a nonempty set of consecutive integers;
        \item if $j < j'$ and $t_j \in \mathcal{T}_i^j$, $t_{j'} \in \mathcal{T}_i^{j'}$, then $t_j \leq t_j'$;
        \item if $|\mathcal{S}_i \cap \mathcal{S}_{i + 1}| \geq 1,$ then $|\mathcal{T}_{i + 1}^0| \geq 2$.
    \end{enumerate}
A \emph{valid} tuple $(\mathcal{M}, \mathcal{S}, \mathcal{T})$ is one such that $\mathcal{S}$ and $\mathcal{T}$ are both valid.
\end{defn}

\begin{rmk}[{\cite[Remark 5.10]{HKT26}}]
    For brevity, we often represent the subsets $\mathcal{T}_i^j \subset \mathcal{S}_i$ by overlining each $\mathcal{T}_i^j$ for $j > 0$. As an example, suppose $\mathcal{S}_i = \{2, 3, 4, 5, 6, 7, 8, 9\},$ and $\mathcal{T}_i^0 = \{2, 3, 4\}, \mathcal{T}_i^1 = \{5, 6\}, \mathcal{T}_i^2 = \{7\}, \mathcal{T}_i^3 = \{8, 9\}.$ As shorthand, we write
    $$\mathcal{S}_i = \{2, 3, 4, \overline{5, 6}, \overline{7}, \overline{8, 9}\}.$$
\end{rmk}

We describe the virtual extended multi-segments obtained with this new parameter below.

\begin{defn}[{\cite[Definition 5.11]{HKT26}}]
\label{defn E(M,S, T)}
    Let $\mathcal{M} = (m_{c_{\min}}, \dots, m_{c_{\max}})$ be a block-tuple with $c_{\min} = 0$. Given valid $(\mathcal{M}, \mathcal{S}, \mathcal{T})$ along with a sign $\eta \in \{\pm 1\}$, we associate a virtual extended multi-segment $\mathcal{E}(\mathcal{M}, \mathcal{S}, \mathcal{T}, \eta)$ as follows.
\begin{enumerate}
    \item For each $\mathcal{T}_i^j$ with $j \geq 1,$ we include a hat with support
    $[\max \mathcal{T}_i^j, -\min \mathcal{T}_i^j].$
    \item For each $\mathcal{T}_i^0 \subset \mathcal{S}_i,$ we include a row of circles with support
    $[\max \mathcal{T}_i^0, \min \mathcal{T}_i^0].$
    \item For each $c \in \{c_{\min}, \dots, c_{\max}\},$ we add $m_c - |\{i \mid c \in \mathcal{S}_i\}|$ copies of a row of circles with support
    $[c, c]$
    for some $\eta_i \in\{\pm 1\}.$
\end{enumerate}
The order of the rows and the signs of each row follow exactly the same rules as in Definition \ref{defn E(M,S)}.
\end{defn}

As before,  rows falling into Case (2) are called \emph{chains} and rows falling into Case (3) are called \emph{multiples}. Similarly, we have exactly the same notions of $z$-chains (Definition \ref{def z-chain}) and a multiple belonging to a chain (Definition \ref{def multiples belonging to a chain}) as before.

\begin{defn}[{\cite[Definition 5.12]{HKT26}}]\label{defn type Y M}
    We say that $\EE$ is of \emph{type $Y_\mathcal{M}$} if it is of the form $\EE(\mathcal{M}, \mathcal{S}, \mathcal{T}, \eta)$ for valid $(\mathcal{M}, \mathcal{S}, \mathcal{T})$ or $\EE(\mathcal{M}, \mathcal{S}, \eta)$ for valid $(\mathcal{M}, \mathcal{S})$. We refer to $\mathcal{S}$ (or $\mathcal{S}$ and $\mathcal{T}$, if applicable) as the \emph{$\mathcal{S}$-data} of $\EE.$
\end{defn}

As it turns out, multi-segments of type $Y_\mathcal{M}$ completely classify multi-segments corresponding to tempered representations. This fact is formalized in the following theorem.

\begin{thm}[{\cite[Theorem 5.17]{HKT26}}]
\label{block classification}
    Let $\mathcal{B} \in\VRep_\rho^\mathbb{Z}(G_n)$ be a block. Then, the set $\Psi(\pi(\BB))$ is precisely the set of $\psi_\EE$ where $\EE\in\VRep_\rho^\mathbb{Z}(G_n)$ is of type $Y_\mathcal{M}$ with $\mathcal{M} = \mathcal{M}_\BB$ and $\eta(\EE)=\eta(\BB)$.
\end{thm}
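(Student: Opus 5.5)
The plan is to prove the two inclusions separately. For each inclusion we work with the equivalence relation $\sim$ generated by basic operators, using Theorem \ref{thm intersections of local Arthur packets}. Two preliminary remarks simplify things. First, $\psi_\EE$ is determined by $\supp(\EE)$. Second, $\BB$ is itself of type $Y_\mathcal{M}$: take every $\mathcal{S}_i$ to be a singleton, every $\mathcal{T}$ trivial, and $\eta=\eta(\BB)$. With these choices the block conditions (alternating signs between consecutive columns, odd multiplicities) are exactly the odd-alternating sign rules of Definition \ref{defn E(M,S)}. So it suffices to show two things: every $\EE$ of type $Y_\mathcal{M}$ with $\eta(\EE)=\eta(\BB)$ lies in $\VRep_\rho^\Z(G_n)$ and satisfies $\EE\sim\BB$; and conversely, the class of type $Y_\mathcal{M}$ multi-segments with fixed sign is closed under every basic operator and its inverse, whenever the result stays in $\VRep_\rho^\Z(G_n)$.

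\textbf{First inclusion.} We induct on a complexity measure, namely $\sum_i(|\mathcal{S}_i|-1)$ plus the number of hats. Given $\EE(\mathcal{M},\mathcal{S},\mathcal{T},\eta)$, pick the last hat; if there are no hats, pick the last non-singleton chain.
\begin{enumerate}
\item A hat $[\max\mathcal{T}_i^j,-\min\mathcal{T}_i^j]$ can be ``unmerged'' from the chain or hat preceding it by the inverse of a $dual\circ ui\circ dual$ (Lemma \ref{merge} read backwards, together with Lemma \ref{lemma ui inv = dud}). The result is a multi-segment of type $Y_\mathcal{M}$ whose $\mathcal{T}$-data has one fewer part.
\item A chain $[\max\mathcal{S}_i,\min\mathcal{S}_i]$ splits, via a $ui^{-1}$ of type 3, into a shorter chain and a singleton row. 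By the odd-alternating rule this row becomes a multiple or a new chain.
\end{enumerate}
Conditions (4)–(5) of Definition \ref{valid S} and condition (3) of Definition \ref{valid T} are what guarantee that the relevant adjacent pair exists. If a multiple sits between the two rows, we first move it out of the way using Lemmas \ref{lem-multiplicity-cancel} and \ref{lem-multiplicity-cancel-up}, which show that multiples in pairs are transparent to row exchange. By Theorem \ref{thm intersections of local Arthur packets}(1) and Lemma \ref{lemma dual}, nonvanishing is preserved, so induction reduces $\EE$ to $\BB$.

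\textbf{Second inclusion.} Let $\EE\sim\BB$. Then $\EE$ is reached from $\BB$ by a chain of basic operators and their inverses, and we check operator by operator that type $Y_\mathcal{M}$ is preserved.
\begin{enumerate}
\item For $R_k$: this only reorders rows and changes signs. Lemmas \ref{big swap down} and \ref{big swap up}, combined with Corollary \ref{swap commutes with M} and Lemma \ref{cor Alex}, show that after reinstating the standard $(P')$ order the signs remain odd-alternating.
\item For $ui_{i,j}$ and its inverse: using Lemma \ref{lem-local}, we compute locally. An applicable union-intersection between two rows of type $Y_\mathcal{M}$ must either glue a chain to an adjacent chain or multiple, or split one. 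The applicability conditions of Definition \ref{ui def} (equalities of $A-l$, $B+l$) then translate exactly into conditions (1)–(5) for the new $\mathcal{S}$. Any other pairing is blocked by the sign rule of Remark \ref{rmk odd signs}.
\item For $dual\circ ui\circ dual$ and its inverse: by Lemma \ref{merge}, these only merge or split hats, which corresponds to changing $\mathcal{T}$ while keeping it valid.
\item The operators $dual_k$ do not occur, since all $B_i\in\Z$.
\end{enumerate}

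The main obstacle is the second inclusion, specifically ruling out ``exotic'' union-intersections. These are ones that, after preliminary row exchanges through several multiples and $z$-chains, pair rows whose $\mathcal{S}$-sets are not adjacent, or that would produce a hat not starting at a $\mathcal{T}$-boundary. My first approach would be to show that the odd-run property of Remark \ref{rmk odd signs} is invariant under each basic operator. Granting that, I would argue that any union-intersection producing a non-type-$Y_\mathcal{M}$ configuration either violates the nonvanishing criterion of \cite[Theorems 3.6, 4.4]{Ato20b} or would create an even run of equal signs. Verifying this invariance requires the detailed sign bookkeeping of Lemmas \ref{big swap down} and \ref{big swap up}.
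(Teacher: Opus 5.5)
\textbf{The gap: the second inclusion.} The second inclusion carries essentially all the content of the theorem, and your proposal asserts it rather than proves it. You plan to check that type $Y_\mathcal{M}$ is closed under \emph{every} basic operator and its inverse, starting from an arbitrary type-$Y_\mathcal{M}$ multi-segment. That commits you to lowering operators and to union-intersections of types 1, 2, 3. These act on a pair $i,j$ only after row exchanges through multiples, $z$-chains and hats. In those intermediate positions rows acquire $l>0$ without being hats: for example, a chain exchanged past a single circle in Case 1(b) of Definition \ref{def row exchange} gets $l=1$. So Lemma \ref{lem-local} gives you nothing there, and your claim that ``any other pairing is blocked by the sign rule'' is exactly what needs proof.

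Your item 3 is also wrong. The dual of a chain $([A,B],0,\eta)$ is the hat $([A,-B],B,\eta')$ (Definition \ref{def dual}). So a $dual\circ ui\circ dual$ can combine a hat with a row of circles; these are the operators $D^1,D^2$ of Lemma \ref{lem SMUD operators}. There are also $dual\circ ui\circ dual$'s of types 1, 2, 3. These operators therefore do not merely merge or split hats.

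Your fallback (invariance of the odd runs of Remark \ref{rmk odd signs}, plus the nonvanishing criterion) does not close the gap either. Odd runs constrain only the signs of circles, not the shapes of rows. They do not exclude rows $([A,B],l,\eta)$ with $l>0$ and $B\neq -l$, which are neither chains, hats nor multiples. Such rows do occur in nonvanishing classes, e.g.\ the type $Z_\mathcal{M}$ classes of \S\ref{sec case 3}. You give no mechanism tying their appearance to an even run or to vanishing.

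\textbf{How the cited proof avoids this.} The argument of \cite{HKT26}, reproduced in this paper for type $Z_\mathcal{M}$ in the proof of Theorem \ref{Z Classification}, never analyzes lowering operators.
\begin{enumerate}
\item By Theorem \ref{thm psi max min}, the class has a unique minimal element. Hence every $\EE'\sim\BB$ is reached from $\EE^{\min}$ by raising operators alone.
\item $\EE^{\min}$ is identified explicitly as a type-$Y_\mathcal{M}$ multi-segment: when $\BB$ starts at zero, it is the one with $\mathcal{S}$-data $(\{0,\overline{1},\dots,\overline{c_{\max}}\})$ (\cite[Lemma 5.46]{HKT26}; compare Lemma \ref{E min classification Z}).
\item It then suffices to show that raising operators preserve type $Y_\mathcal{M}$ and $\eta$. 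The raising operators of type 3$'$ applicable to such multi-segments are exactly $S,M,U,D$.
\item By Lemma \ref{lemma ui inv = dud}, the remaining raising operators are $dual\circ ui\circ dual$'s of types 1, 2, 3. \cite[Lemmas 5.47--5.48]{HKT26} factor these into type-3$'$ raising operators and their inverses.
\end{enumerate}

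\textbf{The first inclusion.} This is just the existence part of Lemma \ref{lem SMUD operators}, which you should invoke. Your version has the operators pointing the wrong way. Reading Lemma \ref{merge} backwards splits a hat, and inverting a $D$ detaches a hat from a chain. Both increase the number of $\mathcal{T}$-parts, so your complexity measure goes up, not down. To move toward $\BB$ you need the forward operators $M$ and $D$ (absorbing $\mathcal{T}_i^1$ into $\mathcal{T}_i^0$), followed by $S$. Note also that splitting a chain is a $ui^{-1}$ of type 3$'$, not of type 3.
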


Suppose that $\EE\in\VRep_\rho^\mathbb{Z}(G_n)$ is of type $Y_\mathcal{M},$ with $\mathcal{M}$ starting at zero.
    Theorem \ref{block classification} guarantees that there exists a tempered virtual extended multi-segment $\EE_{temp}$ equivalent to $\EE.$

    The next result describes several important operators on multi-segments of form $Y_\mathcal{M}$ that preserve equivalence, in the case where $\mathcal{M}$ begins at zero.

\begin{lemma} \label{lem SMUD operators}
    Suppose $\EE = \mathcal{E}(\mathcal{M}, \mathcal{S}, \mathcal{T},\eta)$ with $\mathcal{M}$ beginning at zero. Then the following operations $S, M, U, D$ changing the $\mathcal{S}$-data of $\mathcal{M}$ while creating an equivalent extended multi-segment.
    
    \begin{enumerate}
        \item Let $r = ([A, B], 0, \eta)$ be a chain and $k < C(r)$ be an integer. If $r$ is a $z$-chain and $c = A - B,$ then $S_{r, k}$ replaces 
        $$\{B, \dots, A, \dots\} \longrightarrow \{B + 1, \dots, A, \dots\}.$$ Otherwise, $S_{r, k}$ splits $\mathcal{S}_i$ into two sets \[\mathcal{S}_i^1 = \{B, \dots, A - c\} \text{ and } \mathcal{S}_i^2 = \{A - c + 1, \dots, A, \dots\}.\]

        \item If $h_1 = ([A, -B], B, \eta_1)$ and $h_2 = h_2 = ([-B + 1, C], -C, \eta_2)$ are consecutive hats, then $M_{h_1, h_2}$ replaces
        $$\{\dots \overline{C, \dots, B - 1}, \overline{B, \dots, A}, \dots \} \longrightarrow \{\dots \overline{C, \dots, B - 1, B, \dots, A}, \dots \}.$$

        \item If $h = ([A, -B], B, \eta)$ is a hat in $\EE$ with $C(h) > c$, then $U_{h, c}$ replaces
        $$\{\dots \overline{B, \dots, A}, \dots \} \longrightarrow \{\dots \overline{B, \dots, A - c} \}, \{A - c + 1, \dots, A, \dots \}.$$

        \item Let $h = ([A, -B], B, \eta)$ be a hat and $r_1, r_2, \dots, r_k$ be the rows of circles whose support contains $B - 1.$ Then
        \begin{enumerate}
            \item $D_{h, r}^1$ replaces
            $$\{C, \dots, B - 1, \overline{B, \dots, A}, \dots\} \longrightarrow \{C, \dots, B - 1, B, \dots, A, \dots\}.$$
            \item If $C(r_1) = 1,$ then $D_{h, r_1}^1$ and $D_{h, r_k}^2$ have the same effect on the $\mathcal{S}$-data. Otherwise, $D_{h, r_k}^2$ replaces
            $$\{\dots, B - 1, \overline{B, \dots, A}, \dots\} \longrightarrow \{\dots, B - 1\}, \{B - 1, \dots, A, \dots\}.$$
        \end{enumerate}
    \end{enumerate}
    Moreover, any multi-segment equivalent to $\mathcal{M}$ can be obtained through some series of these operations and their inverses.
\end{lemma}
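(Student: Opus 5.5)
The plan is to check that each of the four operations $S$, $M$, $U$, $D$ is a composition of basic operators (Definition \ref{def basic operators}) together with row exchanges. We then use Theorem \ref{block classification} and Theorem \ref{thm intersections of local Arthur packets} for the completeness claim. Throughout, we exploit the locality of $ui$ and $dual\circ ui\circ dual$ (Lemma \ref{lem-local}). This lets us restrict attention to the chain or hat being modified, plus the rows that must be row exchanged past it to make the relevant pair adjacent.

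For each operation we would argue as follows.
\begin{itemize}
\item[$M$.] The operation $M_{h_1,h_2}$ is precisely the merging of two consecutive mergeable hats. By Lemma \ref{merge} it is a $dual\circ ui\circ dual$ of type 3'. The only thing to check is the alternating sign condition, which follows from the odd-alternating sign rules of Definition \ref{defn E(M,S)}.
\item[$U$.] The operation $U_{h,c}$ splits the hat $h$ into a shorter hat and a chain of $c$ circles. We realize it as the inverse of a $dual\circ ui\circ dual$ of type 3. Concretely, we apply $dual$, use the fact (Lemma \ref{lemma ui inv = dud}) that the inverse is a $ui$ of the same type, and read off the $l,\eta$ data from Definition \ref{ui def}.
\item[$S$.] The operation $S_{r,k}$ splits a chain via $ui^{-1}$ of type 1 or 2. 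In the $z$-chain case, it instead transfers the first circle to the overlapping neighbouring chain, which is a $ui$ of type 2 between the two consecutive chains.
\item[$D^1$ and $D^2$.] We absorb the hat into a row of circles containing $B-1$. First we exchange $h$ past the intervening rows using Lemmas \ref{big swap down} and \ref{big swap up}, which are applicable because the rows alternate. Then we apply a $ui$ of type 3 with $r_1$ (for $D^1$) or with $r_k$ (for $D^2$). Lemmas \ref{lem-multiplicity-cancel} and \ref{lem-multiplicity-cancel-up} let us ignore pairs of equal multiples. The case $C(r_1)=1$ is handled by the coincidence of the resulting $\mathcal{S}$-data.
\end{itemize}
In every case we must also verify that the output is again of type $Y_\mathcal{M}$, i.e.\ that the new $(\mathcal{S},\mathcal{T})$ is valid and that the signs are again odd-alternating. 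This is a bookkeeping check against Definitions \ref{valid S} and \ref{valid T}, using Remark \ref{rmk odd signs}.

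For the final assertion we argue by reduction to the tempered representative. By Theorem \ref{block classification}, every multi-segment equivalent to $\EE$ is of type $Y_\mathcal{M}$. It therefore suffices to show that any valid $(\mathcal{S},\mathcal{T})$ can be reduced, by inverses of $S$, $M$, $U$, $D$, to the tempered data $\mathcal{S}=(\{0\},\{1\},\dots)$ with all $\mathcal{T}_i^j=\emptyset$ for $j\ge1$. We do this by induction on the quantity
\[
\sum_i |\mathcal{S}_i| + \#\{\text{hats}\}.
\]
Hats are first pushed into chains using $D^1$. Chains are then split into singletons using $S$, where the $z$-chain overlaps are resolved by the $z$-chain version of $S$. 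Each step strictly decreases the invariant. Since any two such multi-segments reach the same tempered representative, they are connected through it.

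The main obstacle is the $D$ operations. Row exchanging a hat past several multiples and chains, with the sign bookkeeping of Definition \ref{def row exchange}, is delicate. It is also here that the distinction between $r_1$ and $r_k$ arises, so the case analysis will be heaviest at this step.
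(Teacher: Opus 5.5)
Your outline is reasonable: realize each of $S,M,U,D$ by basic operators plus row exchanges, check that the output is again of type $Y_\mathcal{M}$, and get the last sentence from Theorem~\ref{block classification} by connecting everything to the tempered representative. Your treatment of $M$ is also correct. The paper itself just imports the four constructions from \cite{HKT26}.

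\textbf{The gap.} For $S$, $U$ and $D$ you attach the wrong basic operator, so the verification cannot be carried out as described. Count rows:
\begin{itemize}
\item Row exchanges and $dual$ never change the number of rows.
\item By Definition~\ref{ui def}, a $ui$ deletes a row only in type 3'; in types 1, 2, 3 it returns two rows. Hence a $ui^{\pm1}$ or $dual\circ ui^{\pm1}\circ dual$ of type 1, 2 or 3 preserves the number of rows.
\item $S$ and $U$ each create one new row: a chain, resp.\ a hat, is cut into two rows and no other row disappears. $D$, like $M$, removes one row.
\end{itemize}
So all four are type 3' operators. $S$ and $U$ are $ui^{-1}$'s of type 3', and $M$ and $D$ are $dual\circ ui\circ dual$'s of type 3'. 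This is how the paper uses them, e.g.\ in Lemma~\ref{S existence Z} and in the appendix.

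In particular, $S$ is not a $ui^{-1}$ of type 1 or 2, and $U$ is not (the inverse of) a type 3 $dual\circ ui\circ dual$. The $z$-chain case of $S$ is also not a transfer of a circle to the neighbouring chain. There $\mathcal{S}_{i-1}$ is untouched, $[A,B]$ is split into $[B,B]$ and $[A,B+1]$, and $[B,B]$ becomes an extra multiple; this is why $B$ simply disappears from $\mathcal{S}_i$.

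\textbf{$D$ cannot be a $ui$ at all.} Each $r_j$ is a row of circles ending at column $B-1<A$ and starting at a column $\geq 0>-B$, so its support lies strictly inside $[A,-B]$. Row exchanges do not move supports, and every $ui$ needs $A_{k+1}>A_k$ and $B_{k+1}>B_k$. So no $ui$ of any type can ever combine $h$ with $r_1$ or $r_k$. The combination has to happen in $dual(\EE)$, where $\widehat{h}$ is a row of circles on $[A,B]$.

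\textbf{$U$ needs preliminary row exchanges.} The same applies to $S$ when multiples belong to the chain. A type 3' split only applies to a row with $l=0$, whereas a hat has $l=B\geq 1$. You must first exchange it down past rows inside its support until $l$ reaches $0$ (cf.\ Lemma~\ref{big swap down}, with Lemma~\ref{lem-multiplicity-cancel} to discard pairs of equal multiples), then split, then restore the order. Alternatively, once $S$ and $D$ are established, $U$ follows from $D_{h',r}\circ U_{h,c}=S_{r',c}\circ D_{h,r}$ together with Corollary~\ref{cor same support}. This is how the paper handles $U$ in Lemma~\ref{U exstence Z}.

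\textbf{The reachability invariant.} A non-$z$-chain $S$ preserves both $\sum_i|\mathcal{S}_i|$ and the number of hats, so your invariant does not strictly decrease. The quantity $\sum_i(|\mathcal{S}_i|-1)+\#\{\text{hats}\}$ does strictly decrease under $D^1$ and both forms of $S$.
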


The above lemma is an amalgamation of Remarks 5.30, 5.34, 5.40, and 5.42 of \cite{HKT26}, with the final assertion following from the above Theorem \ref{block classification}. Meanwhile, if $\EE$ is of type $Y_\mathcal{M}$ with $\mathcal{M}$ beginning after zero, the $S$ operator is applicable, but the others cannot be applied.

\begin{lemma}[{\cite[Lemma 5.28]{HKT26}}] \label{S for E starting after 0}
    Suppose $\EE = \mathcal{E}(\mathcal{M}, \mathcal{S}, \mathcal{T},\eta)$ with $\mathcal{M}$ beginning after zero. Then the operator $S$ defined in Lemma \ref{lem SMUD operators} is well defined. Moreover, any multi-segment equivalent to $\mathcal{M}$ can be obtained through some series of $S$-operators and their inverses.
\end{lemma}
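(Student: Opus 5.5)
The plan is to treat the two assertions separately. First I show that each $S_{r,k}$ is realized by an honest composition of basic operators that stays inside the class of type $Y_\mathcal{M}$. Then I use Theorem \ref{block classification} to reduce connectivity to a statement about valid tuples $\mathcal{S}$. Since $\mathcal{M}$ begins after zero, every row has $B(r)\geq c_{\min}>0$. So no hats can occur: a hat has $B=-l\leq 0$. Hence every $\EE$ of type $Y_\mathcal{M}$ here is of the form $\EE(\mathcal{M},\mathcal{S},\eta)$, and only the $\mathcal{S}$-data varies.

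For well-definedness, let $r=([A,B],0,\eta(r))$ be the chain attached to $\mathcal{S}_i$, and let $k<C(r)$. In the non-$z$-chain case I realize $S_{r,k}$ as an inverse union-intersection $ui^{-1}$, i.e.\ $dual\circ ui\circ dual$ by Lemma \ref{lemma ui inv = dud}. It splits $r$ into the two rows of circles with supports $[A-c,B]$ and $[A,A-c+1]$. Because $l=0$ and $r$ consists only of circles, the type of the split is forced. Its applicability is a direct check from Definition \ref{ui def}, read through the dual formulas of Definition \ref{def dual}.

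Next I check that the result is again of the form $\EE(\mathcal{M},\mathcal{S}',\eta)$, where $\mathcal{S}'$ is obtained by replacing $\mathcal{S}_i$ with $\mathcal{S}_i^1,\mathcal{S}_i^2$. This has three parts.
\begin{itemize}
\item The multiples between the two pieces must be reordered into the $(P')$ order prescribed by Definition \ref{defn E(M,S)}. I do this by row exchanges, and Lemmas \ref{big swap down}, \ref{big swap up} and \ref{lem-multiplicity-cancel} control the signs.
\item The odd-alternating sign rules must hold after the split. Here I use Remark \ref{rmk odd signs}: the circles still read as odd runs of equal signs.
\item $\mathcal{S}'$ must satisfy conditions (1)--(5) of Definition \ref{valid S}. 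Condition (4) needs attention when $\mathcal{S}_i$ overlapped $\mathcal{S}_{i+1}$.
\end{itemize}
In the $z$-chain case, the circle in column $B$ is shared with the previous chain, and $S_{r,k}$ shortens $\mathcal{S}_i$ to $\{B+1,\dots,A,\dots\}$. I realize this by the same $ui^{-1}$, peeling off a single circle $[B,B]$. That circle then becomes a multiple in column $B$, and I check that the multiplicity count $m_B-|\{j: B\in\mathcal{S}_j\}|$ is preserved. By Lemma \ref{lem-local}, all rows not involved are untouched.

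For the second assertion, Theorem \ref{block classification} says every $\EE'\sim\EE$ has $\psi_{\EE'}$ realized by some type $Y_\mathcal{M}$ segment with the same sign. By Corollary \ref{cor same support}, $\EE'$ is that segment. It therefore suffices to connect every valid $\mathcal{S}$ to the all-singletons tuple $(\{c_{\min}\},\dots,\{c_{\max}\})$, which corresponds to the tempered block $\BB$ itself.

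I argue by induction on $\sum_i(|\mathcal{S}_i|-1)$ plus the number of overlapping pairs. If some $\mathcal{S}_i$ is a $z$-chain, I apply the $z$-chain form of $S$ to remove the overlap. Otherwise I split some non-singleton $\mathcal{S}_i$ by $S_{r,k}$ with $k=1$. In either case the invariant strictly decreases, and validity is preserved by the first part. Consequently any two equivalent segments are joined through $\BB$ by $S$-operators and their inverses.

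The main obstacle I expect is the sign and order bookkeeping in the first part, especially for $z$-chains. There, after the split, the peeled-off circle and the existing multiples in column $B$ must end up in exactly the order and with exactly the signs dictated by Definition \ref{defn E(M,S)}. My first attempt is to move the new piece past the block of multiples using Lemma \ref{lem-multiplicity-cancel} to cancel pairs of equal multiples. That reduces to the case of at most one multiple, which can be checked by hand from Definition \ref{def row exchange}.
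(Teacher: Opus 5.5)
Your construction of $S_{r,k}$ in the non-$z$-chain case has a genuine gap: the operator, as you build it, does not exist in general. You apply $ui^{-1}$ to $r=([A,B],0,\eta)$ in place and reorder the multiples only afterwards.

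\begin{itemize}
\item \emph{The in-place split is inadmissible.} The multiples belonging to $r$ in columns $c'$ with $B<c'\le A-c$ sit after $r$. Since $A>c'$ and $A-c+1>c'$, condition $(P)$ forces $[A,A-c+1]$ to come after each of them. For example, $\mathcal{M}=(1,3,1)$ on columns $1,2,3$ with $\mathcal{S}=(\{1,2,3\})$ and $k=1$ gives the order $[2,1],[3,3],[2,2],[2,2]$, which violates $(P)$.
\item \emph{It cannot be repaired afterwards.} The rows $[A,A-c+1]$ and $[c',c']$ are disjoint, so no row exchange moves one past the other.
\item \emph{Your applicability check is aimed at the wrong thing.} This split is the inverse of a $ui$ of type 3', and Lemma \ref{lemma ui inv = dud} excludes type 3'. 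So there is nothing to ``read through the dual formulas.''
\end{itemize}

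The missing idea is to exchange \emph{first}. This is how the paper proceeds: the proof of Lemma \ref{S existence Z} says it carries over from \cite[Lemma 5.28]{HKT26}.
\begin{itemize}
\item Move $r$ down past the multiples in columns $B+1,\dots,A-c$.
\item Each such column contributes $m_{c'}-1$ multiples, an even number with a common sign. By Lemma \ref{lem-multiplicity-cancel}, $r$ passes them unchanged and still has $l=0$.
\item Only now is the type 3' split available.
\item Then exchange $[A-c,B]$ back up; this is the inverse of an exchange down, so again it is unchanged.
\item The factors $(-1)^{C(r)+1}$ and $(-1)^{C([A-c,B])+1}$ picked up by the multiples combine to give the sign of the last circle of $[A-c,B]$. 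This is exactly what Definition \ref{defn E(M,S)} requires.
\end{itemize}
So Lemma \ref{lem-multiplicity-cancel} is what makes the operator exist, not after-the-fact sign bookkeeping. Your $z$-chain case with $c=A-B$ is fine in place. There are no interior multiples to pass, and the peeled circle has the sign of the preceding multiples in column $B$.

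Your second half follows the paper's route. The paper likewise derives the final assertion of Lemma \ref{lem SMUD operators} from Theorem \ref{block classification}, after which only connectivity of valid tuples under $S^{\pm1}$ is needed.

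One correction there: the literal tuple produced by $S$ need not be valid, because of condition (5) of Definition \ref{valid S}. Take $\mathcal{M}=(1,3,3,1)$ on columns $1,\dots,4$ and $\mathcal{S}=(\{1,2\},\{2,3,4\})$. Your $z$-chain move gives $(\{1,2\},\{3,4\})$, which violates (5) since $m_3>1$. The segment actually produced is $\EE(\mathcal{M},(\{1,2\},\{3\},\{3,4\}),\eta)$, with one multiple in column $3$ now playing the role of the chain $\{3\}$. So ``validity is preserved by the first part'' is false as stated.

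The simplest fix is to order your moves:
\begin{itemize}
\item apply top splits $S_{r,1}$ to every non-singleton chain except $z$-chains of length $2$;
\item use the $z$-chain move only on those length-$2$ $z$-chains.
\end{itemize}
Every intermediate tuple is then valid, and $\sum_i(|\mathcal{S}_i|-1)$ drops by one at each step until you reach the all-singletons tuple.
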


The next definition will formally represent applying the theta correspondence on the level of extended multi-segments.

\begin{defn}[{\cite[Definition 6.2]{HKT26}}]\label{defn Theta_1(EE)}
    
    Let $\EE\in\VRep_\rho^\mathbb{Z}(G_n)$ be of type $Y_{\mathcal{M}}$ with $\mathcal{M}$ starting at zero. We define
    \[
    \Theta_1(\EE)=\left\{\left(\left[c_{\max}+1,-c_{\max}-1\right]_{\rho}, {c_{\max}+1}, -\eta(\EE) \right)\right\}\cup \EE.
    \]
    
    We remark that the added extended segment should be inserted such that it is the first extended segment in the admissible order. 
\end{defn}

Adding the new row to get from $\EE$ to $\Theta_1(\EE)$ enables new operations involving the new row. These are completely classified by the following two theorems. The first theorem deals with the case where $\mathcal{M} = (m_0, \dots, m_{c_{\max} - 1}, 1),$ while the second theorem deals with the case when the final multiplicity is strictly greater than $1.$ 

\begin{thm}[{\cite[Theorem 6.4]{HKT26}}]
\label{first block m_n = 1}
    Let $\mathcal{M} = (m_0, \dots, m_{c_{\max} - 1}, 1)$ begin at zero and $\EE$ be of type $Y_\mathcal{M}$. Then the following holds.
    \begin{enumerate}
        \item  We can perform a $dual \circ ui \circ dual$ involving the first row of $\Theta_1(\EE)$ to get a virtual extended multi-segment which we denote by $\Theta_2(\EE).$ We can further perform an additional $ui^{-1}$ involving the $(c_{\max} + 1)$-st column of $\Theta_2(\mathcal{E})$ to obtain another virtual extended multi-segment which we denote by $\Theta_3(\EE).$
        \item We have the relation $$\Psi(\Theta_1(\EE)) = \{\psi_{\Theta_i(\EE')} \mid \mathcal{E}' \sim \mathcal{E}; i = 1, 2, 3 \}.$$
    \end{enumerate}
\end{thm}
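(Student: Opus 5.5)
The strategy is to use the classification of everything equivalent to $\EE$ (Theorem \ref{block classification} together with Lemma \ref{lem SMUD operators}). This reduces the problem to showing two things: the operators available on $\Theta_1(\EE)$ are exactly the listed ones, and the resulting set of parameters is closed under all basic operators and their inverses.

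\textbf{Step 1 (existence of $\Theta_2(\EE)$).} Fix $\EE=\EE(\mathcal{M},\mathcal{S},\mathcal{T},\eta)$ of type $Y_\mathcal{M}$ with $\mathcal{M}=(m_0,\dots,m_{c_{\max}-1},1)$. Since $m_{c_{\max}}=1$, column $c_{\max}$ contains a single circle, and it lies in the last chain or hat. I would first use Lemmas \ref{big swap down} and \ref{big swap up} to move the new row $h_0=([c_{\max}+1,-c_{\max}-1],c_{\max}+1,-\eta(\EE))$ past the rows whose support it contains. I then track its $(l,\eta)$; because the signs of $\EE$ are odd-alternating (Remark \ref{rmk odd signs}), this tracking should reduce to a parity count. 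The aim is to show that the new row becomes adjacent to the row meeting column $c_{\max}$, with the sign needed for Definition \ref{ui def}. Concretely, in the dual picture this is a type 3 (or 3$'$) union-intersection with $B_{k+1}+l_{k+1}=A_k-l_k+1$. Applying it produces $\Theta_2(\EE)$, which has a new circle in column $c_{\max}+1$.

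\textbf{Step 2 (existence of $\Theta_3(\EE)$).} In $\Theta_2(\EE)$, column $c_{\max}+1$ is the last column, and it is occupied by a segment ending there. I would apply $ui^{-1}$ (Lemma \ref{lemma ui inv = dud}) to split off the singleton $[c_{\max}+1,c_{\max}+1]$. The sign check again follows from the alternating structure.

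\textbf{Step 3 (completeness, the main obstacle).} I need to show that any basic operator applied to some $\Theta_i(\EE')$ lands, up to equivalence, in another $\Theta_j(\EE'')$ with $\EE''\sim\EE$. I would organize this as a case analysis on which rows the operator involves.

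1. Operators not involving $h_0$, or its image in $\Theta_2$ or $\Theta_3$, commute with the construction. This follows from locality (Lemma \ref{lem-local}) and Lemma \ref{cor Alex}, and such operators are exactly the $S,M,U,D$ operators of Lemma \ref{lem SMUD operators} applied to $\EE$.

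2. Operators involving $h_0$ must, by the nonvanishing conditions and by $A(h_0)=c_{\max}+1$, interact only with column $c_{\max}$. Since $m_{c_{\max}}=1$ and the multiplicity-cancellation lemmas (Lemmas \ref{lem-multiplicity-cancel}, \ref{lem-multiplicity-cancel-up}) let us discard paired multiples, the only possibilities are the union-intersection of Step 1 and its inverse.

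3. On $\Theta_3$, the new singleton column $c_{\max}+1$ together with the shortened hat behaves like a $Y$-type datum for the block-tuple $(m_0,\dots,1,1)$ with first row fixed. The only new moves are the inverses of Steps 1 and 2.

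The delicate part is ruling out raising operators on $\Theta_2$ that mix the extended column with interior hats, via $D$- or $M$-type merges. For this I would argue by the sign parity at column $c_{\max}$, using Lemma \ref{merge}: mergeability forces the alternating sign condition, and the parity argument shows this condition fails.

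Finally, combining Steps 1–3 with Theorem \ref{thm intersections of local Arthur packets}(2) gives the equality $\Psi(\Theta_1(\EE))=\{\psi_{\Theta_i(\EE')}\mid \EE'\sim\EE,\ i=1,2,3\}$.
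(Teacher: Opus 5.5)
\textbf{Gap in Step 3.} The problem is in what you call the delicate part. You plan to show, by a sign-parity count at column $c_{\max}$, that raising operators on $\Theta_2(\EE')$ which combine the row through column $c_{\max}+1$ with other rows by an $M$ or a $D$ do not exist. They do exist, so this argument cannot succeed; what you need instead is to show that they stay inside the set.

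In $\Theta_2(\EE')$, which is again of type $Y$ (see below), consecutive non-multiple rows always satisfy the alternating sign condition (Definition \ref{defn E(M,S)}). So whenever supports abut, the criterion of Lemma \ref{merge} holds. Concretely:
\begin{itemize}
\item Take $\mathcal{M}=(1,1,1)$ and $\EE$ with data $(\{0,\overline{1},\overline{2}\})$. Then $\Theta_2(\EE)$ has data $(\{0,\overline{1},\overline{2,3}\})$. $M$ merges the hats $([3,-2],2,\cdot)$ and $([1,-1],1,\cdot)$ and gives $(\{0,\overline{1,2,3}\})=\Theta_2(\EE'')$, where $\EE''$ has data $(\{0,\overline{1,2}\})$.
\item In the $i=2$ layer of the example in Appendix \ref{sec commutativity}, the $dual\circ ui\circ dual$ arrow is a $D^1$ applied to the merged hat $([2,-1],1,\ominus)$.
\item $S_{r,k}$ on a merged chain, and $U_{h,k}$ with $k\geq 2$ on a merged hat, also involve the image of $h_0$ and land in the $\Theta_2$-layer.
\end{itemize}
None of your three cases accounts for these operators. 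To close the gap along your route you need commutation identities $T\circ\Theta_2=\Theta_2\circ T'$, and the analogues for $\Theta_3$. These are what Theorem \ref{Commutativity of Operators} and Lemmas \ref{commute 1 to 2}--\ref{commute 3 to 2} supply.

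\textbf{The missing idea.} You come close in item 3 of Step 3, but use it only for the $\Theta_3$ layer: $\Theta_1(\EE)$ itself is of type $Y_{\mathcal{M}'}$, with $\mathcal{M}'=(m_0,\dots,m_{c_{\max}-1},1,1)$ and sign $-\eta(\EE)$. The added row is the hat piece $\overline{c_{\max}+1}$ of the last $\mathcal{S}$-set. It comes first in the $(P')$ order, and since it has one circle it alternates with the old first row.
\begin{itemize}
\item Part (1) then follows from Lemma \ref{lem SMUD operators}. $\Theta_2$ is $M$ or $D^1$, according as $c_{\max}$ lies in a hat piece or in $\mathcal{T}^0$ of the last set. $\Theta_3$ is the $U$ or $S$ that splits off $\{c_{\max}+1\}$.
\item For part (2), $\Theta_1(\EE)$ is equivalent by these moves to the tempered block with tuple $\mathcal{M}'$. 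So Theorem \ref{block classification} identifies $\Psi(\pi(\Theta_1(\EE)))$ with the type-$Y_{\mathcal{M}'}$ objects of sign $-\eta(\EE)$.
\item Because $m_{c_{\max}}=m_{c_{\max}+1}=1$, in any valid data for $\mathcal{M}'$ the column $c_{\max}+1$ is either a separate piece $\overline{c_{\max}+1}$, or in the same $\mathcal{T}$-piece as $c_{\max}$, or its own set $\{c_{\max}+1\}$.
\item Deleting it leaves valid data $(\mathcal{S},\mathcal{T})$ for $\mathcal{M}$. With $\EE'=\EE(\mathcal{M},\mathcal{S},\mathcal{T},\eta(\EE))\sim\EE$, the three cases are exactly $\Theta_1(\EE')$, $\Theta_2(\EE')$ and $\Theta_3(\EE')$.
\end{itemize}
This is the same pattern the paper follows for the type-$Z$ analogue in Lemma \ref{times 4 relation}.

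\textbf{Step 1.} This step works in the wrong picture. Since $h_0$ has the smallest $B$, exchanging it downward destroys $(P')$, and then $dual$ is not defined. The union-intersection, which is of type $3'$, happens in $dual(\Theta_1(\EE))$. There $\widehat{h_0}=([c_{\max}+1,c_{\max}+1],0,\cdot)$ is forced to be the last row, and it is the dual of the row through column $c_{\max}$ that must be brought down next to it.
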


\begin{thm}[{\cite[Theorem 6.5]{HKT26}}]
\label{first block m_n > 1}
    Let $\EE$ be of type $Y_\mathcal{M}$ where $\mathcal{M} = (m_0, \dots, m_{c_{\max}})$ begins at zero and $m_{c_{\max}} > 1.$
    Then the following holds.
    \begin{enumerate}
        \item We can perform two $dual \circ ui \circ dual$s of type 3' involving the first row of $\Theta_1(\mathcal{E})$: one with the first row whose support ends at $c_{\max},$ and one with the last row whose support ends at $c_{\max}.$ We call the resulting segments $\Theta_2(\mathcal{E})$ and $\Theta_4(\mathcal{E}),$ and we have $\psi_{\Theta_2(\mathcal{E})} = \psi_{\Theta_4(\mathcal{E})}$ if and only if the $\mathcal{S}$-data of $\EE$ has some $\mathcal{S}_i = \{c_{\max}\}.$ We can perform an additional $ui^{-1}$ to both $\Theta_2$ and $\Theta_4$ to remove a row of one circle from the row with support ending at $c_{\max} + 1$ to obtain the same $\Theta_3(\EE).$
        \item We have the relation $$\Psi(\pi(\Theta_1(\EE))) = \{\psi_{\Theta_i(\EE')} \mid \mathcal{E}' \sim \mathcal{E}; i = 1, 2, 3, 4 \}.$$
    \end{enumerate}
\end{thm}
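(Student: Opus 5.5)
The plan is to treat the added row $h_0=([c_{\max}+1,-c_{\max}-1],c_{\max}+1,-\eta(\EE))$ of $\Theta_1(\EE)$ as a hat, and to work on the dual side. Under $dual$ (Definition \ref{def dual}) a row of circles $([A,B],0,\eta)$ of $\EE$ becomes a hat $([A,-B],B,\eta')$. The added hat becomes a row with support $[c_{\max}+1,c_{\max}+1]$. So a $dual\circ ui\circ dual$ of type 3' between $h_0$ and a row $r$ of $\EE$ is an ordinary $ui$ of type 3' on $dual(\Theta_1(\EE))$.

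\textbf{Part (1): the two merges.}
\begin{enumerate}
\item Check the applicability condition of Case 3 of Definition \ref{ui def}: $\epsilon=-1$ and $B_{k+1}+l_{k+1}=A_k-l_k+1$. The support condition is exactly $A(r)=c_{\max}$.
\item The sign condition $\epsilon=-1$ should follow from the choice $-\eta(\EE)$ for the added row, the odd-alternating signs of type $Y_\mathcal{M}$ (Remark \ref{rmk odd signs}), and the parity bookkeeping $\alpha_i,\beta_i$ in the definition of $dual$.
\item Since $m_{c_{\max}}>1$, there are several rows with $A(r)=c_{\max}$: the chain containing $c_{\max}$ and at least one multiple. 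Any of them can be made adjacent to $h_0$ (on the dual side) by row exchanges. I use Lemma \ref{big swap down} and Lemma \ref{lem-multiplicity-cancel} to show the exchanges only flip signs by a controlled factor, so the merge condition survives. Pairs of equal multiples cancel by Lemma \ref{lem-multiplicity-cancel}, which is why only the first and the last row ending at $c_{\max}$ give genuinely different outcomes; call these $\Theta_2(\EE)$ and $\Theta_4(\EE)$.
\item The resulting parameters coincide exactly when the chain through $c_{\max}$ is itself a single circle, that is, some $\mathcal{S}_i=\{c_{\max}\}$. Otherwise the merged supports differ.
\item On both $\Theta_2$ and $\Theta_4$, the merged row has support ending at $c_{\max}+1$, with at least one circle in column $c_{\max}+1$. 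A $ui^{-1}$ of type 1 or 2 (the inverse from Lemma \ref{lemma ui inv = dud}) peels off a single circle $([c_{\max}+1,c_{\max}+1],0,\ast)$.
\item I then verify directly, again via Lemma \ref{lem-multiplicity-cancel} and Corollary \ref{swap commutes with M}, that the two results agree; this is $\Theta_3(\EE)$.
\end{enumerate}

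\textbf{Part (2), the inclusion $\supseteq$.} This follows from (1) and Theorem \ref{thm intersections of local Arthur packets}(1), using $\pi(\Theta_1(\EE'))=\pi(\Theta_1(\EE))$ for $\EE'\sim\EE$. For that equality I use Lemma \ref{cor Alex}: $h_0$ has support containing every row, so exchanging it past equivalent sub-multi-segments gives the same result.

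\textbf{Part (2), the inclusion $\subseteq$.} By Theorem \ref{thm intersections of local Arthur packets}(2), it suffices to show that the family $\mathcal{X}=\{\Theta_i(\EE'):\EE'\sim\EE,\ i=1,2,3,4\}$ is closed under every basic operator and inverse, up to the ambient row exchanges. I split on whether the operator involves the distinguished row, meaning $h_0$ or its merged or split descendants.
\begin{itemize}
\item If it does not, locality (Lemma \ref{lem-local}) and Lemma \ref{cor Alex} show that it acts as a basic operator on the $\EE$-part. By Theorem \ref{block classification} and Lemma \ref{lem SMUD operators} this is an $S,M,U,D$ move, so the result is $\Theta_i(\EE'')$ with $\EE''\sim\EE$.
\item If it does, I classify all possibilities by the shape of the rows adjacent to the distinguished row in type $Y_\mathcal{M}$ form.
 \begin{itemize}
 \item On $\Theta_1$, the only applicable operators are the merges in (1).
 \item On $\Theta_2$ and $\Theta_4$, the only new ones are the $ui^{-1}$ to $\Theta_3$ and the inverse merges.
 \item On $\Theta_3$, the only operators are the inverses of these.
 \end{itemize}
 I also show that further $ui$ or $dual\circ ui\circ dual$ with deeper rows would violate the $\epsilon$ or support equalities of Definition \ref{ui def}.
\end{itemize}

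\textbf{Main obstacle.} I expect the hard step to be this last classification, uniformly over all valid $(\mathcal{S},\mathcal{T})$. The added row can interact with $z$-chains and with hats coming from $\mathcal{T}$, and one must show that any such interaction is either impossible or reproduces one of $\Theta_2,\Theta_3,\Theta_4$ for a different $\EE'$. My first attempt will be to reduce, via Lemma \ref{lem-multiplicity-cancel}, to the case $m_{c_{\max}}=3$. Then I compare with the $m_{c_{\max}}=1$ analysis of Theorem \ref{first block m_n = 1}, treating the extra multiple as the source of the second merge $\Theta_4$.
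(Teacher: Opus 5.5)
\textbf{There is a gap in the inclusion $\subseteq$ of (2).} Reducing $\subseteq$ to closure of $\mathcal{X}$ under basic operators and their inverses is legitimate. But that closure is the whole content of (2), and your sketch of it misses entire families of operators.

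On $\Theta_2(\EE')$, and likewise on $\Theta_3$ and $\Theta_4$, many operators act on the merged row $\tilde r_1$ besides the transitions among the $\Theta_i$:
\begin{itemize}
\item If $r_1$ is a chain, then $S_{\tilde r_1,k+1}(\Theta_2(\EE'))=\Theta_2(S_{r_1,k}(\EE'))$ whenever $S_{r_1,k}$ applies to $\EE'$. This is Theorem \ref{Commutativity of Operators}(8) with $C(h_0)=1$.
\item If $r_1$ is a hat, the merged hat admits $U_{\cdot,k}$ with $k\geq 2$, $D$, and $M$ with a preceding hat (parts (3)--(6)).
\item There are also the corresponding lowering operators.
\end{itemize}
None of these is ``the $ui^{-1}$ to $\Theta_3$'' or an ``inverse merge'', and your first bullet does not cover them either.

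That first bullet has a problem of its own. Apart from $\tilde r_1$, the rows of $\Theta_2(\EE')$ are those of $\EE'\setminus\{r_1\}$, which is not equivalent to $\EE$. So an operator avoiding $\tilde r_1$ does not automatically descend to an operator on $\EE'$: admissibility after a $ui^{-1}$, and the parities entering $dual$, are computed with $\tilde r_1$ in place of $r_1$. Making your route rigorous requires exactly the commutation relations between the $\Theta$-transitions and the moves $S,M,U,D$ worked out in Appendix \ref{sec commutativity}. Your proposed reduction to $m_{c_{\max}}=3$ does not address this.

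\textbf{The missing idea is to use the classification rather than closure.} The paper only quotes this theorem from \cite{HKT26}, but this is how it proves the analogous Lemma \ref{times 4 relation}, and it is what the closing remark of Appendix \ref{sec commutativity} refers to. By (1), $\Theta_1(\EE)\sim\Theta_1(\EE_{temp})\sim\Theta_3(\EE_{temp})$. The last of these is a tempered block with multiplicity tuple $\mathcal{M}^+=(m_0,\dots,m_{c_{\max}},1)$. So Theorem \ref{block classification} already says that $\Psi(\pi(\Theta_1(\EE)))$ is the set of $\psi_{\EE_1}$ with $\EE_1$ of type $Y_{\mathcal{M}^+}$ (with the appropriate sign).

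It then suffices to see where $c_{\max}+1$ sits in the $\mathcal{S}$/$\mathcal{T}$-data of $\EE_1$. It can be a singleton set, the top of a chain, its own hat, the top of a longer hat, or part of a $z$-chain $\{c_{\max},c_{\max}+1\}$. Deleting it gives $\EE_2$ of type $Y_\mathcal{M}$ with $\EE_1=\Theta_i(\EE_2)$. The overlap $\psi_{\Theta_2}=\psi_{\Theta_4}$ is exactly the coincidence of $D^1$ and $D^2$ in Lemma \ref{lem SMUD operators}.

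\textbf{Smaller slips in (1) and in $\supseteq$.}
\begin{itemize}
\item The $ui^{-1}$ producing $\Theta_3$ creates a new row, so it inverts a $ui$ of type 3' (it is $S_{\cdot,1}$ or $U_{\cdot,1}$), not one of type 1 or 2. Lemma \ref{lemma ui inv = dud} plays no role.
\item The two copies of $\Theta_3$ agree at once by Corollary \ref{cor same support}, since they have the same support and order and are both equivalent to $\Theta_1(\EE)$.
\item The Case 3 equality is not just $A(r)=c_{\max}$. For a chain $r$, $\widehat r$ has $l=B(r)$, and the row exchanges are what bring $l$ down to $0$; they do more than flip signs.
\item For $\supseteq$, Lemma \ref{cor Alex} is not what you need. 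What you need is that every basic operator on $\EE$ stays valid with $h_0$ prepended. This holds because $h_0$ comes first, its support contains all others, and under $dual$ it only flips the signs of all other rows uniformly.
\end{itemize}
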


\section{Proof of the main results}\label{sec theta results}

Here, we use our previous results about tempered representations to provide proofs for Theorem \ref{thm-count-block-theta-temp}, Cases 1, 2, 3, 4, and 6 of Theorem \ref{thm-count-theta-temp}, and Theorem \ref{theta-antitempered-count}.

\subsection{Theta Correspondence for Almost-Blocks}\label{sec Theta Correspondence for Almost-Blocks}

Suppose that $\EE\in\Rep(G_n)$ is tempered and let $\pi=\pi(\EE)$. For brevity, we write $m^{\up, \alpha}=m^{\up, \alpha}(\pi).$ We are interested in determining $\Psi(\theta_{-m^{\up, \alpha}}^\up(\pi(\mathcal{E}))$ using $\Psi(\pi).$ We decompose $\EE=\cup_{i=1}^p\EE_{\rho_i}$ where $\rho_i\neq\rho_j$ for any $i\neq j.$  First, we note by Theorem \ref{thm intersections of local Arthur packets} that 
\[
\Psi(\pi(\EE))=\prod_{i=1}^p\Psi(\pi(\EE_{\rho_i})).
\]

We begin by remarking on a trivial case.
\begin{rmk}\label{rmk theta lift when no chi_v}
First suppose that $\rho_i\neq\chi_V$ for any $i=1,\dots,p.$ By Theorem \ref{thm going up first occurrence}, we have that $\up=\down$ and $m^{\up, \alpha}(\pi)=m^{\down, \alpha}(\pi)=1$ and so the distinction between the going-up and going-down towers does not matter. From Theorem \ref{thm compute theta lift}, we find that
\[
\Psi(\theta_{-1}^{\pm}(\pi(\mathcal{E}))=\{\psi_1 \ | \ \psi\in\Psi(\pi)\},
\]
where $\psi_1=\psi_\alpha$ where $\alpha=1$ (see Conjecture \ref{conj Adams}).     
\end{rmk}

Suppose that some $\rho_i=\chi_v,$ say $\rho_1=\chi_V.$ By Theorems \ref{thm intersections of local Arthur packets} and \ref{thm compute theta lift}, we have that
\[
|\Psi(\theta_{-m^{\up, \alpha}}^\up(\pi(\mathcal{E})))|=|\Psi(\pi((\EE_{\chi_V})_{m^{\up, \alpha}}^\up)))|\prod_{i=2}^p|\Psi(\pi(\EE_{\rho_i}))|,
    \]
where $\pi((\EE_{\chi_V})_{m^{\up, \alpha}}^\up)$ is defined by Algorithm \ref{algo compute theta lift}. Thus, it suffices to determine $|\Psi(\pi((\EE_{\chi_V})_{m^{\up, \alpha}}^\up)))|.$

We proceed in two cases based on whether $\EE_{\chi_V}$ starts at 0 or not. We begin with the latter case. Note that, by Theorem \ref{thm going up first occurrence}, $m^{\up,\alpha}(\pi)=m^{\down,\alpha}(\pi)=1$ in this case and so again the distinction between the going-up and going-down towers does not matter.

\begin{lemma}\label{lemma-EE-does-not-start-at-zero}
    If $\EE_{\chi_V}$ does not start at $0$, then $\theta_{-1}^\pm(\pi(\mathcal{E}))$ is tempered. In particular,  
     $(\EE_{\chi_V})_1^\pm$ is tempered and so $|\Psi(\pi((\EE_{\chi_V})_1^\pm))|$ can be computed by Theorem \ref{thm-count-block-temp}.
\end{lemma}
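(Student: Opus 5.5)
We compute $(\EE_{\chi_V})_1^\pm$ explicitly via Algorithm \ref{algo compute theta lift} and observe that it is tempered. Since $\EE$ is tempered, write $\EE_{\chi_V}=\{([A_i,A_i]_{\chi_V},0,\eta_i)\}_{i\in I}$. The hypothesis that $\EE_{\chi_V}$ does not start at $0$ means $A_i\geq 1$ for all $i$. By Theorem \ref{thm going up first occurrence}, the chain condition fails at $l=1$, so $\mathcal{T}=\{-1\}$ and $l(\pi)=-1$. Hence $m^{\up}(\pi)=m^{\down}(\pi)=2n+2$, giving $m^{\up,\alpha}=m^{\down,\alpha}=1$, and the two towers may be treated symmetrically. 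This justifies working with $\theta_{-1}^\pm$.

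Next we run Algorithm \ref{algo compute theta lift} down to $\alpha=1$. We start from $\EE^\pm_{\alpha_0}$, which is $\EE'$ (with $\chi_V$ replaced by $\chi_W$) together with the first row $([\frac{\alpha_0-1}{2},-\frac{\alpha_0-1}{2}]_{\chi_W},\frac{\alpha_0-1}{2},\pm1)$. This row consists only of triangles, so its number of circles is $C=0$. At each step $\beta\to\beta-2$ we apply $add^{-1}$ to the added row, which keeps $C=0$. We then row exchange it past every row $([A_i,A_i]_{\chi_W},0,\eta_i)$ with $B_i=A_i=-\frac{\beta-3}{2}$. Since all $A_i\geq 1>0\geq -\frac{\beta-3}{2}$, no such rows occur, so no row exchange is ever needed and the other rows are untouched. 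Arriving at $\beta=1$, the added row becomes $([0,0]_{\chi_W},0,\pm1)$.

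This is a tempered extended segment, so
\[
(\EE_{\chi_V})_1^\pm=\{([0,0]_{\chi_W},0,\pm1)\}\cup\{([A_i,A_i]_{\chi_W},0,\eta_i)\}_{i\in I}
\]
is tempered. The column-$0$ sign is well defined because column $0$ has multiplicity one. Under the order we have chosen, the new row placed first is consistent with a $(P')$ order, since $0<A_i$.

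By Theorem \ref{thm compute theta lift}, $\theta_{-1}^\pm(\pi(\EE))$ is obtained from $\pi((\EE_{\chi_V})_1^\pm)$ together with the unchanged $\rho_i$-parts for $\rho_i\neq\chi_V$. Its nonvanishing is guaranteed because $1\geq m^{\pm,\alpha}$ and the lift persists. Every part is tempered, so the whole extended multi-segment has a tempered parameter. Hence $\theta_{-1}^\pm(\pi(\EE))$ lies in a tempered packet and is tempered by Theorem \ref{thm Arthur tempered}.

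The count $|\Psi(\pi((\EE_{\chi_V})_1^\pm))|$ then follows by decomposing into blocks and applying Theorems \ref{thm-count-temp} and \ref{thm-count-block-temp}. Note that $([0,0],0,\pm1)$ may merge into a block with a column-$1$ row of opposite sign, which the block decomposition handles.

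The only delicate point is checking that the row-exchange step of Algorithm \ref{algo compute theta lift} is genuinely vacuous, so that no $l$ or $\eta$ values change. Given $A_i\geq 1$, this is immediate.
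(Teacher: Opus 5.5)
Your route is the paper's: both towers have first occurrence at $\alpha=1$, Algorithm \ref{algo compute theta lift} performs no row exchanges because every $B_i=A_i\geq 1$, and the output is the tempered $\{([0,0]_{\chi_W},0,\pm1)\}\cup\{([A_i,A_i]_{\chi_W},0,\eta_i)\}$.

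The step that does not hold as written is the appeal to Theorem \ref{thm compute theta lift}. Its hypothesis is $\pi(\EE_1^\pm)\neq 0$. What you actually verify is $\theta_{-1}^\pm(\pi(\EE))\neq 0$, via first occurrence and persistence. The theorem only goes one way. A nonzero theta lift does not in general force $\pi(\EE_\alpha^\pm)\neq 0$; every failure of the Adams conjecture on a going-down tower is exactly such an instance. So you have not yet identified the lift with $\pi(\EE_1^\pm)$. The ``in particular'' part then loses its support, since the counting theorems are stated for elements of $\VRep$ and it is this identification that links them to the lift.

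The paper obtains this nonvanishing by citing \cite[Theorem B]{BH22}. You can also get it directly, and your remark that column $0$ has multiplicity one is exactly the needed check. For a tempered extended multi-segment, the nonvanishing criterion of \cite[Theorems 3.6, 4.4]{Ato20b} (with \cite[Theorem 6.17]{HLL25} on the orthogonal side) reduces to requiring equal signs on rows in the same column. Here the new row is alone in column $0$, and the remaining columns are inherited from $\EE\in\Rep(G_n)$. The sign condition \eqref{eq sign condition} also holds: the new row contributes $\pm1$, which equals the $\epsilon$ of $H^\pm$, and the twisted copy of $\EE$ contributes $1$.

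Alternatively, for the temperedness assertion alone you can skip the identification. Since $m^{\up}=m^{\down}$, Remark \ref{rmk equal first occurences} gives $\theta_{-1}^\pm(\pi(\EE))\in\Pi_{\psi_1}$. Because $\psi_1$ is tempered, Theorem \ref{thm Arthur tempered} then shows the lift is tempered.
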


\begin{proof}
    First, we note that \cite[Theorem B]{BH22} implies that $(\EE_{\chi_V})_1^\pm\in\Rep(H_{n+1}^\pm).$ By Algorithm \ref{algo compute theta lift} and the fact that $\EE_{\chi_V}$ does not start at 0, it follows that $\EE_1^\pm$ is tempered which further implies that $\theta_{-1}^\pm(\pi(\mathcal{E}))$ is tempered.
\end{proof}

Hereinafter, we treat the case that $\EE_{\chi_V}$ starts at 0. By Theorem \ref{thm going up first occurrence}, we have that $\up=-\eta(\EE_{\chi_V})$ and $\up\neq\down.$ Furthermore, we have that $(\EE_{\chi_V})_{m^{\up, \alpha}}^\up=\Theta_1(\EE_{\chi_V})$ in this setting (see Definition \ref{defn Theta_1(EE)}). Thus, it suffices to study $|\Psi(\Theta_1(\EE_{\chi_V}))|.$

This case naturally breaks into two cases based on the block decomposition of $\EE_{\chi_V}$.

\begin{lemma}
Suppose that $\EE_{\chi_V}$ consists of a single block starting at $0$. Then $\Theta_1(\EE_{\chi_V})$ is equivalent to an almost block and hence $|\Psi(\Theta_1(\EE_{\chi_V}))|$ is determined by Theorem \ref{thm-count-block-temp}.
\end{lemma}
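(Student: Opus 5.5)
The plan is to make the equivalence to an almost-block explicit by a small number of basic operators, and then read off the count. Write $\BB=\EE_{\chi_V}$, a single block starting at $0$ with last column $c_{\max}$ and sign $\eta=\eta(\BB)$. By Theorem \ref{thm going up first occurrence}, since $\BB$ is a block starting at $0$ (chain, oddness and alternating conditions hold up to $c_{\max}$), we have $l(\pi)=2c_{\max}+1$, so $m^{\up,\alpha}=2c_{\max}+3$ and $\up=-\eta$. Hence Definition \ref{defn Theta_1(EE)} applies: $\Theta_1(\BB)$ is $\BB$ with the hat $h_0=([c_{\max}+1,-c_{\max}-1],c_{\max}+1,-\eta)$ placed first. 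This hat has $C(h_0)=1$, a single circle in column $0$ of sign $-\eta$.

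The first step applies the operators already classified in Theorems \ref{first block m_n = 1} and \ref{first block m_n > 1}. A $dual\circ ui\circ dual$ of type $3'$ merges $h_0$ with a row ending at $c_{\max}$, producing $\Theta_2(\BB)$. A subsequent $ui^{-1}$ on the row whose support ends at $c_{\max}+1$ then splits off a single circle in column $c_{\max}+1$, producing $\Theta_3(\BB)$. Starting from the tempered representative $\BB$ itself (type $Y_\mathcal{M}$ with all $\mathcal{S}_i$ singletons), I would track these two steps via Lemma \ref{merge} and the sign formulas of Definition \ref{def dual}, exactly as in the Case 2 computation of the proof of Theorem \ref{thm-generic-theta-count}. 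After further $ui^{-1}$ splittings of the remaining non-singleton row into singleton columns, using Lemmas \ref{big swap down} and \ref{big swap up} to control the signs, I expect the result to be tempered. Its column $c$ should carry multiplicity $m_c$ for $c\le c_{\max}$ together with one extra circle in column $c_{\max}+1$ (equivalently, an extra circle shifting parities at the top). The signs should still alternate between consecutive columns.

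The key point to verify is that this tempered object satisfies the first two conditions of Definition \ref{def-block} and the oddness condition in every column except possibly the maximal one, i.e.\ that it is an almost-block. Once that holds, Theorem \ref{thm intersections of local Arthur packets}(1) gives $\pi(\Theta_1(\BB))=\pi(\text{almost-block})$ and $\Psi$ is unchanged. The count then follows from Theorem \ref{thm-count-block-theta-temp}, with the relevant block terms computed by Theorem \ref{thm-count-block-temp}.

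The main obstacle is the sign bookkeeping in the first step. I must check that after the $dual\circ ui\circ dual$ and the $ui^{-1}$, the new top-column circle has sign opposite to column $c_{\max}$'s sign, and that the lower columns' signs still alternate. A failure here would break the first block condition. I would first try the reduction of Lemma \ref{lem-multiplicity-cancel}, removing pairs of equal multiples so that every $m_c=1$. In that setting $\BB$ is a single alternating chain, and the computation is a direct induction on $c_{\max}$ using Lemma \ref{big swap down}.
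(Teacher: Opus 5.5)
Your proposal has a genuine gap at the final step. You close by invoking Theorem \ref{thm-count-block-theta-temp}, but in \S\ref{sec Theta Correspondence for Almost-Blocks} the block case of that theorem (odd top multiplicity) is derived \emph{from} this lemma together with Theorem \ref{thm-count-block-temp}. So the appeal is circular. It is also the wrong tool: that theorem counts $\Psi$ of the theta lift of an almost-block, not of an almost-block itself.

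The statement asks you to land in Theorem \ref{thm-count-block-temp}, which requires a genuine block. Your checklist (oddness ``except possibly the maximal column'') stops exactly short of this. The missing observation is already in your expected outcome: the new top column $c_{\max}+1$ carries a single circle. Hence the tempered representative is the \emph{block} with tuple $(m_0,\dots,m_{c_{\max}},1)$ and first sign $-\eta(\BB)$: every sign of $\BB$ is flipped, and the new circle is opposite to column $c_{\max}$. Theorem \ref{thm-count-block-temp} applies to it directly. Removing its top column returns $\BB$ up to a global sign, so you get $3|\Psi(\pi(\BB))|$ or $4|\Psi(\pi(\BB))|-|\Psi(\pi(\rc_{c_{\max}}(\BB)))|$.

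On the route itself, the paper does no computation. It regards $\Theta_1(\BB)$ as a type-$Y$ multi-segment for the extended tuple, with the hat playing the role of the $\mathcal{T}$-part $\{c_{\max}+1\}$ of the last $\mathcal{S}$-set. It then reads off the equivalence with the tempered block, signs included, from Theorem \ref{block classification}.

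Your explicit route through $\Theta_2,\Theta_3$ (Theorems \ref{first block m_n = 1}, \ref{first block m_n > 1}) is a legitimate alternative. It is arguably sturdier: read literally, condition (5) of Definition \ref{valid S} rejects that $\mathcal{S}$-data when $m_{c_{\max}}>1$ and $c_{\max}\geq 1$. However, its decisive step, the sign check, is only ``expected''. When you carry it out, keep the following in mind.
\begin{enumerate}
\item Starting from the tempered $\BB$, $\Theta_3(\BB)$ is already tempered, so no further splittings occur.
\item In the dual, $\widehat r=[c_{\max},-c_{\max}]$ has even $A-B$, so it passes the lower columns without changing their signs.
\item Moving the merged row $[c_{\max}+1,-c_{\max}]$ (odd $A-B$) back into place flips all of them. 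Their $\alpha_i+\beta_i$ has the same parity in $\Theta_1(\BB)$ and $\Theta_2(\BB)$, so the flip survives dualizing back.
\item The multiples at column $c_{\max}$ flip only when $[c_{\max}+1,c_{\max}]$ (odd $A-B$) is exchanged past them before the $ui^{-1}$.
\end{enumerate}
In particular, the pairs you cancel via Lemma \ref{lem-multiplicity-cancel} are not sign-neutral; they must be restored with flipped signs.
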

\begin{proof}
This is a direct consequence of Theorem \ref{block classification}.
\end{proof}

 In the case where $\mathcal{E}$ is a single block starting at zero, then it follows from Theorem \ref{block classification} that the theta lift $\theta_{-m^{\up, \alpha}}^\up(\pi(\mathcal{E}))$ is itself a tempered representation, since it is of type $Y_\mathcal{M}$ for some $\mathcal{M}$, so the count $|\Psi(\theta_{-m^{\up, \alpha}}^\up(\pi(\mathcal{E})))|$ follows immediately from Theorem \ref{thm-count-block-temp}. 

Next we consider consider the case where $\mathcal{E}_{\chi_V}=\mathcal{E}' \cup r,$ where $\mathcal{E}'$ is of type $Y_\mathcal{M}$ and $r = ([c_{\max}, c_{\max}]_{\chi_V}, 0, \eta)$ for $\eta$ equaling the sign of the last circle of $\mathcal{E}'.$

\begin{lemma}
\label{duds for almost block}
    Suppose $\mathcal{E}_{\chi_V} = \mathcal{E}' \cup ([c_{\max}, c_{\max}]_{\chi_V}, 0, \eta)$ is as described above. Let $\Theta_1(\mathcal{E}_{\chi_V}) = h \cup \mathcal{E}'',$ where $h = ([c_{\max} + 1, -c_{\max} - 1]_{\chi_W}, n + 1, -\eta(\mathcal{E}))$ and $\EE''$ is obtained from $\EE$ by replacing each $\chi_V$ by $\chi_W.$ Then we can perform precisely two $dual \circ ui \circ dual$s (up to row exchange) combining the first row of $\Theta_1(\mathcal{E})$: one with the first row whose support ends at $c_{\max},$ and one with the last row whose support ends at $c_{\max}.$ We call the resulting segments $\Theta_2(\mathcal{E})$ and $\Theta_4(\mathcal{E}),$ and we have $\psi_{\Theta_2(\mathcal{E}_{\chi_V})} = \psi_{\Theta_4(\mathcal{E}_{\chi_V})}$ if and only if $\EE'$ has $\{c_{\max}\}$ in its $\mathcal{S}$-data.
\end{lemma}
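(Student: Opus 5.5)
The plan is to reduce to Theorem \ref{first block m_n > 1}, applied to $\EE'$. First I would get the two mergings, then show there are no others, and finally compare the two resulting parameters.

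\textbf{Step 1: the almost-block is a block with one extra multiple.} Since $r=([c_{\max},c_{\max}],0,\eta)$ has the sign of the last circle of $\EE'$, the alternating-sign bookkeeping of Definition \ref{defn E(M,S)} treats $r$ as an additional multiple in column $c_{\max}$. So I would view $\EE_{\chi_V}$ as a virtual extended multi-segment of the shape $\EE(\mathcal{M}^+,\mathcal{S},\mathcal{T},\eta)$, where $\mathcal{M}^+$ agrees with $\mathcal{M}_{\EE'}$ except that $m_{c_{\max}}$ is raised by one. The only difference from type $Y_{\mathcal{M}}$ is that this last multiplicity is now even and $>1$.

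\textbf{Step 2: the two mergings exist.} The first row $h$ is a hat with $C(h)=1$ and support $[c_{\max}+1,-c_{\max}-1]$. By Lemma \ref{merge}, $h$ can merge, after row exchanges bringing the target adjacent, with any row of circles $s$ with $A(s)=c_{\max}$ that meets the alternating sign condition with the image of $h$. I would exchange $h$ downward past the intervening rows, all of whose supports are contained in $\supp(h)$, using Lemma \ref{big swap down} to track its $l$ and $\eta$. Lemma \ref{lem-multiplicity-cancel} and Corollary \ref{swap commutes with M} let me discard pairs of equal multiples, which leaves the same parity computation as in the proof of Theorem \ref{first block m_n > 1}. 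That computation gives a merge with the first row ending at $c_{\max}$ (producing $\Theta_2$) and with the last row ending at $c_{\max}$, namely $r$ (producing $\Theta_4$). The extra multiple $r$ carries the sign of the last circle, so it does not break the alternating condition needed at the last position.

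\textbf{Step 3: no other merge, and when $\Theta_2$ and $\Theta_4$ have the same parameter.} For uniqueness, any row ending at $c_{\max}$ strictly between the first and the last is a multiple. Its sign fails the alternating condition with the image of $h$ at that position, by the odd-alternating rule (3) of Definition \ref{defn E(M,S)}. Rows ending before $c_{\max}$ cannot merge because Lemma \ref{merge} requires $A_2=B_1-1$. For the final claim, $\psi_{\Theta_2}$ and $\psi_{\Theta_4}$ differ only in which support $[c_{\max},B]$ is absorbed into the merged hat. They coincide exactly when the first row ending at $c_{\max}$ is also a singleton $[c_{\max},c_{\max}]$, i.e. a chain with $\mathcal{S}_i=\{c_{\max}\}$. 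Otherwise the first such row is a longer chain and $r$ is a singleton.

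The main obstacle is Step 2/3's sign tracking: after multiple cancellation, the even multiplicity in column $c_{\max}$ must leave the parity relation between the image of $h$ and the last row exactly as in the case $m_{c_{\max}}>1$ odd. I would first try to do this by directly applying Lemma \ref{big swap down} to the reduced multi-segment.
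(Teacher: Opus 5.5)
Your Step 1 is the paper's whole proof: it says the argument of \cite[Lemma 5.39]{HKT26} goes through because it never uses that the last multiplicity is odd. The genuine gap is in how you propose to re-run that argument in Steps 2--3.

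Lemma~\ref{merge} concerns two consecutive \emph{hats}. The rows you want to combine with $h$ are the chain ending at $c_{\max}$, the multiples in column $c_{\max}$, and $r$, and these are in general rows of circles with $B>0$. Also, once $h$ is exchanged downward, its image has $l<c_{\max}+1=-B(h)$, so it is no longer a hat either. More seriously, the sign test you propose gives the wrong verdict.

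The multiples of $\EE'$ in column $c_{\max}$ are an even number of rows sitting right before $r$, with the same support and sign as $r$. By Lemma~\ref{lem-multiplicity-cancel} they leave the image of $h$ unchanged in pairs. So the test you use in Step 3 to exclude them excludes $r$ as well, i.e.\ it forbids $\Theta_4$. Your remark that $r$ ``does not break the alternating condition'' is backwards: carrying the sign of the last circle is exactly failing it.

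The test really does fail for $r$. Take $\EE'=\{([1,0],0,-1)\}$, $r=([1,1],0,1)$, so $h=([2,-2],2,1)$. Exchanging $h$ past $[1,0]$ (Case 1(c)) gives $([2,-2],0,1)$, whose last circle is $\oplus$, the same sign as $r$. Nevertheless $\Theta_4$ exists. In the dual one has $\hat r=([1,-1],1,-1)$, $\hat s=([1,0],0,-1)$ and $\hat h=([2,2],0,-1)$. Exchanging $\hat r$ past $\hat s$ (Case 1(a)) gives $([1,-1],0,1)$. A type 3' $ui$ with $\hat h$ then applies, since $\epsilon=-1$ and $2=1-0+1$. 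After undoing the exchange and dualizing back, the new row is $([2,1],1,\ast)$, a pair of triangles rather than a merged hat.

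The bookkeeping has to be done on the dual side; this is the $D^1/D^2$ computation underlying \cite[Lemma 5.39]{HKT26}. In $dual(\Theta_1(\EE))$, $\hat h$ is the last row $([c_{\max}+1,c_{\max}+1],0,\ast)$. The dual of a row $s$ ending at $c_{\max}$ is a hat with support $[c_{\max},-B(s)]$, which contains the supports of all dual rows between it and $\hat h$. A type 3' $ui$ requires exchanging it down to $\hat h$ so that its image has $l=0$ and $\epsilon=-1$.

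Rows ending before $c_{\max}$ can never satisfy $A-l=c_{\max}$, which is the correct version of your exclusion argument. The intermediate multiples in column $c_{\max}$ are identical to $r$, and exchanges among identical rows are trivial. So combining $h$ with them is possible and simply reproduces $\Theta_4$; that is what ``up to row exchange'' means, not a sign obstruction.

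Your support comparison for the final ``if and only if'' is essentially right. Note only that the first row ending at $c_{\max}$ can also be a hat coming from the $\mathcal{T}$-data, not just a chain.
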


\begin{proof}
The proof is the same as the proof of \cite[Lemma 5.39]{HKT26}. Indeed, the proof there holds even when the multiplicities are not odd.   
\end{proof}

By an analogous proof as that of \cite[Lemma 6.7]{HKT26}, we have that all the elements of the set $\{\psi_{\Theta_i(\EE)} \mid  \psi_\EE \in \Psi(\pi), i \in \{1, 2, 4\}\}$ are distinct, except for the case where $\psi_{\Theta_2(\mathcal{E})} = \psi_{\Theta_4(\mathcal{E})},$ as specified in Lemma \ref{duds for almost block}. In order to prove Theorem \ref{thm-count-block-theta-temp}, it suffices to prove that the elements in this set are the only Arthur packets that $\pi(\Theta_1(\mathcal{E}))$ belongs to.

\begin{lemma}
\label{Almost block classification}
    If $\mathcal{E}$ is an almost block with $c_{\max}$ columns, then
    $$\Psi(\theta_{-m^{\up, \alpha}}^\up(\pi(\mathcal{E}))) = \{\psi_{\Theta_i(\EE')} \mid  \psi_{\EE'} \in \Psi(\pi(\mathcal{E})), i \in \{1, 2, 4\}\}.$$
\end{lemma}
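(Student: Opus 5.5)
The inclusion ``$\supseteq$'' is immediate. By Theorem \ref{thm compute theta lift} and Definition \ref{defn Theta_1(EE)}, $\theta_{-m^{\up,\alpha}}^\up(\pi(\EE))=\pi(\Theta_1(\EE'))$ for every $\EE'\sim\EE$. Theorem \ref{lemma dual} and Lemma \ref{duds for almost block} give the operators producing $\Theta_2(\EE')$ and $\Theta_4(\EE')$. By Theorem \ref{thm intersections of local Arthur packets}(1), these operators preserve the representation. So the whole effort goes into ``$\subseteq$''. I will model the argument on the proof of Theorem \ref{first block m_n > 1} (\cite[Theorem 6.5]{HKT26}), which is the block analogue.

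\textbf{Setting up the orbit.} Write the almost-block as $\EE=\EE'\cup r$, where $\EE'$ is a block of type $Y_{\mathcal{M}}$ and $r=([c_{\max},c_{\max}],0,\eta)$ carries the sign of the last circle of $\EE'$. By Theorem \ref{block classification} and Lemma \ref{lem SMUD operators}, every $\FF\sim\EE$ has the form $\FF'\cup r'$ with $\FF'$ of type $Y_{\mathcal{M}}$, up to row exchanges of the extra row. Theorem \ref{thm intersections of local Arthur packets}(2) says every element of $\Psi(\pi(\Theta_1(\EE)))$ comes from an extended multi-segment reachable from $\Theta_1(\EE)$ by basic operators and their inverses. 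It therefore suffices to show that the set
\[
\mathcal{X}=\{\Theta_i(\EE') : \EE'\sim\EE,\ i\in\{1,2,4\}\}
\]
is closed under every basic operator and every inverse basic operator, up to row exchanges that do not change $\psi$.

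\textbf{Closure under the operators.} I split by which rows an operator $T$ touches.

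First, suppose $T$ does not involve the added hat $h$ (or its merged descendant in $\Theta_2,\Theta_4$). Lemma \ref{lem-local} and Lemma \ref{cor Alex}, with $\supp(h)$ containing every other support, show that $T$ commutes with forming $\Theta_i$. Hence $T(\Theta_i(\EE'))=\Theta_i(T\EE')$. For $i=2,4$ I will use Corollary \ref{swap commutes with M}, since the merged hat still contains all other supports except in column $c_{\max}+1$.

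Second, suppose $T$ involves $h$ in $\Theta_1(\EE')$. Then $T$ must be a $dual\circ ui\circ dual$ with a hat whose support ends at $c_{\max}$, because $h$ is the maximal hat. Lemma \ref{merge} forces this hat to be mergeable. Lemma \ref{duds for almost block} lists exactly two such merges, which yield $\Theta_2$ or $\Theta_4$. A $ui^{-1}$ or $ui$ on $h$ is impossible because $h$ is the first row with maximal support.

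Third, suppose $T$ involves the merged hat in $\Theta_2$ or $\Theta_4$. Here I must show that any further operator either undoes the merge, returning to $\Theta_1$, or is the partner merge, possibly after an $S$/$U$/$D$ move on $\EE'$.

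\textbf{Main obstacle.} The hard point, and the difference from Theorem \ref{first block m_n > 1}, is the absence of a $\Theta_3$. In the block case, a $ui^{-1}$ peels a single circle off column $c_{\max}+1$ of the merged row. I expect this to fail here because the extra row $r$ makes the multiplicity of $c_{\max}$ even. Concretely, I will compute the signs after exchanging the merged hat past the rows in column $c_{\max}$. I will use Lemmas \ref{big swap down}, \ref{big swap up} and \ref{lem-multiplicity-cancel} to cancel pairs of equal rows in column $c_{\max}$. The goal is to show that the alternating sign condition needed for the type-3 $ui^{-1}$ fails, because the parity is now even instead of odd.

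I would first test this on the smallest almost-block, $\EE=\{([0,0],0,\eta),([0,0],0,\eta)\}$, and then induct on the number of columns using Corollary \ref{swap commutes with M}.
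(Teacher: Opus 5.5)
Your proposal is correct and follows the paper's proof: both reduce to showing that $\{\psi_{\Theta_i(\EE')}\}_{i\in\{1,2,4\}}$ is closed under basic operators, both use Lemma \ref{duds for almost block} for the operators touching the added hat in $\Theta_1$, and both identify the only new feature on the $\Theta_2,\Theta_4$ layers as the extra multiple in column $c_{\max}$ blocking the $ui^{-1}$ that would produce $\Theta_3$. The paper describes the obstruction differently: after the row is exchanged to the bottom, a Case 1(b) exchange with the unpaired same-sign multiple gives it $l=1$ (two triangles rather than circles), rather than an alternating-sign failure; the paper also handles the remaining operators on $\Theta_2,\Theta_4$ by adjoining one more multiple to reach the block setting of Theorem \ref{first block m_n > 1}, instead of re-checking them directly.
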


\begin{proof}
    As shorthand, let $P$ denote the set $P = \{\psi_{\Theta_i(\EE')} \mid  \psi_{\EE'} \in \Psi(\pi(\mathcal{E})), i \in \{1, 2, 4\}\}.$ Lemma \ref{duds for almost block} gives us that $\Psi(\theta_{-m^{\up, \alpha}}^\up(\pi(\mathcal{E}))) \subset P,$ so we need only prove the reverse direction. It suffices to show that $P$ is closed under the basic operators, i.e., if $\psi_{\Theta_i(\mathcal{E}')} \in P,$ then we claim that there is no raising or lowering operator from $\Theta_i(\mathcal{E}')$ that does not induce another element of $P.$ To observe this, we split into several cases.

    If $i = 1,$ then $\Theta_i(\mathcal{E}')$ has the hat $h = ([c_{\max} + 1, -c_{\max} - 1], c_{\max} + 1, -\eta(\mathcal{E}))$ as its first row. It is clear that any operator not involving $h$ produces a multi-segment in $P$ with $i = 1.$ Meanwhile, any operator involving $h$ must be a $dual \circ ui \circ dual$ and end up producing either $\Theta_2(\mathcal{E}')$ or $\Theta_4(\mathcal{E}')$ by Lemma \ref{duds for almost block}.
    
    If $i = 2$ or $i = 4,$ it is impossible to execute a $ui^{-1}$ to $\Theta_3(\mathcal{E}')$ analogous to the one given in \cite[Theorem 6.5]{HKT26}. This is because exchanging the row whose support ends at $c_{\max} + 1$ to the bottom of the extended multi-segment produces a row with two triangles rather than a row of circles, due to the extra multiple with support $[c_{\max}, c_{\max}]$.
    
    To see that any operator not yet considered is closed under $P$, consider the extended multi-segment $\mathcal{E}' \cup r,$ where $r$ is an extra multiple of support $[c_{\max}, c_{\max}]$ (with $\eta(r)$ chosen so that $\mathcal{E}' \cup r$ is nonvanishing). This multi-segment must be of the form $\Theta_i(\mathcal{E}''),$ where $i \in \{2, 4\}$ and $\mathcal{E}''$ is of type $Y_{\mathcal{M}}$ for $\mathcal{M}$ ending at $c_{\max}$ with odd multiplicities. Any operator on the sub-virtual extended multi-segment $\mathcal{E}' \subset \Theta_i(\mathcal{E}'')$ must induce an operator on $\Theta_i(\mathcal{E}'').$ This is because the only operator that could interact with the added row $r$ is a $ui^{-1}$ separating one circle of support $[c_{\max} + 1, c_{\max} + 1],$ and we have seen that this operator does not exist. Since $i \in \{2, 4\}$ and any operator on $\mathcal{E}'$ cannot ascend to an operator from $\Theta_i$ to $\Theta_3,$ it is clear that any operator on $\mathcal{E}'$ must produce some other $\Theta_i$ with $i \neq 3$, which belongs to the set $P.$
\end{proof}

\subsection{Cases 2, 4, and 6 of Theorem \ref{thm-count-theta-temp}}\label{sec Cases 1, 2, 4}

First, suppose that $\mathcal{E}\in\VRep_{\chi_V}(G_n)$ is a multi-segment as described in Cases 2, 4, and 6 of Theorem \ref{thm-count-theta-temp}. We will prove that $\mathcal{E}_{m^{\up, \alpha}}^\up$ is tempered, and this along with the counts described in Theorem \ref{thm-count-temp} will imply the desired results.

If $\mathcal{B}_1, \dots, \mathcal{B}_k$ is the block decomposition of $\mathcal{E},$ we have that $\mathcal{B}_2$ begins at least one column after $\mathcal{B}_1$ ends. It follows then from Theorem \ref{thm going up first occurrence} that
$$(\mathcal{B}_1 \cup \mathcal{B}_2 \cup \cdots \cup \mathcal{B}_k)_{m^{\up, \alpha}}^\up = (\mathcal{B}_1)_{m^{\up, \alpha}}^\up \cup \mathcal{B}_2 \cup \cdots \cup \mathcal{B}_k.$$ Yet, according to Theorems \ref{first block m_n = 1} and \ref{first block m_n > 1}, $(\mathcal{B}_1)_{m^{\up, \alpha}}^\up$ can be changed into a new block $\mathcal{B}_1'$ with one more column than $\mathcal{B}$ via a $dual \circ ui \circ dual$ and then a $ui^{-1}.$ These operations can be implemented on the first component of $(\mathcal{B}_1)_{m^{\up, \alpha}}^\up \cup \mathcal{B}_2 \cup \cdots \cup \mathcal{B}_k,$ giving
$$(\mathcal{B}_1 \cup \mathcal{B}_2 \cup \cdots \cup \mathcal{B}_k)_{m^{\up, \alpha}}^\up = \mathcal{B}_1' \cup \mathcal{B}_2 \cup \cdots \cup \mathcal{B}_k.$$

In the event where $\mathcal{B}_1'$ and $\mathcal{B}_2$ share a column, we must have that the circles in this column have the same sign by the nonvanishing conditions given in \cite[Theorems 3.6, 4.4]{Ato20b} (which also holds for even orthogonal groups as a consequence of \cite[Theorem 6.8]{HLL25}). Since each component $\mathcal{B}_1'$ is tempered, we conclude that $\pi(\Theta_3(\mathcal{E}))$ is a tempered representation, from which we obtain Cases 2, 4, and 6 of Theorem \ref{thm-count-theta-temp}.

\begin{rmk}
    Note that $\mathcal{B}_1' \cup \mathcal{B}_2 \cup \cdots \cup \mathcal{B}_k$ is not necessarily itself the block decomposition of the theta lift. In the case where $\mathcal{B}_1'$ and $\mathcal{B}_2$ have an overlapping column, all but one of the circles in the first row of $\mathcal{B}_2$ need to be transferred to the end of $\mathcal{B}_1',$ thereby replacing them with two new blocks $\mathcal{B}_1''$ and $\mathcal{B}_2.$ Yet, \cite[Lemma 6.8]{HKT26} gives $\Psi(\pi(B_1'')) = \Psi(\pi(B_1'))$, while the formula in Theorem \ref{thm-count-block-temp} guarantees that $\Psi(\pi(B_2')) = \Psi(\pi(B_2))$. This, along with Proposition \ref{prop-independence-of-blocks}, implies the formula

    \[|\Psi(\theta_{-m^{\up, \alpha}}^\up(\pi(\EE)))| = |\Psi(\theta_{-m^{\up, \alpha}}^\up(\pi(\mathcal{B}_1)))| \cdot \prod_{i=2}^k |\Psi(\pi(\mathcal{B}_i))|.\]

    Such a formula is not generally possible for Cases 2 and 4, as described in Remark \ref{rmk cases 1 and 4}.
\end{rmk}

\subsection{Case 3 of Theorem \ref{thm-count-theta-temp}}\label{sec case 3}

Throughout this subsection, we often omit the orthogonal supercuspidal representations, e.g.,  $\chi_V$ and $\chi_W$. 

In order to prove Case 3 of Theorem \ref{thm-count-theta-temp}, we will first specify a modification of type $Y_{\mathcal{M}},$ which we will call type $Z_{\mathcal{M}}.$ Let $k_1$ and $k_2$ be integers where $0 < k_1 < k_2.$ Let $\mathcal{M} = (m_0, m_1, \dots, m_{k_2})$ be a tuple of positive integers. Here, we require that $m_i \in 2 \mathbb{Z}$ if and only if $i \in \{k_1, k_1 + 1\}.$ Again, we set a collection of sets $\mathcal{S} = \{\mathcal{S}_1, \dots, \mathcal{S}_\ell\}$ and subsets $\mathcal{T}$ with the same rules as before as in Definitions \ref{valid S} and \ref{valid T}, with the additional requirement that there exists no $i$ such that $\mathcal{T}_i^0 = \{k_1\}.$ We construct an associated $\mathcal{E}(\mathcal{M}, \mathcal{S}, \mathcal{T}, \eta)$ exactly as before as in Definition \ref{defn E(M,S, T)}, except with the following differences:

\begin{itemize}
    \item If $\min \mathcal{T}_i^0 = k_1,$ we associate to $\mathcal{T}_i^0$ the row $([\max \mathcal{T}_i^0, \min \mathcal{T}_i^0], 1, \eta)$ rather than $([\max \mathcal{T}_i^0, \min \mathcal{T}_i^0], 0, \eta).$ This chain should alternate in sign with the previous row.
    \item If $r = ([A, k_1 + 1], 0, \eta)$ is a chain beginning at $k_1 + 1,$ then $r$ should not alternate with the previous row.
\end{itemize}

\begin{rmk}
    Note that the first bullet point makes sense since there exists no $\mathcal{T}_i^0 = \{k_1\},$ so any row corresponding to a $\mathcal{T}_i^0$ with $\min \mathcal{T}_i^0 = k_1$ has a support of length at least two.
\end{rmk}

\begin{rmk}
    As a means of shorthand, we will refer to a multi-segment of Type $Z_\mathcal{M}$ as being of Type $Z_{k_1, k_2}.$
\end{rmk}

\begin{exmp}
    Let $k_1 = 1$ and $k_2 = 4.$ Then the virtual extended multi-segment associated to $\eta = 1, \mathcal{M} = (3, 2, 2, 1, 3, 1)$ and $\mathcal{S} = (\{0\}, \{1, 2, 3, \overline{4}\}, \{5\})$ is

    $$\bordermatrix{& -4 & -3 & -2 & -1 & 0 & 1 & 2 & 3 & 4 & 5 \cr & \lhd & \lhd & \lhd & \lhd & \oplus & \rhd & \rhd & \rhd & \rhd \cr & & & & & \ominus \cr & & & & & \ominus \cr & & & & & \ominus \cr & & & & & & \oplus \cr & & & & & & \lhd & \ominus & \rhd \cr & & & & & & & \ominus \cr & & & & & & & & \ominus \cr & & & & & & & & &  \ominus \cr & & & & & & & & &  \ominus \cr & & & & & & & & &  \ominus \cr & & & & & & & & & & \oplus}$$

    Here, the chain with support $[3, 1]$ has $l = 1$ and alternates with the row above it, in contrast to what would happen in a multi-segment of type $Y_\mathcal{M}.$
\end{exmp}

\begin{exmp}
    Suppose $\mathcal{E}$ is a tempered multi-segment satisfying $m_H \equiv 0 \bmod{2}$, $m_N \equiv 1 \bmod{2}$ as in Case 3 of Theorem \ref{thm-count-theta-temp}. Then $\mathcal{E}_{m^{\up, \alpha}}^\up$ is of type $Z_{k_1, k_2}$ with $\mathcal{S}$-data $(\{0\}, \{1\}, \dots, \{k - 1\}, \{k, \overline{k + 1}\}, \{k + 2\}, \dots, \{n\}).$ We provide an example below.
    \[\EE = \bordermatrix{
    & 0 & 1 & 2 & 3 \cr
    & \ominus & & & \cr
    & & \oplus & & \cr
    & & \oplus & & \cr
    & & & \oplus & \cr
    & & & & \ominus \cr
    } \longrightarrow \mathcal{E}_{m^{\up, \alpha}}^\up =  \bordermatrix{
    & -2 & -1 & 0 & 1 & 2 & 3 \cr
    & \lhd & \lhd & \oplus & \rhd & \rhd & \cr
    &&& \ominus & & & \cr
    &&& & \oplus & & \cr
    &&& & \oplus & & \cr
    &&& & & \oplus & \cr
    &&& & & & \ominus \cr
    }\]

    Note that in this example, the chain with support $[2, 2]$ satisfies the necessary sign condition by not alternating with the previous row.
\end{exmp}

We now briefly classify all operators on extended multi-segments $\mathcal{E}$ of type $Z_{k_1, k_2}.$ In short, we will verify that the operators $S, M, U, D$ (as defined for type $Y_{\mathcal{M}}$ in Lemma \ref{lem SMUD operators}) are still well defined, preserve type $Z_{k_1, k_2}$,  $\eta(\mathcal{E})$, and have the same effect on the $\mathcal{S}$-data of type $Z_{k_1, k_2}$ multi-segments. We will then verify that all other operators preserve type $Z_{k_1, k_2}$. Much of the proof will be exactly the same as the analogous proofs for type $Y_\mathcal{M}$ in \cite[\S 5]{HKT26}, so some details will be omitted. 

The proof that the operator $M$ behaves as prescribed for $Z_{k_1, k_2}$ extended multi-segments carries over immediately for the proof in the case of type $Y_\mathcal{M}$ (see \cite[Lemma 5.33]{HKT26}). Therefore, we will start with the operator $S.$

\begin{lemma}
\label{S existence Z}
    Suppose $\EE$ is of type $Z_{k_1, k_2}$, $r = ([A, B], 0, \eta)$ is a chain associated to the set $\mathcal{S}_i = \{B, \dots, A, \dots\}$ and $c < C(r)$ is a positive integer. Then there exists an operator, notated $S_{r, c},$ preserving type $Z_{k_1, k_2}$ and $\eta(\mathcal{E})$ with the following effect on the $\mathcal{S}$-data:
    \begin{enumerate}
        \item If $r$ is a $z$-chain (see Definition \ref{def z-chain}) with $A - B = c$, then $S_{r, c}$ replaces $\mathcal{S}_i$ by the set $S_i^2 = \{B + 1, \dots, A, \dots\}.$
        \item Otherwise, $S_{r, c}$ splits $\mathcal{S}_i = \{B, \dots, A, \dots\}$ into two sets \[\mathcal{S}_i^1 = \{B, \dots, A - c\} \text{ and } \mathcal{S}_i^2 = \{A - c + 1, \dots, A, \dots\}.\]
    \end{enumerate}
    Moreover, if $l(r) = 1,$ then $S_{r, c}$ is defined even when $c = C(r).$
\end{lemma}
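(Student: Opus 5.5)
We will realize $S_{r,c}$ exactly as for type $Y_\mathcal{M}$ (\cite[\S5]{HKT26}): as a composite of row exchanges, one $ui^{-1}$ that splits $r$, and row exchanges back. The work is to check that the type $Z$ sign and $l$-modifications do not obstruct this, and that the output is again of type $Z_{k_1,k_2}$ with the predicted $\mathcal{S}$-data.

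\textbf{Step 1 (isolating $r$).} Exchange $r$ downward past the multiples belonging to the preceding chain and past any rows of smaller support that separate it from the rows with which it must interact. Lemmas \ref{big swap down}, \ref{lem-multiplicity-cancel} and Corollary \ref{cor Alex} describe the effect on $r$ and on the passed rows. The multiples come in equal-sign pairs except in columns $k_1,k_1+1$, where the multiplicity is even. Lemma \ref{lem-multiplicity-cancel} cancels pairs, so in columns $k_1,k_1+1$ no leftover single multiple remains, while in other columns exactly one remains, as in type $Y$. This parity bookkeeping is the main point where type $Z$ differs from type $Y$, and we would check it first.

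\textbf{Step 2 (the split).} Apply a $ui^{-1}$ of type 1 or 2 (the inverse of Definition \ref{ui def}) to $r=([A,B],0,\eta)$, splitting it into $[A-c,B]$ and $[A,A-c+1]$, or into $[A,B+1]$ when $r$ is a $z$-chain with $A-B=c$. Lemma \ref{lemma ui inv = dud} guarantees that this is a basic operator. The hypothesis $c<C(r)$ ensures that both pieces are nonempty rows of circles, exactly as in \cite[Lemma 5.29]{HKT26}.

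For the extra claim: when $l(r)=1$ (that is, $\min\mathcal{T}_i^0=k_1$), we have $C(r)=b-2$. So $c=C(r)$ still leaves a piece with support $[A-c,B]$ of length $2$ and $l=1$. This is a row with a pair of triangles and no circles, which we can still split off, and its lower end is $k_1$. One then checks via the sign rule in Definition \ref{ui def} that the new chain starting at $A-c+1$ (possibly $k_1+1$) has the correct, non-alternating sign required by the second bullet in the definition of type $Z$.

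\textbf{Step 3 (restoring order and verifying type).} Exchange the new rows back into the $(P')$ order using Lemma \ref{big swap up}, whose hypothesis $C(r_0)\ge 2\sum C(r_i)$ holds because only single circles are passed. The signs must then match the odd-alternating rules of Definition \ref{defn E(M,S)} together with the two type $Z$ modifications. In particular, if the split produces $\mathcal{T}_i^0=\{k_1\}$, this is forbidden. We expect this to happen only when $l(r)=1$, and there Step 2 forces the $l=1$ piece to have length $\ge2$, so the configuration does not arise.

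\textbf{Step 4 (invariants).} Since the operator only changes $\mathcal{S}$ as stated, type $Z_{k_1,k_2}$ is preserved. The first row is untouched unless $r$ is the first row, and in that case the sign formula of Definition \ref{ui def} keeps $\eta(r)$, so $\eta(\mathcal{E})$ is preserved.

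We expect the main obstacle to be the sign verification in Step 3 near columns $k_1$ and $k_1+1$. There the alternation rules are modified and the even multiplicities change which multiples survive Lemma \ref{lem-multiplicity-cancel}; we would handle it by explicitly examining the cases $B\le k_1\le A$ and $B=k_1+1$.
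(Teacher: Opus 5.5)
Your skeleton (exchange $r$ down, split it with a $ui^{-1}$, exchange back) is the paper's. However, the type-$Z$-specific analysis, which is the whole content of this lemma, is wrong in your proposal.

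First, your parity count in Step 1 is inverted. The multiples $r$ must cross are the ones belonging to $r$ itself, in columns $B+1,\dots,A-c$ after $r$. A column $c$ covered by a single $\mathcal{S}_i$ carries $m_c-1$ multiples, not $m_c$. For $c\notin\{k_1,k_1+1\}$, $m_c$ is odd, so $r$ crosses an even number of same-sign multiples there. Lemma \ref{lem-multiplicity-cancel} removes all of them, and this is exactly why $l(r)=0$ survives in type $Y$. If one multiple were left over in every column, as you claim, $l(r)$ would shift at every column crossed and no type 3' split would be available. In columns $k_1,k_1+1$, $m_c$ is even, so the number of multiples is odd and exactly one survives. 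The paper's proof is a case analysis built on this:
\begin{itemize}
\item If $r$ crosses both columns, the two survivors cancel as a pair and the type $Y$ argument applies.
\item If it crosses only column $k_1$, one splits first, then exchanges the new piece past the odd number of column-$k_1$ multiples. By Lemma \ref{big swap down} this produces the chain starting at $k_1$ with $l=1$ that type $Z$ demands. When that piece is the single circle at $k_1$, it becomes one more multiple instead; your Step 3 overlooks this case, which is where a forbidden $\mathcal{T}^0=\{k_1\}$ would otherwise appear.
\item If $r=([A,k_1],1,\eta)$, one first exchanges it past the odd number of column-$(k_1+1)$ multiples, bringing $l$ down to $0$, and only then splits.
\end{itemize}

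Second, the split is a $ui^{-1}$ of type 3', not of type 1 or 2. A $ui$ of type 1 or 2 outputs two rows, a union and a nonempty intersection. Only type 3' deletes the empty intersection $[A-c,A-c+1]$, and Lemma \ref{lemma ui inv = dud} concerns the other types. Since a type 3' $ui$ outputs a row with $l=0$, its inverse cannot act directly on a row with $l=1$. So your justification of the ``Moreover'' clause (that one can ``still split off'' the pair of triangles) has no mechanism behind it. The missing step is precisely the exchange past the column-$(k_1+1)$ multiples that converts $l=1$ into $l=0$.

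Finally, in Step 3 the hypothesis $C(r_0)\ge 2\sum C(r_i)$ of Lemma \ref{big swap up} does not hold merely because only single circles are crossed. The paper instead handles the return exchanges with Lemma \ref{lem-multiplicity-cancel-up}, which again relies on the correct parity count.
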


\begin{proof}
    We prove the second case of the lemma. The first case follows in exactly the same way, except we remove the set $S_i^1 = \{B\}$ from the $\mathcal{S}$-data since this row would represent a multiple rather than a chain. 
    
    The proof that $S_{r, c}$ is possible carries over from \cite[Lemma 5.28]{HKT26} except when the row $r$ has to be exchanged with rows whose supports contain $k_1$ or $k_1 + 1$. Such a step is only a part of the $S$ operation whenever one of these multiples belongs to $r.$ We must fall into one of the following cases.
     \begin{enumerate}
         \item Both the multiples with support $[k_1, k_1]$ and $[k_1 + 1, k_1 + 1]$ belong to $r$ and the operation $S$ involves row exchanging $r$ with all of these multiples. There is an odd number of each of these, and they have the same sign, so exchanging $r$ with these multiples does not change $r$ by Lemma \ref{lem-multiplicity-cancel}. We must necessarily have that $l(r) = 0$ by Lemma \ref{big swap down} so a $ui^{-1}$ of type 3' may be applied after $r$ is exchanged down. Row exchanging up again has no effect by Lemma \ref{lem-multiplicity-cancel-up}. 
         \item Both the multiples with support $[k_1, k_1]$ and $[k_1 + 1, k_1 + 1]$ belong to $r$ but the operation $S$ involves row exchanging $r$ with only the multiples with support $[k_1, k_1].$ Here, we apply $ui^{-1}$ of type 3' to separate some circles from $r$ to obtain a row of circles $r'$ with support $[A, k_1].$ We then row exchange $r'$ until it is right below all the multiples with support $[k_1, k_1].$ Either $A = k_1$ and the resulting row is then another such multiple, or $A > k_1,$ in which case exchanging with an odd number of multiples produces a chain with $l = 1$ as required, by Lemma \ref{big swap down}.
         \item Only the multiples with support $k_1 + 1$ belong to $r,$ in which case $r$ must be a row with support $[A, k_1]$ for some $A \geq k_1 + 1.$ Such a row must have $l = 1$, so we must actually have $A \geq k_1 + 2$, lest $C(r) = 0$. Then, exchanging $r$ with the odd number of multiples of support $[k_1 + 1, k_1 + 1]$ yields a row with $l = 0$ by Lemma \ref{big swap down}, so a $ui^{-1}$ may be applied. 
     \end{enumerate}

     The proof that sign conditions are conserved proceeds analogously to the proof of \cite[Lemma 5.28]{HKT26}. Note that none of these separations can produce some set $\mathcal{T}_0^i = \{k + 1\},$ so the resulting virtual extended multi-segment is still of type $Z_{k_1, k_2}.$
\end{proof}

Next, we verify the existence of the $D$ operator.

\begin{lemma}
\label{D existence Z}
    Given $\mathcal{E}$ of type $Z_{k_1, k_2},$ the operator $D$ is well defined and affects the $\mathcal{S}$-data the same as in the $Y_\mathcal{M}$ case (see Lemma \ref{lem SMUD operators}).
\end{lemma}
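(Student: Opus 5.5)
The plan is to follow the proof of the corresponding statement for type $Y_\mathcal{M}$ in \cite[\S5]{HKT26}, namely the construction behind the operators $D^1_{h,r}$ and $D^2_{h,r_k}$ of Lemma \ref{lem SMUD operators}(4). That argument is local by Lemma \ref{lem-local}: it consists of exchanging the hat $h=([A,-B],B,\eta)$ down past the rows whose supports contain $B-1$, then applying a $dual\circ ui\circ dual$ (equivalently, the inverse of a $ui$ of the form in Lemma \ref{lemma ui inv = dud}) to absorb the hat into the chain ending at $B-1$, and finally exchanging back. So I only need to re-examine the steps where the type $Z_{k_1,k_2}$ modifications could change the outcome. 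These are the steps in which $h$, or the chain it combines with, is exchanged with multiples or chains whose support begins at $k_1$ or $k_1+1$. A second source of difference is a chain with $\min\mathcal{T}_i^0=k_1$, since such a chain carries $l=1$ rather than $l=0$.

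I would organize the argument by the position of $B-1$ relative to $k_1$.

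\emph{Case $B-1\notin\{k_1-1,k_1,k_1+1\}$.} The rows that $h$ passes have odd multiplicity and the usual odd-alternating signs. Lemmas \ref{big swap down}, \ref{big swap up}, \ref{lem-multiplicity-cancel} and \ref{lem-multiplicity-cancel-up} then give exactly the same $(l,\eta)$ as in the $Y_\mathcal{M}$ proof, and the conclusion carries over verbatim.

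\emph{Case $B-1=k_1$.} The target chain $r_1$ has $\min\mathcal{T}^0=k_1$ and so $l(r_1)=1$. Here I would apply Lemma \ref{big swap down} together with the even multiplicity of column $k_1$. The even number of $[k_1,k_1]$ multiples cancels in pairs by Lemma \ref{lem-multiplicity-cancel}, and the single triangle pair on $r_1$ should play the role that the extra odd multiple plays in type $Y_\mathcal{M}$. I would check that after $D^1$ the merged row $[A,C]$ (or, for $D^2$, the split pair $\{\dots,B-1\},\{B-1,\dots,A,\dots\}$) again has $l=1$ exactly when its minimum is $k_1$. I would also check that it alternates with the previous row, as the first bullet of the definition of type $Z_{k_1,k_2}$ requires.

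\emph{Case $B-1=k_1-1$ or $B-1=k_1+1$.} The hat begins at $k_1$ or $k_1+2$. When it merges with a chain, the result may begin at $k_1+1$. I must then verify the non-alternation required by the second bullet of the definition, using the sign flips $(-1)^{C(r_0)+1}$ from Lemmas \ref{big swap down} and \ref{big swap up}.

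In every case I would confirm three further points. First, no set $\mathcal{T}_i^0=\{k_1\}$ is created, which holds because the hat contributes at least its own column to the new $\mathcal{T}^0$. Second, $\eta(\EE)$ is unchanged, since the first row is untouched unless $h$ is first, and in that case the sign rule of Lemma \ref{merge} preserves $\eta_1$. Third, the effect on the $\mathcal{S}$-data is the one stated in Lemma \ref{lem SMUD operators}. Nonvanishing of the result would follow from Theorem \ref{thm intersections of local Arthur packets}(1), since only basic operators are used.

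I expect the main obstacle to be the case $B-1=k_1$ for $D^2_{h,r_k}$. There the hat is exchanged past all chains containing $k_1$, some of which have $l=1$, and the hypothesis $C(r_0)\ge 2\sum C(r_i)$ of Lemma \ref{big swap up} may fail. My first attempt would be to use Corollary \ref{swap commutes with M} and Lemma \ref{cor Alex} to replace the intermediate rows by an equivalent configuration with $l=0$. Specifically, I would perform the $S$-operation of Lemma \ref{S existence Z} with $c=C(r)$, which is permitted there when $l(r)=1$. This reduces the problem to the already-handled $Y_\mathcal{M}$-type computation, after which I would undo the reduction.
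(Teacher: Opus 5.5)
Your overall plan is the same as the paper's: reduce to the $Y_\mathcal{M}$ construction of $D$ using locality and Lemmas \ref{big swap down}, \ref{big swap up}, \ref{lem-multiplicity-cancel} and \ref{lem-multiplicity-cancel-up}. However, splitting cases by the end $B-1$ of the target chain puts the type-$Z$ modifications in the wrong place, and the argument fails as organized.

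In your case $B-1=k_1$, you assume the target chain has $\min\mathcal{T}^0=k_1$, hence $l(r_1)=1$. But the target of $D^1$ is $\mathcal{T}_i^0=\{C,\dots,B-1\}$. Taking $C=B-1=k_1$ gives $\mathcal{T}_i^0=\{k_1\}$, which the definition of type $Z_{k_1,k_2}$ explicitly forbids. So this chain starts strictly before $k_1$ and has $l=0$, and the ``triangle pair on $r_1$'' you want to use does not exist. What is actually new when the hat begins at $k_1+1$ is the $D^2$ case. Absorbing the hat into the last $[k_1,k_1]$ multiple must produce a chain $([A,k_1],1,\cdot)$ with $l=1$, not a row of circles. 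Your proposal gives no argument for this.

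Conversely, the configurations where $l=1$ really matters sit inside your generic case $B-1\ge k_1+2$, where you claim the $Y_\mathcal{M}$ argument carries over verbatim. It does not, in three situations:
\begin{itemize}
\item If the target chain is $r_1=([B-1,k_1],1,\cdot)$, the $dual\circ ui\circ dual$ of type 3' must be computed with $l(r_1)=1$.
\item If $r_1$ starts at $k_1+1$, condition (5) of Definition \ref{valid S} forces the previous set to contain $k_1+1$. The paper identifies that set's chain as the zero-circle row $([k_1+1,k_1],1,\cdot)$, and $r_1$ does not alternate with its predecessor.
\item If $r_1$ starts above $k_1+1$, the single circles in columns $k_1$ and $k_1+1$ can each occur an odd number of times. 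They must be cancelled jointly, which works because they carry the same sign.
\end{itemize}

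These are exactly the paper's subcases, and it handles them as follows:
\begin{itemize}
\item It exchanges the $l=1$ chain up past one $[k_1,k_1]$ multiple so that $l=0$ (Lemma \ref{big swap up}). It then dualizes inside the resulting $Y_\mathcal{M}$ sub-configuration and exchanges back down, restoring $l=1$ (Lemma \ref{big swap down}).
\item It deletes the zero-circle row, which does not affect row exchanges.
\item It cancels the leftover multiples by Lemmas \ref{lem-multiplicity-cancel} and \ref{lem-multiplicity-cancel-up}.
\end{itemize}

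Your fallback could replace the first device: apply $S_{r,C(r)}$ to an $l=1$ chain, which splits off $([k_1+1,k_1],1,\cdot)$ and leaves an $l=0$ chain starting at $k_1+2$. But it must be aimed at a target chain \emph{starting} at $k_1$, not at the case $B-1=k_1$. ``Undoing the reduction'' also needs an argument. For instance, apply Lemma \ref{S existence Z} to the desired output, whose chain starting at $k_1$ has $l=1$, so $c=C$ is allowed; this shows the output is equivalent to what you built. Then identify it with the single $dual\circ ui\circ dual$ via Corollary \ref{cor same support}.
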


\begin{proof}
    Suppose $h = ([A, -B], B, \eta)$ be the hat to be dualized by the $D$ operator. Firstly, if $A \leq k_1,$ then all possible $dual \circ ui \circ dual$ operations of type 3' are contained in a sub-virtual extended multi-segment of type $Y_\mathcal{M}$ with $c_{\max} = A$, from which the result follows immediately from Lemma \ref{lem SMUD operators}.
    
    If $A = k_1 + 1,$ then $h$ can either be dualized to a chain $r$ ending at $k_1$ or a multiple with support $[k_1, k_1]$ (these virtual extended multi-segments being equivalent by row-exchange if and only if $C(r) = 1$). By Lemma \ref{lem SMUD operators}, dualizing to the bottom multiple with support $[k_1, k_1]$ will produce a row $([k_1 + 1, k_1], 1, \eta)$ as desired.

    If $A \geq k_1 + 2,$ let $r_1$ be the chain ending at $A - 1$ (if such a chain exists) and let $r_2$ be the final multiple with support $[A - 1, A - 1].$ If we want to dualize $h$ to $r_1$, we may suppose by \cite[Corollary 5.32 and Lemma 6.8]{HKT26} that all rows between $h$ and $r_1$ are unmerged and none of them are hats. We have the following four cases.

    \begin{enumerate}
        \item Suppose  $r_1$ has support $[A - 1, B]$ with $B > k_1 + 1.$ If there exists a chain beginning at $k$ with $l = 1,$ then we row exchange the chain up so that it is the first row beginning at $k$ and it has $l = 0$ by Lemma \ref{big swap up}. Now, there are an odd number of rows of circles with support $[k_1, k_1]$ and an odd number with support $[k_1 + 1, k_1 + 1],$ all above $r_1.$ By Lemmas \ref{lem-multiplicity-cancel} and \ref{lem-multiplicity-cancel-up}, we can suppose that these evenly many multiples are not present since they do not affect row exchanges. This reduces to the $Y_\mathcal{M}$ case from which the claim follows by Lemma \ref{lem SMUD operators}.
        \item If $r_1$ has support $[A - 1, k_1 + 1],$ then suppose $\mathcal{S}_i = \{k_1 + 1, \dots, A - 1, \overline{A, \dots }, \dots\}$ be the associated set. But $\mathcal{S}_{i - 1}$ must also contain $k_1 + 1$ because $m_{k_1 + 1} > 1.$ Since all rows between $h$ and $r_1$ are as separated as possible, we must have $\mathcal{S}_{i - 1} = \{k_1, k_1 + 1\}$ (since we cannot have $\mathcal{T}_{i - 1}^0 = \{k_1 + 1\}.$ The associated chain is of the form $([k_1 + 1, k_1], 1, \eta').$ Removing this chain does not impact row exchanges and reduces to the $Y_\mathcal{M}$ case.
        \item If $r_1$ has support $[A - 1, k_1],$ then $r_1 = ([A - 1, k_1], 1, \eta)$ for some $\eta.$ In this case, row exchange $r_1$ up with one of the rows with support $[k_1, k_1]$ so it has $l = 0$ by Lemma \ref{big swap up}. Now the rows between $k_1$ and $r_1$, inclusive, constitute a sub-virtual extended multi-segment of type $Y_\mathcal{M}$. Perform the dualization and exchange the chain back down, restoring the value $l = 1$ by Lemma \ref{big swap down}.
        \item Otherwise, $r_1$ must be above all the multiples of support $[k_1, k_1]$ or $[k_1 + 1, k_1 + 1],$ in which case the multi-segment of rows from $h$ to $r_1$ is of type $Y_{\mathcal{M}}$ for $\mathcal{M}$ ending at $A,$ and the existence of the dualization operator follows immediately.
    \end{enumerate}

    If we want to dualize $h$ to $r_2,$ we again suppose that all rows in between are completely unmerged and all intermediate hats are dualized away. In this case, dualizing to $r_2$ is equivalent up to dualizing to the chain that $r_2$ belongs to, and we have already proved that this is possible.
\end{proof}

We show the existence of the $U$ operator.

\begin{lemma}
\label{U exstence Z}
    Given $\mathcal{E}$ of type $Z_{k_1, k_2},$ the operator $U$ is well-defined and affects the $\mathcal{S}$-data the same as in the $Y_\mathcal{M}$ case.
\end{lemma}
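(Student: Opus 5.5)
To prove that $U$ is well defined for $\EE$ of type $Z_{k_1,k_2}$, I will follow the template of the $S$ and $D$ lemmas above (Lemmas \ref{S existence Z} and \ref{D existence Z}). The idea is to reduce to the type $Y_\mathcal{M}$ case of Lemma \ref{lem SMUD operators}. Only a bounded number of configurations near columns $k_1$ and $k_1+1$ can differ from that case, and these I will handle by hand.

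Recall what $U_{h,c}$ does. For a hat $h=([A,-B],B,\eta)$ with $C(h)>c$, it is realized as a $dual\circ ui\circ dual$ (equivalently, the inverse of a $D$-type merge, by Lemma \ref{lemma ui inv = dud}). This operation splits off from $h$ a row of circles with support $[A,A-c+1]$, and that row becomes the chain $\mathcal{T}^0$ of a new set $\{A-c+1,\dots,A,\dots\}$. After the split, the new row is exchanged down past the intervening rows to its position in the $(P')$ order. By Lemma \ref{lem-local}, only the rows it passes are affected.

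I split into cases according to where the new chain lands relative to $k_1$ and $k_1+1$.

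\emph{Case $A-c+1>k_1+1$, or $A<k_1$.} The row exchanges involve no multiples of support $[k_1,k_1]$ or $[k_1+1,k_1+1]$, or they involve both. In the first situation the relevant sub-multi-segment is literally of type $Y_\mathcal{M}$. In the second, the evenly many multiples in those two columns can be discarded using Lemmas \ref{lem-multiplicity-cancel} and \ref{lem-multiplicity-cancel-up}, exactly as in Case (1) of the proof of Lemma \ref{D existence Z}. Either way the $Y_\mathcal{M}$ argument applies verbatim.

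\emph{Case $A-c+1=k_1$.} The new chain begins at $k_1$ and must end up with $l=1$. The chain first appears with $l=0$ and must then be exchanged past an odd number of multiples of support $[k_1,k_1]$ (those belonging to the previous chain). Lemma \ref{big swap down} then yields $l=1$ with the correct sign, mirroring Case (2) of Lemma \ref{S existence Z}. We must also rule out creating $\mathcal{T}_i^0=\{k_1\}$. This would require $c=1$ with $A=k_1$, in which case the new row is a single circle and is simply another multiple.

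\emph{Case $A-c+1=k_1+1$.} The new chain starts at $k_1+1$ and must fail to alternate with the previous row. Passing an odd number of same-sign multiples in column $k_1+1$ flips the sign relation, by Lemma \ref{big swap down}. This gives the non-alternation required in the definition of type $Z_{k_1,k_2}$.

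In each case I will then check, as in \cite[\S5]{HKT26}, that the remaining sign conditions (odd-alternation) are preserved and that $\eta(\EE)$ is unchanged. The resulting $\mathcal{S}$-data is exactly that of the $Y_\mathcal{M}$ rule in Lemma \ref{lem SMUD operators}.

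I expect the main obstacle to be the third situation, and more generally the sign bookkeeping when the split chain must pass multiples in both special columns. There the parities of $m_{k_1}$ and $m_{k_1+1}$ (even in $\mathcal{M}$, hence odd among the multiples after subtracting the chain containing them) must produce exactly the modified sign rules. My first attempt will be to row exchange the adjacent chain with $l=1$ up via Lemma \ref{big swap up}, so that it has $l=0$. This temporarily produces a genuine $Y_\mathcal{M}$ configuration, in which I will apply $U$ and then exchange back down.
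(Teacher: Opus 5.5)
\textbf{There is a genuine gap.} Your plan rests on a misidentification of $U$.

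\emph{What $U$ is.} The operator $U_{h,c}$ is a $ui^{-1}$ of type 3' applied to the hat $h$. This is how the paper uses it, for instance in passing from $\Theta_2$ to $\Theta_3$. It is neither a $dual\circ ui\circ dual$ nor the inverse of a $D$-merge:
\begin{itemize}
\item $U$ increases the number of rows by one, whereas a $dual\circ ui\circ dual$ never does; when of type 3' it deletes a row.
\item $D^{-1}$ is a lowering operator that splits a chain back into a chain and a hat, which is a different change of $\mathcal{S}$-data from $U$.
\item Lemma \ref{lemma ui inv = dud} concerns $ui$ not of type 3', so it does not apply here.
\end{itemize}

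\emph{Why the mechanism matters.} A type 3' split can only be performed on a row with $l=0$. The hat $h=([A,-B],B,\eta)$ has $l=B\geq 1$. So it cannot be split in place with the new chain then pushed down, as you describe. Instead, $h$ must first be row exchanged past rows contained in its support until its $l$ drops to $0$; in the $Y_\mathcal{M}$ picture, these are the rows accounting for columns $0,\dots,B-1$. Only then is it split, after which the remaining hat $[A-c,-B]$ is exchanged back up.

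\emph{Consequence for your case analysis.} The configurations where type $Z_{k_1,k_2}$ can differ from type $Y_\mathcal{M}$ are governed by the position of $B=\min\mathcal{T}_i^j$ relative to $k_1$ and $k_1+1$, not by $A-c+1$. The relevant questions are whether $h$ must cross the even-multiplicity columns, a chain $([\cdot,k_1],1,\cdot)$, or the non-alternating chain starting at $k_1+1$, and whether its $l$ still reaches exactly $0$. Your case split does not track this. The one genuinely new configuration you do identify is left as a ``first attempt'' in your last paragraph.

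\emph{The paper's route needs no row-exchange bookkeeping.}
\begin{itemize}
\item Take the identity $D_{h',r}\circ U_{h,c}=S_{r',c}\circ D_{h,r}$ of Theorem \ref{Commutativity of Operators}(9), with $r$ the chain from the same $\mathcal{S}_i$ as $h$. It holds purely at the level of $\mathcal{S}$-data.
\item $S$, $D$ and their inverses are already available on type $Z_{k_1,k_2}$ by Lemmas \ref{S existence Z} and \ref{D existence Z}.
\item Hence $D_{h',r}^{-1}\circ S_{r',c}\circ D_{h,r}$ carries $\EE$ to an equivalent virtual extended multi-segment with exactly the $\mathcal{S}$-data prescribed by $U_{h,c}$.
\item Any $ui^{-1}$ splitting $c$ circles off $h$ yields the same support and order, so Corollary \ref{cor same support} forces it to coincide with that multi-segment.
\end{itemize}
This formal reduction is the missing idea. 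A direct verification along your lines would have to be rebuilt around the correct realization of $U$, with the case split taken on $B$.
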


\begin{proof}
    Let $h = ([A, -B], B, \eta)$ for some $\eta$ and that we wish to perform $U_{h, c},$ the $ui^{-1}$ operator separating $c$ circles from the hat $h$. We use the following relation
    $$D_{h', r} \circ U_{h, c} =  S_{r', c}  \circ D_{h, r},$$
    where $r$ is the chain obtained from the same $\mathcal{S}_i$ as $h.$ Here, $r'$ is the image of $r$ under $D$ and $h'$ is the remaining hat after $U.$ This relation is stated as Part (9) of Theorem \ref{Commutativity of Operators} for the $Y_\mathcal{M}$ case, and it follows immediately from the definitions of the operators in terms of the $\mathcal{S}$-data. The existence of a $U$ operation that impacts the $\mathcal{S}$-data in the same way as the $U$-operation for type $Y_\mathcal{M}$ follows since the other operators and their inverses are already known to exist. Any $ui^{-1}$ applied to a hat will produce an extended multi-segment of the same support and order as $U$ and therefore must produce the same multi-segment by Corollary \ref{cor same support}.
\end{proof}

Now we aim to prove the following classification for extended multi-segments of type $Z_\mathcal{M}.$

\begin{thm}
\label{Z Classification}
    Suppose $\mathcal{E}$ is of type $Z_\mathcal{M}$. Then $\mathcal{E}' \sim \mathcal{E}$ if and only if $\mathcal{E}'$ is of type $Z_\mathcal{M}$ and $\eta(\mathcal{E}) = \eta(\mathcal{E}').$
\end{thm}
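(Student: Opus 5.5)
We prove the two directions separately, following the proof of Theorem \ref{block classification} in \cite[\S5]{HKT26} for type $Y_\mathcal{M}$.

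\emph{The ``if'' direction.} Let $\mathcal{E}'$ be of type $Z_\mathcal{M}$ with $\eta(\mathcal{E}')=\eta(\mathcal{E})$. We want a chain of operators from each of $\mathcal{E}$ and $\mathcal{E}'$ to one common normal form. The natural choice is the maximally split element, with $\mathcal{S}$-data
\[
(\{0\},\dots,\{k_1-1\},\{k_1,\overline{k_1+1}\},\{k_1+2\},\dots,\{k_2\}).
\]
This is the form that arises as $\mathcal{E}^{\up}_{m^{\up,\alpha}}$ in the example preceding the theorem. The operators $S$, $M$, $U$, $D$ are available on type $Z_\mathcal{M}$: see Lemmas \ref{S existence Z}, \ref{D existence Z} and \ref{U exstence Z}, together with the remark that $M$ carries over from \cite[Lemma 5.33]{HKT26}. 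Each of them changes the $\mathcal{S}$-data exactly as in Lemma \ref{lem SMUD operators}, so we can repeat the combinatorial reduction of \cite[\S5]{HKT26} verbatim:
\begin{enumerate}
\item use $D$ to dualize away all hats;
\item use $S$ to split every chain until all $\mathcal{S}_i$ are singletons, except for the forced set $\{k_1,\overline{k_1+1}\}$.
\end{enumerate}
The one new point is that, because no $\mathcal{T}_i^0=\{k_1\}$ is allowed, the final set containing $k_1$ cannot be reduced further. The extra clause of Lemma \ref{S existence Z}, which covers the case $l(r)=1$ and $c=C(r)$, is exactly what brings a chain beginning at $k_1$ down to this form. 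Once every element has been reduced to this normal form, the signs are determined by $\eta$ through the odd-alternating rule. Corollary \ref{cor same support} then identifies the two normal forms, which gives $\mathcal{E}\sim\mathcal{E}'$.

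\emph{The ``only if'' direction.} By Theorem \ref{thm intersections of local Arthur packets}, it suffices to show that applying any basic operator, or its inverse, to a type $Z_\mathcal{M}$ multi-segment produces a type $Z_\mathcal{M}$ multi-segment with the same first sign. We will mirror the closure argument of \cite[Theorem 5.17]{HKT26}. First, identify the applicable $ui$, $ui^{-1}$, $dual\circ ui\circ dual$ and their inverses, and check that each is one of $S^{\pm1}$, $M^{\pm1}$, $U^{\pm1}$, $D^{\pm1}$ up to row exchanges. Row exchanges do not change $\psi$, and we may handle them using Lemma \ref{cor Alex}. Throughout, we reduce to the $Y_\mathcal{M}$ situation as follows:
\begin{enumerate}
\item discard evenly many equal multiples in columns $k_1$ and $k_1+1$ via Lemmas \ref{lem-multiplicity-cancel} and \ref{lem-multiplicity-cancel-up};
\item temporarily convert an $l=1$ chain starting at $k_1$ into an $l=0$ chain by exchanging it with one multiple $[k_1,k_1]$, using Lemmas \ref{big swap up} and \ref{big swap down}.
\end{enumerate}
This is the same case split used in the proof of Lemma \ref{D existence Z}. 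The sign $\eta(\mathcal{E})$ is preserved because each of these operators preserves the first row's sign, as in the $Y_\mathcal{M}$ case.

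\emph{Main obstacle.} The hardest step is the ``only if'' direction near columns $k_1$ and $k_1+1$. Here one must rule out operators that would produce a forbidden $\mathcal{T}_i^0=\{k_1\}$, or a chain starting at $k_1+1$ with the wrong alternation. For example, this could come from a $ui^{-1}$ of type 3' that splits off a single circle in column $k_1$ from a chain with $l=1$. My first attempt would be a nonvanishing argument: such a row, carrying $l=1$ on support of length one or with the wrong sign, violates the conditions of \cite[Theorems 3.6, 4.4]{Ato20b}. Hence the operator is not applicable in $\VRep$, and only the allowed $S$-splits from Lemma \ref{S existence Z} remain.
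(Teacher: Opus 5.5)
Both of your directions take a different route from the paper's, and each has a concrete problem.

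\textbf{The ``if'' direction: the normal form is wrong.}
\begin{itemize}
\item The set $\{k_1,\overline{k_1+1}\}$ has $\mathcal{T}^0=\{k_1\}$. The definition of type $Z_\mathcal{M}$ excludes this. You took it from the Case 3 example, which conflicts with that definition; the proof of this theorem never needs it.
\item More importantly, the set still contains a hat, so your own step (1) applies $D$ to it. As in the case $A=k_1+1$ of the proof of Lemma \ref{D existence Z}, this produces the zero-circle row $([k_1+1,k_1],1,\eta)$, i.e.\ the set $\{k_1,k_1+1\}$.
\item The extra clause of Lemma \ref{S existence Z} (with $l(r)=1$, $c=C(r)$) likewise leaves $\{k_1,k_1+1\}$, not $\{k_1,\overline{k_1+1}\}$.
\end{itemize}
So your raising procedure does not stop where you claim. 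You also give no argument that its terminal configuration is independent of the starting point. This is delicate near $k_1$, because an $S$-split that would isolate a forbidden singleton cannot be performed inside type $Z_\mathcal{M}$.

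Corollary \ref{cor same support} cannot identify the two normal forms either, since it presupposes $\pi(\EE_1)=\pi(\EE_2)$. What you need is simply that a type $Z_\mathcal{M}$ multi-segment is determined by $(\mathcal{M},\mathcal{S},\mathcal{T},\eta)$.

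The paper goes downward instead. The operators $S^{-1},D^{-1},M^{-1}$ bring any type $Z_\mathcal{M}$ multi-segment to the one with $\mathcal{S}$-data $(\{0,\overline{1},\dots,\overline{n}\})$. Its only chain is $[0,0]$, so none of the special rules at $k_1,k_1+1$ come into play. Lemma \ref{E min classification Z} then identifies it as $\EE^{\min}$.

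\textbf{The ``only if'' direction: the missing idea is the reduction to raising operators.}
$\EE^{\min}$ is of type $Z_\mathcal{M}$, and $\psi^{\min}$ is the unique minimal element (Theorem \ref{thm psi max min}). Hence every $\EE'\sim\EE$ is reached from $\EE^{\min}$ by raising operators alone. It therefore suffices that raising operators preserve type $Z_\mathcal{M}$ and $\eta$:
\begin{itemize}
\item the raising operators of type 3' are exactly $S,M,U,D$;
\item a $dual\circ ui\circ dual$ not of type 3' is a composition of these and their inverses (as in \cite[Lemmas 5.47, 5.48]{HKT26}).
\end{itemize}
You instead propose closure under every basic operator and its inverse, asserting that each is a single $S^{\pm1},M^{\pm1},U^{\pm1},D^{\pm1}$ up to row exchange. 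That is not the case for operators not of type 3', which are only compositions. Closure under the lowering operators ($ui$ of every type and $dual\circ ui^{-1}\circ dual$) would also require a classification you do not provide.

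Your nonvanishing argument for the ``main obstacle'' does not fill this gap. Outputs of basic operators are automatically nonvanishing by Theorem \ref{thm intersections of local Arthur packets}. What must be ruled out is a nonvanishing output that is simply not of type $Z_\mathcal{M}$.
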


In order to do this, we will use the following description of $\mathcal{E}^{\min}$ for extended multi-segments $\mathcal{E}$ of type $Z_{\mathcal{M}}.$ The proof of this lemma is omitted as it is exactly the same as the proof of \cite[Lemma 5.46]{HKT26}.

\begin{lemma}
\label{E min classification Z}
    Suppose $\EE$ is of type $Z_\mathcal{M}$ with $\mathcal{S}$-data $(\{0, \overline{1}, \overline{2}, \dots, \overline{n}\}).$ Then $\EE = \EE^{min}.$
\end{lemma}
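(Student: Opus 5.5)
The claim is that $\EE$ of type $Z_\mathcal{M}$ with $\mathcal{S}$-data $(\{0,\overline{1},\overline{2},\dots,\overline{n}\})$ is the minimal element, i.e.\ $\psi_\EE=\psi^{min}(\pi(\EE))$ in the sense of Theorem \ref{thm psi max min}. I will follow the strategy of \cite[Lemma 5.46]{HKT26}: show that no inverse of a raising operator (no lowering operator) applies nontrivially to $\EE$. By Theorem \ref{thm psi max min} the order $\geq_O$ has a unique minimum. So if $\EE$ admits no operator of the form $ui$ or $dual\circ ui^{-1}\circ dual$ (the inverses of the raising operators in Definition \ref{defn raising operators}), then $\psi_\EE$ is minimal, hence equals $\psi^{min}$. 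Since all $B_i\in\Z$, the partial dual operators $dual_k^{\pm}$ are not relevant here.

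First I would write $\EE$ explicitly. There is a single chain $\mathcal{T}_1^0=\{0\}$, i.e.\ the row $([0,0],0,\eta)$. The singletons $\mathcal{T}_1^j=\{j\}$ give hats $([j,-j],j,\eta_j)$ for $j=1,\dots,n$. The remaining rows are multiples $([c,c],0,\ast)$, with $m_c-1$ copies in each column $c$. Note that the first bullet of the type $Z$ construction never triggers, because $\min\mathcal{T}_1^0=0\neq k_1$. The second bullet also never triggers, because there are no chains beginning at $k_1+1$. So the only deviation from type $Y_\mathcal{M}$ is that columns $k_1$ and $k_1+1$ contain an odd number $m_c-1$ of multiples, all with a common sign.

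The key step is ruling out every $ui_{i,j}$. There are three kinds of pairs to check. For two hats $([j,-j])$ and $([j',-j'])$ the supports are nested, so the condition $A_i<A_j$, $B_i<B_j$ fails. For a hat paired with a multiple $[c,c]$, we would need $c\le j$ together with $-j<c$, and then one checks the equalities of Cases 1--3 in Definition \ref{ui def} against the alternating/non-alternating sign rules. For two multiples $[c,c]$ and $[c+1,c+1]$, a $ui$ requires alternating signs, i.e.\ Case 3, and this is excluded by rule (3) of Definition \ref{defn E(M,S)}: a multiple fails alternation with the row above it. Similarly, $dual\circ ui^{-1}\circ dual$ would require splitting a hat $([j,-j],j)$ into two hats, which is impossible because each hat covers a singleton $\mathcal{T}$-set with a single column.

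I expect the main obstacle to be the pairs involving the multiples in columns $k_1,k_1+1$. The parity there differs from type $Y_\mathcal{M}$, so a row exchange past an odd rather than even number of same-sign multiples could change $l$. To handle this, I would use Lemmas \ref{lem-multiplicity-cancel} and \ref{lem-multiplicity-cancel-up} to delete pairs of equal multiples without affecting row exchanges. This leaves one multiple in each of these columns, with equal signs, and reduces the check to a small local configuration, which can be verified directly via Lemmas \ref{big swap down} and \ref{big swap up}. Everything else then reduces verbatim to the $Y_\mathcal{M}$ argument.
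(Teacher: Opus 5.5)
Your overall strategy is sound: show that no lowering operator applies to $\EE$ up to row exchange (a $ui_{i,j}$ or a $dual\circ ui^{-1}\circ dual$; the $dual_k$ are irrelevant), then invoke uniqueness of $\psi^{min}$. The paper itself only points to \cite[Lemma 5.46]{HKT26} for this proof. However, two of your local checks are set up wrongly.

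\emph{First problem: the hat--multiple pairs.} For a hat $h_j=([j,-j],j,\eta_j)$ and a multiple $([c,c],0,\ast)$, a $ui$ requires $A_i<A_j$ and $B_i<B_j$, which forces $c>j$. The case $c\le j$ that you describe is nested and trivially excluded. Write $\eta_0$ for the sign of the chain. By the sign rules every multiple also has sign $\eta_0$, and $\eta(h_i)=(-1)^i\eta_0$.
\begin{itemize}
\item The multiple cannot pass above the chain or above multiples in lower columns, and $h_j$ cannot pass below multiples in columns $>j$. So adjacency forces $h_j$ to be exchanged down past $h_{j-1},\dots,h_1$, the chain, and every multiple in columns $\le j$.
\item At that point Cases 1 and 2 of Definition \ref{ui def} would need $l=j-c<0$ or $l=j+c>j$. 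Case 3 needs the image of $h_j$ to have $l=0$, together with $c=j+1$ and $\epsilon=-1$.
\item Lemma \ref{big swap down} takes $h_j$ to $([j,-j],0,\eta_0)$. Consecutive pairs of same-sign multiples are invisible by Lemma \ref{lem-multiplicity-cancel}. That lemma does not require equal supports, so for $j\ge k_1+1$ the unpaired circles of columns $k_1$ and $k_1+1$ cancel each other. This leaves $\epsilon=1$, exactly as in type $Y_\mathcal{M}$.
\item The only genuinely new case is $j=k_1$. The unpaired multiple in column $k_1$ gives a Case 1(b) exchange, so the image becomes $([k_1,-k_1],1,-\eta_0)$. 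Case 3 then fails because $B_{k+1}+l_{k+1}=k_1+1\neq k_1=A_k-l_k+1$.
\end{itemize}
So your ``main obstacle'' is located correctly. The computation must be about a hat travelling down to a multiple in a later column, not about multiples nested inside the hat.

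\emph{Second problem: $dual\circ ui^{-1}\circ dual$ is checked on the wrong rows.} Under $dual$, each hat $([j,-j],j)$ becomes the single circle $([j,j],0)$, so nothing can be split there; this is the inverse-$M$ case you mention. But each multiple $([c,c],0)$ becomes a hat $([c,-c],c)$ of $dual(\EE)$. These are exactly the rows of $dual(\EE)$ spanning more than one column, and your argument never addresses them.

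They are excluded by support alone. A type 3' $ui^{-1}$ splits $[A,B]$ into $[x,B]$ and $[A,x+1]$ with $B\le x<A$. For $[A,B]=[c,-c]$, the row $[x,-c]$ has $A+B<0$, violating Definition \ref{def multi-segment}. The same observation is used in the proof of Lemma \ref{lem-case5-independence}. The operators $dual\circ ui^{-1}\circ dual$ not of type 3' are $ui$'s by Lemma \ref{lemma ui inv = dud}, so the first check covers them.

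With these two repairs the argument goes through.
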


Any $\mathcal{E}$ of type $Z_{k_1, k_2}$ can clearly be turned into one of the form specified in Lemma \ref{E min classification Z} with the same $\mathcal{M}$ and $\eta$ through a series of $-S, -D$ and $-M$ operators. This suffices to show that two extended multi-segments of type $Z_\mathcal{M}$ with the same $\eta$ are equivalent. Now, to prove Theorem \ref{Z Classification}, it suffices to show that if $\mathcal{E}$ is of type $Z_\mathcal{M}$ then any equivalent extended multi-segment is of type $Z_\mathcal{M}$ with the same $\eta.$ Since we know that $\mathcal{E}^{\min}$ has the same $\mathcal{E}$ and $\eta$, it suffices to show that these properties are preserved under raising operators.

\begin{lemma}
    Let $\mathcal{E}$ be of type $Z_\mathcal{M}$ and $T$ be a raising operator. Then $T$ preserves $Z_\mathcal{M}$ form, including $\mathcal{M}$, as well as $\eta(\mathcal{E})$.
\end{lemma}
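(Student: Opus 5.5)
We argue by classifying the raising operators that can act on $\EE$ of type $Z_\mathcal{M}$, following the corresponding argument for type $Y_\mathcal{M}$ in \cite[\S5]{HKT26}. By Definition \ref{defn raising operators}, a raising operator is $dual\circ ui\circ dual$, $ui^{-1}$, or $dual_k^-$. Since all $B_i\in\Z$, the partial duals do not occur, so only the first two types need treatment. We will show that every applicable $ui^{-1}$ on $\EE$ (after suitable row exchanges) coincides with some $S_{r,c}$ or $U_{h,c}$. Likewise, every applicable $dual\circ ui\circ dual$ coincides with some $M_{h_1,h_2}$ or $D_{h,r}$. Lemmas \ref{S existence Z}, \ref{D existence Z}, \ref{U exstence Z} and the remark on $M$ show that these operators preserve type $Z_\mathcal{M}$, the tuple $\mathcal{M}$, and $\eta(\EE)$, so the lemma follows.

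For the $ui^{-1}$ case, fix the row $r$ being split and the piece separated off. Any $ui^{-1}$ on a row of circles or on a hat produces two rows whose supports partition a consecutive interval, and the row order is then fixed by the $(P')$ convention. Corollary \ref{cor same support} says the result is determined by its support and admissible order. It therefore suffices to show two things. First, the support produced must equal the support produced by $S_{r,c}$ (if $r$ is a chain) or $U_{h,c}$ (if $r$ is a hat) for the appropriate $c$. Second, any split not of this shape either yields a vanishing multi-segment or fails the applicability conditions of Definition \ref{ui def}. To check the second point, we count circles and signs via Lemmas \ref{big swap down} and \ref{big swap up}, exactly as in the $Y_\mathcal{M}$ case.

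The $dual\circ ui\circ dual$ case is handled the same way. Lemma \ref{merge} says two hats merge only when they are consecutive, abut, and alternate, which gives $M$. Otherwise the operator combines a hat with a row of circles whose support contains $B-1$, which gives $D$ with target $r_1$ or $r_k$.

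We expect the main obstacle to be rows meeting columns $k_1$ and $k_1+1$. These are the chains with $l=1$ starting at $k_1$, the chains starting at $k_1+1$ that break alternation, and the even multiplicities $m_{k_1},m_{k_1+1}$. Here a potential new operator might exist that has no analogue in type $Y_\mathcal{M}$. For instance, we must rule out a $ui^{-1}$ that carves out a $\mathcal{T}_i^0=\{k_1\}$, and a union-intersection between an $l=1$ chain and a multiple at $k_1$. To handle these, we first reduce to the $Y_\mathcal{M}$ situation, in the same way as the case analysis in the proof of Lemma \ref{D existence Z}. One step is to row exchange the $l=1$ chain up past a single multiple at $k_1$ to make $l=0$ (Lemma \ref{big swap up}). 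Another is to cancel pairs of equal multiples at $k_1$ and $k_1+1$ using Lemmas \ref{lem-multiplicity-cancel} and \ref{lem-multiplicity-cancel-up}. After this reduction, Lemma \ref{lem SMUD operators} applies to the relevant sub-multi-segment, and we then exchange back. In the forbidden configuration $\{k_1\}$, we check directly that the resulting row would have $l=1$ with support of length one. This gives $C<0$, which is impossible, so that operator is not applicable. Finally, the first row, and hence $\eta(\EE)$, is unchanged, since raising operators do not alter the sign of the top hat beyond what Lemmas \ref{S existence Z}--\ref{U exstence Z} already record.
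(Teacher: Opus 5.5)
\textbf{The gap: raising operators that are not of type 3'.} Your handling of type 3' raising operators matches the paper, which simply asserts this step: you identify each applicable type 3' $ui^{-1}$ with some $S_{r,c}$ or $U_{h,c}$, and each type 3' $dual\circ ui\circ dual$ with some $M_{h_1,h_2}$ or $D_{h,r}$, then appeal to Lemmas \ref{S existence Z}, \ref{D existence Z}, \ref{U exstence Z}. The problem is your stronger claim that \emph{every} applicable $dual\circ ui\circ dual$ is an $M$ or a $D$, and every applicable $ui^{-1}$ is an $S$ or a $U$.

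That claim is false. $M$ and $D$ are $dual\circ ui\circ dual$'s of type 3', so they lower the number of rows by one. $S$ and $U$ are $ui^{-1}$'s of type 3', so they raise it by one. A $dual\circ ui\circ dual$ of type 1, 2 or 3 instead replaces two rows by two rows (the ``union'' and ``intersection'' in the dual picture), so it cannot coincide with any of $S,M,U,D$. By Lemma \ref{lemma ui inv = dud}, the inverse of a $ui$ of type 1, 2 or 3 is again an operator of this kind. These are exactly the raising operators your case analysis never reaches. Your two arguments only ever see type 3' configurations: Lemma \ref{merge} (abutting, alternating hats) and ``split a row into two pieces with consecutive supports, then use Corollary \ref{cor same support}.'' The same is true of your reduction near columns $k_1,k_1+1$.

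\textbf{How the paper closes it.} The paper treats this case separately. If $T$ is a $dual\circ ui\circ dual$ not of type 3', it shows, by the argument of \cite[Lemmas 5.47 and 5.48]{HKT26}, that $T$ is a composition of type 3' raising operators and their inverses. Since $S,M,U,D$ preserve type $Z_\mathcal{M}$, $\mathcal{M}$ and $\eta$, so does $T$. You need to add this decomposition step. Alternatively, you would have to prove that no non-type-3' operator ever applies to a type $Z_\mathcal{M}$ multi-segment, which is not what the paper does and would need its own argument. As written, your proposal proves the lemma only for type 3' raising operators.
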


\begin{proof}
    It is apparent that the only raising operators of type 3' applicable to multi-segments of form $Z_{k_1, k_2}$ are $S, M, U,$ and $D,$ and these are known to conserve $\mathcal{M}$ and $\eta.$ Otherwise, $T$ must be a $dual \circ ui \circ dual$ not of type 3'. In this case, $T$ consists of some combination of the raising operators of type 3' and their inverses; the proof of this fact is exactly the same as the proofs of \cite[Lemmas 5.47 and 5.48]{HKT26}.
\end{proof}

Next, we need to ensure that two unequal but equivalent $Z_\mathcal{M}$ multi-segments are associated with different Arthur packets. The proof of the following result is exactly the same as the proof of \cite[Lemma 6.1]{HKT26}.

\begin{lemma}
\label{packets distinct Z}
    If $\EE_1$ and $\EE_2$ are both of type $Z_\mathcal{M}$ and $\eta(\EE_1) = \eta(\EE_2),$ then $\psi(\EE_1) = \psi(\EE_2)$ if and only if $\EE_1 = \EE_2.$
\end{lemma}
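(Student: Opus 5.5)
The plan follows the proof of \cite[Lemma 6.1]{HKT26}, adapted to type $Z_\mathcal{M}$. One direction is trivial: if $\EE_1=\EE_2$ then $\psi(\EE_1)=\psi(\EE_2)$. For the converse, suppose $\psi(\EE_1)=\psi(\EE_2)$ with both of type $Z_\mathcal{M}$ and $\eta(\EE_1)=\eta(\EE_2)$. I will show that the support of a type $Z_\mathcal{M}$ virtual extended multi-segment determines its $\mathcal{S}$-data (and $\mathcal{T}$-data). Since the remaining data, the $l_i$ and the signs, are forced by the construction rules and by $\eta$, this gives $\EE_1=\EE_2$.

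The recovery of the data from the support goes as follows.
\begin{enumerate}
\item The hats are exactly the rows $[A,B]$ with $B<0$. Each hat with support $[A,-B']$ recovers a set $\mathcal{T}_i^j=\{B',\dots,A\}$ with $j\ge1$.
\item The rows with $B\ge0$ are chains or multiples.
\begin{itemize}
\item A row with $A>B$ must be a chain.
\item A row with $A=B=c$ is ambiguous: it could be a multiple or a singleton chain $\mathcal{S}_i=\{c\}$, or a $\mathcal{T}_i^0=\{c\}$ followed by hats.
\end{itemize}
\item To resolve the ambiguity, use that the multiplicities $m_c$ are fixed by $\mathcal{M}$, together with conditions (3)--(5) of Definition~\ref{valid S} and the type $Z$ restriction that no $\mathcal{T}_i^0=\{k_1\}$.
\item Reconstruct the sets $\mathcal{S}_i$ greedily from left to right. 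Each column $c$ must be covered, in increasing order of $\mathcal{S}_i$, and each hat attaches to the chain-part $\mathcal{T}_i^0$ ending immediately before it.
\end{enumerate}
Whether a column-$c$ singleton opens a new $\mathcal{S}_i$ is then forced, exactly as in the $Y_\mathcal{M}$ argument. Each column is covered by a chain or by a hat. If no chain or hat of length $\ge2$ covers $c$, one singleton must be a chain, and all others are multiples. Their order is fixed by the ordering rule (chains before multiples), so they contribute identical supports anyway.

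The main obstacle is the pair of columns $k_1$ and $k_1+1$, where the multiplicities are even. Here the parity arguments of \cite[Lemma 6.1]{HKT26} could a priori fail: a chain starting at $k_1$ now carries $l=1$, and sign non-alternation enters at $k_1+1$. I would check this directly. Since the $l$-values and signs are determined by the type $Z$ rules once $\mathcal{S}$ and $\mathcal{T}$ are known, only the support-to-data reconstruction matters, and the modifications in the type $Z$ definition affect $l$ and $\eta$ but not supports. So the same reconstruction applies verbatim. Finally, I would invoke Corollary~\ref{cor same support} only as a sanity check: equal supports with the same admissible $(P')$ order and equal representations force equality. This is consistent with, but not needed for, the direct reconstruction.
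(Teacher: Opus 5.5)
Your plan depends on the claim that the support of a type $Z_\mathcal{M}$ multi-segment determines its $(\mathcal{S},\mathcal{T})$-data. As stated, that claim is neither proved nor true. The decisive case, deciding which rows with support $[c,c]$ are chains, you defer to ``the $Y_\mathcal{M}$ argument''. But the number of rows with support $[c,c]$ equals $m_c-L_c$, where $L_c$ is the number of hats and chains of length $\ge 2$ covering $c$. This count does not depend on how many of those rows are chains, so the support never sees the chain/multiple distinction. That distinction must come entirely from conditions (4)--(5) of Definition \ref{valid S} and (3) of Definition \ref{valid T}.

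Sometimes validity forces a singleton chain even though a longer chain covers $c$. For $\mathcal{M}=(1,3,1)$, $\mathcal{S}=(\{0\},\{1\},\{1,2\})$ is valid, while $(\{0\},\{1,2\})$, with the same supports, violates (5). Sometimes validity does not decide at all. For $\mathcal{M}=(3,1)$, both $\mathcal{S}=(\{0\},\{0,1\})$ and $\mathcal{S}=(\{0,1\})$ are valid, because condition (5) only constrains $i\ge 2$, and they have the same supports. The same happens at column $0$ in type $Z_\mathcal{M}$ whenever $m_0\ge 3$. In that example both data produce the same rows $([0,0],0,\eta),([0,0],0,\eta),([1,0],0,\eta)$, so the lemma survives. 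Your chain support $\Rightarrow$ data $\Rightarrow$ rows does not prove it, though. To rescue your route you would have to show directly that the support determines the rows, including at the even columns $k_1,k_1+1$.

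The missing idea is the tool you set aside. The paper imports the argument of \cite[Lemma 6.1]{HKT26} and places it right after Theorem \ref{Z Classification}, phrased for ``unequal but equivalent'' multi-segments. That placement points to the following short argument.
\begin{enumerate}
\item Since $\EE_1,\EE_2$ are of type $Z_\mathcal{M}$ with the same $\eta$, Theorem \ref{Z Classification} gives $\EE_1\sim\EE_2$, so $\pi(\EE_1)=\pi(\EE_2)\neq 0$.
\item Equal $\psi$ means equal multisets of supports. The ordering rule of Definition \ref{defn E(M,S)} depends only on the supports, with ties only among identical supports. Hence $\supp(\EE_1)=\supp(\EE_2)$ as ordered multisets in a $(P')$ order.
\item Corollary \ref{cor same support} (multiplicity one of local Arthur packets, applied after adjoining a common $\EE^\rho$) then gives $\EE_1=\EE_2$.
\end{enumerate}
So the corollary is not a sanity check. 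Combined with the classification it is the entire proof, and no reconstruction of the $\mathcal{S}$-data is needed.
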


Let $\mathcal{E}$ be an extended multi-segment of type $Z_{k, k + 1}$ and let $\Theta_1'(\mathcal{E}) = r \cup \mathcal{E},$ where $r = ([k + 2, -k - 2], k + 2, -\eta(\mathcal{E})).$ We remark that $\Theta_1'(\EE)$ only differs from the definition of $\Theta_1(\EE)$ by twisting by the appropriate supercuspidal representations. Note that $\Theta_1'(\mathcal{E})$ is of type $Z_{k, k + 2}.$ Then we have the following result.

\begin{lemma}
    We can perform exactly two $dual \circ ui \circ dual$s (up to row exchange) involving the first row of $\Theta_1'(\mathcal{E})$: one with the first row whose support ends at $k + 1,$ and one with the last row whose support ends at $k + 1.$ The resulting multi-segments are not equivalent, and we call them $\Theta_2'(\mathcal{E})$ and $\Theta_4'(\mathcal{E}).$ We can perform an additional $ui^{-1}$ to both $\Theta_2'$ and $\Theta_4'$ separating one circle from the row with support ending at $k + 2$ to obtain the same $\Theta_3'(\EE).$
\end{lemma}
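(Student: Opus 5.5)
The plan is to imitate the proof of Theorem \ref{first block m_n > 1} (that is, \cite[Theorem 6.5]{HKT26}) and of Lemma \ref{duds for almost block}. The rows of $\Theta_1'(\EE)$ ending at $k+1$ play the role of the rows ending at $c_{\max}$. The point to keep in view is that $\Theta_1'(\EE)$ is of type $Z_{k,k+2}$, so the only new feature compared with type $Y_\mathcal{M}$ is the parity defect in columns $k$ and $k+1$. I will dispose of that defect using the reductions already made in Lemmas \ref{S existence Z} and \ref{D existence Z}.

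First, I classify the possible $dual\circ ui\circ dual$ operations involving the first row $r=([k+2,-k-2],k+2,-\eta(\EE))$. Since $r$ is a hat and every other row has $B\geq 0$, Lemma \ref{merge} and the definition of $ui$ show that any such operation is of type $3'$. It must pair $r$, after row exchanges, with a row whose support ends at $k+1$. Both the sign and the position of that partner are controlled by Lemmas \ref{big swap down} and \ref{big swap up}.

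Next, I show that exactly two choices survive up to row exchange: the first row ending at $k+1$, and the last multiple with support $[k+1,k+1]$.
\begin{itemize}
\item For the first choice, I use \cite[Corollary 5.32 and Lemma 6.8]{HKT26} to assume that all rows between $r$ and the target are unmerged chains or multiples.
\item The odd blocks of multiples in columns $k$ and $k+1$, both with the same sign, cancel in pairs by Lemmas \ref{lem-multiplicity-cancel} and \ref{lem-multiplicity-cancel-up}.
\item A chain starting at $k$ with $l=1$ is first exchanged up to a row with $l=0$ (Lemma \ref{big swap up}) and exchanged back afterwards, exactly as in case (3) of the proof of Lemma \ref{D existence Z}.
\end{itemize}
After these reductions the configuration is of type $Y_\mathcal{M}$ ending at $k+1$, and Lemma \ref{duds for almost block} gives precisely the two operations. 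Any other candidate partner fails the alternating condition of Lemma \ref{merge}, because multiples belonging to a single chain all carry the same sign (Definition \ref{def multiples belonging to a chain}).

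To show that $\Theta_2'(\EE)$ and $\Theta_4'(\EE)$ are not equivalent, I will check that their supports differ. This means ruling out the degenerate case of Lemma \ref{duds for almost block}, in which some $\mathcal{S}_i=\{k+1\}$. In type $Z_{k,k+1}$ the multiplicity $m_{k+1}$ is even, so column $k+1$ is always covered by a chain meeting column $k$ with $l=1$, or by extra multiples. The two merged hats then have different supports, hence different parameters, and Corollary \ref{cor same support} together with Theorem \ref{thm intersections of local Arthur packets} and Lemma \ref{packets distinct Z} excludes equivalence. This parity bookkeeping is the step I expect to be hardest. I would first try to reduce it to the count $C$ of circles in column $k+1$, using the additivity of $C$ under merging.

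Finally, for $\Theta_3'(\EE)$, in each of $\Theta_2'$ and $\Theta_4'$ I exchange the row ending at $k+2$ down past the rows ending at $k+1$. I use Lemma \ref{big swap down}, with the odd cancellations above, to see that it has $l=0$ after the exchange, so that a $ui^{-1}$ of type $3'$ splitting off one circle $[k+2,k+2]$ applies. The two results have the same support and order. Since equivalent extended multi-segments with the same support coincide (Corollary \ref{cor same support}), they are the same $\Theta_3'(\EE)$.
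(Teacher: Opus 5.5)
Your outline has the same three parts as the paper's proof: two dualizations up to row exchange, distinctness by comparing supports, and a common $\Theta_3'$. The step you flag as hardest is where it breaks. To get $\psi_{\Theta_2'(\EE)}\neq\psi_{\Theta_4'(\EE)}$ you must know that the first row ending at $k+1$ is never $[k+1,k+1]$, i.e.\ that the degenerate case $\mathcal{S}_i=\{k+1\}$ of Lemma~\ref{duds for almost block} never occurs.

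Evenness of $m_{k+1}$ does not give this. A singleton chain $[k+1,k+1]$ followed by the odd number $m_{k+1}-1$ of multiples has exactly the right count. More fundamentally, the column multiplicities are the same for every member of the equivalence class, since they form the fixed tuple $\mathcal{M}$. So no argument that only counts circles in column $k+1$, such as your proposed reduction to $C$, can exclude the degenerate configuration.

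The paper closes this step with a structural input instead. In type $Z$ one cannot have $\mathcal{T}_i^0=\{k+1\}$; this is how the type-$Z$ restriction is used throughout this subsection, e.g.\ in the proofs of Lemmas~\ref{S existence Z} and~\ref{D existence Z}. By Theorem~\ref{Z Classification}, every multi-segment equivalent to $\EE$ is again of type $Z$. Hence the first row ending at $k+1$ is a longer chain or a hat, while the last one is the multiple $[k+1,k+1]$, so the two supports differ. Without this restriction the conclusion would genuinely fail, exactly as in the degenerate case of Lemma~\ref{duds for almost block}.

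Three smaller points.
\begin{itemize}
\item $\Theta_2'(\EE)$ and $\Theta_4'(\EE)$ are equivalent by construction, since both are obtained from $\Theta_1'(\EE)$ by basic operators. ``Not equivalent'' in the statement means their parameters differ, and that follows at once once the supports differ. Corollary~\ref{cor same support}, Theorem~\ref{thm intersections of local Arthur packets} and Lemma~\ref{packets distinct Z} play no role there.
\item In your first step, rows of $\EE$ need not have $B\geq 0$: $\EE$ can contain hats, for instance when it is itself a lift. Also, Lemma~\ref{merge} only concerns merging two hats. The other rows ending at $k+1$ are not excluded by a failed alternating condition. They are repeats of the last one, so dualizing to any of them agrees with dualizing to the last one up to row exchange. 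That is the paper's entire argument for ``exactly two''.
\item Your treatment of $\Theta_3'$, via equal supports and Corollary~\ref{cor same support}, is an acceptable substitute for the paper's appeal to \cite[Lemma 6.6]{HKT26}, provided you actually verify that the row is in a position with $l=0$ before the split.
\end{itemize}
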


\begin{proof}
    It is immediately obvious that there are only two $dual \circ ui \circ dual$s involving the first row of $\mathcal{E}$, up to row exchange. This is because all but one of the rows ending at $k + 1$ must be repeats. The first row ending at $k + 1$ cannot have support $[k + 1, k + 1]$ because we cannot have some $\mathcal{T}_i^0 = \{k + 1\}.$ Therefore, we can be sure that $\psi_{\Theta_2'(\mathcal{E})} \neq \psi_{\Theta_4'(\mathcal{E})}$ by comparing supports. Lastly, we can be sure that applying an additional $ui^{-1}$ to separate one circle with support $[k + 2, k + 2]$ produces the same $\Theta_4'(\mathcal{E})$ by \cite[Lemma 6.6]{HKT26}, which still holds for type $Z_{\mathcal{M}}.$
\end{proof}

\begin{lemma}
\label{thetas distinct case 3}
    All of the elements of the set $\{\psi_{\Theta_i'(\EE')} \mid \mathcal{E}' \sim \mathcal{E}; i = 1, 2, 3, 4 \}$ are distinct.
\end{lemma}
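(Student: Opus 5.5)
We have to show that the parameters $\psi_{\Theta_i'(\EE')}$, as $\EE'$ runs over the $Z_\mathcal{M}$-equivalence class of $\EE$ and $i\in\{1,2,3,4\}$, are pairwise distinct. The plan follows the proof of \cite[Lemma 6.7]{HKT26}: distinguish the parameters by coarse invariants of their supports, and then use injectivity within a fixed type (Lemma \ref{packets distinct Z}) to separate parameters with the same $i$.

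\textbf{Step 1: parameters with the same $i$.} Fix $i$. We show that $\psi_{\Theta_i'(\EE_1)}=\psi_{\Theta_i'(\EE_2)}$ forces $\EE_1=\EE_2$.
\begin{itemize}
\item For $i=1$, $\Theta_1'(\EE)$ is $\EE$ with the single row $([k+2,-k-2],k+2,-\eta(\EE))$ prepended. So $\psi_{\Theta_1'(\EE)}=\psi_\EE\oplus(\text{fixed summand})$. Equality of the $\Theta_1'$-parameters therefore gives $\psi_{\EE_1}=\psi_{\EE_2}$, and Lemma \ref{packets distinct Z} (both are of type $Z_\mathcal{M}$ with the same $\eta$, by Theorem \ref{Z Classification}) gives $\EE_1=\EE_2$.
\item For $i=2,4$, the merged row is determined by the first, respectively last, row of $\EE$ ending at $k+1$. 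Its support is $[k+2,-B]$, where $[k+1,B]$ is the support of the absorbed row. Removing the prefix $[k+2,\,\cdot\,]$ recovers the support of the absorbed row.
\item For $i=3$, the $ui^{-1}$ splits off exactly one circle $[k+2,k+2]$, so the same procedure applies.
\end{itemize}
In each case $\psi_\EE$ is recovered from $\psi_{\Theta_i'(\EE)}$ together with $i$, and Lemma \ref{packets distinct Z} finishes.

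\textbf{Step 2: parameters with different $i$.} Compare the multiset of pairs $(A,B)$ occurring in the supports.
\begin{itemize}
\item $\Theta_1'$ is the only case containing a segment $[k+2,-(k+2)]$; after a $dual\circ ui\circ dual$ of type $3'$ the lowest $B$ is $-B_r$ with $B_r\le k+1$.
\item $\Theta_3'$ is the only case with a singleton $[k+2,k+2]$ in addition to a long hat ending at $k+1$. By contrast, $\Theta_2'$ and $\Theta_4'$ have their unique segment containing $k+2$ of the form $[k+2,-B]$ with $B\ge 0$, and $\Theta_1'$ also has column $k+2$ covered only by the big hat.
\item To separate $\Theta_2'(\EE_1)$ from $\Theta_4'(\EE_2)$: in $\Theta_4'$ the absorbed row is a multiple with support $[k+1,k+1]$, so the long segment is $[k+2,-(k+1)]$, that is, $B=-(k+1)$. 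In $\Theta_2'$ the absorbed row is the first row ending at $k+1$, which by the defining restriction of type $Z$ (no $\mathcal{T}_i^0=\{k+1\}$) has support $[k+1,B]$ with $B<k+1$, or is a hat.
\end{itemize}
We still have to check that this distinguishing length statistic cannot be matched by another $\EE'$ in the class. For this we count the multiplicity of singleton rows $[k+1,k+1]$: $\Theta_4'$ has exactly $m_{k+1}-1-(\#\text{chains through }k+1)$ of them, while $\Theta_2'$ has one more. Equating total column-$(k+1)$ content then yields a contradiction.

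\textbf{Main obstacle.} The hard step is this last comparison. Different $\EE'$ can have chains and hats through column $k+1$ arranged in different ways, so a naive support count may coincide. I would first attempt the invariant ``number of segments whose support contains $k+1$ but not $k+2$.'' If that fails, I would reduce via the commutation of $D$ with the $S,M,U$ operators to the case where $\EE'$ is $\EE^{\min}$-like near column $k+1$, as in \cite[Lemma 6.7]{HKT26}.
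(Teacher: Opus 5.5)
Your strategy is the paper's. The unique summand of $\psi_{\Theta_i'(\EE')}$ with $A=k+2$ determines $i$; it is unique because every row of $\EE'\sim\EE$ has $A\le k+1$. For fixed $i$ you then recover $\psi_{\EE'}$ and conclude with Lemma~\ref{packets distinct Z}.

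However, you have the support of the merged row wrong, and that is what creates your ``main obstacle.'' A $dual\circ ui\circ dual$ of type 3' joining the hat $[k+2,-(k+2)]$ to a row $r$ with $A(r)=k+1$ yields a row with support $[k+2,B(r)]$: it keeps the $B$ of $r$. For a hat $r=[k+1,-B']$ this is the merge $[k+2,-B']$ of Lemma~\ref{merge}. For a row of circles it is the operator $D$ of Lemma~\ref{lem SMUD operators}. So $\Theta_4'$ contains the two-circle row $[k+2,k+1]$, not $[k+2,-(k+1)]$. Likewise, in Step 1 the rule is ``replace $k+1$ by $k+2$,'' not ``negate $B$.'' With your value, a $\Theta_2'$ that absorbs a one-circle hat $[k+1,-(k+1)]$ would produce exactly $[k+2,-(k+1)]$ and collide with your $\Theta_4'$. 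That is presumably why the last comparison looked delicate.

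With the correct formula nothing is left to do. The $B$-coordinate of the unique summand with $A=k+2$ equals:
\begin{itemize}
\item $-(k+2)$ if $i=1$;
\item $k+2$ if $i=3$;
\item $k+1$ if $i=4$;
\item $B(r)\in[-(k+1),k]$ if $i=2$, using, as the paper's preceding lemma does, that the first row ending at $k+1$ is not a singleton $[k+1,k+1]$.
\end{itemize}
This number is read off from $\psi$ alone, and the four ranges are disjoint for every $\EE'$ in the class. So no comparison between different $\EE_1,\EE_2$ is needed.

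Drop the proposed count of singleton rows $[k+1,k+1]$. As you suspected, it depends on how chains pass through column $k+1$ and is not an invariant you can control. Also drop ``in addition to a long hat ending at $k+1$'' for $\Theta_3'$. What characterizes $\Theta_3'$ is only that its row with $A=k+2$ is the singleton $[k+2,k+2]$.

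After these corrections your argument coincides with the paper's proof. Your assignment of that singleton to $\Theta_3'$ is the one consistent with the definitions; the paper's case list in this proof interchanges the labels $3$ and $4$.
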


\begin{proof}
    The proof is analogous to that of \cite[Lemma 6.7]{HKT26}. In light of Lemma \ref{packets distinct Z}, it suffices to show that these extended multi-segments are not equal to each other. Suppose $\psi_{\Theta_i'(\mathcal{E}_1)} = \psi_{\Theta_i'(\Theta_j'(\mathcal{E}_2))}.$ First, we show that $i = j$ by comparing the supports of these multi-segments. Specifically, let $r \in \Theta_i(\mathcal{E}_1)$ be the row whose support ends at $k + 2.$
    \begin{enumerate}
        \item If $supp(r) = [k + 2, -k - 2],$ then $i = 1.$
        \item If $supp(r) = [k + 2, k + 2],$ then $i = 4.$
        \item If $supp(r) = [k + 2, k + 1],$ then $i = 3.$
        \item Otherwise, $i = 2.$
    \end{enumerate}

    Therefore, it is clear that $\psi_{\Theta_i'(\mathcal{E}_1)} = \psi_{\Theta_i'(\Theta_j'(\mathcal{E}_2)} \implies i = j.$ Now, we aim to show that $\mathcal{E}_1 = \mathcal{E}_2.$ If $i \in \{1, 3\}$ or $i = 3,$ then $\supp(\mathcal{E}) = \supp(\Theta_i'(\mathcal{E}) \setminus \{r\}),$ so $\mathcal{E}$ can be uniquely determined from $\Theta_i'(\mathcal{E}).$ If $i \in \{2, 4\},$ then $\Theta_i'(\mathcal{E})$ uniquely determines $\Theta_3'(\mathcal{E}),$ which determines $\mathcal{E}.$
\end{proof}

\begin{lemma}
\label{times 4 relation}
    We have the relation $$\Psi(\pi(\Theta_1'(\mathcal{E}))) = \{\psi_{\Theta_i'(\EE')} \mid \mathcal{E}' \sim \mathcal{E}; i = 1, 2, 3, 4 \}.$$
    Moreover, Lemma \ref{thetas distinct case 3} then implies that $|\Psi(\pi(\Theta_1'(\mathcal{E})))| = 4 |\Psi(\pi(\mathcal{E}))|.$
\end{lemma}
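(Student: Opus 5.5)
The plan is to mirror the proof of \cite[Theorem 6.5]{HKT26} (recalled here as Theorem \ref{first block m_n > 1}), using the type $Z_\mathcal{M}$ machinery in place of type $Y_\mathcal{M}$. Write $P=\{\psi_{\Theta_i'(\EE')} \mid \EE'\sim\EE;\ i=1,2,3,4\}$.

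\emph{Inclusion $P\subseteq\Psi(\pi(\Theta_1'(\EE)))$.} Fix $\EE'\sim\EE$. The operators relating $\EE'$ to $\EE$ never involve the added hat, and by Lemma \ref{lem-local} they act on the sub-multi-segment; hence $\Theta_1'(\EE')\sim\Theta_1'(\EE)$. The preceding lemma supplies the two $dual\circ ui\circ dual$'s and the $ui^{-1}$ producing $\Theta_2'(\EE'),\Theta_4'(\EE'),\Theta_3'(\EE')$. All of these are basic operators or inverses, so Theorem \ref{thm intersections of local Arthur packets} gives $P\subseteq\Psi(\pi(\Theta_1'(\EE)))$.

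\emph{Reverse inclusion.} By Theorem \ref{thm intersections of local Arthur packets}(2), it suffices to show that the set of multi-segments $\{\Theta_i'(\EE')\}$ is closed under every basic operator and its inverse. I split by $i$.

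For $i=1$, an operator not touching the first row $h$ acts on $\EE'$. By Theorem \ref{Z Classification} it stays within type $Z_{k,k+1}$ with the same sign, so the result is again some $\Theta_1'(\EE'')$. An operator touching $h$ must be a $dual\circ ui\circ dual$, since $h$ is a maximal hat that cannot be raised by $ui^{-1}$. By the preceding lemma it yields $\Theta_2'$ or $\Theta_4'$, up to row exchanges, which are absorbed by Corollary \ref{cor Alex}.

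For $i\in\{2,3,4\}$, I would proceed as in \cite[Lemma 6.6]{HKT26} and Lemma \ref{Almost block classification}. Observe that $\Theta_3'(\EE')$, with the extra circle $[k+2,k+2]$ split off, is itself of type $Z_{k,k+2}$. Its $\mathcal{S}$-data is that of $\EE'$ with $\{k+2\}$ appended, or with $k+2$ adjoined to the last set. Similarly, $\Theta_2'(\EE')$ and $\Theta_4'(\EE')$ are type $Z_{k,k+2}$ multi-segments in which the last set contains $k+1$ and $k+2$, via a $z$-chain or via a multiple.

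Theorem \ref{Z Classification}, applied with $\mathcal{M}=(m_0,\dots,m_{k+1},1)$, then says that the equivalence class consists exactly of type $Z$ multi-segments with this tuple. I would check that each such multi-segment with this $\mathcal{M}$ and the correct sign has one of the four shapes, according to where column $k+2$ sits:
\begin{itemize}
\item in a hat starting at $-(k+2)$;
\item in a singleton set $\{k+2\}$;
\item in a set containing $k+1$, with $k+2$ overlined or not;
\item in a chain.
\end{itemize}
Stripping column $k+2$ in each case recovers some $\EE''$ of type $Z_{k,k+1}$, so the multi-segment is $\Theta_j'(\EE'')$. This case analysis on the last set of the $\mathcal{S}$-data, matched against the four support shapes listed in Lemma \ref{thetas distinct case 3}, is the main obstacle. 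The delicate point is ensuring that the constraint $\mathcal{T}_i^0\neq\{k_1\}$ and the non-alternation rule at $k_1+1$ do not produce extra shapes. I would first try to prove directly that every type $Z_{k,k+2}$ datum with $m_{k+2}=1$ decomposes this way, using the $S,D,U,M$ descriptions (Lemmas \ref{S existence Z}, \ref{D existence Z}, \ref{U exstence Z}).

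\emph{Counting.} Lemma \ref{thetas distinct case 3} shows that all elements of $P$ are distinct. Combined with Theorem \ref{Z Classification} and Lemma \ref{packets distinct Z}, the map $\psi_{\EE'}\mapsto\EE'$ is a bijection onto the equivalence class of $\EE$, so $|P|=4|\Psi(\pi(\EE))|$.
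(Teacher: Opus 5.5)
Your proposal is correct and follows essentially the paper's proof. Both arguments use Theorem \ref{Z Classification} to see that everything equivalent to $\Theta_1'(\EE)$ is of type $Z_{k,k+2}$ with the same $\mathcal{M}$ and $\eta$. Both then case on the set of the $\mathcal{S}$-data containing $k+2$ and strip it to write $\EE_1=\Theta_i'(\EE_2)$, with Lemma \ref{thetas distinct case 3} giving the count.

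Applying this classification directly to $\Theta_1'(\EE)$ already handles $i=1$, so your separate closure argument is unnecessary. It also gives $\Theta_1'(\EE')\sim\Theta_1'(\EE)$ more cleanly than your appeal to Lemma \ref{lem-local}.
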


\begin{proof}
    Suppose that  {$\psi_{\mathcal{E}_1} \in \Psi(\pi(\Theta_1(\mathcal{E})))$.}Since $\Theta_1(\mathcal{E})$ is of type $Z_{k, k + 2},$ then $\mathcal{E}_1$ must also be of type $Z_{k, k + 2}$ with the same $\mathcal{M}$ and $\eta.$ We examine the $\mathcal{S}$-data of $\mathcal{E}_1$. In particular, we study the unique set containing $k + 2.$ Let $\mathcal{E}_2$ be the virtual extended multi-segment $\mathcal{E}(\mathcal{M}, \mathcal{S}', \mathcal{T}', \eta(\mathcal{E})$, where $(\mathcal{S}', \mathcal{T}')$ is obtained by removing $k + 2$ from the $\mathcal{S}$ data of $\mathcal{E}_1$. We have the following cases.

    \begin{enumerate}
        \item $\mathcal{S} = (\dots, \{\dots, k + 1\}, \{k + 2\})$ in which case $\mathcal{E}_1 = \Theta_4'(\mathcal{E}_2).$
        \item $\mathcal{S} = (\dots, \{ \dots \overline{\dots, k + 1, k + 2}\}),$ in which case $\mathcal{E}_1= \Theta_2'(\mathcal{E}_2).$
        \item $\mathcal{S} = (\dots, \{ \dots k, k + 1, k + 2\}),$ in which case $\mathcal{E}_1= \Theta_2'(\mathcal{E}_2).$
        \item $\mathcal{S} = (\dots, \{k + 1, k + 2\}),$ in which case $\mathcal{E}_1 = \Theta_4'(\mathcal{E}_2).$
        \item $\mathcal{S} = (\dots, \{\dots, k + 1, \overline{k + 2}\}),$ in which case $\mathcal{E}_1 = \Theta_3'(\mathcal{E}_2).$ \qedhere
    \end{enumerate}
\end{proof}

Now, we prove Case 3 of Theorem \ref{thm-count-theta-temp}, starting with Case 3.1. Let $\mathcal{E}$ be an extended multi-segment with $m_H \equiv 0 \bmod{2}$, $m_N \equiv 1 \bmod{2}$ such that $\EE$ ends at $H_{col}+1$ and let $\EE' = \rc_{H_{col}+1}(\EE).$ Then $\mathcal{E}_{m^{\up, \alpha}}^\up$ is of type $Z_{k, k + 1},$ while $(\mathcal{E}_{m^{\up, \alpha}}^\up)' = \mathcal{E}_{m^{\up, \alpha}}^\up \sm \{r_2, \dots, r_{m_{k + 1}}\},$ where $\{r_2, \dots, r_{m_{k + 1}} \}$ are the rows with support $[k + 1, k + 1]$. The statement of Case 3.1 is equivalent to the assertion that 
$$|\Psi(\pi(\mathcal{E}_{m^{\up, \alpha}}^\up))| = |\Psi(\pi((\mathcal{E}_{m^{\up, \alpha}}^\up)'))|.$$
This, however, follows from the fact that all the operators on $\pi(\mathcal{E}_{m^{\up, \alpha}}^\up))$ are some composition of $S, M, U, D,$ and their inverses. None of these operators affect the rows $\{r_2, \dots, r_{m_{k + 1}}\},$ and they are all still well defined even if these rows are removed. Thus, we have a one-to-one correspondence
$$\{\mathcal{E}_1 \mid \mathcal{E}_1 \sim \mathcal{E}_{m^{\up, \alpha}}\} = \{\mathcal{E}_1' \cup \{r_1, \dots, r_k\} \mid \mathcal{E}_1' \sim \mathcal{E}_{m^{\up, \alpha}}'\}.$$
We thus have a similar one-to-one correspondence between the sets $\Psi(\pi(\mathcal{E}_{m^{\up, \alpha}}))$ and $\Psi(\pi(\mathcal{E}'_{m^{\up, \alpha}})),$ which suffices for the proof.

Next, we prove Case 3.2. Let $\mathcal{E}$ be a multi-segment with $m_H \equiv 0 \bmod{2}$, $m_N \equiv 1 \bmod{2}$ such that $\EE$ ends at $H_{col}+2$, and let $\EE' = \rc_{H_{col}+2}(\EE).$ An argument equivalent to the proof of \cite[Lemma 6.8]{HKT26} allows us to reduce to the case where $m_{k + 2} = 1.$ Again, $\mathcal{E}'_\alpha$ is an extended multi-segment of type $Z_{k, k+ 1}$, and our reduction means that  $\mathcal{E}_\alpha$ is an extended multi-segment of type $Z_{k, k + 2}$ obtained by taking the union $\mathcal{E}'_\alpha \cup r.$ Then it follows from Lemma \ref{times 4 relation} that
$$|\Psi(\pi(\mathcal{E}_\alpha))| = 4 |\Psi(\pi(\mathcal{E}'_\alpha))|.$$

The proof of Case 3.3 is almost exactly the same as the proof of the recursive formula in Theorem \ref{thm-count-block-temp}, so we omit it here.

\begin{rmk}
    The more general statement of Case 3 is:
    \[|\Psi(\theta^\up_{-m^{\up, \alpha}}(\pi(\EE)))| = |\Psi(\theta^\up_{-m^{\up, \alpha}}(\pi(\BB_1 \cup \BB_2)))| \cdot \prod_{i=3}^k |\Psi(\pi(\BB_i))|,\]
    which essentially states that taking the theta correspondence only impacts the first and second block, while multi-segments on the remaining blocks act completely independently of these blocks a la the statement of Proposition \ref{prop-independence-of-blocks}. The details of the proof of this proposition still apply if a pair of blocks of Type $Y_{\mathcal{M}}$ is replaced by a single block of type $Z_{\mathcal{M}}$, which is exactly what happens under the theta correspondence.
\end{rmk}

\subsection{Case 5 of Theorem \ref{thm-count-theta-temp}}\label{sec case 5}

In this section, we prove the formula in Case 5 of Theorem \ref{thm-count-theta-temp}. The approach resembles that in \cite[\S 7]{HKT26}, since in both cases we aim to show that we can treat parts of a certain decomposition separately. Our proof has two parts. First, we prove an independence-type result (Lemma \ref{lem-case5-independence}), which says that operations must involve only one part of a certain decomposition structure. This is analogous to \cite[Lemma 7.12]{HKT26} (stated as Proposition \ref{lem-independence-of-blocks} below). Second, we prove an existence-type result (Lemma \ref{lem-existence-theta-lift}), which says that operations which are valid on one part of the decomposition viewed as its own extended multi-segment can still be applied when it is included in the larger whole. This is analogous to \cite[Lemma 7.6]{HKT26} and \cite[Lemma 7.7]{HKT26}.

Since the approach is broadly similar, let us recall several notions and lemmas from \cite[\S 7]{HKT26}. We first recall the definitions of types of boundaries.

\begin{defn}[{\cite[Definition 7.11]{HKT26}}]
    Suppose $\EE\in\VRep_\rho^\mathbb{Z}(G_n)$ is  equivalent to a tempered virtual extended multi-segment $\EE_{temp}$, and suppose $\EE_{temp}$ has a block decomposition $\BB_{1, temp} \cup \cdots \cup \BB_{k, temp}$. Suppose that $\EE$ has a decomposition $\BB_1 \cup \cdots \cup \BB_k$ with $\BB_i$ equivalent to $\BB_{i, temp}$. In this setup we give a classification of the ways in which $\BB_k$ could start, which depend on how its support overlaps with the support of $\BB_{k-1}$, which we call the \emph{boundary} between $\BB_k$ and $\BB_{k-1}$. Let $H_{col}$ be the last nonempty column in $\BB_{k-1}$ and let $N_{col}$ be the first nonempty column in $\BB_k$.
    \begin{itemize}
        \item A \emph{type 1 boundary} occurs when $N_{col} > H_{col}+1$.
        \item A \emph{type 2 boundary} occurs when $N_{col} = H_{col}$.
        \item A \emph{type 3 boundary} occurs when $N_{col} = H_{col}+1$.
    \end{itemize}
\end{defn}

Second, we recall the following fact about truncations (from the top) of extended multi-segments of type $Y_\mathcal{M}$. We will use this to prove a generalization \ref{lem-truncations-theta-lifts} regarding truncations of a theta lift of an almost-block, which will be used to understand operations of the form $dual \circ ui\inv \circ dual$ of type 3'.

\begin{lemma}[{\cite[Lemma 7.10]{HKT26}}]
\label{lem-truncations}
    Let $\EE$ be of type $Y_\mathcal{M}$, and let $\EE^{\tc}$ be a truncated virtual extended multi-segment containing all but the first $i$ rows of $\EE$. Then regardless of $i$, the extended multi-segment $\EE^{\tc}$ is equivalent to a tempered virtual extended multi-segment where the multiplicity of each column is at most the corresponding multiplicity in $\EE_{temp}$. Moreover, the operations realizing this equivalence can be performed on $\EE$.
\end{lemma}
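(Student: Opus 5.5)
The plan is to prove the statement by induction on the number $i$ of removed rows, following the structure of type $Y_\mathcal{M}$ given by Definition \ref{defn E(M,S, T)} and the operator calculus of Lemma \ref{lem SMUD operators}. First, by Theorem \ref{block classification} and Lemma \ref{lem SMUD operators}, $\EE=\EE(\mathcal{M},\mathcal{S},\mathcal{T},\eta)$ is carried to $\EE_{temp}$ by a sequence of $S$, $M$, $U$, $D$ operators and their inverses. Since the rows are ordered with $A(r)$ nondecreasing, the first $i$ rows of $\EE$ are exactly the hats and chains attached to the earliest sets $\mathcal{S}_1,\dots$ together with the multiples belonging to them. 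So $\EE^{\tc}$ is again described by $\mathcal{S}$-type data: it is a (possibly partial) collection of chains, hats and multiples, with column multiplicities $m_c'\le m_c$.

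The main step is to \emph{straighten} $\EE^{\tc}$ using only operators whose rows all lie in $\EE^{\tc}$. Concretely, I would first apply $dual\circ ui\circ dual$ of type 3' (the $D$ operators) to turn every hat of $\EE^{\tc}$ into circles. Then I would apply $ui^{-1}$ of type 3' (the $S$ operators) repeatedly to break each chain $[A,B]$ into singletons $[c,c]$. Each such operator only removes, splits or reorders circles within existing columns, so the multiplicity of column $c$ in the result never exceeds the number of circles of $\EE^{\tc}$ in column $c$. That number is at most $m_c$, because in $\EE$ the number of circles in column $c$ equals $m_c$ (Remark following Definition \ref{defn E(M,S)}).

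I expect the main obstacle to be legality. The truncation destroys the odd-alternating sign pattern at the top: the first surviving row may be a multiple, or the tail of a $z$-chain whose predecessor was deleted. So the validity conditions of Definitions \ref{valid S} and \ref{valid T} may fail, and Lemma \ref{lem SMUD operators} does not apply verbatim. I would handle this by locality (Lemma \ref{lem-local}) together with Lemmas \ref{big swap down}, \ref{big swap up}, \ref{lem-multiplicity-cancel} and \ref{lem-multiplicity-cancel-up}. Pairs of equal multiples can be ignored during row exchanges, and the sign computations for each $ui^{-1}$ or $D$ involve only the rows participating in it and the rows it is exchanged past. Where the leading rows break validity, I would drop the offending leading multiples of a column; this only decreases multiplicities further, which the statement permits. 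For an orphaned $z$-chain tail I would use the first case of the $S$ operator, which peels off its first circle as a multiple.

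Finally, the claim that these operations can be performed on $\EE$ itself follows from Lemma \ref{lem-local}. Each $ui^{-1}$ and $dual\circ ui\circ dual$ used involves only rows of $\EE^{\tc}$ and row exchanges among them, so the same operator is applicable in $\EE$ and fixes the first $i$ rows. One must still check that row exchanges past the deleted top rows are never needed. This holds because the deleted rows precede everything in the order, and the operators above only move rows downward or within the truncated range. Corollary \ref{cor Alex} covers any exchange of a large-support row past a block that has already been modified.
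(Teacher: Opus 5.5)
Your overall route is the intended one: absorb the surviving hats with $D$, split chains with $S$, and lift to $\EE$ by Lemma \ref{lem-local}. The paper only quotes this lemma, but its proof of Lemma \ref{lem-truncations-theta-lifts} uses exactly ``undualize hats and split rows of circles'' with the operators of Lemma \ref{lem SMUD operators}. The gap is that you misidentify which rows the truncation removes, so the one point that actually needs checking is never checked.

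Type $Y_\mathcal{M}$ is in (P') order (Definition \ref{defn E(M,S)}; this is also used in \S\ref{sec case 5}). The hat attached to $\mathcal{T}_j^k$ with $k\geq 1$ has $B=-\min\mathcal{T}_j^k<0$. Hence \emph{all} hats precede \emph{all} rows of circles, and the hats occur in decreasing order of $\min\mathcal{T}_j^k$. ``$A(r)$ nondecreasing'' holds only among rows of circles; in every picture in the paper the hat sits on top.

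So for $i$ at most the number of hats, truncation deletes, inside each $\mathcal{S}_j$, a column-suffix of its hats and nothing else. The damage is therefore in the interior of the column range: the columns of deleted hats keep only their multiples. Only after all hats are gone does truncation remove rows of circles from the left, and that is the only regime your legality discussion addresses.

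This ordering is precisely what makes your straightening step legal. Each surviving hat of $\mathcal{S}_j$ still has $\mathcal{T}_j^0$ and all earlier hats of $\mathcal{S}_j$, so successive $D$'s absorb it. Every row passed during such a $D$, or during an $S_{r,k}$ (which only passes $r$ over multiples belonging to $r$), lies after the operated row and therefore survives. That same fact gives the ``performed on $\EE$'' clause. In your picture, a chain $\mathcal{T}_j^0$ could be cut while a hat of $\mathcal{S}_j$ survives, and then $D$ has no row ending at $\min\mathcal{T}_j^k-1$ to absorb that hat into. You never exclude this.

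Second, ``drop the offending leading multiples'' is not a legitimate step. The tempered representative of $\EE^{\tc}$ is unique (Theorem \ref{thm Arthur tempered} with Corollary \ref{cor same support}), so its multiplicities are determined by $\EE^{\tc}$. ``At most'' is a conclusion to be proved, not permission to delete rows. Deleting a row replaces $\EE^{\tc}$ by a different object and destroys the very equivalence you are asserting.

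No deletion is needed. Multiples left in columns whose hat or chain was cut off are already singletons and are simply carried along; equal pairs of them are invisible to row exchanges by Lemmas \ref{lem-multiplicity-cancel} and \ref{lem-multiplicity-cancel-up}. The bound by $m_c$ is then just the count of circles of the surviving rows in column $c$, using that $\EE$ has exactly $m_c$ of them.
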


Finally, we recall the following independence results, which we will use to prove our analogous independence results.

\begin{lemma}[{\cite[Lemma 7.12]{HKT26}}]
\label{lem-independence-of-blocks} 
    Suppose that $\EE\in\VRep_\rho^\mathbb{Z}(G_n)$ is equivalent to a tempered virtual extended multi-segment $\EE_{temp}$. Let $\EE_{temp}$ have block decomposition $\BB_{1, temp} \cup \dots \cup \BB_{k, temp}$. Suppose that $\EE$ has a decomposition $\BB_1 \cup \dots \cup \BB_k$ with $\BB_i$ equivalent to $\BB_{i, temp}$. Then any raising operator $T$ (or inverse) on $\EE$ cannot involve (even by row exchanges) rows from both $\BB_{i_1}$ and $\BB_{i_2}$ for $i_1 < i_2$.
\end{lemma}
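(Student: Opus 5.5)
We argue by contradiction and reduce to a local analysis at one boundary. Suppose $T$ is a raising operator (or the inverse of one) on $\EE=\BB_1\cup\cdots\cup\BB_k$ that involves rows from both $\BB_{i_1}$ and $\BB_{i_2}$ with $i_1<i_2$. Every basic operator is a composition of row exchanges with a single $ui$, $dual\circ ui\circ dual$ or $dual_k$ step, and the intermediate row exchanges are local (Lemma \ref{lem-local}). Moreover, row exchanges between rows whose supports are nested do not change which rows can combine (Lemma \ref{cor Alex}, Lemmas \ref{lem-multiplicity-cancel}, \ref{lem-multiplicity-cancel-up}). It therefore suffices to show that no single $ui$-type step can combine a row $r$ coming from $\BB_{i_1}$ with a row $s$ coming from $\BB_{i_2}$. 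By induction on $k$ and by restricting to the sub-multisegment $\BB_{i_1}\cup\cdots\cup\BB_{i_2}$, we reduce to adjacent blocks $\BB_{k-1},\BB_k$.

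Next we use the structure theory to put both blocks in normal form. By Theorem \ref{block classification}, each $\BB_i$, being equivalent to the block $\BB_{i,temp}$, is of type $Y_{\mathcal M_i}$ with $\mathcal{M}_i=\mathcal{M}_{\BB_{i,temp}}$. The rows of $\BB_{k-1}$ therefore have supports contained in $[-H_{col},H_{col}]$, with $A\le H_{col}$. The rows of $\BB_k$ have $B\ge N_{col}$, unless $\BB_k$ starts at $0$; that can only happen for $k=1$, since the blocks after the first begin after zero, which is why Lemma \ref{S for E starting after 0} applies. Hence all rows of $\BB_k$ are chains or multiples with $B\ge N_{col}\ge H_{col}$. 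We then split by boundary type. For a type 1 boundary the supports are too far apart: a $ui$ of any case requires the two segments to overlap or be adjacent, and $dual\circ ui\circ dual$ needs hats, which $\BB_k$ lacks. For type 2 and type 3 boundaries, the only candidate pairs are a row of $\BB_{k-1}$ ending at $H_{col}$ and a row of $\BB_k$ starting at $H_{col}$ or $H_{col}+1$.

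The main obstacle is these type 2 and 3 boundaries, where the applicability conditions of $ui$ (Cases 1–3 of Definition \ref{ui def}) must be ruled out through the sign data. Here we compute signs with Lemmas \ref{big swap down} and \ref{big swap up} after exchanging the relevant row past the intervening rows. Maximality of $\BB_{k-1}$ as a block means that the column $H_{col}$ circles and the first circles of $\BB_k$ fail exactly the alternation/oddness that would have merged them, namely the same sign or an even multiplicity. With the odd-alternating sign rule of Definition \ref{defn E(M,S)} and Remark \ref{rmk odd signs}, this forces $\epsilon=1$ in the type 3 case while the $l$-equalities of Cases 1 and 2 fail. It forces $\epsilon=-1$ but $B+l\neq A-l+1$ (respectively a type 3' union producing non-alternation) in the type 2 case. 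I would first try to carry this out in the tempered representatives, using Lemma \ref{lem-truncations} to handle the truncated top part of $\BB_k$. I would then transport the conclusion to arbitrary type $Y$ representatives through the $S,M,U,D$ description of Lemma \ref{lem SMUD operators}, checking that each of these operators changes neither the last-circle sign of $\BB_{k-1}$ nor the first-circle sign of $\BB_k$.
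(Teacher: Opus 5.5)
\textbf{The main gap.} Your reduction ``it suffices to show that no single $ui$-type step can combine a row from $\BB_{i_1}$ with a row from $\BB_{i_2}$'' throws away most of what the lemma asserts. The lemma covers every raising operator and its inverse, and it forbids involvement even through row exchanges. After Lemma \ref{lemma ui inv = dud} removes the inverses not of type 3', four families remain: $ui$, $dual\circ ui\circ dual$, and the \emph{splitting} operations $ui^{-1}$ and $dual\circ ui^{-1}\circ dual$ of type 3'.

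A splitting operation acts on a single row, and that row may first have to be exchanged across a block boundary. It never combines two rows, so your plan never examines it. Yet these are the cases the source argument in \cite{HKT26} must handle, and its structure is reproduced in the proof of Lemma \ref{lem-case5-independence}.
\begin{itemize}
\item For $ui^{-1}$ you need an admissibility argument: once a row has been exchanged past rows of a neighbouring block, splitting off circles breaks the admissible order.
\item For $dual\circ ui^{-1}\circ dual$, take $\widehat{r}$ exchanged down in $dual(\EE)$ through the duals of a final segment of an earlier block. You must show that $l(\widehat{r})$ cannot reach $0$. An exchange with a one-circle row lowers $l$ by at most one, and crossing a block boundary forfeits a decrease.
\item Making this work when the earlier block is in an arbitrary, non-tempered configuration is exactly what Lemma \ref{lem-truncations} is for, together with Lemmas \ref{cor Alex} and \ref{lem-multiplicity-cancel}. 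The exchanged-up case instead uses that $\EE$ is in $(P')$ order.
\end{itemize}
The same weakening breaks your reduction to adjacent blocks. Passing from $\BB_{i_1},\BB_{i_2}$ to a consecutive pair is legitimate only if the consecutive case is proved in the strong form ``nothing touches both blocks, even by exchanges''. Your boundary sign analysis only addresses which pairs can be unioned. When a block consists of circles in a single column, a row of the preceding block can slide past it and meet a row of the following block. This is precisely the $i_2=3,4$ analysis in Lemma \ref{lem-case5-independence}.

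\textbf{Repairs needed in the $ui$ part.} First, ``$dual\circ ui\circ dual$ needs hats, which $\BB_k$ lacks'' is not an argument. A $dual\circ ui\circ dual$ is a $ui$ taken in $dual(\EE)$, where the chains and multiples of $\BB_k$ themselves become hats, and the $M$/$D$ description of hat operations is a within-block classification. The right argument is the staircase inequality you already wrote down. $A(r_1)\le H_{col}\le B(r_2)$ gives $\supp(\widehat{r_1})\subseteq\supp(\widehat{r_2})$, while a union-intersection needs $A_{k+1}>A_k$ and $B_{k+1}>B_k$. This disposes of every boundary type at once.

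Second, at a type 2 boundary the column-$H_{col}$ circles of both blocks carry the same sign, so for the natural candidate pair $\epsilon=1$, not $-1$. The conclusion survives only because Cases 1 and 2 fail.

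Third, checking at tempered representatives and transporting by ``$S,M,U,D$ preserve two boundary signs'' does not suffice. Whether a cross-boundary $ui$ applies depends on the $l$-value and last-circle sign of whichever row of $\BB_{k-1}$ can be exchanged to the end of that block in the given configuration. You need a statement valid for every type $Y_{\mathcal M}$ configuration: such a row, once exchanged to the last position, has $l=0$ and its last circle has the block's final sign. This is the fact from \cite{HKT26} invoked in the proof of Lemma \ref{lem-aj-almost-block}.

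Finally, Lemmas \ref{cor Alex}, \ref{lem-multiplicity-cancel}, \ref{lem-multiplicity-cancel-up} do not say that nested exchanges ``do not change which rows can combine''. Exchanges change $l$ and $\eta$ (Lemma \ref{big swap down}), and tracking that change is the whole difficulty.
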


\begin{prop}[{\cite[Proposition 7.13]{HKT26}}]
\label{prop-independence-of-blocks} 
    Let $\EE\in\VRep_\rho^\mathbb{Z}(G_n)$ be  equivalent to a tempered virtual extended multi-segment $\EE_{temp}$. If $\EE_{temp}$ has a block decomposition $\BB_{1, temp} \cup \cdots \cup \BB_{k, temp}$, then there exists a decomposition $\EE = \BB_1 \cup \cdots \cup \BB_k$ such that $\BB_i$ is equivalent to $\BB_{i, temp}$ for all $i$.
\end{prop}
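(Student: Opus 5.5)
The natural approach is induction on the length of a chain of basic operators connecting $\EE_{temp}$ to $\EE$; such a chain exists by Theorem \ref{thm intersections of local Arthur packets}. In the base case $\EE=\EE_{temp}$ we take the block decomposition itself, which exists and is unique by Lemma \ref{lem-unique-block-decomp}. For the inductive step, suppose $\EE$ has a decomposition $\BB_1\cup\cdots\cup\BB_k$ with $\BB_i\sim\BB_{i,temp}$, and let $\EE'=T(\EE)$ for a single basic operator $T$ or its inverse. We must produce a compatible decomposition of $\EE'$.

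\emph{Raising operators and their inverses.} If $T$ is a raising operator or the inverse of one, Lemma \ref{lem-independence-of-blocks} applies directly: $T$ does not involve, even through the auxiliary row exchanges in its definition, rows from two distinct $\BB_{i_1},\BB_{i_2}$. So all rows touched by $T$ lie in a single $\BB_{i}$. By locality (Lemma \ref{lem-local}), every row outside $\BB_{i}$ is fixed. Define $\BB_j'=\BB_j$ for $j\neq i$ and $\BB_i'=T(\BB_i)$, the same operator applied to $\BB_i$ viewed as its own virtual extended multi-segment. I would justify that $T(\BB_i)$ is a legitimate operator application on $\BB_i$, and that it agrees with the restriction of $T(\EE)$, as follows.
\begin{itemize}
\item The applicability conditions of $ui$ and $dual\circ ui\circ dual$ depend only on the rows involved.
\item The only effect of other blocks enters through the $\alpha_i,\beta_i$ terms in $dual$. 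These cancel in $dual\circ ui\circ dual$ for rows not involved, and shift involved rows uniformly.
\end{itemize}
Hence $\BB_i'\sim\BB_i\sim\BB_{i,temp}$, which closes the induction for this case.

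\emph{Row exchanges.} The remaining basic operators are the row exchanges $R_k$. (The $dual_k$ do not occur, since all $B_i\in\Z$.) If $R_k$ swaps two adjacent rows of the same $\BB_i$, we argue exactly as above. If it swaps adjacent rows $r\in\BB_i$ and $s\in\BB_j$ with $i\ne j$, the decomposition is meant as a partition of rows (interleaving allowed, the ``followed by'' being only a choice of order). So we keep the partition, replacing $r,s$ by their images $r',s'$.

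We must then show that $\BB_i$ with $r$ replaced by $r'$ is still equivalent to $\BB_{i,temp}$, and similarly for $\BB_j$. By Definition \ref{def row exchange}, when one support contains the other, the image is computed by the explicit formulas. I would reduce to the situation where the exchanged row from the other block is compatible with Lemmas \ref{lem-multiplicity-cancel}, \ref{lem-multiplicity-cancel-up} and \ref{big swap down}. Then the change to $r$ is either trivial, or a sign twist together with a shift in $l$ that can be undone inside $\BB_i$ by exchanging $r$ past the remaining rows of $\BB_j$. Lemma \ref{cor Alex} lets us replace $\BB_j$ by any equivalent representative during this exchange, in particular by $\BB_{j,temp}$.

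The main obstacle is this row-exchange case. Lemma \ref{lem-independence-of-blocks} controls only raising operators, so cross-block row exchanges, which do alter signs and $l$-values, must be handled by hand. If the direct argument becomes unwieldy, my fallback is to avoid cross-block row exchanges altogether. I would connect $\EE$ to $\EE_{temp}$ only through raising operators and their inverses, each carrying its own internal row exchanges, passing through $\psi^{min}$ and $\psi^{max}$ (Theorem \ref{thm psi max min}). Then I would use Corollary \ref{cor same support} to identify the endpoint once supports and admissible orders match, so that only the raising-operator case above is needed.
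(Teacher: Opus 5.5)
Your treatment of raising operators and their inverses is the argument the paper relies on. In the proof of Proposition \ref{prop-case5-decomposition}, which it calls analogous to this one, the paper connects $\EE$ to the tempered model by basic operations. It uses the independence lemma to see that each operation involves a single part, and locality (Lemma \ref{lem-local}) to see that the other parts are untouched. It then applies the operation to that part.

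\textbf{The gap: cross-block row exchanges.}

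First, an interleaved partition is not the decomposition in the statement. That decomposition is ordered and has the staircase property. Lemma \ref{lem-independence-of-blocks} is only available for such decompositions, so after one interleaving step your induction hypothesis no longer feeds into it. Similarly, Lemma \ref{cor Alex} needs $\supp(r)$ to contain the supports of all the other rows involved, which fails across a boundary.

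Second, and decisively, the claim that the parts stay equivalent is false. Take
$\EE_{temp}=\{([0,0],0,1),([1,1],0,-1),([1,1],0,-1),([2,2],0,1)\}$, with blocks $\BB_{1,temp}=\{([0,0],0,1),([1,1],0,-1)\}$ and $\BB_{2,temp}=\{([1,1],0,-1),([2,2],0,1)\}$.
\begin{itemize}
\item A type 3' $ui$ inside the second block gives $\EE=\{([0,0],0,1),([1,1],0,-1),([2,1],0,-1)\}$, with $\BB_1=\BB_{1,temp}$ and $\BB_2=\{([2,1],0,-1)\}$.
\item Exchanging the last two rows is admissible; both have $B=1$, so even (P') is kept. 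It falls under Case 2(b) of Definition \ref{def row exchange} and gives $\EE'=\{([0,0],0,1),([2,1],1,1),([1,1],0,1)\}$.
\item Your $\BB_1$-part is now $\{([0,0],0,1),([1,1],0,1)\}$, which has the wrong relative sign.
\item Your $\BB_2$-part is $\{([2,1],1,1)\}$, which has no circles.
\end{itemize}
Compare these with $\BB_{1,temp}$ and with $\{([2,1],0,-1)\}\sim\BB_{2,temp}$ using Corollary \ref{cor same support}: neither part is equivalent to the required block. Further exchanges inside $\EE'$ cannot repair this, because the claim concerns the parts as standalone objects.

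In fact $\EE'$ has no ordered decomposition at all. Its proper initial segments have $\sum_r a(r)b(r)$ equal to $1$ and $9$, while $\BB_{1,temp}$ has $4$, and this quantity is invariant under equivalence.

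\textbf{What this means for the statement and your proof.}

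The proposition only holds under the paper's standing convention, which has two parts. Multi-segments are taken in the fixed (P') order used for type $Y_\mathcal{M}$. ``Operations'' mean raising operators and their inverses, whose internal row exchanges are exactly what Lemma \ref{lem-independence-of-blocks} controls (``even by row exchanges'').

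Under that convention your fallback is the correct proof and coincides with the paper's:
\begin{itemize}
\item No standalone cross-block $R_k$ ever arises.
\item Multi-segments with equal support automatically have the same order.
\item Corollary \ref{cor same support} therefore identifies endpoints directly, so you do not need the detour through $\psi^{min}$ from Theorem \ref{thm psi max min}.
\end{itemize}
Drop the interleaved-partition paragraph and state the order convention explicitly.
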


We now turn to the proof of Case 5 of Theorem \ref{thm-count-theta-temp}. In all the statements that follow, we assume that $\EE_{temp}$ lies in Case 5 of Theorem \ref{thm-count-theta-temp}.

\subsubsection{Independence-type result}

Shortly we will consider decompositions of the following form. If $\EE_{temp}$ is a tempered extended multi-segment with block decomposition $\BB_{1, temp} \cup \dots \cup \BB_{k, temp}$, and $\EE$ is equivalent to $\Theta_1(\EE_{temp})$, then we want to consider a decomposition $\CC_1 \cup \dots \cup \CC_{k-1}$ such that
\begin{align*}
    \CC_1 &\sim \Theta_1(\BB_{1, temp} \cup \BB_{2, temp}), \\
    \CC_i &\sim \BB_{i+1, temp} \text{ for } i = 2, \dots, k-1.
\end{align*}

Like the decomposition into parts equivalent to blocks, this decomposition has the staircase property (see \cite[Definition 7.5]{HKT26}).

\begin{lemma}
\label{lem-staircase-theta-lift}
    The decomposition $\CC_1 \cup \dots \cup \CC_{k-1}$ described above, if it exists, has the property that for $i_1 < i_2$, and rows $r_1 \in \CC_{i_1}$, $r_2 \in \CC_{i_2}$, we have $A(r_1) \leq B(r_2)$. 
\end{lemma}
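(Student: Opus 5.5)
The plan is to reduce the staircase property to the staircase property already known for block decompositions (\cite[Definition 7.5]{HKT26}, which underlies Lemma \ref{lem-independence-of-blocks} and Proposition \ref{prop-independence-of-blocks}), together with an explicit control of the supports of rows in a piece equivalent to $\Theta_1(\BB_{1,temp}\cup\BB_{2,temp})$. For $i_1,i_2\geq 2$ the pieces $\CC_{i}$ are equivalent to the blocks $\BB_{i+1,temp}$, so the claim $A(r_1)\leq B(r_2)$ for $r_1\in\CC_{i_1}$, $r_2\in\CC_{i_2}$ is exactly the staircase property of a decomposition into parts equivalent to the blocks $\BB_{3,temp},\dots,\BB_{k,temp}$. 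This follows verbatim from the corresponding statement in \cite[\S7]{HKT26}, since the argument there only uses the $\mathcal{S}$-data description of each part (Theorem \ref{block classification}) and the admissible $(P')$ order.

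The remaining work is the case $i_1=1$. First I bound the supports inside $\CC_1$. Since $\CC_1\sim\Theta_1(\BB_{1,temp}\cup\BB_{2,temp})$, and in Case 5 the union $\BB_{1,temp}\cup\BB_{2,temp}$ is an almost-block ending at $H_{col}$ (Remark \ref{rmk going up packets calculation}), Lemma \ref{Almost block classification} says that every element of $\Psi(\pi(\CC_1))$ is $\psi_{\Theta_i(\EE')}$ with $i\in\{1,2,4\}$ and $\EE'$ equivalent to the almost-block. The next step is to read off from the shape of $\Theta_1,\Theta_2,\Theta_4$ that every row $r_1\in\CC_1$ satisfies $A(r_1)\leq H_{col}+1$. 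The added hat has $A=H_{col}+1$, and the rows of $\EE'$ have $A\leq H_{col}$ because equivalent type $Y_\mathcal{M}$ data keep $c_{\max}$. For $r_2\in\CC_{i_2}$ with $i_2\geq2$, the part $\CC_{i_2}$ is equivalent to $\BB_{i_2+1,temp}$, which starts at a column $\geq H_{col}+1$. This is where the Case 5 hypothesis $m_N\equiv 0$ is used: $\BB_{3,temp}$ starts at $H_{col}+1$ or later. By Lemma \ref{S for E starting after 0}, only $S$-operators act on $\CC_{i_2}$, and these preserve $c_{\min}$, so $B(r_2)\geq H_{col}+1$.

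The main obstacle is the boundary equality $A(r_1)=H_{col}+1=B(r_2)$, which is allowed but must not be exceeded. I also need to handle the possibility that $\Theta_2$ or $\Theta_4$ produces a row extending past $H_{col}+1$. I would check directly, from Lemma \ref{duds for almost block}, that the merged hat has support $[H_{col}+1,\cdot]$. If some later block starts exactly at $H_{col}+1$ (a type 3 boundary), I would invoke the sign and nonvanishing conditions to show that no row of $\CC_1$ can be lengthened beyond $H_{col}+1$ without breaking the equivalence $\CC_1\sim\Theta_1(\BB_{1,temp}\cup\BB_{2,temp})$. Finally, since the decomposition is listed in the admissible order, the inequality is only needed for $i_1<i_2$, which completes the argument.
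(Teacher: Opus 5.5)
Your proposal is correct and follows essentially the paper's route. The paper likewise deduces the staircase property from the one for $\Theta_1(\BB_{1,temp}\cup\BB_{2,temp}),\BB_{3,temp},\dots,\BB_{k,temp}$, using two facts: the maximum of $A(r)$ is an invariant of equivalence, and the minimum of $B(r)$ over $\CC_i$ ($i>1$) equals that of $\BB_{i+1,temp}$, because these parts are of type $Y_\mathcal{M}$ not starting at zero.

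Two small remarks. Since that maximum is an invariant of the equivalence class of $\CC_1$ alone, your closing worry about type 3 boundaries and sign/nonvanishing conditions is unnecessary. Also, $\BB_{3,temp}$ starts at column $\geq H_{col}+1$ because $\BB_{1,temp}\cup\BB_{2,temp}$ already contains every row in columns $\leq H_{col}$, not because $m_N\equiv 0$.
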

\begin{proof}
    The proof is much the same as \cite[Lemma 7.4]{HKT26}, except this time we know $\CC_i$ for $i>1$ is type $Y_\mathcal{M}$ and does not begin at zero, so the minimum of $B(r)$ for $r \in \BB_{i+1, temp}$ is the same as the minimum of $B(r)$ for $r \in \CC_i$.
    
    More precisely, this follows from the facts that
    \begin{itemize}
        \item the analogous statement is true for $\Theta_1(\BB_{1, temp}, \BB_{2, temp}), \BB_{3, temp}, \dots, \BB_{k, temp}$,
        \item equivalent extended multi-segments have the same max of support, and
        \item the minimum of the support of $\CC_{i+1}$ is the same as the minimum of the support of $\BB_i$. \qedhere
    \end{itemize}
\end{proof}

We also need the following fact, which follows from the results on lifts of almost-blocks.
\begin{lemma}
\label{lem-aj-almost-block}
    Let $\BB$ be an almost-block ending at column $k$ with the multiplicity of the last column even, and let $\CC_1$ be any virtual extended multi-segment equivalent to $\Theta_1(\BB)$. Pick any row $r\in\CC_1$ for which we can row exchange $r$ until it becomes the last row. We denote its image under these row exchanges by $r'$. Then either
    \begin{itemize}
        \item $r$ consists of a single circle in column $k$, with the same sign as the last sign of $\BB$ or
        \item $l(r') = 1$, and the sign of the last circle of $r'$ is the opposite of the sign of the last circle in $\BB$.
    \end{itemize}
\end{lemma}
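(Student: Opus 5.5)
We prove Lemma \ref{lem-aj-almost-block} by classifying everything equivalent to $\Theta_1(\BB)$ and then checking each possible form. By Lemma \ref{Almost block classification}, together with Theorem \ref{block classification} applied to the type $Y_\mathcal{M}$ part, any $\CC_1\sim\Theta_1(\BB)$ is of the form $\Theta_i(\EE')$ with $i\in\{1,2,4\}$. Here $\EE'=\EE''\cup r_0$, where $\EE''$ is of type $Y_{\mathcal{M}}$ for the block-tuple obtained by removing one circle from column $k$, and $r_0=([k,k],0,\eta_0)$ is the extra multiple. By construction, $\eta_0$ is the sign of the last circle of $\EE''$, which is also the last sign of $\BB$.

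So it suffices to go through the rows $r$ of $\Theta_i(\EE')$ that can be exchanged to the bottom. We then compute $r'$ using Lemmas \ref{big swap down} and \ref{lem-multiplicity-cancel}.

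\textbf{Step 1: rows that are singletons in column $k$.} If $r$ is a multiple with support $[k,k]$, it can only sit at the end. All multiples in column $k$ share the last sign of $\BB$ by Definition \ref{def multiples belonging to a chain}. This gives the first alternative.

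\textbf{Step 2: rows whose support strictly contains other rows below them.} For $r$ to be exchangeable to the bottom, the relevant row exchanges must be admissible. Under the $(P')$ order, the rows below $r$ must then have $B$ at least $B(r)$, so $r$ is a chain or hat whose support contains $[k,k]$, or the merged hat from $\Theta_2/\Theta_4$ (support $[k+1,\cdot]$).

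We pair off the multiples of equal support and sign that lie below $r$ using Lemma \ref{lem-multiplicity-cancel}. This leaves an alternating string below $r$. Column $k$ has even multiplicity and carries one leftover circle besides those in $r$, so exactly one unmatched circle remains among these rows.

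Lemma \ref{big swap down} then gives $l(r')=l(r)-\sum C(r_i)$. For a row of circles with $l(r)=0$, this would be negative, so it must fall into the other branch of Definition \ref{def row exchange} (Case 1(a)), producing $l=1$. For a hat, we first apply Corollary \ref{swap commutes with M} to merge, and then track $l$ the same way.

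\textbf{Step 3: the sign.} Using the sign formula $(-1)^{\sum C(r_i)}\eta(r)$ from Lemma \ref{big swap down}, together with the odd-alternating rules in Definition \ref{defn E(M,S)}, we show that the last circle of $r'$ is opposite to $\eta_0$.

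I expect Step 2 to be the main obstacle. It needs a careful case analysis of which branch of the row exchange formula occurs, and it has to cover the new top row from $\Theta_1$ (a hat with $l=k+1$). For that hat, the exchange past the whole multi-segment must be computed directly, for instance by reducing via $dual$ to the tempered picture, where $l(r')=1$ reflects the single leftover circle in column $k$.
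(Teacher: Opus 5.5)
Your reduction to the forms $\Theta_i(\EE''\cup r_0)$ with $i\in\{1,2,4\}$ matches the paper, and Step 1 is fine. However, Step 2, which carries the whole content of the lemma, rests on claims that fail.

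\textbf{Step 2 is wrong as stated.} After you cancel pairs of equal single-circle multiples via Lemma \ref{lem-multiplicity-cancel}, what remains below $r$ is \emph{not} an alternating string. By the odd-alternating rules of Definition \ref{defn E(M,S)}, a multiple always fails the alternating sign condition with the row above it, and $r_0$ does in particular. So Lemma \ref{big swap down} is not available.

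Your branch identification is also wrong. If $l(r)=0$ and the row below has one circle, Case 1(a) of Definition \ref{def row exchange} forces $C(r)=1$ and returns $l=0$, never $l=1$.

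A concrete example: let $\EE''$ be the chain $([2,0],0,+)$ followed by two multiples $([1,1],0,+)$, and let $r_0=([2,2],0,+)$. Exchange the chain down. The two multiples cancel. The last exchange, with $r_0$, has $\epsilon=+1$ and $C=3\geq 2$, so it is Case 1(b). It \emph{raises} $l$ from $0$ to $1$ and flips $\eta$. This single non-alternating step is exactly where both conclusions come from: $l(r')=1$ and the reversed last sign.

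Your count of ``one unmatched circle in column $k$'' is not an invariant either. Take the top hat $([3,-3],3,-)$ of $\Theta_1(\BB)$, with $\BB$ tempered with multiplicities $(1,3,2)$ and signs $+,-,+$. The two column-$k$ circles cancel against each other, and $l=1$ comes from the alternating columns $0$ and $1$.

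\textbf{The missing idea is a reduction to the block case.} This avoids any case analysis over $\mathcal{S}$-data.
\begin{itemize}
\item In the $\Theta_1$ and $\Theta_2$ forms, $\CC_1=\BB_1'\cup r_0$ with $\BB_1'\sim\Theta_1(\BB_{1,temp})$. This is equivalent to a genuine block ending at column $k+1$.
\item By \cite[Lemma 7.9]{HKT26}, any row exchanged to the bottom of $\BB_1'$ has $l=0$, and its last circle equals the last sign of that block.
\item That last sign equals the last sign of $\BB_{1,temp}$, since the first sign flips and one column is added. Hence it equals $\eta(r_0)$.
\item One Case 1(b) exchange with $r_0$ then finishes. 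This works uniformly for chains and hats, including the new top hat that you left to a ``direct computation''.
\end{itemize}

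\textbf{The $\Theta_4$ form needs its own short argument, which your plan does not isolate.} There the hat has been dualized against $r_0$ itself. So the new last row is the pair of triangles $([k+1,k],1,\eta)$, with $\eta$ the last sign of $\BB$. Its last-circle sign is $(-1)^{C-1}\eta=-\eta$, which gives the second alternative directly.
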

\begin{proof}
    Let $\BB$ have block decomposition $\BB_{1, temp} \cup \BB_{2, temp}$. Note that $\BB_{2, temp}$ is just a single circle. First recall from \cite[Lemma 6.8]{HKT26} that every extended multi-segment equivalent to $\BB_{1, temp} \cup \BB_{2, temp}$ takes the form $\BB_1 \cup \BB_{2, temp}$ for some $\BB_1 \sim \BB_{1, temp}$. By Lemma \ref{duds for almost block}, the only operations on $\Theta_1(\BB_1\cup \BB_{2, temp})$ which involve the hat are $dual \circ ui \circ dual$ operations between the hat and the first and last rows ending at column $k$. So we split into cases depending on whether $\CC_1$ is of the form $\Theta_1(\BB)$, $\Theta_2(\BB)$, or $\Theta_4(\BB)$.
    
    First consider the virtual extended multi-segments of the form $\Theta_1(\BB_1 \cup \BB_{2, temp})$ or $\Theta_2(\BB_1\cup \BB_{2, temp})$. Observe that both of these have a decomposition of the form $\BB_1' \cup \BB_{2, temp}$ where $\BB_1' \sim \Theta_1(\BB_{1, temp})$. If the row $r$ lies in $\BB_{2, temp}$, then it is already the last row. Its sign must be the same as the sign of the last circle in $\BB_{1, temp}$, and it lies in column $k$. So this satisfies the conditions of the first case. 
    
    Otherwise, since $\Theta_1(\BB_{1, temp})$ is a block, when the row $r$ is exchanged to the last position in $\BB_1'$, giving $r'$, we have $l(r') = 0$ by \cite[Lemma 7.9]{HKT26}. Furthermore, the last circle of $r'$ has the same sign as the last circle of $\Theta_1(\BB_{1, temp})$. Since the first circle of $\Theta_1(\BB_{1, temp})$ has the opposite sign as the first circle of $\BB_{1, temp}$, but $\Theta_1(\BB_{1, temp})$ ends in column $k+1$ while $\BB_{1, temp}$ ends in column $k$, the sign of the last circle is the same. So we conclude that the last circle of $r'$ has the same sign as the last circle of $\BB_{1, temp}$, which is the same as the sign of $\BB_{2, temp}$. So the row exchange between $r'$ and the (only) row of $\BB_{2, temp}$ lies in Case 1(b). If $r''$ is the result, then $l(r'') = 1$ and $\eta(r'') = - \eta(r)$. So the sign of the last circle of $r''$ is opposite the sign of the last circle of $r'$, which is the same as the sign of the last circle of $\BB$.

    Now we consider extended multi-segments of the form $\Theta_4(\BB_1 \cup \BB_{2, temp})$. In this case since the last row of $\BB_1 \cup \BB_{2, temp}$ ending at $k$ is just $\BB_{2, temp}$, the $dual \circ ui \circ dual$ occurs between the hat and $\BB_{2, temp}$. By reducing to the case where all rows have one circle by Lemma \ref{cor Alex} and then performing a similar calculation as in the proof of the existence of the $D$ operation, as stated in Lemma \ref{lem SMUD operators} (detailed in \cite[Lemma 5.22]{HKT26}) we see that the resulting row is a pair of triangles with the same sign as the last circle of $\BB$. So the sign of the last circle of this row (recall this is defined even if the row has no circles in the picture) is the opposite of the sign of the last circle of $\BB$.
\end{proof}

In preparation for our main independence results, we generalize Lemma \ref{lem-truncations} to the case of a theta lift of an almost-block.

\begin{lemma}
\label{lem-truncations-theta-lifts}
    Let $\EE$ be equivalent to the theta lift of an almost-block. Suppose the almost-block has block decomposition $\BB_{1, temp} \cup \BB_{2, temp}$, with $\BB_{1, temp}$ ending at column $H_{col}$. Let $\EE^{\tc}$ be a truncated extended multi-segment containing all but the first $i$ rows of $\EE$. Then $\EE^{\tc}$ is equivalent to one of the following. Moreover, in each case the operations realizing this equivalence can be performed on $\EE$.
    \begin{enumerate}
        \item If $\EE = \Theta_1(\BB_1 \cup \BB_{2, temp})$ for some $\BB_1 \sim \BB_{1, temp}$, then
        \begin{itemize}
            \item $\EE^{\tc}$ is equivalent to $\Theta_1(\BB_{1, temp} \cup \BB_{2, temp})$,
            \item or is equivalent to $\BB_1' \cup \BB_{2, temp}$ where $\BB_1'$ is tempered, and the multiplicity of each column is at most the corresponding multiplicity in $\BB_{1, temp}$.
        \end{itemize}
        \item If $\EE = \Theta_2(\BB_1 \cup \BB_{2, temp})$ for some $\BB_1 \sim \BB_{1, temp}$, then
        \begin{itemize}
            \item $\EE^{\tc}$ is equivalent to the second form in the previous case,
            \item or is equivalent to $\BB_1' \cup \BB_{2, temp}$ where $\BB_1'$ is the result after applying a certain $ui$ to a tempered extended multi-segment whose multiplicities are at most the multiplicities in $\Theta_3(\BB_{1, temp})$.
        \end{itemize}
        \item If $\EE = \Theta_4(\BB_1 \cup \BB_{2, temp})$ for some $\BB_1 \sim \BB_{1, temp}$, then $\EE^{\tc}$ is equivalent to $\BB_1' \cup \BB_2'$, where $\BB_1'$ is tempered with the multiplicities of each column at most the multiplicities in $\BB_{1, temp}$, and $\BB_2'$ is the same as the last row of $\EE$.
    \end{enumerate}
\end{lemma}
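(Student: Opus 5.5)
I will follow the proof of Lemma \ref{lem-truncations} (that is, \cite[Lemma 7.10]{HKT26}). The argument is a case split according to Lemma \ref{Almost block classification} and Lemma \ref{duds for almost block}. Every $\EE$ equivalent to the theta lift of the almost-block has the form $\Theta_i(\BB_1\cup\BB_{2,temp})$ with $i\in\{1,2,4\}$ and $\BB_1\sim\BB_{1,temp}$. Here I also use \cite[Lemma 6.8]{HKT26}: everything equivalent to $\BB_{1,temp}\cup\BB_{2,temp}$ is $\BB_1\cup\BB_{2,temp}$. In each case the truncation $\EE^{\tc}$ removes the first $i$ rows. I will separate whether or not the removed rows include the added hat (respectively, the merged row produced by the $dual\circ ui\circ dual$).

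For case (1), $\EE=\Theta_1(\BB_1\cup\BB_{2,temp})$ and the hat is the first row. If $i=0$ there is nothing to prove, since $\EE\sim\Theta_1(\BB_{1,temp}\cup\BB_{2,temp})$. If $i\geq 1$, the hat is removed. Then $\EE^{\tc}$ is the truncation of $\BB_1\cup\BB_{2,temp}$ by its first $i-1$ rows, and $\BB_{2,temp}$ is untouched because it is the last row. I apply Lemma \ref{lem-truncations} to the type $Y_\mathcal{M}$ part $\BB_1$, which gives a tempered $\BB_1'$ with multiplicities at most those of $\BB_{1,temp}$. Those operations only involve rows of $\BB_1$; by Lemma \ref{lem-local} and the fact that $\BB_{2,temp}$ lies after all of them in the order, they leave $\BB_{2,temp}$ fixed. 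They can also be performed on $\EE$ itself: they do not involve the hat, since its support contains all the others and Lemma \ref{cor Alex} lets the hat be exchanged past equivalent sub-multi-segments unchanged.

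For case (3), $\Theta_4$ merges the hat with $\BB_{2,temp}$. The proof of Lemma \ref{lem-aj-almost-block} then shows that the resulting last row is a pair of triangles. After truncating the rows preceding it, it is exactly the last row of $\EE$, and the remainder is a truncation of $\BB_1$, to which I again apply Lemma \ref{lem-truncations}.

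Case (2) is the main obstacle. Here the hat has merged with the first row ending at $H_{col}$, producing a row with support roughly $[H_{col}+1,-B]$ that sits inside the $\BB_1$-part. I would first use the relation behind Theorem \ref{first block m_n > 1}: $\Theta_2$ is obtained from $\Theta_3$ by a $ui$ (the inverse of the $ui^{-1}$ splitting off the circle in column $H_{col}+1$). So $\Theta_2(\BB_1\cup\BB_{2,temp})=ui(\Theta_3(\BB_1)\cup\BB_{2,temp})$, where $\Theta_3(\BB_1)$ is equivalent to the block $\Theta_3(\BB_{1,temp})$, which is of type $Y_\mathcal{M}$.

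I then split on whether the truncation removes the merged row.

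If it does, what remains are rows of $\BB_1$ after the merged row, together with $\BB_{2,temp}$. Lemma \ref{lem-truncations} gives the first form of case (1).

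If it does not, the removed rows all lie before the merged row. I truncate $\Theta_3(\BB_1)$ correspondingly and apply Lemma \ref{lem-truncations} to get a tempered multi-segment with multiplicities bounded by those of $\Theta_3(\BB_{1,temp})$. I then reapply the same $ui$, which is legitimate by Lemma \ref{lem-local}, since the rows it touches are not affected.

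The delicate point is to check that the tempering operations from Lemma \ref{lem-truncations} commute with this $ui$ and do not disturb the split-off circle row. I would verify this with the commutation relations of Lemma \ref{lem SMUD operators} (the $S$/$U$/$D$ relation used in Lemma \ref{U exstence Z}) and with Corollary \ref{swap commutes with M}. Finally, in each case I need to check that $\BB_{2,temp}$ stays unchanged. The sign analysis of Lemma \ref{lem-aj-almost-block} ensures that exchanging rows down past $\BB_{2,temp}$ falls in Case 1(b) of the row exchange definition, so the last row is preserved.
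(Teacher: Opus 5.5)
Your Cases (1) and (3) follow the paper's reduction to Lemma \ref{lem-truncations}. Case (2), however, has a genuine gap.

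\textbf{The identity you use in Case (2) is false.} You write $\Theta_2(\BB_1\cup\BB_{2,temp})=ui(\Theta_3(\BB_1)\cup\BB_{2,temp})$. But the $ui^{-1}$ that splits the circle in column $H_{col}+1$ off the merged row is exactly the operation that $\BB_{2,temp}$ blocks.
\begin{enumerate}
\item The split-off single circle must come after $\BB_{2,temp}$ in any admissible order. So the merged row must first be exchanged down past $\BB_{2,temp}$.
\item Its last circle has the same sign as $\BB_{2,temp}$, so this exchange is Case 1(b) of Definition \ref{def row exchange}.
\item That case raises $l$ by one and turns the circle in column $H_{col}+1$ into a triangle, so there is nothing left to split.
\end{enumerate}
This is the point made in the proof of Lemma \ref{Almost block classification}. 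The Case 1(b) computation from Lemma \ref{lem-aj-almost-block} that you cite at the end is precisely this obstruction, not a reason the last row is harmless.

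A minimal example: take $\BB_{1,temp}=\{([0,0],0,1)\}$ and $\BB_{2,temp}=([0,0],0,1)$. Then $\Theta_2=\{([1,0],0,-1),([0,0],0,1)\}$, and the exchange yields $([0,0],0,-1),([1,0],1,1)$. Correspondingly, $\Theta_3(\BB_{1,temp})\cup\BB_{2,temp}$ has circles of opposite signs in column $H_{col}$, so it vanishes.

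Consequently, your chain ``pass to $\Theta_3(\BB_1)$, truncate, temper by Lemma \ref{lem-truncations}, reapply $ui$'' is not a sequence of basic operations on $\EE^{\tc}$: its first step does not exist there. So neither the equivalence in the second bullet of Case (2) nor its realizability on $\EE$ is established. The ``delicate point'' you flag, commuting the tempering steps with $ui$, is not the real difficulty.

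\textbf{What is needed instead.} The paper argues directly on $\Theta_2(\BB_1)\cup\BB_{2,temp}$. Undualizing hats and splitting chains via the operators of Lemma \ref{lem SMUD operators} still works with $\BB_{2,temp}$ present, with one exception: splitting off the circle in column $H_{col}+1$. Since the row absorbed by the $dual\circ ui\circ dual$ was not truncated, a circle in column $H_{col}$ stays attached to the piece ending at $H_{col}+1$. The process therefore ends in a tempered configuration except for one two-circle row $[H_{col}+1,H_{col}]$. This is exactly why the second bullet of Case (2) carries a leftover $ui$ applied to something bounded by $\Theta_3(\BB_{1,temp})$.

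You might try to rescue your route with Lemmas \ref{commute 2 to 3} and \ref{commute 3 to 2}, pushing each tempering step down to the $\Theta_2$ layer so the blocked split is never performed. You would still have to check that each resulting step is valid with $\BB_{2,temp}$ present, which is the actual content.

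\textbf{Two minor points.}
\begin{enumerate}
\item When the merged row is truncated, you land in the second (tempered) form of Case (1), not the first.
\item In Case (3), you should say why the final row $[H_{col}+1,H_{col}]$, which has no circles, does not obstruct the tempering operations on the truncation of $\BB_1$.
\end{enumerate}
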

\begin{proof}
    The proof is similar to the proof of Lemma \ref{lem-truncations} (see \cite[Lemma 7.10]{HKT26}). 
    
    Assume first that was are in Case (1). if $\EE^{\tc} \neq \EE$, then the hat added by $\Theta_1$ must be truncated. Hence a nontrivial truncation of $\EE$ is just a (possibly trivial) truncation of a virtual extended multi-segment equivalent to an almost-block. In this case, the result follows from Lemma \ref{lem-truncations}.

    Assume now that was are in Case (2). Further, we first suppose that the row to which the $dual \circ ui \circ dual$ is applied is truncated. Then whether or not the $dual \circ ui \circ dual$ was applied is irrelevant, so this reduces to the second form of Case (1). Now suppose that the row was not truncated. Recall that $\EE$ takes the form $\EE' \cup \BB_{2, temp}$ where $\EE' \sim \Theta_1(\BB_{1, temp})$, since $\BB_{2, temp}$ cannot be the last row ending at $H_{col}$. By using the operators of Lemma \ref{lem SMUD operators}, we can undualize hats and split rows of circles in $\EE'$, with one exception. Since $\BB_{2, temp}$ is a single circle in $H_{col}$, we cannot perform a $ui\inv$ operation to split off circles in $H_{col}+1$. Hence $\EE$ is equivalent to $\BB_1' \cup \BB_{2, temp}$, where $\BB_1'$ is ``almost tempered,'' except that there cannot be a single circle in $H_{col}+1$. In fact, observe that since the $dual \circ ui \circ dual$ is applied to the first row ending at $H_{col}$, and this row was not truncated, in $\BB_1'$ we must have at least one circle in $H_{col}$, since the dual of this row (if it is a hat, or the row itself if it is a row of circles) has a circle in $H_{col}$. So in $\BB_1'$ there exists a row with support $[H_{col}+1, H_{col}]$ consisting of two circles, which is the result of applying a certain $ui$.

    Finally, we assume that we are in Case (3). We note that the last row of $\BB_1 \cup \BB_{2, temp}$ with support ending in $H_{col}$ is just the last row $\BB_{2, temp}$, so the truncation does not affect this row. Hence a truncation of $\EE$ is just a truncation of $\BB_1$ followed by the last row of $\EE$. Since this last row has support $[H_{col}+1, H_{col}]$, all undualizations and all $ui\inv$ operations of type 3' on $\BB_1$ are valid. Thus by Lemma \ref{lem-truncations}, the truncation of $\BB_1$ is equivalent to an extended multi-segment taking the desired form $\BB_1'$.
\end{proof}

We now state and prove our main independence lemma, which will allow us (in Proposition \ref{prop-case5-decomposition}) to create the decomposition promised at the beginning of the subsection.

\begin{lemma}
\label{lem-case5-independence}
    Let $\EE_{temp}$ be a tempered extended multi-segment with block decomposition $\BB_{1, temp} \cup \dots \cup \BB_{k, temp}$. Let $\EE$ be any extended multi-segment equivalent to $\Theta_1(\EE_{temp})$, and suppose $\EE$ has a decomposition $\CC_1 \cup \dots \cup \CC_{k-1}$ such that $\CC_1 \sim \Theta_1(\BB_{1, temp} \cup \BB_{2, temp})$ and $\CC_i \sim \BB_{i+1, temp}$ for $i = 2, \dots, k-1$. Then any operation on $\EE$ cannot involve (even by row exchanges) a row from $\CC_{i_1}$ and a row from $\CC_{i_2}$ for $i_1 < i_2$.
\end{lemma}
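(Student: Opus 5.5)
The plan is to mimic the proof of Lemma \ref{lem-independence-of-blocks} (\cite[Lemma 7.12]{HKT26}), with the first part $\CC_1$ playing the role of a block even though it is only equivalent to the theta lift of an almost-block. Suppose for contradiction that some raising operator $T$, or inverse, on $\EE$ involves a row $r_1\in\CC_{i_1}$ and a row $r_2\in\CC_{i_2}$ with $i_1<i_2$. By Lemma \ref{lem-local}, all relevant operators ($ui$, $dual\circ ui\circ dual$ and their inverses, together with the row exchanges needed to make the rows adjacent) are local. We may therefore choose $i_2$ minimal and $i_1=i_2-1$, and reduce to the consecutive pair $\CC_{i_2-1}\cup\CC_{i_2}$.

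When $i_1\geq 2$, both pieces are equivalent to blocks not starting at zero. Lemma \ref{lem-staircase-theta-lift} gives the staircase property, so the argument of \cite[Lemma 7.12]{HKT26} applies verbatim, because it uses only the two adjacent pieces and the staircase property. The real work is the case $i_1=1$, $i_2=2$.

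For this case, I first use Lemma \ref{lem-truncations-theta-lifts} to truncate $\CC_1$ from the top, keeping only the rows that could be exchanged down toward $\CC_2$. After this reduction $\CC_1$ has one of the explicit shapes listed there: a $\Theta_1$-type piece, a tempered $\BB_1'\cup\BB_{2,temp}$, or its $ui$-modification. Lemma \ref{lem-truncations-theta-lifts} also guarantees that the required operations can be carried out inside $\EE$. Next I row exchange $r_1$ to the bottom of $\CC_1$ and apply Lemma \ref{lem-aj-almost-block}. This gives only two possibilities for its image $r_1'$:
\begin{itemize}
\item a single circle in column $H_{col}$ whose sign is the last sign of $\BB_1\cup\BB_2$;
\item a row with $l(r_1')=1$ whose last-circle sign is the opposite one.
\end{itemize}

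On the $\CC_2$ side, $\CC_2\sim\BB_{3,temp}$ is of type $Y_\mathcal{M}$ and does not start at zero, so Lemma \ref{S for E starting after 0} applies. Its top row, after row exchanges up, is a row of circles starting at $N'_{col}$, the first column of $\BB_{3,temp}$; its sign is fixed, and the staircase property gives $N'_{col}\geq H_{col}$. I then check each type of operator ($ui$ of types 1, 2, 3, 3', the inverse $ui^{-1}$, and $dual\circ ui\circ dual$) against the conditions of Definition \ref{ui def}, using these explicit $l,\eta$ values. The goal is to show the conditions never hold. In type 3 the equality $B_{k+1}+l_{k+1}=A_k-l_k+1$ should fail because of the extra $l=1$. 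Types 1 and 2 need $\epsilon=1$, which should contradict the sign relations that make $\BB_{2,temp}$ and $\BB_{3,temp}$ separate blocks. These relations come from the Case 5 hypotheses that $m_H$ and $m_N$ are even and $\eta_N=\eta_H$. For $dual\circ ui\circ dual$ involving hats, no hat of $\CC_2$ can exist, since $\CC_2$ does not start at zero. A hat of $\CC_1$ could only merge via Lemma \ref{merge} with a row reaching down to zero, and no such row lies in $\CC_2$.

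I expect the main obstacle to be the $\Theta_4$ case. There the bottom row of $\CC_1$ is a pair of triangles with support $[H_{col}+1,H_{col}]$, which overlaps column $H_{col}+1$ where $\CC_2$ may begin in a type 3 boundary. A $ui$ of type 3 or a row exchange could then plausibly merge the two pieces. To rule this out I would compute the sign of that row directly: it is the opposite of the last circle of $\BB$ by Lemma \ref{lem-aj-almost-block}. I would then compare it with the sign of the first circle of $\CC_2$, which is fixed by the nonvanishing conditions and the hypothesis $\eta_N=\eta_H$, and use $l=1$ to show the type 3 equality is off by one. Finally, any row exchange of $r_1$ past the rows of $\CC_2$ leaves $r_1$ unchanged, by Lemmas \ref{lem-multiplicity-cancel} and \ref{cor Alex}, so no nontrivial interaction can be produced indirectly.
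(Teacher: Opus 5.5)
There is a genuine gap, and it begins with the reduction ``choose $i_2$ minimal and $i_1=i_2-1$.'' Lemma \ref{lem-local} only says that rows untouched by an operation (including its row exchanges) are fixed. It gives no license to discard the parts lying between $r_1$ and $r_2$, and in Case 5 those intermediate parts are where the difficulty sits. Because $m_H,m_N$ are even and $\eta_N=\eta_H$, the blocks $\BB_{2,temp}$ and $\BB_{3,temp}$ are single circles. So $\CC_2$ is one circle in column $H_{col}+1$, not a general $Y_\mathcal{M}$-block starting at some $N'_{col}$. Meanwhile $\CC_1$ also ends at $H_{col}+1$ (via the hat or its image in $\Theta_2,\Theta_4$), $\CC_3$ starts at $H_{col}+1$, and $\CC_4$ may start at $H_{col}+2$. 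A row of $\CC_1$ can therefore be exchanged past $\CC_2$, and past $\CC_3$, until it is adjacent to a row of $\CC_3$ or $\CC_4$. One must then show the resulting $ui$ or $ui^{-1}$ is impossible or trivial; the paper does this case by case for $i_2=2,3,4$ and excludes $i_2\ge 5$ by support. Your closing claim that passing $\CC_2$ leaves $r_1$ unchanged is also false. With $l(r_1')=1$ and last-circle sign opposite to that of $\CC_2$'s circle, the exchange is Case 1(c) of Definition \ref{def row exchange}. It yields $l=0$ and a flipped sign, and that is exactly what must be tested against a one-circle row of $\CC_3$. Lemma \ref{lem-multiplicity-cancel} only becomes available after $r_1'$ has passed all of the (even number of) circles in column $H_{col}+1$, which are spread over $\CC_2$ and $\CC_3$ together.

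The second missing piece is the $dual\circ ui^{-1}\circ dual$ of type 3'; your check covers only $ui$, $ui^{-1}$ and hat merging. For the other $dual\circ ui\circ dual$'s, the paper instead uses the staircase property: dual supports are nested. In $dual(\EE)=dual(\CC_{k-1})\cup\cdots\cup dual(\CC_1)$ the order of the parts is reversed. So a dualized row $\widehat r$ from any $\CC_{i_2}$ with $i_2\ge 3$ can be exchanged down through $dual(\CC_2)$ into $dual(\CC_1)$, and even the case $i_1\ge 2$ does not reduce to \cite[Lemma 7.12]{HKT26} as you claim. The paper's argument is a count on $l(\widehat r)$:
\begin{itemize}
\item $r\notin\CC_2$, since $B(\widehat r)=-A(\widehat r)$;
\item each exchange with a one-circle row lowers $l$ by at most one, and crossing a block boundary loses a decrement;
\item so one must show that the rows of $dual(\CC_1)$ met by $\widehat r$ lower $l$ by no more than their number of columns other than $H_{col}+1$.
\end{itemize}
This last step is where Lemma \ref{lem-truncations-theta-lifts} is genuinely needed, through its four truncation shapes. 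It uses $l(\widehat r')\ge 2$, which comes from the even multiplicities in $H_{col}$ and $H_{col}+1$, together with a sign analysis of the rows $[H_{col}+1,H_{col}]$ and $\BB_{2,temp}$ in the $\Theta_2$ shape. In your plan that lemma is aimed at the $ui$ case instead, where the paper only needs Lemma \ref{lem-aj-almost-block}.
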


\begin{proof}
    The proof is similar in structure to the proof of \cite[Lemma 7.12]{HKT26} (see Lemma \ref{lem-independence-of-blocks}), although the specific cases are different.
    
    Without loss of generality suppose that each of the $\CC_i$ for $i>1$ are of type $Y_\mathcal{M}$, since any extended multi-segment equivalent to a block is of type $Y_\mathcal{M}$ up to row exchanges. For $\CC_1$, we can assume that it takes the form $\Theta_1(\BB_1 \cup \BB_{2, temp})$, $\Theta_2(\BB_1 \cup \BB_{2, temp})$, or $\Theta_4(\BB_1 \cup \BB_{2, temp})$ for some $\BB_1$ of type $Y_\mathcal{M}$ equivalent to $\BB_{1, temp}$.

    Throughout this proof, let $H_{col}$ be the last column of $\BB_1$. In the picture below we illustrate the decomposition into parts $\CC_i$ for a prototypical example of Case 5 of Theorem \ref{thm-count-theta-temp}. 
    \[\begin{tikzpicture}[baseline=(current bounding box.north)]
      \matrix (m) [matrix of math nodes, nodes in empty cells] {
        & & & 0 & & {H_{col}} & & & \\
        \lhd & \lhd & \lhd & \ominus & \rhd & \rhd & \rhd & & \\
        & & & \oplus & & & & & \\
        & & & & \ominus & & & \CC_1 & \\
        & & & & & \oplus & & & \\
        & & & & & \oplus & & & \\
        & & & & & & \oplus & \CC_2 & \\
        & & & & & & \oplus & & \CC_3 \\
        & & & & & & & \ominus & \CC_4 \\
        & & & & & & & \ominus & \phantom{\oplus} \\
        & & & & & & & \phantom{\oplus} & \ddots \\
      } ;
      \draw (m-7-7.north west) rectangle (m-7-7.south east);
      \draw (m-8-7.north west) rectangle (m-9-8.south east);
      \draw (m-10-8.north west) -- (m-10-9.north east);
      \draw (m-10-8.north west) -- (m-11-8.south west);
      \draw (m-2-1.north west) rectangle (m-6-7.south east);
    \end{tikzpicture}\]
    
    \underline{Operations of the form $ui\inv$ of type 3'}

    As before, it suffices to consider a $ui\inv$ operation consisting of a sequence of monotonic row exchanges, followed by a $ui\inv$, followed by the inverse sequence. If $i_1, i_2 \neq 1$ then we can view the $ui\inv$ operation as occurring on $\CC_2 \cup \dots \cup \CC_{k-1}$, which are all equivalent to blocks. So by Lemma \ref{lem-independence-of-blocks}, no such operation is possible. So suppose $i_1 = 1$.

    First suppose that the $ui\inv$ operation is applied to a row $r$ that was originally from $\CC_{i_1} = \CC_1$, before row exchanges. Note that each $\CC_i$ for $i>1$ starts at $H_{col}+1$ or later, while $r$ ends at $H_{col}+1$ or earlier. So the only rows $r$ can row exchange with that do not lie in $\CC_1$ are single circles in column $H_{col}+1$, and $r$ must end at $H_{col}+1$. Let $r'$ be the image of $r$ after the row exchanges. Since $r$ lies in $\CC_1$, after the row exchanges $r'$ comes after a row which is a single circle in column $H_{col}+1$. Since $r'$ ends at $H_{col}+1$, this means that if any number of circles are split off, this will result in a row ending before $H_{col}+1$. So the resulting extended multi-segment is not in admissible order, so no $ui\inv$ is possible.

    Second suppose that the $ui\inv$ operation is applied to a row $r$ that was originally from $\CC_{i_2}$. We claim that it is impossible for $r$ to be exchanged into $\CC_1$ if $i_2 \geq 4$. If the boundary between $\CC_3$ and $\CC_4$ is type 1 or type 3\footnote{Note that $\CC_i$ for $i>1$ are equivalent to blocks, so we can speak of the boundary in the same way as before.}, then $\CC_4$ starts at $H_{col}+3$ or later. If the boundary is type 2, then since the boundary between $\CC_2$ and $\CC_3$ is also type 2, and blocks are maximal, it must be that $\CC_3$ has at least two columns. So $\CC_4$ starts at $H_{col}+2$ or later. In either case, since $\CC_1$ ends at $H_{col}+1$, no row exchanges are possible with rows of $\CC_1$.

    Now suppose $i_2 = 3$. Since the supports of $\CC_3$ and $\CC_1$ only overlap in column $H_{col}+1$, if $r$ is exchanged with any rows of $\CC_1$, they must have support only in $H_{col}+1$, so they must be single circles. Moreover, $r$ must have support starting at $H_{col}+1$. But by the same reasoning as before (in the case that $r$ lies in $\CC_1$ and is exchanged down), the image after row exchanges $r'$ has no valid $ui\inv$ operations which leave the order admissible, since it comes before the single circles in column $H_{col}+1$.

    Finally, if $i_2 = 2$ then $r$ has support of length $1$, so certainly no $ui\inv$ is possible. 
    
    \underline{Operations of the form $dual \circ ui\inv \circ dual$ of type 3'}

    There are two cases for a possible $dual \circ ui\inv \circ dual$ operation. Either a row $r$ from $\EE$ corresponding to $\widehat{r}$ in $dual(\EE)$ is exchanged up in $dual(\EE)$, then a $ui\inv$ operation is applied, or $\widehat{r}$ is exchanged down in $dual(\EE)$.

    In the first case, the proof is very similar to the proof in \cite[Lemma 7.12]{HKT26} (Lemma \ref{lem-independence-of-blocks}). It suffices to show that $\EE$ is in (P') order as it is shown in \cite[p. 52, 3rd paragraph]{HKT26} that it is not possible to apply a $dual\circ ui^{-1} \circ dual$ of type 3' involving a row that is being swapped up in the dual in our setting. The decomposition $\CC_1 \cup \dots \cup \CC_{k-1}$ satisfies the staircase property, so it suffices to show that each $\CC_i$ is in (P') order. For $\CC_i$ for $i>1$, this follows from the fact that $\CC_i$ is of the type $Y_\mathcal{M}$. For $\CC_1$, if it takes the form $\Theta_1(\BB_1 \cup \BB_{2, temp})$ then since the hat is wider than all the rows, adding it preserves the (P') order. If $\CC_1$ takes the form $\Theta_2(\BB_1 \cup \BB_{2, temp})$ or $\Theta_4(\BB_1 \cup \BB_{2, temp})$ then we see that the order is still (P') since none of the left endpoints have changed compared to $\BB_1 \cup \BB_{2, temp}$.

    In the second case, we once again imitate the proof of \cite[Lemma 7.12]{HKT26} (Lemma \ref{lem-independence-of-blocks}), but we make use of Lemma \ref{lem-truncations-theta-lifts} instead of Lemma \ref{lem-truncations}, which makes it slightly more involved. 

    If $\widehat{r}$ is not exchanged with any rows of $dual(\CC_1)$, then the claim follows from Lemma \ref{lem-independence-of-blocks}. Indeed, we remove $\mathcal{C}_1$ from $\EE$ and apply Lemma \ref{lem-independence-of-blocks}. Note that the resulting parts agree with $dual(\EE)$ up to a global sign change.
    Thus, we suppose $\widehat{r}$ is exchanged with rows of $dual(\CC_1)$. We get a truncation of $\CC_1$ by considering the rows in $\CC_1$ corresponding to the rows in $dual(\CC_1)$ which are exchanged with $\widehat{r}$. By Lemma \ref{cor Alex}, we can assume that $\widehat{r}$ is exchanged with the dual of extended multi-segments of the form specified in Lemma \ref{lem-truncations-theta-lifts}.

    We observe that $r$ cannot lie in $\CC_2$. This is because $r$ is only supported in one column, so $B(\widehat{r}) = - A(\widehat{r})$, so any $ui\inv$ of type 3' would create we a row $s$ satisfying $|B(s)| > A(s)$, which is impossible.

    By examining the cases of Definition \ref{def row exchange}, we see that any row exchange with a row with with one circle cannot decrease $l(\widehat{r})$ by more than $1$. So in order for $l$ to be $0$ after all the row exchanges, it must decrease once for every column. Also, if $\widehat{r}$ is exchanged across the boundary of two parts of the decomposition equivalent to blocks, then the net result is that $l$ decreases less than once for every column. Since $r$ cannot lie in $\CC_2$, necessarily $\widehat{r}$ is exchanged across such a boundary. Hence it remains to show that when $\widehat{r}$ is exchanged with the rows of $dual(\CC_1)$ corresponding to the truncation of $\CC_1$, $l$ does not decrease more than the number of columns of the truncation, excluding the column $H_{col}+1$. (The reason for the caveat is that the supports of $\CC_1$ and $\CC_2$ have an overlap of one column, namely $H_{col}+1$.)

    First suppose that the truncation of $\CC_1$ is trivial, or in other words $\widehat{r}$ is exchanged all the way to the last row of $dual(\EE)$. Let $\widehat{r}'$ be the image of $\widehat{r}$ after all row exchanges except the last one (i.e. the one with the dual of the hat). We note that $l(\widehat{r}') \geq 2$, since columns $H_{col}$ and $H_{col}+1$ both have an even multiplicity in $\EE_{temp}$. So for the last row exchange, between $\widehat{r}'$ and the dual of the hat, if it is Case 1(b) or Case 1(c) then $l(\widehat{r}'') = l(\widehat{r}') \pm 1$, which in either case is still positive. If it is Case 1(a), then $l(\widehat{r}'') = C(\widehat{r}') + l(\widehat{r}') - 1$, but $l(\widehat{r}') - 1>0$, so this is still positive. So in any case no $ui\inv$ of type 3' is possible.

    Second suppose that the truncation is equivalent to something of the form $\BB_1' \cup \BB_{2, temp}$, where $\BB_1'$ is tempered and the multiplicities are at most the multiplicities of $\BB_{1, temp}$. In this case since the truncation is not supported in $H_{col}+1$, we just want to show that $l$ does not decrease by more than the number of columns of the truncation. This follows directly from the aforementioned fact that any row exchange does not decrease $l$ by more than $1$, and Lemma \ref{lem-multiplicity-cancel}.

    Third suppose that the truncation is equivalent to something of the form $\BB_1' \cup \BB_{2, temp}$, where $\BB_1'$ is the result after applying a $ui$ involving circles in $H_{col}$ and $H_{col}+1$ to a tempered extended multi-segment whose multiplicities are at most the multiplicities in $\Theta_3(\BB_{1, temp})$. This is depicted in the diagram below. By Lemma \ref{lem-multiplicity-cancel}, we can assume that there are no rows of $\BB_1'$ supported in column $H_{col}$ except for the row with support $[H_{col}+1, H_{col}]$, because the multiplicity of $H_{col}$ is even. By the same reasoning as before, for every column before than $H_{col}$, exchanging with the hats corresponding to the circles in that column does not decrease $l$ by more than $1$. So it suffices to show that exchanging with the duals of the row with support $[H_{col}+1, H_{col}]$ and $\BB_{2, temp}$ does not decrease $l$ by more than $1$. Call the first row $s_1$ and the second row $s_2$.
    \[\begin{tikzpicture}[baseline=(current bounding box.north)]
      \matrix (m) [matrix of math nodes, nodes in empty cells] {
        & 0 & \cdots & H_{col} & H_{col} + 1 & \cdots\\
        & & \ddots &  & \phantom{\oplus} & \\
        s_1 & & & \ominus & \oplus & \\
        s_2 & & & \oplus & \phantom{\oplus} & \\
        & & & & \oplus & \\
        & & & & \oplus & \phantom{\oplus} \\
        & & & & \phantom{\oplus}& \ddots \\
      } ;
      \draw (m-4-2.south west) -- (m-4-5.south east);
      \draw (m-4-5.south east) -- (m-2-5.north east) node[pos=0.5, right] {$\CC_1$};
      \draw (m-6-5.north west) -- (m-6-6.north east) node[pos=0.85, above] {$\CC_3$} node[pos=0.51, above] {$\CC_2$};
      \draw (m-6-5.north west) -- (m-7-5.south west);
      \draw (m-5-5.north west) rectangle (m-5-5.south east);
    \end{tikzpicture}\]
    
    To show this, we first observe that by examining the cases of Definition \ref{def row exchange}, exchanging $\widehat{r}$ with $\widehat{s_2}$ cannot decrease $l$ by more than $1$, and exchanging it with $\widehat{s_1}$ cannot decrease $l$ by more than $2$. Hence the row exchange with $\widehat{s_2}$ must either keep $l$ unchanged, or decrease it by $1$. In the former case, the row exchange with $\widehat{s_1}$ must decrease $l$ by $2$, and in the latter case the row exchange with $\widehat{s_1}$ must decrease $l$ by $1$.

    In the former case, if $l$ is unchanged, then it must have been Case 1(a) of Definition \ref{def row exchange} and $C(\widehat{r}) = 1$. But note that $s_1$ and $s_2$ fail the alternating sign condition, since the last circle of both of them has the same sign as the last circle of $\BB_1$. So $\widehat{s_2}$ and $\widehat{s_1}$ also fail the alternating sign condition, and since $C(\widehat{s_2})=1$, this means $\eta(\widehat{s_2}) = \eta(\widehat{s_1})$. Since the row exchange of $\widehat{r}$ with $\widehat{s_1}$ is Case 1(a), they fail the alternating sign condition. Moreover, $\eta(\widehat{r}) = \eta(\widehat{r}')$, where $\widehat{r}'$ is the image of $\widehat{r}$ after the first row exchange. So the row exchange of $\widehat{r}$ with $\widehat{s_2}$ is either Case 1(a) or Case 1(b). But the only way for $l$ to decrease by $2$ is if it is Case 1(a) and $C(\widehat{r}') = 0$, which is impossible, as $\widehat{r}$ always has at least one circle.

    In the latter case, if $l$ decreases by $1$ in the first row exchange, then it must have been Case 1(c). By similar reasoning as before, since $\eta(\widehat{r}') = -\eta(\widehat{r})$, we have that $\widehat{r}'$ fails the alternating sign condition with $\widehat{s_2}$. So the row exchange must be Case 1(a) or Case 1(b). But in order for $l$ to decrease by $1$ in the second row exchange, it must be Case 1(a) and $C(\widehat{r}') = 1$. But this is clearly impossible since $l(\widehat{r}')$ is not maximal, as it just decreased by $1$.

    Fourth and finally suppose that the truncation is equivalent to something of the form $\BB_1' \cup \BB_2'$, where $\BB_1'$ is tempered and the multiplicities are at most the multiplicities in $\BB_{1, temp}$, and $\BB_2'$ is a pair of triangles, as shown below. Since $\BB_2'$ is the only row of the truncation supported in $H_{col}+1$, it suffices to show that row exchanges with the dual of this row do not decrease $l$. This is clear by examining the cases of Definition \ref{def row exchange}, since the row has no circles.
    \[\begin{tikzpicture}[baseline=(current bounding box.north)]
      \matrix (m) [matrix of math nodes, nodes in empty cells] {
        0 & \cdots & H_{col} & H_{col} + 1 & \cdots\\
        & \ddots & \phantom{\oplus} & \phantom{\oplus} & \\
        & & \oplus & & \CC_1 \\
        & & \lhd & \rhd & \\
        & & & & \ddots \\
      } ;
      \draw (m-4-1.south west) -- (m-4-4.south east);
      \draw (m-4-4.south east) -- (m-2-4.north east);
    \end{tikzpicture}\]
    
    \underline{Operations of the form $dual \circ ui \circ dual$}

   These are impossible because the decomposition has the staircase property (Lemma \ref{lem-staircase-theta-lift}). Indeed, it implies that for any $r_1 \in \CC_{i_1}$ and $r_2 \in \CC_{i_2}$, $\supp(\widehat{r_2}) \supset \supp(\widehat{r_1})$. So there is no nontrivial union-intersection possible between different blocks.

    \underline{Operations of the form $ui$}

    Observe that if $i_1 > 1$, then the $ui$ operation between $r_1$ and $r_2$ can be viewed as a $ui$ on the truncation $\CC_2 \cup \dots \cup \CC_{k-1}$. All of the parts of this decomposition are equivalent to blocks. So by Lemma \ref{lem-independence-of-blocks}, no $ui$ is possible. Hence we only have to consider the case where $i_1 = 1$.

    Now observe that for $i_2 > 4$, no $ui$ is possible for support reasons. By the same reasoning as when we considered $ui\inv$ operations, the earliest column that $\CC_4$ could start at is $H_{col}+2$, which is achieved when the boundaries between $\CC_2$ and $\CC_3$, and $\CC_3$ and $\CC_4$ are both type 2. If $\CC_5$ started at $H_{col}+2$, then $\CC_4$ would only be supported in one column, which means that its circles should be part of $\CC_3$ or $\CC_4$, by maximality of blocks. So $\CC_5$ must start at least at $H_{col}+3$, which means no $ui$ is possible with a row from $\CC_1$, which ends at $H_{col}+1$. So it remains to consider the cases $i_2 = 4$, $3$, $2$.

    First suppose $i_2 = 4$. Then by the above reasoning it must be that the boundaries between $\CC_2$ and $\CC_3$ and $\CC_3$ and $\CC_4$ are both type 2, and that $\CC_3$ has two columns. It also follows that $r_1$ must end at $H_{col}+1$ and $r_2$ must begin at $H_{col}+2$. Moreover, it must be that every row of $\CC_3$ has exactly one circle. Otherwise, row with support $[H_{col}+2, H_{col}+1]$ cannot be exchanged with $r_1$ or $r_2$, so they can never be exchanged to adjacent positions. By Lemma \ref{lem-multiplicity-cancel} and \ref{lem-multiplicity-cancel-up}, we can assume the multiplicities of $H_{col}+1$ and $H_{col}+2$ in $\CC_2$ and $\CC_3$ are all $1$. Let $r_1'$ be the image of $r_1$ after it is exchanged to the last row of $\CC_1$, and let $r_2'$ be the image of $r_2$ after it is exchanged to the first row of $\CC_4$. In summary, we have the following situation.
    \[\begin{tikzpicture}[baseline=(current bounding box.north)]
      \matrix (m) [matrix of math nodes, nodes in empty cells] {
        & 0 & & H_{col} & & & \\
        & & \ddots & & \phantom{\oplus} & \CC_1 & \\
        r_1' & & \dots & \ominus & \rhd & & \\
        & & & & \oplus & \CC_2 & \\
        & & & & \oplus & & \CC_3 \\
        & & & & & \ominus & \CC_4 \\
        r_2' & & & & & \ominus & \dots \phantom{\oplus} \\
        & & & & & \phantom{\oplus} & \ddots \\
      } ;
      \draw (m-3-2.south west) -- (m-3-5.south east);
      \draw (m-2-5.north east) -- (m-3-5.south east);
      \draw (m-4-5.north west) rectangle (m-4-5.south east);
      \draw (m-5-5.north west) rectangle (m-6-6.south east);
      \draw (m-7-6.north west) -- (m-7-7.north east);
      \draw (m-7-6.north west) -- (m-8-6.south west);
    \end{tikzpicture}\]

    To see that no $ui$ is possible, we note that for support reasons the $ui$ must be performed by exchanging $r_1'$ with the circles in column $H_{col}+1$, exchanging $r_2'$ with the circles in column $H_{col}+2$, and then applying $ui$ to adjacent rows, and exchanging back. By Lemma \ref{lem-multiplicity-cancel}, the image of $r_1'$ after these row exchanges, which we call $r_1''$, is identical to $r_1'$. Let $r_2''$ be the image of $r_2'$ after the row exchange with the circle in column $H_{col}+2$. Since the supports of $r_1$ and $r_2$ do not overlap, the union-intersection cannot be Cases 1 or 2 of Definition \ref{def row exchange}. Also, since $r_1''$ has $l(r_1'') = 1$, \[A(r_1'') - l(r_1'') + 1 = H_{col}+1 < H_{col}+2 \leq B(r_2'') + l(r_2'').\] So no $ui$ of Case 3 is possible either.

    Second suppose $i_2 = 3$. We have the following three cases:
    \begin{enumerate}
        \item $A(r_1) = H_{col}+1$ and $B(r_2) = H_{col}+1$
        \item $A(r_1) = H_{col}+1$ and $B(r_2) = H_{col}+2$
        \item $A(r_1) = H_{col}$ and $B(r_2) = H_{col}+1$
    \end{enumerate}

    In both the first and the second case, since $A(r_1) = H_{col}+1$, by Lemma \ref{lem-aj-almost-block}, we have that if $r_1'$ is the image of $r_1$ after it is exchanged to the last row of $\CC_1$, then $l(r_1') = 1$ and the sign of the last circle of $r_1'$ is opposite the sign of the last circle of $\BB_{1, temp}$.

    In the first case, by Lemma \ref{lem-multiplicity-cancel}, we can assume without loss of generality that $\CC_2$ has multiplicity $1$ in $H_{col}+1$. First suppose that $r_2$ is supported only in $H_{col}+1$. Then the $ui$ could occur in one of two ways: either $r_1'$ is exchanged with the row of $\CC_2$, and then the $ui$ occurs, or $r_2$ is exchanged with the row of $\CC_2$, and then the $ui$ occurs. 
    
    For the first way, since the sign of the last circle of $r_1'$ is opposite the sign of the last circle of $\BB_{1, temp}$, and this is the same as the sign of the circle in $\CC_2$, the row exchange with the row of $\CC_2$ is Case 1(c) of Definition \ref{def row exchange}. If $r_1''$ is the image of $r_1'$ after the row exchange, then $l(r_1'') = 0$ and $\eta(r_1'') = -\eta(r_1')$. Since $\eta(r_2)$ is the same as the sign of the circle in $\CC_2$, it follows that $r_1''$ and $r_2$ fail the alternating sign condition. So the only possible union-intersection would be Case 1 or Case 2 of Definition \ref{def ui}, but $A(r_2) - l(r_2) \neq A(r_1'') - l(r_1'')$ and $B(r_2) + l(r_2) \neq B(r_1'') + l(r_1'')$.

    For the second way the row exchange between $r_2$ and the row in $\CC_2$ is trivial, giving $r_2'$ with $l(r_2') = l(r_2)$ and $\eta(r_2') = \eta(r_2)$. In this case a union-intersection between $r_1'$ and $r_2'$ of Case 3 of Definition \ref{def ui} is possible, but it leaves the rows (and the order) unchanged, so it is trivial.

    Next suppose that $r_2$ is supported in more than one column. We once again have to ways for the $ui$ to occur: either $r_1'$ is exchanged with the circle in $\CC_2$, or $r_2$ is exchanged with the circle in $\CC_2$. The first way is impossible for exactly the same reason as before. For the second way, the row exchange between $r_2$ and the circle in $\CC_2$ is Case 2(b) of Definition \ref{def row exchange}. So the image $r_2'$ after row exchange has $l(r_2') = 1$ and $\eta(r_2') = -\eta(r_2)$. So $r_1'$ and $r_2'$ fail the alternating sign condition, so the union-intersection would have to be Case 1 or Case 2. But $A(r_1') - l(r_1') \neq A(r_2') - l(r_2')$ and $B(r_1') + l(r_1') \neq B(r_2') + l(r_2')$, so union-intersection is impossible.

    For the second case, that $A(r_1) = H_{col}+1$ and $B(r_2) = H_{col}+2$, the only way the union-intersection can occur is if $r_1'$ is exchanged with all the circles in column $H_{col}+1$. Since the multiplicity of $H_{col}+1$ is even, by Lemma \ref{lem-multiplicity-cancel}, the result $r_1''$ is the same as $r_1'$. So $r_1''$ and $r_2$ fail the alternating sign condition, but again neither Case 1 nor Case 2 of Definition \ref{def ui} applies.

    For the third case, that $A(r_1) = H_{col}$ and $B(r_2) = H_{col}+1$, for support reasons the only way the union-intersection could occur is if $r_1$ is exchanged to the last row of $\CC_1$, giving $r_1'$, and $r_2$ is exchanged to the first row of $\CC_2$, giving $r_2'$. For support reasons the $ui$ would have to be Case 3 of Definition \ref{def ui}. Moreover, since $B(r_2') + l(r_2') \geq H_{col}+1$, and $A(r_1') - l(r_1') + 1 \leq H_{col}+1$, it follows that $l(r_1') = 0$ and $l(r_2') = 0$. By \cite[Lemma 7.9]{HKT26}, this implies $r_1'$ is a single circle in column $H_{col}$, with the same sign as the last circle of $\BB_{1, temp}$. 

    If $r_2$ has support only in $H_{col}+1$, then the row exchanges with the circles in $\CC_2$ leave it unchanged. So it fails the alternating sign condition with $r_1'$, so no $ui$ is possible.

    If $r_2$ has support in multiple columns, then after row exchanging with circles in $\CC_2$, the result $r_2'$ has $l(r_2') = 1$ using much the same reasoning as before. So again no $ui$ is possible.

    Third suppose $i_2 = 2$. Since $r_2$ is then a single circle with support in $H_{col}$, this case has already been considered when $i_2 = 3$ and $r_2$ is a single circle with support in $H_{col}$, since row exchanges leave it unchanged.
\end{proof}

Using Lemma \ref{lem-case5-independence}, we show the existence of the decomposition described at the beginning of the subsection.

\begin{prop}
\label{prop-case5-decomposition}
    Let $\EE_{temp}$ be a tempered extended multi-segment with block decomposition $\BB_{1, temp} \cup \dots \cup \BB_{k, temp}$. Let $\EE$ be any extended multi-segment equivalent to $\Theta_1(\EE_{temp})$. Then there exists a decomposition $\CC_1 \cup \dots \cup \CC_{k-1}$ of $\EE$ such that
    \begin{align*}
        \CC_1 &\sim \Theta_1(\BB_{1, temp} \cup \BB_{2, temp}) \\
        \CC_i &\sim \BB_{i+1, temp} \text{ for } i = 2, \dots, k-1.
    \end{align*}
\end{prop}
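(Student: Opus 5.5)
We will argue by induction on the number of basic operators (raising operators and their inverses) needed to pass from $\Theta_1(\EE_{temp})$ to $\EE$. By Theorem \ref{thm intersections of local Arthur packets}, any $\EE\sim\Theta_1(\EE_{temp})$ is reached from $\Theta_1(\EE_{temp})$ by a finite chain $\EE_0=\Theta_1(\EE_{temp}),\EE_1,\dots,\EE_N=\EE$. Each step is a single basic operator or its inverse. We decompose each such operator into row exchanges together with one $ui$, $dual\circ ui\circ dual$, or inverse.

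For the base case $\EE_0$, we use the decomposition
\[
\CC_1=\Theta_1(\BB_{1,temp}\cup\BB_{2,temp}),\qquad \CC_i=\BB_{i+1,temp}\ (i\ge 2).
\]
This is legitimate because, by Theorem \ref{thm going up first occurrence}, the added hat depends only on the first block. Since in Case 5 the going-up tower and $m^{\up,\alpha}$ coincide for $\EE_{temp}$ and for $\BB_{1,temp}\cup\BB_{2,temp}$, we get
\[
\Theta_1(\EE_{temp})=\Theta_1(\BB_{1,temp}\cup\BB_{2,temp})\cup\BB_{3,temp}\cup\cdots\cup\BB_{k,temp}.
\]
The parts are concatenated in order, so this is a valid decomposition of $\EE_0$.

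For the inductive step, suppose $\EE_j$ has a decomposition $\CC_1^{(j)}\cup\cdots\cup\CC_{k-1}^{(j)}$ of the required form, and let $T$ be the operator with $\EE_{j+1}=T(\EE_j)$. By Lemma \ref{lem-case5-independence}, $T$ cannot involve rows from two different parts, not even through row exchanges. Hence $T$ acts entirely within a single part $\CC_{i_0}^{(j)}$, and by Lemma \ref{lem-local} it fixes all other rows. We then define $\CC_{i_0}^{(j+1)}=T(\CC_{i_0}^{(j)})$, regarded as an operator on that virtual extended multi-segment, and keep the other parts unchanged.

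One must check three things for this new decomposition.
\begin{enumerate}
\item[(a)] The restriction of $T$ to $\CC_{i_0}^{(j)}$ is still a legitimate basic operator. This holds because the formulas in Definitions \ref{def row exchange} and \ref{ui def} only depend on the rows involved. The row exchanges used by $T$ are all within the part, and the $\epsilon$ and alternating data are internal, so the operator makes sense on $\Vseg$.
\item[(b)] Consequently $\CC_{i_0}^{(j+1)}\sim\CC_{i_0}^{(j)}$, which by induction is equivalent to $\Theta_1(\BB_{1,temp}\cup\BB_{2,temp})$ or to $\BB_{i_0+1,temp}$.
\item[(c)] The rows of $\EE_{j+1}$ are still ordered part by part. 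This follows because the resulting admissible order only permutes rows inside $\CC_{i_0}$, or deletes a row in the type $3'$ case. Alternatively, the staircase property of Lemma \ref{lem-staircase-theta-lift} allows one to reorder freely between parts.
\end{enumerate}

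The subtle point is applying $dual\circ ui\circ dual$ and the other dual-conjugated operators to a single part. The operator $dual$ on $\EE$ introduces the signs $(-1)^{\alpha_i+\beta_i}$, which depend on the rows in the other parts. We handle this exactly as in the proof of Lemma \ref{lem-case5-independence}: $dual(\EE)$ restricted to a part agrees with $dual$ of that part up to a global sign change on the part. This sign twist commutes with the operators, so conjugating back gives the same result on the part. This bookkeeping, namely that the restriction of a dual-conjugated operator is again such an operator on the part, is the main obstacle. We would verify it by tracking $\alpha_i,\beta_i$ under the staircase property, following \cite[Proposition 7.13]{HKT26}, whose proof (Proposition \ref{prop-independence-of-blocks}) we mirror throughout.
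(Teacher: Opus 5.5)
Your proposal is correct and follows the paper's proof. You start from the evident decomposition $\CC_1=\Theta_1(\BB_{1,temp}\cup\BB_{2,temp})$, $\CC_i=\BB_{i+1,temp}$ of $\Theta_1(\EE_{temp})$ and transport it along a chain of basic operators, using Lemma \ref{lem-case5-independence} to confine each operator to one part and Lemma \ref{lem-local} to leave the other parts fixed. Your checks (a)--(c) and the dual-sign bookkeeping are details the paper leaves implicit, partly covered in Lemma \ref{lem-existence-theta-lift}. One small correction to (c): the staircase property (Lemma \ref{lem-staircase-theta-lift}) does not let you reorder freely between parts; it guarantees that the part-by-part order remains admissible.
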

\begin{proof}
    The proof is analogous to the proof of Proposition \ref{prop-independence-of-blocks} (see \cite[Lemma 7.12]{HKT26}). Observe that $\Theta_1(\EE_{temp})$ has such a decomposition, namely $\CC_1 = \Theta_1(\BB_{1, temp} \cup \BB_{2, temp})$ and $\CC_i = \BB_{i+1, temp}$ for $i > 1$. Since $\EE$ is equivalent to $\Theta_1(\EE_{temp})$, there exists a sequence of basic operations taking $\EE$ to $\Theta_1(\EE_{temp})$. So it suffices to show that each one of these operations preserves the decomposition. By Lemma \ref{lem-case5-independence}, each operation only involves rows from a single part $\CC_i$. Since operations are local (Lemma \ref{lem-local}), they do not affect the other parts of the decomposition. So we can just apply the operation to the relevant $\CC_i$, which gives a new decomposition.
\end{proof}

\subsubsection{Existence-type result}

We prove the following existence type result.

\begin{lemma}
\label{lem-existence-theta-lift}
    Let $\EE_{temp}$ be a tempered extended multi-segment with block decomposition $\BB_{1, temp} \cup \dots \cup \BB_{k, temp}$. Let $\EE$ be any extended multi-segment equivalent to $\Theta_1(\EE_{temp})$ with decomposition $\CC_1 \cup \dots \cup \CC_{k-1}$ such that $\CC_1 \sim \Theta_1(\BB_{1, temp} \cup \BB_{2, temp})$ and $\CC_i \sim \BB_{i+1, temp}$ for $i = 2, \dots, k-1$, which exists by Proposition \ref{prop-case5-decomposition}. Then any operation on $\CC_i$ as its own extended multi-segment is also a valid operation on $\EE$.
\end{lemma}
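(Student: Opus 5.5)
We follow the proof of the analogous existence statements \cite[Lemmas 7.6 and 7.7]{HKT26}. Every basic operator (Definition \ref{def basic operators}), and the raising operators in particular, is a finite sequence of row exchanges followed by a single $ui$, $ui\inv$, or its conjugate by $dual$, followed by the inverse row exchanges. So it is enough to show two things for an operation $T$ valid on $\CC_i$ viewed as its own virtual extended multi-segment. First, the row exchanges inside $\CC_i$ that make up $T$ give the same result when performed inside $\EE$. Second, the defining condition for the core $ui$ or $ui\inv$ (equality of $A-l$, $B+l$, signs, admissibility of the new order) is unaffected by the presence of the other parts. By Lemma \ref{lem-local}, rows outside $\CC_i$ that take part in no exchange stay fixed. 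The only real issue is therefore the global data entering the operators: the signs, and in the $dual$ conjugated operators the parities $\alpha_i,\beta_i$ from Definition \ref{def dual}.

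\textbf{Operations of type $ui$, $ui\inv$.} These involve only rows of $\CC_i$ and adjacent exchanges within $\CC_i$. By the staircase property (Lemma \ref{lem-staircase-theta-lift}), every row of $\CC_{i'}$ with $i'<i$ lies before every row of $\CC_i$, and the order on $\CC_i$ is the restriction of the order on $\EE$. The row exchanges of Definition \ref{def row exchange} and the union-intersections of Definition \ref{ui def} depend only on the two rows involved. Hence $T$ acts on $\EE$ exactly as on $\CC_i$, and the result $\CC_1\cup\cdots\cup T(\CC_i)\cup\cdots$ is still in admissible order, again by the staircase property, since $T$ does not enlarge the span of $\supp(\CC_i)$ beyond that of an equivalent part. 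Nonvanishing follows because the result is equivalent to $\EE$.

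\textbf{Operations conjugated by $dual$.} The main obstacle is the $dual\circ ui^{\pm1}\circ dual$ operations. When $\EE$ is in $(P')$ order, which we checked in the proof of Lemma \ref{lem-case5-independence} for each of the forms $\Theta_1,\Theta_2,\Theta_4$ of $\CC_1$, the dual $dual(\EE)$ is the reversed concatenation of $dual(\CC_j)$. The rows of $dual(\CC_i)$ computed inside $\EE$ differ from those computed inside $\CC_i$ only through $\alpha_r$ and $\beta_r$. These shift by the constants $\sum_{j<i}a(\CC_j)$ and $\sum_{j>i}b(\CC_j)$, and all $B_r\in\Z$. So $dual(\EE)\cap\widehat{\CC_i}$ agrees with $dual(\CC_i)$ up to the global sign change $\eta\mapsto\pm\eta$ on every row, which is exactly the observation used in the proof of Lemma \ref{lem-case5-independence}. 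Row exchange (Definition \ref{def row exchange}) depends on signs only through $\epsilon=(-1)^{A_k-B_k}\eta_k\eta_{k+1}$ and multiplies signs covariantly. Likewise, the $ui$ of Definition \ref{ui def} depends on signs only through $\epsilon$. Both are therefore equivariant under a global sign flip. Applying $dual$ again undoes the shift, so the result is the image of $T(\CC_i)$ placed back in $\EE$.

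For $\CC_1$, which contains the hat from $\Theta_1$, we would check this shift computation directly on the three forms $\Theta_1,\Theta_2,\Theta_4(\BB_1\cup\BB_{2,temp})$. For $i\ge 2$ we can simply quote \cite[Lemma 7.7]{HKT26}.
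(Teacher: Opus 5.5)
Your route is the paper's. You split by the type of $T$, use that row exchanges and union-intersections only see the rows involved, get admissibility after a $ui\inv$ from the staircase property (Lemma~\ref{lem-staircase-theta-lift}), and handle the $dual$-conjugated operators by noting that $dual(\EE)$ is the reversed concatenation of the $dual(\CC_j)$ up to a sign change on each part. Your justification of that last point is more explicit than the paper's one-line remark: the shifts of $\alpha_r,\beta_r$ are constant for $r\in\CC_i$, and $\epsilon$ is invariant when every sign is flipped. Because this is uniform in $i$, your closing plan is unnecessary: you need not recheck $\CC_1$ form by form, nor quote \cite[Lemma 7.7]{HKT26} for $i\ge 2$, whose setting has every part block-equivalent, which is not the case here.

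The step you leave out is the one the paper singles out as the only real check for $dual\circ ui\inv\circ dual$. When the $ui\inv$ splits a row of $dual(\CC_i)$ inside $dual(\EE)$, the new rows must satisfy the admissibility condition with the rows of $dual(\CC_j)$, $j\neq i$; otherwise the operator is not defined on $\EE$ at all. Sign-equivariance says nothing about this, and your span argument from the $ui\inv$ case concerns rows of $\EE$, not of $dual(\EE)$. Indeed, splitting $[A(s),-B(s)]$ into $[C,-B(s)]$ and $[A(s),C+1]$ returns the rows $[C,B(s)]$ and $[A(s),-C-1]$ to $\EE$, and the latter may have negative $B$; this is exactly how $\Theta_1$ is recovered from $\Theta_2$, recreating the hat.

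The paper handles this via $\max_{r\in\CC_i}|B(r)|\le\max_{r\in\CC_i}A(r)\le\min_{r\in\CC_{i+1}}B(r)$. Then every row of $dual(\CC_i)$ is contained in every row of $dual(\CC_j)$ for $j>i$, so the pieces of a split row are too, and the admissibility condition is vacuous against those parts. Against parts $j<i$ the containment goes the other way, but nothing is needed there. For $i\ge 2$, $\CC_i$ is equivalent to a block not starting at zero, so by Theorem~\ref{block classification} every equivalent virtual extended multi-segment has all $B(r)\ge c_{\min}>0$. In the split above, however, $C\ge B(s)>0$ forces $-C-1<0$, so no such operation is valid on $\CC_i$ alone. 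With this check added, your argument is complete.
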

\begin{proof}
    The proof is analogous to the proof of Lemma \cite[Lemma 7.6]{HKT26}, but we sketch it again for clarity, since the context is slightly more general here.
    
    Any operation $T$ on $\CC_i$ is either a $ui$, a $dual \circ ui \circ dual$, a $ui\inv$ of type 3', or a $dual \circ ui\inv \circ dual$ where the $ui\inv$ is of type 3'. In the case that $T$ is a $ui$, it is clear that whether the conditions of union-intersection are satisfied is purely local. In the case that $T$ is a $dual \circ ui \circ dual$, we note that $dual(\EE) = dual(\CC_{k-1}) \cup \dots \cup dual(\CC_1)$ possibly up to a global sign change on each $dual(\CC_i)$. Hence this does not affect whether the union-intersection is valid.

    In the case that $T$ is a $ui\inv$ of type 3', the only thing we need to check is that the order is still admissible after the $ui\inv$. Since $T$ is a valid operation on $\CC_i$, the rows created satisfy the admissible order condition with other rows from $\CC_i$. Also since the decomposition has the staircase property (Lemma \ref{lem-staircase-theta-lift}), the rows created by applying a $ui\inv$ to $\CC_i$ cannot fail the admissible order condition with a row from $\CC_j$ for $j \neq i$.

    Finally, in the case that $T$ is a $dual \circ ui\inv \circ dual$, again the only thing we need to check is that the order is still admissible. Note that \[\max_{r \in \CC_i} |B(r)| \leq \max_{r \in \CC_i} A(r) \leq \min_{r \in \CC_{i+1}} B(r),\] where the first inequality follows from the definition of $A$ and $B$ and the second inequality follows from the fact that the decomposition has the staircase property. This implies that for any $i<j$, the support of every row in $dual(\CC_j)$ contains the support of every row in $dual(\CC_i)$. Hence the new rows created by applying a $ui\inv$ to $dual(\CC_i)$ cannot fail the admissible order condition with a row from $\CC_j$ for $j \neq i$, since it does not even apply.
\end{proof}

Finally, we prove Case 5 of Theorem \ref{thm-count-theta-temp}, which completes the proof of Theorem \ref{thm-count-theta-temp}.

\begin{proof}[Proof (of Case 5 of Theorem \ref{thm-count-theta-temp})]
    By Proposition \ref{prop-case5-decomposition}, every $\EE$ equivalent to $\EE_{temp}$ has a decomposition $\CC_1 \cup \dots \cup \CC_{k-1}$ with $\CC_1 \sim \Theta_1(\BB_{1, temp} \cup \BB_{2, temp})$ and $\CC_i \sim \BB_{i+1, temp}$ for $i > 1$. By repeatedly applying Lemma \ref{lem-existence-theta-lift}, all such combinations of $\CC_i$ are achievable.
\end{proof}

\subsection{Anti-tempered extended multi-segments}\label{sec Anti-Tempered Extended Multi-Segments}

Finally, calculating $\theta_{-m^{\up, \alpha}}^\up(\pi)$ when $\pi$ is an anti-tempered representation is simpler than the tempered case because $\theta_{-m^{\up, \alpha}}^\up(\pi)$ itself must be an anti-tempered representation. Moreover, given an anti-tempered extended multi-segment $\mathcal{E},$ we can easily identify a multi-segment associated to the theta lift $\theta_{-m^{\up, \alpha}}^\up(\pi(\mathcal{E})).$

\begin{lemma}
\label{theta-antitempered}
    Let $\mathcal{E}$ be an anti-tempered extended multi-segment in (P') order. Then the following holds.

    \begin{enumerate}
        \item The first row of $\mathcal{E}$ is of the form $([k, -k], k, \eta(\mathcal{E})).$
        \item $\mathcal{E}_{m^{\up, \alpha}}$ is of the form $([k + 1, -k - 1], k + 1, -\eta(\mathcal{E})) \cup \mathcal{E}.$
        \item $\mathcal{E}_{m^{\up, \alpha}}$ is itself an anti-tempered multi-segment.
    \end{enumerate}
\end{lemma}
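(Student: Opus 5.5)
The plan is to pass through the dual, where the anti-tempered statement turns into a tempered one and Theorem \ref{thm going up first occurrence} together with Algorithm \ref{algo compute theta lift} apply directly.

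\textbf{Step 1: identify the first row.} Since $\mathcal{E}$ is anti-tempered, $\psi_{\mathcal{E}}$ is trivial on the Weil--Deligne $\SL_2(\BC)$. Hence every row has $a_i=1$, that is $B_i=-A_i$. In (P') order the rows are therefore sorted by decreasing $A_i$, so the first row has the largest support $[k,-k]$.

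For nonvanishing, I would use that $dual(\mathcal{E})$ is tempered (Lemma \ref{lemma dual}). By Definition \ref{def dual}, a row $([A_i,-A_i],l_i,\eta_i)$ dualizes to $([A_i,A_i],l_i',\eta_i')$ with $l_i'=l_i-A_i$. A tempered multi-segment has $l'=0$, so $l_i=A_i$ for every row. For the first row, $\alpha_1=0$ and $l_1=k=b_1/2$ up to the half-integer convention; in any case the row is a hat $([k,-k],k,\eta(\mathcal{E}))$ with sign $\eta(\mathcal{E})$ by definition. This gives (1).

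\textbf{Step 2: compute the lift.} I would pass to $dual(\mathcal{E})$, which is tempered, and determine $\up$ and $m^{\up,\alpha}$ from Theorem \ref{thm going up first occurrence} applied to $\pi(\mathcal{E})$. Since the first occurrence is an invariant of the representation, the plan is to argue through the $dual$ only for bookkeeping. The cleanest route, however, is to run Algorithm \ref{algo compute theta lift} directly on $\mathcal{E}$:
\begin{itemize}
\item start from $\mathcal{E}_{\alpha_0}^\pm$, which adds the hat $([\frac{\alpha_0-1}{2},-\frac{\alpha_0-1}{2}],\frac{\alpha_0-1}{2},\pm1)$ with $\frac{\alpha_0-1}{2}=k+1$ on top;
\item note that $\alpha_0$ is already $2k+3$, so the lift at $\alpha=2k+3$ is $\mathcal{E}_{2k+3}^\pm=([k+1,-k-1],k+1,\pm1)\cup\mathcal{E}$.
\end{itemize}

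The remaining task is to show two things: that $m^{\up,\alpha}=2k+3$, and that $\up$ corresponds to the sign $-\eta(\mathcal{E})$. The tower claim follows from the sign condition: a new hat in the wrong tower has its sign determined so that, for $\alpha=2k+3$, the two hats $([k+1,-k-1],k+1,\pm1)$ and $([k,-k],k,\eta)$ satisfy or fail the alternating sign condition.

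To show the first occurrence is not earlier, I would consider $\alpha=2k+1$. Step 2 of the algorithm would lower the hat to $[k,-k]$ and exchange it past the rows with $B=-k$. Using Lemma \ref{big swap down} together with the nonvanishing criterion (e.g.\ \cite[Theorem 6.17]{HLL25}, applied formally), I would show that in the tower with sign $-\eta(\mathcal{E})$ the result vanishes; equivalently, this lower lift lies in the going-down tower. I would translate this through $dual$ to Theorem \ref{thm going up first occurrence}. Dualizing $([k+1,-k-1],k+1,-\eta)\cup\mathcal{E}$ produces a tempered multi-segment whose column $k+1$ circle, for the correct sign, does not extend the chain in a way that lowers the first occurrence.

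\textbf{Step 3: conclude (3).} With (2) in hand, (3) is immediate: every row of $\mathcal{E}_{m^{\up,\alpha}}$ has $B=-A$, so its parameter is trivial on the first $\SL_2$, and it lies in $\Rep$ by Theorem \ref{thm Adams going up tower}.

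The main obstacle is Step 2: pinning down that the first occurrence is exactly at $\alpha=2k+3$ with sign $-\eta(\mathcal{E})$ rather than lower. I would try first to verify it via $dual$, using \cite[Proposition 6.11]{HLL25} (the Aubert dual is compatible with theta at the first occurrence up to the going-up/going-down swap), and fall back on the explicit sign computation if that fails.
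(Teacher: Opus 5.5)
Your Step 1 is correct and matches the paper. You also locate the crux correctly: one must show that the candidate lift in the tower with sign $-\eta(\mathcal{E})$ vanishes at $\alpha=2k+1$. But that step is only announced, and the arguments you offer around it do not work.

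\emph{The remark at $\alpha=2k+3$.} Nothing at $\alpha=2k+3$ can identify $\up$. Both $\pi(\mathcal{E}^{\pm}_{2k+3})$ are nonzero, since the dual of either is tempered with an isolated circle in the new column $k+1$. So the observation that the two hats alternate for one choice of sign only records the answer; it does not derive it.

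\emph{Going through $dual$.} Theorem \ref{thm going up first occurrence} applies only to tempered representations, and first occurrence does not pass through $dual$. Take $\mathcal{E}=\{([k,-k],k,1)\}$ with $k\geq 1$. The tempered $dual(\mathcal{E})$ is a single circle in column $k$, and its representation first occurs at $\alpha=1$ in both towers. By contrast, $\pi(\mathcal{E})$ has first occurrences $\alpha=2k+3$ and $\alpha=-2k-1$.

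\emph{The fallback citation.} The same example refutes the theta compatibility you attribute to \cite[Proposition 6.11]{HLL25}. That result only identifies $\pi(dual(\mathcal{E}))$ as the Aubert--Zelevinsky dual of $\pi(\mathcal{E})$ and says nothing about theta lifts. The proposed computation via Lemma \ref{big swap down} is never carried out.

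\emph{The missing idea.} Apply Lemma \ref{lemma dual} to the candidate lift itself rather than to $\pi(\mathcal{E})$. At $\alpha=2k+1$ the candidate is $([k,-k],k,\eta)\cup\mathcal{E}$, and its dual is tempered. Work out the signs from Definition \ref{def dual}:
\begin{enumerate}
\item Inserting the new first row raises every $\alpha_i$ by $a=1$, so all old dual signs flip uniformly.
\item The old circles in column $k$ therefore carry sign $(-1)^{1+\beta_1}\eta(\mathcal{E})$, where $\beta_1=\sum_{j>1}b_j$ is computed in $\mathcal{E}$.
\item The new row dualizes to one more circle in column $k$, with sign $(-1)^{2k+1+\beta_1}\eta=(-1)^{1+\beta_1}\eta$.
\end{enumerate}
A tempered multi-segment is nonzero exactly when the circles in each column share a sign. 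So the candidate is nonzero if and only if $\eta=\eta(\mathcal{E})$.

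Hence the $-\eta(\mathcal{E})$ tower vanishes at $\alpha=2k+1$. By persistence and conservation it is the going-up tower, with first occurrence at least $2k+3$. At $\alpha=2k+3$ the lift $([k+1,-k-1],k+1,-\eta(\mathcal{E}))\cup\mathcal{E}$ is nonzero, because its dual is tempered with a lone circle in column $k+1$. This gives (2), and (3) then follows as in your Step 3.
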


\begin{proof}
    Part (1) follows immediately from the fact that $dual(\mathcal{E})$ must be tempered. Since the first row of $\mathcal{E}$ must dualize to a row with a single circle, this indeed must be a row of the form $([k, -k], n, \eta(\mathcal{E})).$ Furthermore, since we impose a (P') order, we note that no row of $\mathcal{E}$ is supported $\geq k.$

    Therefore, from Algorithm \ref{algo compute theta lift}, it follows that for $\alpha$ such that $\frac{\alpha - 1}{2} \geq n,$ $\theta_{-\alpha}^\eta(\pi(\mathcal{E})$ is associated to an extended multi-segment:
    $$\mathcal{E}' := \left( \left[ \frac{\alpha - 1}{2}, \frac{-\alpha + 1}{2}\right], \frac{\alpha - 1}{2}, \eta   \right) \cup \mathcal{E}.$$

    Forming such an extended multi-segment for the case where $\frac{\alpha - 1}{2} = n$, we note that $\pi(\mathcal{E}') \neq 0$ if and only if $\eta = \eta(\mathcal{E}).$ To see this, via Lemma \ref{lemma dual}, it suffices to show that $\pi(dual(\mathcal{E}')) \neq 0$ if and only if $\eta = \eta(\mathcal{E}').$ This is true because the hat $\left( \left[ \frac{\alpha - 1}{2}, \frac{-\alpha + 1}{2}\right], \frac{\alpha - 1}{2}, \eta   \right) = ([k, -k], k, \eta)$ is equal to the first row of $\mathcal{E},$ except possibly for the sign. If $\eta = \eta(\mathcal{E}),$ then $dual(\mathcal{E})$ will be a tempered extended multi-segment with an extra circle in the $n$th column and therefore will correspond to a non-zero tempered representation. If $\eta = -\eta(\mathcal{E}),$ then $dual(\mathcal{E})$ will have two circles in the same column with opposite signs and therefore the resulting representation vanishes.

    Therefore, it is clear that the sign $\eta(\mathcal{E})$ corresponds with the going-down tower, and that $m^{\up, \alpha}$ must satisfy $\frac{m^{\up, \alpha} - 1}{2} \geq k + 1.$ In fact, we have $\frac{m^{\up, \alpha} - 1}{2} = k + 1,$ since the extended multi-segment $([k + 1, -k - 1], k + 1, -\eta(\mathcal{E})) \cup \mathcal{E}$ clearly has a tempered dual. This suffices to prove Part (2), and also Part (3).
\end{proof}

The proof of Theorem \ref{theta-antitempered-count} now follows immediately from a combination of Lemma \ref{theta-antitempered} and Theorems \ref{thm-count-block-temp} and \ref{thm-count-temp}.

\appendix

\section{Motivation for Conjecture \ref{conj-count-theta-temp}}\label{sec motivation}

The goal of this appendix is to give an idea of how the formula in Conjecture \ref{conj-count-theta-temp} can be obtained. 

Throughout this appendix, we let $\rho$ be an orthogonal supercuspidal representation of some $\GL_d(F)$ (often omitted in the notation). Let $([\frac{\alpha-1}{2},-\frac{\alpha-1}{2}]_{\chi_W}, \frac{\alpha-1}{2}, \eta)$ denote the added extended segment in $\Theta_1(\EE)$ for some $\EE\in\VRep_\rho(G_n).$ From the point of view of Definition \ref{def-hat}, this extended segment is called a hat. Throughout this appendix, we shall refer to it as the hat. Often, we apply a $dual\circ ui \circ dual$ which involves the hat with some other row in $\EE.$ We refer to this operation as ``dualizing the hat'' (see the operations $D^1$ and $D^2$ described in Lemma \ref{lem SMUD operators}).

By a similar argument as in the proof of Theorem \ref{thm-count-temp} (see \cite[\S 7]{HKT26}), we can argue that the blocks after the first two are independent, in the sense that the count for the theta lift of $\EE$ is the same as the count for the theta lift of $\BB_1 \cup \BB_2$ multiplied by $\prod_{i=3}^k |\Psi(\pi(\BB_i))|$. So it suffices to consider the case when $\EE$ decomposes into two blocks $\BB_1$ and $\BB_2$.

Our approach is the following. We consider an arbitrary extended multi-segment $\EE'$ equivalent to a tempered extended multi-segment $\EE$, and ask what operations are possible on $\Theta_1(\EE')$ that were not possible in $\EE'$. Each of these operations gives a distinct new extended multi-segment. We then add the count of the number of these operations to $|\Psi(\pi(\BB_1))| \cdot |\Psi(\pi(\BB_2))|$, which is the number of extended multi-segments $\EE'$ equivalent to the tempered $\EE$.

For each $\Theta_1(\EE')$, we claim that the operations are of the following type.
\begin{enumerate}
    \item We can perform some number of $dual \circ ui \circ dual$ operations between the hat in the first row and another row. The total number of extended multi-segments coming from the this type of operation is given by $D$.
    \item We can perform additional $ui$ operations between rows in extended multi-segments equivalent to $\BB_1$ and $\BB_2$. (In other words, the blocks are not independent in the sense of Lemma \ref{lem-independence-of-blocks}.) The number of extended multi-segments coming from the second type of operation is given by $U$.
\end{enumerate}

To give a count of $D$, we note that there are usually exactly two $dual \circ ui \circ dual$ operations involving the hat in the first row. This is for basically the same reasons as in the almost-block case (see Lemma \ref{duds for almost block}): as long as the first circle in column $H_{col}$ is combined with some row in $\BB_1'$, either in a row of circles or in a hat, we can dualize the hat to that row; moreover, since $m_H$ is even and hence greater than $1$, there is always at least one additional circle in column $H_{col}$ leftover to which the hat can dualize. There cannot be more than two dualizations because only the first circle in $H_{col}$ can interact with $\BB_1'$, and the dualizations to the remaining circles all differ by row exchanges.

The exceptions to the above reasoning are the following. In the case that $m_H-1>1$ the only exception is that the first circle in $H_{col}$ might not be combined with anything in $\BB_1'$. In this case there are $|\Psi(\pi(\BB_1'))| \cdot |\Psi(\pi(\BB_2))|$ possibilities. This explains the second case of the formula for $D$, which is much the same as in the almost block-case (Theorem \ref{thm-count-block-theta-temp}).

The case that $m_H - 1 = 1$ is more subtle. In these cases it is possible that there is only one $dual \circ ui \circ dual$ for another reason: the last circle in $H_{col}$, which belongs to $\BB_2$, might be $ui$'d with another circle in $H_{col}+1$, thereby ``blocking'' the $dual \circ ui \circ dual$ with the hat. For example, in the extended multi-segment below, there is only one $dual \circ ui \circ dual$ possible despite the fact that the first circle in column $3$ has been combined with previous rows. 
\[\begin{tikzpicture}[baseline=(current bounding box.north)]
  \matrix (m)[matrix of math nodes, nodes in empty cells] {
    -4 & -3 & -2 & -1 & 0 & 1 & 2 & 3 & 4 & 5 \\
    \lhd & \lhd & \lhd & \lhd & \oplus & \rhd & \rhd & \rhd & \rhd & \\
    & & & & \ominus & & & & & \\
    & & & & & \oplus & & & & \\
    & & & & & & \ominus & & \BB_1 & \\
    & & & & & & \ominus & & & \\
    & & & & & & \ominus & \oplus & & \BB_2 \\
    & & & & & & & \oplus & \ominus & \\
    & & & & & & & & & \oplus \\
  } ;
  \draw (m-3-5.north west) rectangle (m-7-8.south east);
  \draw (m-8-8.north west) rectangle (m-9-10.south east);
\end{tikzpicture}\] So, we count as follows:
\begin{itemize}
    \item start with two $dual \circ ui \circ dual$ operations for each of the $\EE'$, giving \[2 |\Psi(\pi(\BB_1))| \cdot |\Psi(\pi(\BB_2))|,\]
    \item subtract the case that there is only one $dual \circ ui \circ dual$ operation because the first circle in $H_{col}$ is unchanged, of which there are \[|\Psi(\pi(\BB_1'))| \cdot |\Psi(\pi(\BB_2))|,\]
    \item subtract the case that there is only one $dual \circ ui \circ dual$ operation because the second circle in $H_{col}$ has been combined with other circles, of which there are \[|\Psi(\pi(\BB_1))| \cdot (|\Psi(\pi(\BB_2))| - |\Psi(\pi(\BB_2'))|),\]
    \item add back the overlap between the previous two cases, of which there are \[|\Psi(\pi(\BB_1'))| \cdot (|\Psi(\pi(\BB_2))| - |\Psi(\pi(\BB_2'))|).\]
\end{itemize}
The first two formulas are straightforward. The third formula can be derived by complementary counting. We want to find the number of ways to form a $\BB_2''$ equivalent to $\BB_2$ such that the first row has more than one circle. The total number of such $\BB_2''$ is $|\Psi(\pi(\BB_2))|$, and the number of such $\BB_2''$ where the first row has only one circle is just $|\Psi(\pi(\BB_2'))|$, since we do not allow operations on the first row. Finally the fourth formula can be derived in a similar way. Combining the formulas and simplifying gives the claimed value of $D$ for $m_H - 1 = 1$ in Conjecture \ref{conj-count-theta-temp}.

Continuing in the above example, we have $\BB_1'$ and $\BB_2'$ are as follows.
\[\begin{tikzpicture}[baseline=(current bounding box.north)]
  \matrix (m)[matrix of math nodes, nodes in empty cells] {
    -4 & -3 & -2 & -1 & 0 & 1 & 2 & 3 & 4 & 5 \\
    \lhd & \lhd & \lhd & \lhd & \oplus & \rhd & \rhd & \rhd & \rhd & \\
    & & & & \ominus & & & & & \\
    & & & & & \oplus & & & & \\
    & & & & & & \ominus & \BB_1' & \BB_1 & \\
    & & & & & & \ominus & & & \\
    & & & & & & \ominus & & & \\
    & & & & & & & \oplus & & \BB_2 \\
    & & & & & & & \oplus & & \BB_2' \\
    & & & & & & & & \ominus & \\
    & & & & & & & & & \oplus \\
  } ;
  \draw (m-3-5.north west) rectangle (m-8-8.south east);
  \draw (m-3-5.north west) rectangle (m-7-7.south east);
  \draw (m-9-8.north west) rectangle (m-11-10.south east);
  \draw (m-10-9.north west) rectangle (m-11-10.south east);
\end{tikzpicture}\] We have the counts $|\Psi(\pi(\BB_1))| = 33, |\Psi(\pi(\BB_1'))| = 9, |\Psi(\pi(\BB_2))| = 4,$ and $|\Psi(\pi(\BB_2'))| = 2$, so the formula gives \[D = 2 (33 \cdot 4) - (9 \cdot 4 + 33 \cdot (4 - 2) - 9 \cdot (4 - 2)) = 180.\]

Finally we explain the count of $U$, the number of additional $ui$ operations involving rows from extended multi-segments equivalent to $\BB_1$ and $\BB_2$. These occur in the following way: first, we perform a $dual \circ ui \circ dual$ operation between the hat and the last circle in column $H_{col}$, creating a row with a pair of triangles. (We expect this operation is always possible, even if the extended multi-segment is not tempered.) Then, if the row immediately following the pair of triangles has more than one circle, we can perform a union-intersection between the two rows. The number of new extended multi-segments created by this operation is possible is $|\Psi(\pi(\BB_1))| \cdot (|\Psi(\pi(\BB_2'))| - |\Psi(\pi(\BB_2''))|)$. The term $|\Psi(\pi(\BB_1))|$ comes from the fact that the first part of the extended multi-segment can be anything equivalent to $\BB_1$, and the term $|\Psi(\pi(\BB_2'))| - |\Psi(\pi(\BB_2''))|$ comes from the fact that we need the first row with support in $H_{col}+1$ to have more than one circle.

As an example, consider the following theta lift of a tempered extended multi-segment.
\[\begin{tikzpicture}[baseline=(current bounding box.north)]
  \matrix (m)[matrix of math nodes, nodes in empty cells] {
    -2 & -1 & 0 & 1 & 2 & 3 & 4 \\
    \lhd & \lhd & \oplus & \rhd & \rhd & & \\
    & & \ominus & \BB_1' & \BB_1 & & \\
    & & & \oplus & & & \\
    & & \BB_2 & \oplus & & & \\
    & & & \BB_2' & \ominus & & \\
    & & & & \ominus & & \\
    & & & & \ominus & \BB_2'' & \\
    & & & & & \oplus & \\
    & & & & & & \ominus \\
  } ;
  \draw (m-3-3.north west) rectangle (m-4-4.south east);
  \draw (m-3-3.north west) rectangle (m-3-3.south east);
  \draw (m-5-4.north west) rectangle (m-10-7.south east);
  \draw (m-6-5.north west) rectangle (m-10-7.south east);
  \draw (m-9-6.north west) rectangle (m-10-7.south east);
\end{tikzpicture}\]

An additional $6$ extended multi-segments are created by these $ui$ operations. One of them can be created by the following process. Note that the union-intersection is only valid in the last step because the $4$th row has more than one circle.

\[\begin{tikzpicture}[auto]
  \matrix (m)[matrix of math nodes, nodes in empty cells, row 6/.style=blue, row 7/.style=blue, row 8/.style=blue, row 9/.style=blue, row 10/.style=blue] {
    -2 & -1 & 0 & 1 & 2 & 3 & 4 \\
    \lhd & \lhd & \oplus & \rhd & \rhd & & \\
    & & \ominus & & \BB_1 & & \\
    & & & \oplus & & & \\
    & & & \oplus & & & \\
    & & & & \ominus & & \\
    & & & & \ominus & & \\
    & & & & \ominus & & \\
    & & & & & \oplus & \\
    & & & & & & \ominus \\
  } ;
  \matrix (m1) [right=of m, matrix of math nodes, nodes in empty cells, row 2/.style=blue, row 5/.style=blue] {
    -2 & -1 & 0 & 1 & 2 & 3 & 4 \\
    \lhd & \lhd & \oplus & \rhd & \rhd & & \\
    & & \ominus & & \BB_1 & & \\
    & & & \oplus & & & \\
    & & & \oplus & & & \\
    & & & & \ominus & \oplus & \\
    & & & & \oplus & & \\
    & & & & \oplus & & \\
    & & & & & & \ominus \\
  } ;
  \matrix (m2) [below=of m, matrix of math nodes, nodes in empty cells, row 4/.style=blue, row 5/.style=blue] {
    -2 & -1 & 0 & 1 & 2 & 3 & 4 \\
    & & \oplus & & \BB_1 & & \\
    & & & \ominus & & & \\
    & & & \lhd & \rhd & & \\
    & & & & \ominus & \oplus & \\
    & & & & \oplus & & \\
    & & & & \oplus & & \\
    & & & & & & \ominus \\
  } ;
  \matrix (m3) [below=of m1, matrix of math nodes, nodes in empty cells] {
    -2 & -1 & 0 & 1 & 2 & 3 & 4 \\
    & & \oplus & & \BB_1 & & \\
    & & & \ominus & & & \\
    & & & \lhd & \ominus & \rhd & \\
    & & & & \oplus & & \\
    & & & & \oplus & & \\
    & & & & \oplus & & \\
    & & & & & & \ominus \\
  } ;
  \node (ghost) [left=of m2] {};
  \draw (m-3-3.north west) rectangle (m-4-4.south east);
  \draw (m1-3-3.north west) rectangle (m1-4-4.south east);
  \draw (m2-2-3.north west) rectangle (m2-3-4.south east);
  \draw (m3-2-3.north west) rectangle (m3-3-4.south east);
  \draw [->] (m) to node {$ui$'s} (m1);
  \draw [->] (ghost) to node {$dual \circ ui \circ dual$} (m2);
  \draw [->] (m2) to node {$ui$} (m3);
\end{tikzpicture}\]

\section{Commutativity Results}\label{sec commutativity}

Throughout this appendix, we assume that $\rho$ is an orthogonal representation of $\GL_d(F)$ and often suppress it in the notation. 
In the language of extended multi-segments, the spirit of the Adams conjecture in the case of the first occurrence of the going-up tower is that for some $\mathcal{E}\in\Rep_\rho(G_n),$ we have
$$\Psi(\theta_{-m^{\up, \alpha}}^\up (\pi(\mathcal{E}))) \supseteq \{\psi_{\mathcal{E}'_{m^{\up, \alpha}}} \mid \mathcal{E}' \sim \mathcal{E}\}.$$

Our previous results show that it is too much to hopeful for an equality between these sets generally. However, in many cases, they offer a meaningful resolution for tempered $\mathcal{E}$. In particular, Theorem \ref{first block m_n = 1} describes a case in which $\Psi(\theta_{-m^{\up, \alpha}}^\up \pi(\mathcal{E}))$ is equal not to $\{\psi_{\mathcal{E}'_{m^{\up, \alpha}}} \mid \mathcal{E}' \sim \mathcal{E}\},$ but to three ``copies" of this set. Theorem \ref{first block m_n > 1} offers a bit more complexity. In this case, $\Psi(\theta_{-m^{\up, \alpha}}^\up \pi(\mathcal{E}))$ is equal to the union of four sets like $\{\psi_{\mathcal{E}'_{m^{\up, \alpha}}} \mid \mathcal{E}' \sim \mathcal{E}\},$ but two of these sets may have some overlap. Here, we offer an elaboration of these results that better elucidates the connection between these sets, rather than simply counting their sizes.

\subsection{A motivating example}

Consider the following.

$$\mathcal{E} = \bordermatrix{& 0 & 1 & 2 & 3 \cr & \oplus \cr & & \ominus \cr& & & \ominus \cr & & & & \oplus}, ~~~~~~~~~~  \mathcal{E}_{m^{\up, \alpha}} = \bordermatrix{& -2 & -1 & 0 & 1 & 2 & 3 \cr & \lhd & \lhd & \ominus & \rhd & \rhd \cr & & & \oplus \cr & & & &  \ominus \cr & & & & & \ominus \cr & & & & & & \oplus}.$$

The multi-segment $\mathcal{E}\in\Rep(G_n)$ can be written in the block decomposition $\mathcal{B}_1 \cup \mathcal{B}_2,$ where $|\Psi(\pi(\mathcal{B}_1))| = 3$ and $|\Psi(\pi(\mathcal{B}_2))| = 2,$ from which it follows that $|\Psi(\pi(\mathcal{E}))| = 6$ via Theorem \ref{thm-count-temp}. These six extended multi-segments and all connecting raising operators are depicted below.

\[\begin{tikzcd}[scale cd = 0.7, sep = small, every label/.append style={font=\tiny}]
	\bordermatrix{& -1 & 0 & 1 & 2 & 3 \cr & \lhd & \oplus & \rhd \cr & & \ominus \cr & & & &  \ominus \cr & & & & & \oplus} && \bordermatrix{& 0 & 1 & 2 & 3 \cr & \oplus & \ominus \cr& & & \ominus \cr & & & & \oplus} && \bordermatrix{& 0 & 1 & 2 & 3 \cr & \oplus \cr & & \ominus \cr& & & \ominus \cr & & & & \oplus} \\
	\\
	\bordermatrix{& -1 & 0 & 1 & 2 & 3 \cr & \lhd & \oplus & \rhd \cr & & \ominus \cr & & & &  \ominus & \oplus} && \bordermatrix{& 0 & 1 & 2 & 3 \cr & \oplus & \ominus \cr& & & \ominus & \oplus} && \bordermatrix{& 0 & 1 & 2 & 3 \cr & \oplus \cr & & \ominus \cr& & & \ominus & \oplus}
	\arrow["{dual \circ ui \circ dual}", from=1-1, to=1-3]
	\arrow["{ui^{-1}}", from=1-3, to=1-5]
	\arrow["{ui^{-1}}", from=3-1, to=1-1]
	\arrow["{dual \circ ui \circ dual}", from=3-1, to=3-3]
	\arrow["{ui^{-1}}", from=3-3, to=1-3]
	\arrow["{ui^{-1}}", from=3-3, to=3-5]
	\arrow["{ui^{-1}}"', from=3-5, to=1-5]
\end{tikzcd}\]

The block $\mathcal{B}_1$ is of type $Y_\mathcal{M_\B}$ with $\mathcal{M}_\mathcal{B} = (m_0, m_1) = (1, 1),$ so Theorem \ref{first block m_n = 1} guarantees that $\Psi(\Theta(\mathcal{B}_1)) = \{\psi_{\Theta_i(\mathcal{B}')} \mid \mathcal{B}' \sim \mathcal{B}_1, i \in \{1, 2, 3\}\}.$ Independence of blocks, as stated in Proposition \ref{prop-independence-of-blocks}, guarantees a similar 3-to-1 correspondence between extended multi-segments equivalent to $\mathcal{E}_{m^{\up, \alpha}}$ and $\mathcal{E}$ respectively. The following three diagrams show the layers of multi-segments $\{\Theta_i(\mathcal{E}') \mid \mathcal{E}' \sim \mathcal{E}\}$ for $i = 1, 2,$ and $3$ respectively.

\begin{center}
\begin{tabular}{|c|c|}
\hline
    $i = 1$ &  \begin{tikzcd}[scale cd = 0.6, sep = small, every label/.append style={font=\tiny}]
	{\let\quad\thinspace\bordermatrix{& -2 & -1 & 0 & 1 & 2 & 3 \cr & \lhd & \lhd & \ominus &  \rhd & \rhd \cr && \lhd & \oplus & \rhd \cr & & & \ominus \cr & & & & & \ominus \cr & & & & & &\oplus}} && {\let\quad\thinspace\bordermatrix{& -2 & -1 & 0 & 1 & 2 & 3 \cr & \lhd & \lhd & \ominus & \rhd & \rhd \cr & & & \oplus & \ominus \cr& & &&& \ominus \cr & & & &&& \oplus}} && {\let\quad\thinspace\bordermatrix{& -2 & -1 & 0 & 1 & 2 & 3 \cr & \lhd & \lhd & \ominus & \rhd & \rhd \cr &&& \oplus \cr & &&& \ominus \cr& & &&& \ominus \cr & & & &&& \oplus}} \\
	\\
	{\let\quad\thinspace\bordermatrix{& -2 & -1 & 0 & 1 & 2 & 3 \cr & \lhd & \lhd & \ominus &  \rhd & \rhd \cr && \lhd & \oplus & \rhd \cr & & & \ominus \cr & & & & & \ominus & \oplus}} && {\let\quad\thinspace\bordermatrix{& -2 & -1 & 0 & 1 & 2 & 3 \cr & \lhd & \lhd & \ominus & \rhd & \rhd \cr & & & \oplus & \ominus \cr& & & & & \ominus & \oplus}} && {\let\quad\thinspace\bordermatrix{& -2 & -1 & 0 & 1 & 2 & 3 \cr & \lhd & \lhd & \ominus & \rhd & \rhd \cr & & & \oplus \cr & & & & \ominus \cr& & & & & \ominus & \oplus}}
	\arrow["{dual \circ ui \circ dual}", from=1-1, to=1-3]
	\arrow["{ui^{-1}}", from=1-3, to=1-5]
	\arrow["{ui^{-1}}", from=3-1, to=1-1]
	\arrow["{dual \circ ui \circ dual}", from=3-1, to=3-3]
	\arrow["{ui^{-1}}", from=3-3, to=1-3]
	\arrow["{ui^{-1}}", from=3-3, to=3-5]
	\arrow["{ui^{-1}}"', from=3-5, to=1-5]
\end{tikzcd} 
\\
     \hline
     $i = 2$ & \begin{tikzcd}[scale cd = 0.6, sep = small, every label/.append style={font=\tiny}]
	{\let\quad\thinspace\bordermatrix{& -1 & 0 & 1 & 2 & 3 \cr & \lhd & \ominus & \oplus & \rhd \cr & & \ominus \cr & & & &  \ominus \cr & & & & & \oplus}} && {\let\quad\thinspace\bordermatrix{& 0 & 1 & 2 & 3 \cr & \ominus & \oplus & \ominus \cr& & & \ominus \cr & & & & \oplus}} && {\let\quad\thinspace\bordermatrix{& 0 & 1 & 2 & 3 \cr & \ominus \cr & & \oplus & \ominus \cr& & & \ominus \cr & & & & \oplus}} \\
	\\
	{\let\quad\thinspace\bordermatrix{& -1 & 0 & 1 & 2 & 3 \cr & \lhd & \ominus & \oplus & \rhd \cr & & \ominus \cr & & & &  \ominus & \oplus}} && {\let\quad\thinspace\bordermatrix{& 0 & 1 & 2 & 3 \cr & \ominus & \oplus & \ominus \cr& & & \ominus & \oplus}} && {\let\quad\thinspace\bordermatrix{& 0 & 1 & 2 & 3 \cr & \ominus  \cr & & \oplus & \ominus \cr& & & \ominus & \oplus}}
	\arrow["{dual \circ ui \circ dual}", from=1-1, to=1-3]
	\arrow["{ui^{-1}}", from=1-3, to=1-5]
	\arrow["{ui^{-1}}", from=3-1, to=1-1]
	\arrow["{dual \circ ui \circ dual}", from=3-1, to=3-3]
	\arrow["{ui^{-1}}", from=3-3, to=1-3]
	\arrow["{ui^{-1}}", from=3-3, to=3-5]
	\arrow["{ui^{-1}}"', from=3-5, to=1-5]
\end{tikzcd}

\\
     \hline
     
     $i = 3$ & \begin{tikzcd}[scale cd = 0.6, sep = small, every label/.append style={font=\tiny}]
	{\let\quad\thinspace\bordermatrix{& -1 & 0 & 1 & 2 & 3 \cr & \lhd & \ominus & \rhd \cr & & \oplus \cr & & & &  \ominus \cr & & & &  \ominus \cr & & & & & \oplus}} && {\let\quad\thinspace\bordermatrix{& 0 & 1 & 2 & 3 \cr & \ominus & \oplus \cr& & & \ominus \cr& & & \ominus \cr & & & & \oplus}} && {\let\quad\thinspace\bordermatrix{& 0 & 1 & 2 & 3 \cr & \ominus \cr & & \oplus \cr& & & \ominus \cr& & & \ominus \cr & & & & \oplus}} \\
	\\
	{\let\quad\thinspace\bordermatrix{& -1 & 0 & 1 & 2 & 3 \cr & \lhd & \ominus & \rhd \cr & & \oplus \cr & & & &  \ominus \cr & & & &  \ominus & \oplus}} && {\let\quad\thinspace\bordermatrix{& 0 & 1 & 2 & 3 \cr & \ominus & \oplus \cr& & & \ominus \cr& & & \ominus & \oplus}} && {\let\quad\thinspace\bordermatrix{& 0 & 1 & 2 & 3 \cr & \ominus \cr & & \oplus \cr& & & \ominus \cr& & & \ominus & \oplus}}
	\arrow["{dual \circ ui \circ dual}", from=1-1, to=1-3]
	\arrow["{ui^{-1}}", from=1-3, to=1-5]
	\arrow["{ui^{-1}}", from=3-1, to=1-1]
	\arrow["{dual \circ ui \circ dual}", from=3-1, to=3-3]
	\arrow["{ui^{-1}}", from=3-3, to=1-3]
	\arrow["{ui^{-1}}", from=3-3, to=3-5]
	\arrow["{ui^{-1}}"', from=3-5, to=1-5]
\end{tikzcd}
\\

\hline
\end{tabular}
\end{center}

Not only does each of these layers have the same number of multi-segments as $\Psi(\pi(\mathcal{E})),$ the arrows between the corresponding multi-segments is exactly the same in each diagram. In other words, raising operators between multi-segments equivalent to $\mathcal{E}$ seem to ``commute" with the $dual \circ ui \circ dual$ and $ui^{-1}$ operators which go between $\Theta_1, \Theta_2,$ and $\Theta_3.$ In a sense, then, each set $\{\Theta_i(\mathcal{E}') \mid \mathcal{E}' \sim \mathcal{E}\}$ is a precise copy of the set $\{\mathcal{E}' \mid \mathcal{E}' \sim \mathcal{E}\}.$ This leads us to the main theorem we aim to prove regarding the structure of row operations on $\mathcal{E}_{m^{\up, \alpha}},$ which we aim to prove at the end of this section.

\begin{thm}
\label{commutativity theta correspondence}
    Let $\mathcal{E}$ be of type $Y_\mathcal{M}$ with $\mathcal{M}$ starting at zero, such that $\Psi(\pi(\mathcal{E})) = \{\psi(\Theta_i(\mathcal{E})) \mid \mathcal{E}' \sim \mathcal{E}, i \in I_\mathcal{E}\},$ for some index set $I_\mathcal{E} = \{1, 2, 3, 4\}$ or $\{1, 2, 3\}$ depending on $\mathcal{E}.$ Then, if $\EE' \sim \EE'' \sim \EE$ but $\EE' \neq \EE''$, there exists a raising operator $T \in \{dual \circ ui \circ dual, ui^{-1}\}$ from $\Theta_i(\EE')$ to $\Theta_j(\EE'')$ if and only if $i = j$ and the same raising operator $T$ can be applied from $\EE'$ to $\EE''.$
\end{thm}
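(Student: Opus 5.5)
The plan is to work entirely at the level of the $\mathcal{S}$-data. By Theorem \ref{block classification} and Lemma \ref{lem SMUD operators}, every $\EE'\sim\EE$ is $\EE(\mathcal{M},\mathcal{S},\mathcal{T},\eta)$ for valid data, and every operator between equivalent multi-segments is a composition of the $S,M,U,D$ operators and their inverses. So I would first record each $\Theta_i(\EE')$ as a multi-segment of type $Y_{\mathcal{M}^+}$, where $\mathcal{M}^+$ is $\mathcal{M}$ with an extra column $c_{\max}+1$ of multiplicity one. Its $\mathcal{S}$-data is obtained from that of $\EE'$ by a fixed local rule at the last column, exactly as in the case analysis of Lemma \ref{times 4 relation}:
\begin{itemize}
\item $\Theta_1$ appends $\overline{c_{\max}+1,\dots}$ as a hat (reading the hat as the $\mathcal{T}$-part);
\item $\Theta_2$ appends $c_{\max}+1$ to the set containing the first row ending at $c_{\max}$;
\item $\Theta_4$ does the same with the last such row;
\item $\Theta_3$ creates the split set $\{\dots,c_{\max},\overline{c_{\max}+1}\}$.
\end{itemize}
With this dictionary, a raising operator is a specific move on $\mathcal{S}$-data, and the theorem becomes a combinatorial statement about these moves.

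For the ``if'' direction, suppose $T\in\{dual\circ ui\circ dual,\,ui^{-1}\}$ takes $\EE'$ to $\EE''$. I would split according to whether $T$ touches the row whose support ends at $c_{\max}$ and is modified by $\Theta_i$.
\begin{itemize}
\item If it does not, then $T$ lifts verbatim to $\Theta_i(\EE')$. This uses locality (Lemma \ref{lem-local}), the fact that row exchanges past the added hat or merged row do not change the relevant rows (Lemma \ref{cor Alex} and Corollary \ref{swap commutes with M}), and the fact that the $D$ and $S$ moves on $\mathcal{S}$-data commute with appending $c_{\max}+1$.
\item If it does, I would use the relations between operators (as in the proof of Lemma \ref{U exstence Z}, e.g. $D_{h',r}\circ U_{h,c}=S_{r',c}\circ D_{h,r}$). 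These rewrite $T$ on $\Theta_i(\EE')$ as the same $\mathcal{S}$-data move carried along with the appended column, landing in $\Theta_i(\EE'')$.
\end{itemize}

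For the ``only if'' direction, suppose $T$ takes $\Theta_i(\EE')$ to $\Theta_j(\EE'')$.
\begin{enumerate}
\item Show $i=j$ by the support invariant of the row ending at $c_{\max}+1$, as in the proof of Lemma \ref{thetas distinct case 3}. The operators that change this invariant are precisely the inter-layer moves: the $dual\circ ui\circ dual$ with the hat, and the $ui^{-1}$ splitting off the circle in column $c_{\max}+1$. By Theorems \ref{first block m_n = 1} and \ref{first block m_n > 1}, these fix the underlying $\EE'$, so they would force $\EE'=\EE''$, which is excluded.
\item With $i=j$, delete column $c_{\max}+1$ from the $\mathcal{S}$-data. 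This is injective by Lemma \ref{packets distinct Z}/Corollary \ref{cor same support}. The image of $T$ is then an $\mathcal{S}$-data move from $\EE'$ to $\EE''$.
\item Check that this move is realized by the same raising operator $T$ on $\EE'$, by running the first direction's dictionary backwards.
\end{enumerate}

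The main obstacle is the ``only if'' case where $T$ genuinely involves the row carrying column $c_{\max}+1$ but does not change $i$. Examples are an $S$-split or a $D$-dualization of the merged row in $\Theta_2$ or $\Theta_4$, or a $U$ on the enlarged hat in $\Theta_1$. For these one must verify that the move, after removing $c_{\max}+1$, is still a legal move of the same operator type on $\EE'$. I would first reduce to rows with a single circle via Lemmas \ref{lem-multiplicity-cancel} and \ref{lem-multiplicity-cancel-up}, and then compute directly with Lemmas \ref{big swap down} and \ref{big swap up}. The main risk is the boundary case $m_{c_{\max}}>1$ with $\psi_{\Theta_2}=\psi_{\Theta_4}$; there the index $i$ must be read modulo this identification.
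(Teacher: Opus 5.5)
Your mechanism is essentially the paper's. Everything is read off from $\mathcal{S}$-data via locality and the $S,M,U,D$ relations of Theorem~\ref{Commutativity of Operators}, and your ``$T$ touches the modified row'' case is exactly Parts (2)--(9). The paper first reduces to type 3' operators and then proves commuting squares between adjacent layers ($\EE\leftrightarrow\Theta_1$, $\Theta_1\leftrightarrow\Theta_2$, $\Theta_1\leftrightarrow\Theta_4$, $\Theta_2\leftrightarrow\Theta_3$). You instead treat all layers at once as type $Y_{\mathcal{M}^+}$ and separate them by an endpoint invariant; that repackaging is fine.

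\textbf{The dictionary is wrong at $\Theta_3$.} The set $\{\dots,c_{\max},\overline{c_{\max}+1}\}$ is $\Theta_1(\EE')$ whenever the last $\mathcal{T}$-part of the last set is its chain part. This is because the $\mathcal{T}$-part $\overline{c_{\max}+1}$ is literally the added hat $([c_{\max}+1,-c_{\max}-1],c_{\max}+1,-\eta(\EE))$. So your first and fourth bullets describe the same multi-segment, and Step 1 cannot separate layers 1 and 3.

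In fact $\Theta_3$ arises by splitting the single circle in column $c_{\max}+1$ off the extended row of $\Theta_2$ (or off the z-chain of $\Theta_4$). Since $m_{c_{\max}+1}=1$, that circle must be a chain. Hence $\Theta_3(\EE')$ is the $\mathcal{S}$-data of $\EE'$ with a new singleton set $\{c_{\max}+1\}$ appended, as in the $i=3$ layer of the motivating example in Appendix~\ref{sec commutativity}. With this fix, the row ending at $c_{\max}+1$ is:
\begin{itemize}
\item the one-circle hat for $i=1$;
\item $[c_{\max}+1,c_{\max}+1]$ for $i=3$;
\item $[c_{\max}+1,c_{\max}]$ for $i=4$ (only when $m_{c_{\max}}>1$);
\item anything else for $i=2$.
\end{itemize}

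\textbf{Even with the corrected invariant, Step 1's key claim fails as stated.} It is not true that only inter-layer moves change the class unless the $\Theta_2=\Theta_4$ identification is built into the invariant itself. Suppose $m_{c_{\max}}>1$ and the extended row of $\Theta_2(\EE')$ is a chain $[c_{\max}+1,B]$ with $B<c_{\max}$. The $S$-move splitting off its top two circles produces a row $[c_{\max}+1,c_{\max}]$. This lands in $\Theta_2(\EE'')=\Theta_4(\EE'')$, where $\EE''\neq\EE'$ is obtained downstairs by splitting off only one circle.

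So the identification is not a rare boundary case; ordinary moves inside layer 2 reach it, and your invariant must assign such multi-segments to layer 2. This example also shows that ``the same $T$'' has to allow the index shift visible in Part (8).

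\textbf{Operators not of type 3'.} As the paper does, you need to say how raising operators not of type 3' are handled. Your case analysis only covers $S,M,U,D$.
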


\begin{rmk}
    We have stated this theorem for the case where $\mathcal{E}$ is of type $Y_\mathcal{M}.$ Similar results can be deduced for other cases, such as

    \begin{itemize}
        \item $\mathcal{E}$ is an almost-block, given the structure detailed in Lemma \ref{Almost block classification},
        \item $\mathcal{E}$ if of type $Y_\mathcal{M}$ with $\mathcal{M}$ starting after zero; i.e., $\mathcal{E}$ is equivalent to a tempered multi-segment not starting at zero, or
        \item $\mathcal{E}$ has a block decomposition similar to the one in the above example.
    \end{itemize}

    For the sake of brevity, we only prove the stated theorem.
\end{rmk}

\subsection{Commutativity results for theta correspondence}

All of the operators of type 3'; $S; M; U; D^1$; and $D^2$, are, in a sense, `local.' Each of them affects $1-2$ rows in structurally predictable ways and and leaves the others completely intact, modulo a possible sign change. This means that many of these operators can be performed simultaneously, independently of each other. In a sense, they commute with each other. This idea is the primary mechanism behind Theorem \ref{commutativity theta correspondence}. We make it precise in the following theorem. We also provide an example to illustrate each specific commutativity result in the theorem.

\begin{thm}[Commutativity of Operators]
\label{Commutativity of Operators} Let $\EE$ be of type $Y_\mathcal{M},$ with $\mathcal{M}$ starting at zero.
    \begin{enumerate}
        \item Any two operators of type 3' involving distinct rows commute.
        
        \item Given a chain $r,$ we have:
        $$S_{r', k_2} \circ S_{r, k_1} = S_{r'', k_1} \circ S_{r, k_1 + k_2}.$$
        Here, $r'$ is the row remaining after $k_1$ circles have been taken out of $r,$ while $r''$ is the row of $k_1 + k_2$ circles removed from $r.$

        \item Given consecutive hats $h_1 < h_2 < h_3,$ we have
        $$(h_1 * h_2) * h_3 = h_1 * (h_2 * h_3).$$

        \item Given mergeable hats $h_1, h_2$ and a row of circles $r$, we have
        $$D_{h_1, r'} \circ D_{h_2, r} = D_{h_1 * h_2, r} \circ M_{h_1, h_2},$$
        where $r'$ is the image of $r$ under $D.$

        \item Given mergeable hats $h_1, h_2$, we have
        $$M_{h_1', h_2} \circ U_{h_1, k} = U_{h_1 * h_2, k} \circ M_{h_1, h_2},$$
        where $h_1'$ is the image of $h_1$ after $U.$

        \item Given mergeable hats $h_1, h_2$, we have
        $$D_{h_1, r} \circ U_{h_2, k} = U_{h_1 * h_2, C(h_1) + k} \circ M_{h_1, h_2},$$
        where $r$ is the row of circles in the image of $h_2$ under $U.$

        \item Given a hat $h$, we have
        $$U_{h', k_2} \circ U_{h, k_1} = S_{r, k_1} \circ U_{h, k_1 + k_2},$$
        where $h'$ is the remaining part of of $h$ when $k_1$ circles are pulled out through $U$, and $r$ is the row of $k_1 + k_2$ circles pulled out.

        \item Given a hat $h$ and row of circles $r$, we have
        $$S_{r', C(h) + k} \circ D_{h, r} = D_{h, r_2} \circ S_{r, k},$$
        where $r'$ is the image of $r$ after $D,$ and $r_2$ is the second row in the image of $r$ under $S.$

        \item Given a hat $h$ and row of circles $r$, we have
        $$D_{h', r} \circ U_{h, k} = S_{r', k} \circ D_{h, r},$$
        where $h'$ is the remaining hat after $U,$ and $r'$ is the image of $r$ under $D.$
    \end{enumerate}
\end{thm}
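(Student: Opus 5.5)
Each of the nine identities compares two composites of operators, so the plan is to compare the resulting virtual extended multi-segments. Both sides of every identity are compositions of basic operators (or their inverses) applied to $\EE$, so both outputs are equivalent to $\EE$. By Theorem \ref{block classification} and Lemma \ref{lem SMUD operators}, each output is again of type $Y_\mathcal{M}$ with the same $\mathcal{M}$ and the same sign $\eta(\EE)$. Lemma \ref{lem SMUD operators} also records the effect of each of $S, M, U, D^1, D^2$ on the $\mathcal{S}$-data. Moreover, a type $Y_\mathcal{M}$ virtual extended multi-segment is determined by $(\mathcal{M},\mathcal{S},\mathcal{T},\eta)$ through Definition \ref{defn E(M,S, T)}. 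Hence for every item it will suffice to check two things: both composites are defined, and they produce the same $\mathcal{S}$-data (with the same $\mathcal{T}$-partition).

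For existence, I would use Lemma \ref{lem SMUD operators} to see that each step is legitimate. The hypotheses of the second operator in each composite should follow from the $\mathcal{S}$-data after the first operator. For example, in (2) the row $r'$ still has $C(r')=C(r)-k_1>k_2$, and in (7) the hat $h'$ has $C(h')=C(h)-k_1>k_2$. In (4)--(6) I would also need that merging preserves mergeability and that $C(h_1*h_2)=C(h_1)+C(h_2)$, which follows from Lemma \ref{merge}.

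The identity checks themselves should be set-theoretic computations on consecutive-integer sets. In (2) and (7), both sides split $\{B,\dots,A,\dots\}$ into three pieces: the piece ending at $A-k_1-k_2$, the middle piece of $k_2$ elements (respectively its overlined version), and the tail. In (3) and (4), both sides coalesce the overlined blocks $\mathcal{T}_i^j$ in the same way. In (5), (6), (8) and (9), one side un-overlines or splits first and then merges or dualizes; I would follow the position of the boundary elements $B-1$ and $A-c$ on both sides. Item (1) is separate. It follows from Lemma \ref{lem-local}: operators on disjoint rows act on disjoint parts of the $\mathcal{S}$-data. The only points to verify are that the row exchanges used to bring the rows adjacent do not interfere, which is handled by Lemma \ref{cor Alex} and Lemma \ref{lem-multiplicity-cancel}, and that the global sign is unchanged.

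The main obstacle is the boundary cases, namely $z$-chains in (2) and (8), and the $D^1$ versus $D^2$ distinction when $C(r_1)=1$. There $S$ removes an element from $\mathcal{S}_i$ instead of splitting it, and the two sides might a priori differ by whether a singleton set becomes a multiple or a chain. For these I would first try checking the validity conditions of Definition \ref{valid S}, in particular (4) and (5), to show that both outputs are valid tuples with the same underlying multiset of chain supports. I would then conclude equality via Corollary \ref{cor same support} together with Lemma \ref{packets distinct Z}'s analogue for type $Y_\mathcal{M}$ (\cite[Lemma 6.1]{HKT26}): equal $\psi$ forces equal extended multi-segments.
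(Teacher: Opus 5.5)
Your proposal is correct and is essentially the paper's argument: every operator involved preserves $\eta$ and the sign rules of Definition~\ref{defn E(M,S)}, so it suffices to compare $\mathcal{S}$-data; for Parts (2)--(9) this is a direct computation from the formulas in Lemma~\ref{lem SMUD operators}, and Part (1) follows from locality of the operators (your extra care with $z$-chains and the fallback via Corollary~\ref{cor same support} goes beyond what the paper does, but is harmless). One small slip: in (7) it is the first piece $\overline{B,\dots,A-k_1-k_2}$, the shortened hat, that stays overlined, while the middle $k_2$-element piece is an ordinary chain on both sides.
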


\begin{proof}
    We note that for the duration of this proof, it suffices to show that two combinations of operations yield the same virtual extended multi-segments up to the signs. That is, we need only check the effect on the $\mathcal{S}$-data, and since all these operations preserve $\eta$ and the sign conditions for Definition \ref{defn E(M,S)}, all the signs must match. Parts (2) through (9) thus follow directly from the formulations of the operators in Lemma \ref{lem SMUD operators}. More details can be found in our earlier manuscript, \cite[\S 12]{HKT25}. Meanwhile, Part (1) follows from the fact that all four operations are local. That is, they affect $1-2$ segments in a prescribed manner and leave the other segments unaffected up to sign.
\end{proof}

We now prove Theorem \ref{commutativity theta correspondence}. We do this by proving the theorem for the case where $T$ is a raising operator of type $3'.$ From this, we immediately obtain the result for the case where $T$ is a lowering operator of type $3'.$ All other operators are compositions of such operators of type 3'. The proof for raising operators of type 3' follows from the following series of lemmas.

\begin{lemma}
\label{commute 0 to 1}
    Let $\EE$ be of type $Y_\mathcal{M}$ with $\mathcal{M}$ starting at zero. Suppose there exists a raising operator $T \in \{ dual \circ ui \circ dual, ui^{-1}\}$ of type 3' from $\Theta_1(\EE)$ to $\EE'.$ If $T$ is not a $dual \circ ui \circ dual$ involving the first row of $\Theta_1(\EE),$ then there exists $\EE'' \sim \EE$ such that the following diagram commutes.
\[\begin{tikzcd}
	{\Theta_1(\EE)} & {\EE' = \Theta_1(\EE'')} \\
	{\EE} & {\EE''}
	\arrow["T", from=1-1, to=1-2]
	\arrow["\theta", from=2-1, to=1-1]
	\arrow["T", from=2-1, to=2-2]
	\arrow["\theta", from=2-2, to=1-2]
\end{tikzcd}\]
\end{lemma}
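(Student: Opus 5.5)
The plan is to use locality of type 3' operators (Lemma \ref{lem-local}) together with the structure of $\Theta_1(\EE)=h\cup\EE$, where $h=([c_{\max}+1,-c_{\max}-1],c_{\max}+1,-\eta(\EE))$ is the added hat placed first in the admissible order. Since $T$ is a raising operator of type 3' that is not a $dual\circ ui\circ dual$ involving $h$, there are two possibilities: $T$ is a $ui^{-1}$ of type 3' acting on some row, or $T$ is a $dual\circ ui\circ dual$ of type 3' (a merging $M$, or a dualization $D^1$, $D^2$) between two rows of $\EE$. In either case I will show that $h$ is not involved, even through row exchanges, and that the applicability of $T$ is not affected by the presence of $h$. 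Then $\EE'':=T(\EE)$ is defined, $\EE''\sim\EE$ by Theorem \ref{thm intersections of local Arthur packets}, and $T(\Theta_1(\EE))=h\cup T(\EE)=\Theta_1(\EE'')$. For the last equality I need $c_{\max}$ and $\eta$ to be preserved; Lemma \ref{lem SMUD operators} gives that these operators preserve $\mathcal{M}$ and $\eta(\EE)$.

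First I treat $ui^{-1}$. A $ui^{-1}$ of type 3' on $h$ itself is ruled out: $h$ is a hat with $l=b/2$, so it has no circles, and there is nothing to split off. The same holds after any row exchanges, since Lemma \ref{big swap down} controls $l$. A $ui^{-1}$ on a row $r\in\EE$ involves $r$ exchanged downward together with the rows below it, and $h$ lies above everything. So the computation is identical in $\Theta_1(\EE)$ and in $\EE$. Admissibility of the order is also unaffected, because $\supp(h)$ strictly contains every support in $\EE$.

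Second I treat $dual\circ ui\circ dual$ of type 3' not involving $h$. I pass to $dual(\Theta_1(\EE))$. By Definition \ref{def dual}, $\widehat h$ becomes the last row, it is a single circle in column $c_{\max}+1$, and the other rows are $\widehat r$ for $r\in\EE$ up to a global sign twist. The $ui$ in the dual picture is a union-intersection of two rows $\widehat r_1,\widehat r_2$ from $dual(\EE)$, possibly after row exchanges. Exchanges with $\widehat h$ occur only at the very bottom, and a $ui$ of type 3' between rows of $dual(\EE)$ never needs to pass $\widehat h$, since $\widehat h$ lies strictly after them and its support is disjoint from theirs or to the right. The sign changes coming from the $\alpha_i,\beta_i$ terms in Definition \ref{def dual} shift $\eta$ on all of $dual(\EE)$ uniformly. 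Because the conditions in Definition \ref{ui def} depend only on the products $\eta_k\eta_{k+1}$, applicability is therefore the same with or without $h$. Dualizing back gives $h\cup T(\EE)$.

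The main obstacle is the sign bookkeeping in the second step. I must verify that adding $h$ twists the signs of $dual(\EE)$ only by a global factor, so that the $\epsilon$ in Definitions \ref{def row exchange} and \ref{ui def} is unchanged. I must also verify that no row exchange with $\widehat h$ is forced, for example by a $D^2$ that moves a row to the bottom. For the latter, I would first reduce to (P') order using Lemma \ref{cor Alex}. Then, for $D^2$ targeting the last row ending at $c_{\max}$, I would check directly that this row still precedes $\widehat h$ in the dual order, so that the exchange sequence from Lemma \ref{lem SMUD operators} is unchanged.
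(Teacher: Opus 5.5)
Your route is the paper's: show that $T$ never uses the added hat $h$ and that its applicability does not depend on $h$, then use Lemma \ref{lem SMUD operators} (preservation of $\mathcal{M}$ and $\eta$) to conclude that $T(\Theta_1(\EE))=h\cup T(\EE)=\Theta_1(T(\EE))$.

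Your dual-side bookkeeping supplies details the paper leaves implicit, and it is correct. Since $a_h=A_h+B_h+1=1$ is odd, adjoining $h$ multiplies every sign of $dual(\EE)$ by the same factor $-1$. That flip commutes with row exchanges and union-intersections, and the same thing happens on dualizing back. The obstacle you flag about exchanges with $\widehat h$ also disappears immediately. We have $\widehat h=([c_{\max}+1,c_{\max}+1],0,\ast)$, which strictly exceeds every row of $dual(\EE)$ in both endpoints. So condition (P) forces $\widehat h$ to be last in every admissible order, and it is never exchanged.

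However, your exclusion of a $ui^{-1}$ on $h$ itself is wrong as written. It is not true that $l(h)=b(h)/2$. We have $b(h)=2c_{\max}+3$, which is odd, and $l(h)=c_{\max}+1$, so $C(h)=1$: $h$ has exactly one circle, in column $0$ (compare the pictures of $\EE^{\up}_{m^{\up,\alpha}}$ in the paper). Lemma \ref{big swap down} also does not give what you want, since exchanging $h$ downward can lower $l(h)$.

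The correct reason is a support argument. A $ui^{-1}$ of type 3' splits $[A,B]$ into $[A',B]$ and $[A,A'+1]$ with $B\le A'<A$. For $h$ this produces a piece $[A',-c_{\max}-1]$ with $A'<c_{\max}+1$, which violates the requirement $A+B\ge 0$. Row exchanges do not change supports, so the split stays impossible after any exchanges. This is the fact the paper uses when it says the only operation involving $h$ is the $dual\circ ui\circ dual$. With this repair, your argument goes through.
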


\begin{proof}
    Let $h = ([c_{\max} + 1, -c_{\max} - 1], c_{\max} + 1, -\eta(\EE))$ be the first row of $\Theta_1(\EE).$ Note that there is only one possible row operation, a $dual \circ ui\circ dual$ involving $h.$ Therefore, $T$ must involve rows in $\EE$. 
    It suffices to prove that such an operation can be implemented regardless of whether $h$ is present. This is true for all the operators $S, M, U, D$ by Lemma \ref{lem SMUD operators}.
\end{proof}

The above lemma essentially states that almost all raising operators on $\Theta_1(\EE)$ produce other multi-segments of the form $\Theta_1$ and descend to be operators on $\EE.$ It is also true that case 3' operators on $\EE$ can ascend to $\Theta_1(\EE),$ as stated in the following lemma.

\begin{lemma}
\label{commute 1 to 0}
    Let $\EE$ be of type $Y_\mathcal{M}$ with $\mathcal{M}$ starting at zero. Suppose there exists a raising operator $T \in \{dual \circ ui \circ dual, ui^{-1}\}$ of type 3' from $\EE$ to $\EE'.$ Then the following diagram commutes.
    \[\begin{tikzcd}
	{\Theta_1(\EE)} & {\Theta_1(\EE')} \\
	{\EE} & {\EE'}
	\arrow["T", from=1-1, to=1-2]
	\arrow["\theta", from=2-1, to=1-1]
	\arrow["T", from=2-1, to=2-2]
	\arrow["\theta", from=2-2, to=1-2]
    \end{tikzcd}\]
\end{lemma}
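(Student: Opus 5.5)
The plan is to show that the operator $T$, performed on $\EE$, can also be performed on $\Theta_1(\EE)=\{h\}\cup\EE$, where $h=([c_{\max}+1,-c_{\max}-1],c_{\max}+1,-\eta(\EE))$ is the new first row. Since operators are local (Lemma \ref{lem-local}), the rows of $\EE$ untouched by $T$, and $h$ itself, would then be unchanged, giving $T(\Theta_1(\EE))=\Theta_1(T(\EE))=\Theta_1(\EE')$.

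\textbf{Reduction to $S,M,U,D$.} By Lemma \ref{lem SMUD operators}, every raising operator of type 3' on a type $Y_\mathcal{M}$ segment starting at zero is one of $S$, $M$, $U$, $D^1$, $D^2$. These change only the $\mathcal{S}$-data and preserve $\mathcal{M}$ and $\eta$, so $\EE'$ is again of type $Y_\mathcal{M}$ with the same $c_{\max}$ and the same sign. Hence the added hat in $\Theta_1(\EE')$ is literally the same row $h$, and it suffices to check that $T$ is still applicable in the presence of $h$.

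\textbf{Case $T=ui^{-1}$ (operators $S$ and $U$).} These are realized by exchanging a row of $\EE$ monotonically, splitting it, and exchanging back. No row is ever exchanged past $h$, since $h$ sits first and is untouched. Two things must be checked. The admissibility of the new order is unaffected, because $\supp(h)=[c_{\max}+1,-c_{\max}-1]$ contains every support in $\EE$, and $h$ is wider and first in the (P') order. The row exchanges themselves are unaffected by locality.

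\textbf{Case $T=dual\circ ui\circ dual$ (operators $M$ and $D$).} We pass to $dual(\Theta_1(\EE))$. By Definition \ref{def dual}, $\widehat{h}=([c_{\max}+1,c_{\max}+1],\cdot,\cdot)$ becomes the last row, and the other rows are $\widehat{r}$ for $r\in\EE$. For these the quantities $\alpha_i$ are unchanged, while each $\beta_i$ increases by $b(h)$. Since $b(h)=2c_{\max}+3$ is odd, adding $h$ flips every $\eta_i'$ uniformly. This global sign change does not affect the relevant union-intersection conditions, because those depend only on the products $\eta_k\eta_{k+1}$ and on the $l$'s. The $ui$ in the dual involves only rows of $\EE$, all of whose supports in the dual lie in columns $\le c_{\max}$, which are strictly before $\widehat{h}$. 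So $\widehat{h}$ is never exchanged, and after dualizing back we recover $h$ unchanged.

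The main obstacle is this dual bookkeeping: confirming that the sign flip is global and leaves each case of Definition \ref{ui def} (and the row exchanges within it) intact, and that $dual$ is then inverted consistently. I would first try to verify this directly from the formulas in Definition \ref{def dual}, treating each of $M$, $D^1$ and $D^2$ in turn.
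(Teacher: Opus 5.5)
Your proposal is correct and matches the paper's argument. The paper simply observes, as in the proof of Lemma \ref{commute 0 to 1}, that each type 3' raising operator $S$, $M$, $U$, $D$ from Lemma \ref{lem SMUD operators} can be implemented whether or not the hat $h$ is present; your case-by-case check makes this explicit, including the point that $\mathcal{M}$ and $\eta(\EE)$ are preserved, so the added hat is literally the same row.

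One small correction to the dual bookkeeping. Since $h$ is the \emph{first} row, it contributes $a(h)=1$ to every $\alpha_i$ and nothing to the $\beta_i$, not $b(h)$ to $\beta_i$ as you wrote. Because $a(h)$ is also odd, your conclusion still holds: all $\eta_i'$ flip uniformly and the $l_i'$ are unchanged.
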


\begin{proof}
    We have already shown in the proof to Lemma \ref{commute 0 to 1} that any raising operator $T$ can be implemented regardless of the presence of $h = [c_{\max} + 1, -c_{\max} - 1], c_{\max} + 1, -\eta(\EE))$. This suffices to prove the lemma.
\end{proof}

In the next two lemmas, we show that operators can similarly be lifted between $\Theta_1$ and $\Theta_2.$

\begin{lemma}
\label{commute 1 to 2}
    Let $\EE$ be of type $Y_\mathcal{M}$ with $\mathcal{M}$ starting at zero and let $h = ([c_{\max} + 1, -c_{\max} - 1], c_{\max} + 1, -\eta(\EE))$ be the first row of $\Theta_1(\EE).$ Let $T \in \{dual \circ ui \circ dual, ui^{-1}\}$ be a raising operator of type 3' on $\Theta_1(\EE)$ which is not equal to the $dual \circ ui \circ dual$ that gives $\Theta_2(\EE)$ (or $\Theta_4(\EE),$ should it exist). Then the following diagram commutes.
    \[\begin{tikzcd}
	{\Theta_2(\EE)} & {\Theta_2(\EE')} \\
	{\Theta_1(\EE)} & {\Theta_1(\EE')}
	\arrow["T", from=1-1, to=1-2]
	\arrow["{dual \circ ui \circ dual}", from=2-1, to=1-1]
	\arrow["T", from=2-1, to=2-2]
	\arrow["{dual \circ ui \circ dual}"', from=2-2, to=1-2]
\end{tikzcd}\]
\end{lemma}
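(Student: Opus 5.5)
The plan follows the pattern of Lemma \ref{commute 0 to 1}, using that all type 3' operators are local. Write $D_0$ for the $dual\circ ui\circ dual$ that merges the hat $h$ with the first row $r_0$ of $\EE$ whose support ends at $c_{\max}$, so that $\Theta_2(\EE)=D_0(\Theta_1(\EE))$. By hypothesis, $T$ is not $D_0$ (nor the operator giving $\Theta_4$). By Lemma \ref{duds for almost block} and Theorems \ref{first block m_n = 1} and \ref{first block m_n > 1}, these are the only operators on $\Theta_1(\EE)$ involving $h$. So, as in Lemma \ref{commute 0 to 1}, $T$ involves only rows of $\EE$, and it descends to an operator on $\EE$ with $\Theta_1(\EE')=T(\Theta_1(\EE))$, where $\EE'\sim\EE$ is again of type $Y_\mathcal{M}$. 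By Lemma \ref{lem SMUD operators}, $T$ is one of $S,M,U,D$ (or an inverse) acting on the $\mathcal{S}$-data of $\EE$.

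First I would split into two cases according to whether $T$ touches $r_0$ (including through its row exchanges).

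If $T$ does not touch $r_0$, then $T$ and $D_0$ act on disjoint sets of rows. Part (1) of Theorem \ref{Commutativity of Operators} then gives that they commute, up to a global sign adjustment. Also, $D_0$ applied to $\Theta_1(\EE')$ is again the dualization of $h$ to the first row ending at $c_{\max}$, since $T$ does not change which row this is. Hence $D_0\circ T=T\circ D_0$ on $\Theta_1(\EE)$, and the square commutes.

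If $T$ does touch $r_0$, I would use the specific identities (2)--(9) of Theorem \ref{Commutativity of Operators}, treating $h$ as a hat and $D_0$ as $D_{h,r_0}$.
\begin{itemize}
\item If $T=S_{r_0,k}$ splits $r_0$, part (8) gives $S_{r_0',C(h)+k}\circ D_{h,r_0}=D_{h,(r_0)_2}\circ S_{r_0,k}$. After the split, the new first row ending at $c_{\max}$ is $(r_0)_2$, which identifies the top route with $\Theta_2(\EE')$.
\item If $r_0$ is itself a hat and $T$ is an $M$, $U$ or $D$ involving it, I would use parts (4), (6) and (9) in the same way.
\end{itemize}
In each case I would check that the resulting top-right object is $D_0(\Theta_1(\EE'))=\Theta_2(\EE')$.

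Throughout, equality is checked only at the level of $\mathcal{S}$-data, as in the proof of Theorem \ref{Commutativity of Operators}. Since all operators preserve $\eta$ and the odd-alternating sign rules, matching $\mathcal{S}$-data forces the signs to agree.

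The main obstacle is the bookkeeping in the second case. There I must check that the operator applied on top, $T$ conjugated through $D_0$, really is ``the same'' $T$ in the sense of the theorem, possibly with shifted parameters such as $C(h)+k$. I must also check that no new operator appears after $D_0$: for instance, a $U$ splitting circles off the merged row ending at $c_{\max}+1$ gives $\Theta_3$ rather than an image of $T$. I would handle this by listing the possible $\mathcal{S}$-sets containing $c_{\max}$ (singleton $\{c_{\max}\}$, a chain ending at $c_{\max}$, or a barred hat block ending at $c_{\max}$), and verifying each diagram directly from the descriptions in Lemma \ref{lem SMUD operators}.
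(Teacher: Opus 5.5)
Your proposal follows the paper's proof. You descend $T$ via Lemma~\ref{commute 0 to 1} and settle the case where $T$ avoids the row $r_0$ combined with $h$ by Part~(1) of Theorem~\ref{Commutativity of Operators}. Otherwise you split by the type of $r_0$:
\begin{itemize}
\item If $r_0$ is a chain, then $D_0=D_{h,r_0}$, $T$ must be an $S$, and Part~(8) applies.
\item If $r_0$ is a hat, then $D_0$ is really the merge $M_{h,r_0}$ (better to write it that way, since Parts~(3), (4), (6) are phrased with $M_{h_1,h_2}$), and $T\in\{U,D,M\}$.
\end{itemize}

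One correction in the hat subcase. Part~(6) handles $T=U$ and Part~(4) handles $T=D$. However, $T=M$, which merges $r_0$ with the next hat $h_3$, needs Part~(3), the associativity $(h*r_0)*h_3=h*(r_0*h_3)$. Note that $r_0*h_3$ still ends at $c_{\max}$, so it is the row $D_0$ acts on in $\Theta_1(\EE')$. Part~(9) does not apply here: it concerns a $U$ on the hat being dualized, which would be $h$ itself. Here $T$ never involves $h$, and a $U$ on $h$ is impossible anyway since $C(h)=1$. Part~(9) is instead the identity the paper uses in Lemma~\ref{commute 2 to 3}.
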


\begin{proof}
    Firstly, we know that there exists $\EE'$ such that $T$ goes from $\Theta_1(\EE)$ to $\Theta_1(\EE')$ by Lemma \ref{commute 0 to 1}. Let $r$ be the row that $h$ is combined with in the $dual \circ ui \circ dual$ going from $\Theta_1(\EE)$ to $\Theta_2(\EE).$ If $T$ does not involve $h$ or $r$ then the lemma follows from Part (1) of Theorem \ref{Commutativity of Operators}. Since there are no other raising operators that involve $h,$ we reduce to the case where $T$ involves $r.$

    If the unique $dual \circ ui \circ dual$ involving $h$ is of the form $M$ and $r$ is a hat, then there are three possibilities for $T$ as follows.
    \begin{itemize}
        \item If $T$ is of the form $U,$ then Part (6) of Theorem \ref{Commutativity of Operators} guarantees the existence of the the following commutative diagram.
        \[\begin{tikzcd}
	{\Theta_2(\EE)} & {\Theta_2(\EE')} \\
	{\Theta_1(\EE)} & {\Theta_1(\EE')}
	\arrow["U", from=1-1, to=1-2]
	\arrow["M", from=2-1, to=1-1]
	\arrow["U", from=2-1, to=2-2]
	\arrow["D"', from=2-2, to=1-2]
\end{tikzcd}\]

        \item If $T$ is of the form $D$, then Part (4) of Theorem \ref{Commutativity of Operators} gives the following commutative diagram.
        \[\begin{tikzcd}
	{\Theta_2(\EE)} & {\Theta_2(\EE')} \\
	{\Theta_1(\EE)} & {\Theta_1(\EE')}
	\arrow["D", from=1-1, to=1-2]
	\arrow["M", from=2-1, to=1-1]
	\arrow["D", from=2-1, to=2-2]
	\arrow["D"', from=2-2, to=1-2]
\end{tikzcd}\]

        \item If $T$ is of the form $M,$ then Part (3) of Theorem \ref{Commutativity of Operators} gives a commutative diagram.
        \[\begin{tikzcd}
	{\Theta_2(\EE)} & {\Theta_2(\EE')} \\
	{\Theta_1(\EE)} & {\Theta_1(\EE')}
	\arrow["M", from=1-1, to=1-2]
	\arrow["M", from=2-1, to=1-1]
	\arrow["M", from=2-1, to=2-2]
	\arrow["M"', from=2-2, to=1-2]
\end{tikzcd}\]
    \end{itemize}

    Alternatively, if the unique $dual \circ ui \circ dual$ is of the form $D$ and $r$ is a chain, then the only possible raising operator involving $r$ is one of the form $S.$ Here, Part (8) of Theorem \ref{Commutativity of Operators} gives a commutative diagram.
    \[\begin{tikzcd}
	{\Theta_2(\EE)} & {\Theta_2(\EE')} \\
	{\Theta_1(\EE)} & {\Theta_1(\EE')}
	\arrow["S", from=1-1, to=1-2]
	\arrow["D", from=2-1, to=1-1]
	\arrow["S", from=2-1, to=2-2]
	\arrow["D"', from=2-2, to=1-2]
\end{tikzcd}\]

    Having considered all possible operators that interact with the same rows as the $dual \circ ui \circ dual,$ we have proved the result.
\end{proof}

Next, we demonstrate the reverse direction.

\begin{lemma}
\label{commute 2 to 1}
    Let $\EE$ be of type $Y_\mathcal{M}$ with $\mathcal{M}$ starting at zero. Let $T$ be a raising operator of type 3' from $\Theta_2(\EE)$ to $\EE'$ which is not equal to the $ui^{-1}$ on $\Theta_2(\EE)$ which gives $\Theta_3(\EE).$ Then there exists $\EE''$ of type $X_n$ such that the following diagram commutes.
\[\begin{tikzcd}
	{\Theta_2(\EE)} & {\EE' = \Theta_2(\EE'')} \\
	{\Theta_1(\EE)} & {\Theta_1(\EE'')}
	\arrow["T", from=1-1, to=1-2]
	\arrow["{dual \circ ui \circ dual}", from=2-1, to=1-1]
	\arrow["T", from=2-1, to=2-2]
	\arrow["{dual \circ ui \circ dual}"', from=2-2, to=1-2]
\end{tikzcd}\]
\end{lemma}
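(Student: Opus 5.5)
The plan is to mirror the proof of Lemma \ref{commute 1 to 2}, reversing the roles of $\Theta_1$ and $\Theta_2$. Write $h$ for the first row of $\Theta_1(\EE)$, and $r$ for the row it is combined with by the $dual\circ ui\circ dual$ producing $\Theta_2(\EE)$; call the resulting combined row $\widetilde{r}$ in $\Theta_2(\EE)$. The idea is to classify the raising operators $T$ of type 3' on $\Theta_2(\EE)$ according to whether they involve $\widetilde{r}$. For each case I would exhibit $\EE''$ (still of type $Y_\mathcal{M}$, since it is obtained from $\EE$ by $S,M,U,D$ operators, Lemma \ref{lem SMUD operators}) with $T(\Theta_2(\EE))=\Theta_2(\EE'')$ and $T(\Theta_1(\EE))=\Theta_1(\EE'')$.

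\emph{First step: operators not touching $\widetilde{r}$.} If $T$ does not involve $\widetilde{r}$, even through row exchanges, then by locality (Lemma \ref{lem-local}) and Part (1) of Theorem \ref{Commutativity of Operators}, $T$ commutes with the $dual\circ ui\circ dual$ creating $\widetilde{r}$. Hence $T$ is already applicable to $\Theta_1(\EE)$. Because $T$ does not involve $h$, Lemma \ref{commute 0 to 1} lets it descend to an operator $\EE\to\EE''$, which gives the square. To justify applicability on $\Theta_1(\EE)$ I would use Lemma \ref{cor Alex} and Corollary \ref{swap commutes with M}. These guarantee that swapping a row past $\widetilde{r}$ has the same effect as swapping it past the pair $h,r$ when the supports are nested.

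\emph{Second step: operators involving $\widetilde{r}$.} I would split according to whether the $\Theta_1\to\Theta_2$ step was an $M$ (so $r$ is a hat and $\widetilde{r}=h*r$) or a $D$ (so $r$ is a chain and $\widetilde{r}$ is a longer chain beginning at $0$).

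\begin{itemize}
\item In the $M$ case, $\widetilde{r}$ is a hat, so $T$ is an $M$, a $U$ or a $D$ acting on $h*r$. I would read off from Parts (3), (5), and (4) of Theorem \ref{Commutativity of Operators}, respectively, the factorization of $T\circ M_{h,r}$ as $(\text{merge with }h)\circ T'$, where $T'$ acts only on $r$. Then $\EE''=T'(\EE)$. The one subtlety is a $U$ that splits off more than $C(r)$ circles, since such a $U$ would reach into $h$'s part. By Part (6), that composite is $D_{h,\cdot}\circ U_{r,\cdot}$ only when the split-off piece stays inside $r$. Otherwise the operator is exactly the $ui^{-1}$ producing $\Theta_3(\EE)$ (or a row exchange of it), and this is excluded by hypothesis.
\item In the $D$ case, $\widetilde{r}$ is a chain, so $T$ must be some $S_{\widetilde{r},k}$. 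I would use Part (8) if the split leaves at least $C(h)$ circles attached to the first part, and Part (9) together with Part (7) otherwise. The excluded situation is again precisely the one where the splitting removes the circle in column $c_{\max}+1$, that is, the map to $\Theta_3$.
\end{itemize}

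\emph{Main obstacle.} The hardest step will be the boundary analysis in the second step. I need to show that every $U$- or $S$-type operator on $\widetilde{r}$ that cannot be pulled back through the relations of Theorem \ref{Commutativity of Operators} coincides, up to row exchange, with the $ui^{-1}$ producing $\Theta_3(\EE)$. For this I would compare supports and apply Corollary \ref{cor same support}, as in the proof of Lemma \ref{U exstence Z}: any $ui^{-1}$ with the same resulting support and order must give the same extended multi-segment. Finally, I would check that $\Theta_2(\EE'')$ is well defined, meaning that the row $h$ is combined with in $\EE''$ is again the first row ending at $c_{\max}$. This follows because $T'$ preserves the $\mathcal{S}$-data position of column $c_{\max}$, except in the excluded case.
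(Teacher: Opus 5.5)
Your architecture matches the paper's: you split on whether $T$ touches the row $\widetilde{r}$ produced by the $\Theta_1\to\Theta_2$ step, and use Part (1) of Theorem~\ref{Commutativity of Operators} when it does not. You treat $T\in\{M,U,D\}$ when that step is an $M$ and $T=S$ when it is a $D$, and finish with Lemma~\ref{commute 0 to 1}. The gap is in the $U$ and $S$ subcases, which rest on a misreading of where $h$ sits.

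In the $\mathcal{S}$-data the added hat is $\overline{c_{\max}+1}$. So in $M_{h,r}$ it is the \emph{upper} hat $h_1$ (with $r=h_2$), it has $C(h)=1$, and $U$ and $S$ always split off the \emph{top} columns. Consequently Part (5) is vacuous here. It requires $U_{h_1,k}=U_{h,k}$ with $1\le k<C(h)=1$, and in any case that $U$ acts on $h$, not on $r$. So it cannot give your factorization $(\text{merge with } h)\circ T'$ with $T'$ acting only on $r$. For the same reason Part (9) is vacuous, and Part (7) concerns hats rather than the chain $\widetilde{r}$.

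The correct bookkeeping, which is what the paper's four diagrams (Parts (3), (4), (6), (8)) encode, is as follows.
\begin{itemize}
\item For $T=U_{h*r,c}$ one has $1\le c\le C(r)$. The case $c=1$ splits off exactly the circle in column $c_{\max}+1$ and is the excluded map to $\Theta_3(\EE)$.
\item For $c\ge 2$, Part (6) with $k=c-1<C(r)$ gives $U_{h*r,c}\circ M_{h,r}=D_{h,\cdot}\circ U_{r,c-1}$. So $\EE''=U_{r,c-1}(\EE)$, and the right vertical arrow is a $D$, not a merge. The same happens for $T=D$ via Part (4).
\item In the $D$-case, $S_{\widetilde{r},1}$ is the excluded map, and every $S_{\widetilde{r},c}$ with $c\ge 2$ is handled by Part (8) with $k=c-1$.
\end{itemize}

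Your proposed exceptional case, a $U$ splitting off more than $C(r)$ circles, does not exist, since it would need $c\ge C(h*r)$. Meanwhile the genuinely excluded $U_{h*r,1}$ would be ``pulled back'' by your Part (5) scheme to a merge of $h$ with the hat left by $U_{r,1}$. That hat ends at $c_{\max}-1$, so it is not mergeable with $h$.

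Once the subcases are reassigned this way, your final check goes through. The chain created by $U_{r,c-1}$, $D_{r,\cdot}$ or $S_{r,c-1}$ ends at $c_{\max}$ and precedes the multiples in that column. It is therefore the first row ending at $c_{\max}$, and the right vertical arrow really is the map defining $\Theta_2(\EE'')$.
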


\begin{proof}
    We claim that the following diagram commutes.
    \[\begin{tikzcd}
	{\Theta_2(\EE)} & {\EE'} \\
	{\Theta_1(\EE)} & {T(\Theta_1(\EE))}
	\arrow["T", from=1-1, to=1-2]
	\arrow["{dual \circ ui \circ dual}", from=2-1, to=1-1]
	\arrow["T", from=2-1, to=2-2]
	\arrow["{dual \circ ui \circ dual}"', from=2-2, to=1-2]
\end{tikzcd}\]
    To see this, let $h = ([c_{\max} + 1, -c_{\max} - 1], c_{\max} + 1, -\eta(\EE))$ be the first row of $\Theta_1(\EE),$ let $r \in \Theta_1(\EE)$ be the row combined with $h$ through the unique $dual \circ ui \circ dual,$ and let $r' \in \Theta_2(\EE)$ be the image of $r.$ If $T$ does not involve $r',$ then the existence of the commutative diagram follows from Part (1) of Theorem \ref{Commutativity of Operators}.If $T$ involves $r',$ then the $dual \circ ui \circ dual$ is either $M$ or $D.$ If $M,$ then $r'$ is a merged hat, and $T$ is either $M,$ $D,$ or $U.$ If $D,$ then $r'$ is a row of circles and $T$ is of the form $S.$ In either case, the existence of the four commutative diagrams used in the proof of Lemma \ref{commute 1 to 2} verify the existence of the desired diagram.

    The operation $T$ is not the $dual \circ ui \circ dual$ operation from $\Theta_1(\mathcal{E})$ to $\Theta_2(\mathcal{E}).$ Therefore by Lemma \ref{commute 1 to 0}, it is clear that $T(\Theta_1(\mathcal{E})) = \Theta_1(\mathcal{E}'')$ for some $\mathcal{E}'' \sim \mathcal{E}.$ The fact that $(dual \circ ui \circ dual)(\Theta_1(\mathcal{E}'')) = \Theta_2(\mathcal{E}'')$ is evident.
\end{proof}

Next, we verify a similar result for $\Theta_1(\EE)$ and $\Theta_4(\EE).$

\begin{lemma}
\label{commute 1 to 4}
    Let $\EE$ be of type $Y_\mathcal{M}$ with $\mathcal{M}$ beginning at zero, and let $h = ([n + 1, -n - 1], n + 1, -\eta(\EE))$ be the first row of $\Theta_1(\EE).$ Let $T \in \{dual \circ ui \circ dual, ui^{-1}\}$ be a raising operator of type 3' on $\Theta_1(\EE)$ not equal to the $dual \circ ui \circ dual$(s) going to $\Theta_2(\EE)$ or $\Theta_4(\EE)$. Then the following diagram commutes.
    \[\begin{tikzcd}
	{\Theta_4(\EE)} & {\Theta_4(\EE')} \\
	{\Theta_1(\EE)} & {\Theta_1(\EE')}
	\arrow["T", from=1-1, to=1-2]
	\arrow["{D}", from=2-1, to=1-1]
	\arrow["T", from=2-1, to=2-2]
	\arrow["{D}"', from=2-2, to=1-2]
\end{tikzcd}\]
\end{lemma}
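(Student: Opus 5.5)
The proof follows the template of Lemma \ref{commute 1 to 2}, with $\Theta_2$ replaced by $\Theta_4$.

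First, by Lemma \ref{commute 0 to 1}, the operator $T$ takes $\Theta_1(\EE)$ to some $\Theta_1(\EE')$ with $\EE'\sim\EE$. This uses only that $T$ is not a $dual\circ ui\circ dual$ involving the hat $h$; since $T$ is not one of the dualizations to $\Theta_2$ or $\Theta_4$ (Theorem \ref{first block m_n > 1} says these are the only operations involving $h$), $T$ cannot involve $h$. So the bottom arrow exists.

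Next, let $r$ be the row of $\Theta_1(\EE)$ that $h$ combines with to produce $\Theta_4(\EE)$. By Theorem \ref{first block m_n > 1}, this is the last row whose support ends at $c_{\max}$. Since $m_{c_{\max}}>1$, in type $Y_\mathcal{M}$ this row is a multiple with support $[c_{\max},c_{\max}]$, unless some $\mathcal{S}_i=\{c_{\max}\}$, in which case $\Theta_4=\Theta_2$ and Lemma \ref{commute 1 to 2} already applies. I will therefore assume $r$ is a multiple. The operation $D$ producing $\Theta_4(\EE)$ is of the form $D^2_{h,r}$ in Lemma \ref{lem SMUD operators}, and it turns $h$ and $r$ into a row with a pair of triangles and no circles.

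If $T$ involves neither $h$ nor $r$, the square commutes by Part (1) of Theorem \ref{Commutativity of Operators}, since all these operators are local (Lemma \ref{lem-local}). This leaves the case where $T$ involves $r$. A multiple is never a chain, so $S$ does not act on it; it is not a hat, so $M$ and $U$ do not act on it. The only type 3' operator touching $r$ would be a $D$ from some other hat $h'$ onto $r$. I expect this to be the main obstacle. I would argue it is impossible, or equivalent to a $D$ onto the chain to which $r$ belongs: every hat of $\EE$ other than $h$ ends at most at $c_{\max}$, while a $D$ onto $r$ requires the hat to begin at $c_{\max}+1$. So only $h$ can dualize onto rows ending at $c_{\max}$, and $T$ does not involve $r$.

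The remaining task is to show that the two ways around the square give the same multi-segment. Both paths give virtual extended multi-segments with the same support and order; one is $D(T(\Theta_1(\EE)))$ and the other is $T(D(\Theta_1(\EE)))$, the latter well defined by locality. By the sign remark in the proof of Theorem \ref{Commutativity of Operators} (every operator preserves $\eta$ and the odd-alternating conditions), the two agree, and Corollary \ref{cor same support} then gives equality. Finally, the image $D(\Theta_1(\EE'))$ equals $\Theta_4(\EE')$ because $T$ leaves $r$ untouched, so $r$ is still the last row of $\EE'$ ending at $c_{\max}$.
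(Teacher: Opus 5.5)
Your proposal is correct and follows the paper's proof. The steps are the same: use Lemma \ref{commute 0 to 1} for the bottom arrow, note $T$ cannot touch $h$, take $r$ to be the last row (a multiple at $[c_{\max},c_{\max}]$), observe that no type 3' raising operator touches it, and conclude by Part (1) of Theorem \ref{Commutativity of Operators}. Your justification that no other hat can $D^2$ onto $r$ is actually more precise than the paper's blanket remark that $S,M,U,D$ never involve multiples, since $D^2$ can involve one: every hat $([A,-B],B,\eta)$ of $\EE$ has $B\le c_{\max}$, whereas reaching $r$ would need $B-1=c_{\max}$.

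Two harmless slips. First, at most two $\mathcal{S}_i$ contain $c_{\max}$ and $m_{c_{\max}}\ge 3$, so the last row is always a multiple and the $\mathcal{S}_i=\{c_{\max}\}$ case split is unnecessary. Second, for odd $m_{c_{\max}}$ the row produced by $D^2_{h,r}$ is the two-circle row $([c_{\max}+1,c_{\max}],0,\cdot)$, which is exactly why the $ui^{-1}$ to $\Theta_3(\EE)$ exists. It is not a pair of triangles; that only happens in the even-multiplicity almost-block setting.
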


\begin{proof}
    Again, we know that there exists $\EE'$ such that $T$ goes from $\Theta_1(\EE)$ to $\Theta_1(\EE')$ by Lemma \ref{commute 0 to 1}. Let $r$ be the last row of  $\Theta_1(\EE)$. The operator $T$ cannot involve $h$, so we can reduce to the case where $T$ involves $r$. Indeed, otherwise, the lemma follows from Part (1) of Theorem \ref{Commutativity of Operators}. But $r$ is a multiple, and the definitions of $S, M, U,$ and D; the only type 3' raising operators, imply that they do not involve multiples.
\end{proof}

Conversely, raising operators in the $\Theta_4$ layer can descend to the $\Theta_1$ layer.

\begin{lemma}
\label{commute 4 to 1}
    Let $\EE$ be of type $Y_\mathcal{M}$ with $\mathcal{M}$ beginning at zero and $m_{c_{\max}} \geq 3.$ Let $T$ be a raising operator of type 3' from $\Theta_4(\EE)$ to $\EE'$ not equal to the $ui^{-1}$ from $\Theta_4(\EE)$ to $\Theta_3(\EE).$ Then there exists $\EE''$ of type $Y_\mathcal{M}$ such that the following diagram commutes.
\[\begin{tikzcd}
	{\Theta_4(\EE)} & {\EE' = \Theta_4(\EE'')} \\
	{\Theta_1(\EE)} & {\Theta_1(\EE'')}
	\arrow["T", from=1-1, to=1-2]
	\arrow["{D}", from=2-1, to=1-1]
	\arrow["T", from=2-1, to=2-2]
	\arrow["{D}"', from=2-2, to=1-2]
\end{tikzcd}\]
\end{lemma}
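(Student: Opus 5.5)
The plan is to mirror the proof of Lemma \ref{commute 2 to 1}, with $\Theta_4$ in place of $\Theta_2$ and the operator $D$ in place of the generic $dual\circ ui\circ dual$. Write $h$ for the hat $([c_{\max}+1,-c_{\max}-1],c_{\max}+1,-\eta(\EE))$ of $\Theta_1(\EE)$. Write $r$ for the last row of $\Theta_1(\EE)$. Since $m_{c_{\max}}\geq 3$ and multiples are ordered after chains, $r$ is a multiple with support $[c_{\max},c_{\max}]$. Let $r'\in\Theta_4(\EE)$ be the row produced by the $D$ from $\Theta_1(\EE)$ to $\Theta_4(\EE)$. By the row-exchange computation already recalled in the proof of Lemma \ref{lem-aj-almost-block}, $r'$ is a pair of triangles with support $[c_{\max}+1,c_{\max}]$ and no circles.

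First I will show that $T$ cannot involve $r'$. The type 3' raising operators are $S,M,U,D$, and each acts on a chain or a hat. The row $r'$ is neither. The only $ui^{-1}$ that touches $r'$ is the one splitting off the circle in column $c_{\max}+1$ to produce $\Theta_3(\EE)$, and this operator is excluded by hypothesis. A $dual\circ ui\circ dual$ of type 3' acting on $r'$ would need $r'$ to be a hat or to dualize to a mergeable hat. Since $\widehat{r'}$ sits at the top of $dual(\Theta_4(\EE))$ and has support containing every other support, Lemma \ref{merge} rules this out. I expect the verification that $r'$ and $\widehat{r'}$ admit no type 3' operation (other than the excluded $ui^{-1}$) to be the main obstacle, because the row has $C(r')=0$ and the case analysis of Definition \ref{def row exchange} degenerates there. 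I would handle it by checking each case with $C=0$ directly, as in the fourth case of the proof of Lemma \ref{lem-case5-independence}.

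Once $T$ is known not to involve $r'$, the diagram commutes by Part (1) of Theorem \ref{Commutativity of Operators}, because $T$ and $D$ involve distinct rows and both are local (Lemma \ref{lem-local}). Multiples may only change sign under such operators, and the signs are forced by the $\mathcal{S}$-data, so the result is an equality of virtual extended multi-segments. Concretely, $T$ applied to $\Theta_1(\EE)$ does not involve $h$, since the only operators involving $h$ are the $dual\circ ui\circ dual$'s to $\Theta_2$ and $\Theta_4$, and $T$ is not one of these. Lemma \ref{commute 0 to 1} then gives $T(\Theta_1(\EE))=\Theta_1(\EE'')$ with $\EE''\sim\EE$ of type $Y_\mathcal{M}$ (by Theorem \ref{block classification}). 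Finally, $T$ does not touch the last multiple with support $[c_{\max},c_{\max}]$, so that row is still the last row ending at $c_{\max}$ in $\Theta_1(\EE'')$. Applying $D$ to $h$ and this row therefore yields $\Theta_4(\EE'')$, and this equals $\EE'$ by the commutation.
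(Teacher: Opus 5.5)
Your overall route is the paper's: identify the row $r'$ of $\Theta_4(\EE)$ created by $D$, show that the only type 3' raising operator touching it is the excluded $ui^{-1}$, and conclude by Part (1) of Theorem \ref{Commutativity of Operators}. However, you have misidentified $r'$, and your justification of the key step depends on that. Since $\EE$ is of type $Y_\mathcal{M}$, $m_{c_{\max}}$ is odd. In this setting $r'$, the last row of $\Theta_4(\EE)$, is a row of \emph{two circles} with support $[c_{\max}+1,c_{\max}]$. It is the chain attached to the new set $\{c_{\max},c_{\max}+1\}$ in the $\mathcal{S}$-data. This is also what makes possible the $ui^{-1}$ in Theorem \ref{first block m_n > 1} that ``removes a row of one circle''.

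The pair of triangles in Lemma \ref{lem-aj-almost-block} occurs only in the almost-block setting. There the last column has even multiplicity, and the one extra single circle in column $c_{\max}$ flips $l$ of this row (compare the proof of Lemma \ref{Almost block classification}). You can check directly for $\mathcal{M}=(3)$ that $\Theta_4(\EE)$ ends with a row of the form $([1,0],0,\eta')$. So your claim that $r'$ is ``neither a chain nor a hat'' is false, since $S$ does act on it. The anticipated degenerate $C(r')=0$ case analysis is therefore aimed at the wrong object. It also contradicts your own remark that the excluded $ui^{-1}$ splits a circle off $r'$.

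The correct check is short.
\begin{itemize}
\item $r'$ is a chain with $C(r')=2$, so the only $S$ involving it is $S_{r',1}$ (since $S_{r',k}$ requires $k<C(r')$). This is exactly the excluded $ui^{-1}$ to $\Theta_3(\EE)$.
\item $M$ and $U$ act only on hats.
\item No $D$ can pair a hat $([A,-B],B,\eta)$ of $\Theta_4(\EE)$ with $r'$. Every remaining hat has $B\le A\le c_{\max}$, so $B-1\notin\supp(r')$.
\end{itemize}
Equivalently, in the dual, $\widehat{r'}$ has support $[c_{\max}+1,-c_{\max}]$, which contains every other support, so no $ui$ can involve it.

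With this correction, the rest of your argument goes through and matches the paper's one-line proof. That rest consists of locality, Lemma \ref{commute 0 to 1}, and the observation that the last multiple in column $c_{\max}$ is untouched, so $D$ again yields $\Theta_4(\EE'')$. It actually makes explicit some details the paper leaves implicit.
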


\begin{proof}
    Let $h = ([c_{\max} + 1, -c_{\max} - 1], n + 1, -\eta(\EE))$ be the first row of $\Theta_1(\EE)$ and let the multiple $r \in \Theta_1(\EE)$ be the last row. Let $r'$ be the image of $r$ under the dualization: a row of two circles. If $T$ does not involve $r',$ then the existence of the commutative diagram follows from Part (1) of Theorem \ref{Commutativity of Operators}. The only row operation involving the row of two circles $r'$ is $S_{r, 1},$ which we have assumed $T$ is not.
\end{proof}

Finally, we show that most operations $T$ can ascend and descend between $\Theta_2$ and $\Theta_3.$

\begin{lemma}
\label{commute 2 to 3}
    Let $\EE$ be of type $Y_\mathcal{M}$ with $\mathcal{M}$ beginning at zero. Let $T \in \{dual \circ ui \circ dual, ui^{-1}\}$ be a raising operator of type 3' on $\Theta_2(\EE)$ not equal to the $ui^{-1}$ from $\Theta_2(\EE)$ to $\Theta_3(\EE).$ Then the following commutative diagram commutes.
    \[\begin{tikzcd}
	{\Theta_3(\EE)} & {\Theta_3(\EE')} \\
	{\Theta_2(\EE)} & {\Theta_2(\EE')}
	\arrow["T", from=1-1, to=1-2]
	\arrow["{ui^{-1}}", from=2-1, to=1-1]
	\arrow["T", from=2-1, to=2-2]
	\arrow["{ui^{-1}}"', from=2-2, to=1-2]
\end{tikzcd}\]
\end{lemma}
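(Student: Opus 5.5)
We follow the pattern of Lemmas \ref{commute 1 to 2} and \ref{commute 1 to 4}. Set $u$ to be the $ui^{-1}$ taking $\Theta_2(\EE)$ to $\Theta_3(\EE)$. Let $R$ be the row of $\Theta_2(\EE)$ whose support ends at $c_{\max}+1$; this is the row produced by the $dual\circ ui\circ dual$ between the hat and $r$. Then $u$ splits a single circle with support $[c_{\max}+1,c_{\max}+1]$ off $R$ and row-exchanges it into its $(P')$ position at the bottom.

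First we produce $\EE'$. By Lemma \ref{commute 2 to 1}, since $T\neq u$, we have $T(\Theta_2(\EE))=\Theta_2(\EE')$ for some $\EE'\sim\EE$, so it remains to see that $u$ commutes with $T$. If $T$ involves neither $R$ nor the new single circle, then Part (1) of Theorem \ref{Commutativity of Operators} applies. Here we use the locality Lemma \ref{lem-local}. The new circle lies in the last column and is a multiple, so by the definitions in Lemma \ref{lem SMUD operators} no type 3' raising operator other than an $S$ on $R$ touches it, and row exchanges past it leave other rows unchanged up to sign. Consequently $T$ acts identically on $\Theta_3(\EE)$, which must then equal $\Theta_3(\EE')$ by Corollary \ref{cor same support}, since supports and order match.

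The main case is when $T$ involves $R$. We split according to whether $R$ is a merged hat or a row of circles, that is, according to whether the dualization to $\Theta_2$ was of type $M$ or of type $D$.
\begin{itemize}
\item If $R$ is a row of circles, the only relevant $T$ is an $S_{R,k}$ with $k<C(R)$ and $k\neq1$ (the case $k=1$ being $u$ itself). Part (2) of Theorem \ref{Commutativity of Operators}, with $k_1=1$ and $k_2=k-1$ or the reverse, gives $S_{\cdot,1}\circ S_{R,k}=S_{\cdot,k-1}\circ S_{R,1}$ up to relabeling. This is exactly the required square.
\item If $R$ is a hat, then $u=U_{R,1}$ and $T$ is $U$, $D$ or $M$. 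We use Part (7) for $U\circ U$, Part (9) for $D\circ U$, and Part (5) for $M\circ U$.
\end{itemize}
In each case the commutativity identity must be checked to give $\Theta_3(\EE')$, and not merely some segment with the same $\mathcal{S}$-data. Since all these operators preserve $\eta$ and the odd-alternating sign rules, equality of $\mathcal{S}$-data suffices, as in the proof of Theorem \ref{Commutativity of Operators}.

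I expect the main obstacle to be the hat case. There, $T$ may interact with $R$ in a way that changes which row plays the role of ``the row ending at $c_{\max}+1$''. For example, a $D$ or $M$ can merge $R$ with an earlier hat. We must then verify that after $T$ the analogous $ui^{-1}$ still removes exactly one circle at column $c_{\max}+1$, so that the top arrow lands in the $\Theta_3$-layer. I would handle this by tracking the $\mathcal{S}$-data. In $\Theta_2$ the set containing $c_{\max}+1$ has $c_{\max}+1$ overlined together with $c_{\max}$; in $\Theta_3$ it appears as $\{\dots,c_{\max},\overline{c_{\max}+1}\}$, as in the case list of Lemma \ref{times 4 relation}. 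Each of $S$, $M$, $U$, $D$ not equal to $u$ leaves this tail pattern intact.
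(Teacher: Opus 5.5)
Your proposal is correct and follows the paper's proof. It uses Part (1) of Theorem \ref{Commutativity of Operators} when $T$ avoids the row being split, Part (2) when that row is a chain, and Parts (7), (9), (5) for $T$ of type $U$, $D$, $M$ when it is a hat; in each of these identities the step after $T$ is already the one-circle split from the new row ending at $c_{\max}+1$, so your anticipated ``obstacle'' is handled automatically. Two side remarks are inaccurate but unused: the circle split off in $\Theta_3(\EE)$ is a one-element chain $\{c_{\max}+1\}$, not a multiple, since that column has multiplicity one. Also, the tail $\{\dots,c_{\max},\overline{c_{\max}+1}\}$ you quote from Lemma \ref{times 4 relation} is the pattern of $\Theta_1$, not $\Theta_3$.
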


\begin{proof}
    Let $r$ be the row that is split by the $ui^{-1}$ from $\Theta_2(\EE)$ to $\Theta_3(\EE).$ If $T$ does not directly involve $r,$ then the result follows from Part (1) of Theorem \ref{Commutativity of Operators}. Otherwise, we consider a number of cases.

    If the $ui^{-1}$ from $\Theta_2(\EE)$ to $\Theta_3(\EE)$ is of the form $U$ and $r$ is a hat, then there are three possibilities for $T$. If $T$ is of form $U,$ then Part (7) of Theorem \ref{Commutativity of Operators} guarantees the existence of the commutative diagram. If $T$ is of form $D,$ then Part (9) of Theorem \ref{Commutativity of Operators} guarantees the existence of the diagram. If $T$ is of form $M$, then Part (5) gives the diagram.

    Alternatively, if the $ui^{-1}$ is of the form $S$ and $r$ is a chain, then the only possible raising operator involving $r$ is one of the form $S.$ Here, Part (2) of Theorem \ref{Commutativity of Operators} gives the commutative diagram. Having considered all possible operators that interact with the same rows as the $ui^{-1}$ we have proved the result.
\end{proof}

Conversely, all operations on the $\Theta_3$ layer descend to the $\Theta_2$ layer.

\begin{lemma}
\label{commute 3 to 2}
    Let $\EE$ be of type $Y_\mathcal{M},$ with $\mathcal{M}$ starting at zero. Let $T$ be a raising operator of type 3' from $\Theta_3(\EE)$ to $\EE.'$ Then there exists $\EE''$ of type $X_n$ such that the following diagram commutes.
\[\begin{tikzcd}
	{\Theta_3(\EE)} & {\EE' = \Theta_3(\EE'')} \\
	{\Theta_2(\EE)} & {\Theta_2(\EE'')}
	\arrow["T", from=1-1, to=1-2]
	\arrow["{ui^{-1}}", from=2-1, to=1-1]
	\arrow["T", from=2-1, to=2-2]
	\arrow["{ui^{-1}}"', from=2-2, to=1-2]
\end{tikzcd}\]
\end{lemma}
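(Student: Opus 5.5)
The plan mirrors the proofs of Lemmas \ref{commute 2 to 1} and \ref{commute 4 to 1}: first build a commutative square at the level of the $\Theta_2$ layer, and only then identify the top-right corner. Let $r\in\Theta_2(\EE)$ be the row split by the $ui^{-1}$ producing $\Theta_3(\EE)$. Let $r_1,r_2\in\Theta_3(\EE)$ be its two images: the separated single circle in column $c_{\max}+1$, and the remaining row, which is either a hat or a chain.

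\textbf{Step 1: $T$ does not involve $r_1$ or $r_2$.} Since $T$ is a type 3' raising operator on a virtual extended multi-segment of type $Y_\mathcal{M}$ shape, it is one of $S,M,U,D$ by Lemma \ref{lem SMUD operators}. By Part (1) of Theorem \ref{Commutativity of Operators}, $T$ can then be applied to $\Theta_2(\EE)$, and it commutes with the $ui^{-1}$. This gives $T(\Theta_3(\EE))=ui^{-1}(T(\Theta_2(\EE)))$.

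\textbf{Step 2: $T$ involves $r_1$.} I would argue this cannot happen. The row $r_1$ is a single circle sitting in the last column $c_{\max}+1$, so it is a multiple in the $\mathcal{S}$-data description. The operators $S,M,U,D$ never act on multiples, as in the proof of Lemma \ref{commute 1 to 4}.

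\textbf{Step 3: $T$ involves $r_2$.} If $r_2$ is a hat, then $T$ is $U$, $M$ or $D$. I would use Parts (7), (5) and (9) of Theorem \ref{Commutativity of Operators}, read in reverse, to rewrite $T\circ ui^{-1}$ as $ui^{-1}\circ T'$ for a suitable operator $T'$ on $\Theta_2(\EE)$. If $r_2$ is a chain, then $T=S$ and Part (2) does the same job. This case analysis is exactly the one in Lemma \ref{commute 2 to 3}, run backwards.

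\textbf{Step 4: identify the result.} The operator $T'$ on $\Theta_2(\EE)$ is not the $ui^{-1}$ to $\Theta_3$, since it preserves the row ending at $c_{\max}+1$. Lemma \ref{commute 2 to 1} therefore gives $T'(\Theta_2(\EE))=\Theta_2(\EE'')$ with $\EE''\sim\EE$. Hence $\EE'=ui^{-1}(\Theta_2(\EE''))=\Theta_3(\EE'')$. If needed, I would invoke Corollary \ref{cor same support} to match this with the given $\EE'$ via equality of supports.

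The main obstacle is Step 3. The identities of Theorem \ref{Commutativity of Operators} are stated in the forward direction, and the operator $T'$ they produce on the $\Theta_2$ layer must be checked to be a raising operator that is admissible there. I would verify this directly on the $\mathcal{S}$-data, using the descriptions in Lemma \ref{lem SMUD operators}.
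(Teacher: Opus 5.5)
Your argument is the paper's own. Like the paper, you split on whether $T$ touches the rows produced by the $ui^{-1}$. You use Part (1) of Theorem~\ref{Commutativity of Operators} when they are disjoint, and Parts (7), (5), (9) (remainder a hat) or Part (2) (remainder a chain) when $T$ acts on the remainder $r_2$. You then identify the corner via Lemma~\ref{commute 2 to 1}.

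Introducing a separate $T'$ on the $\Theta_2$ layer is more accurate than the paper's diagram, which labels both horizontal arrows $T$. In fact, Part (7) turns $U_{h',k_2}$ into $U_{h,1+k_2}$ and turns the vertical $U_{h,1}$ into $S_{r,1}$. Your worry about reading the identities backwards is harmless: they are equalities, and $T'$ exists directly (e.g.\ $U_{h',k_2}$ valid gives $C(h)=C(h')+1>1+k_2$).

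Two of your justifications are wrong as stated, although the claims they support are true.

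\textbf{Step 2.} $r_1$ is not a multiple. In $\Theta_3(\EE)$ the column $c_{\max}+1$ has multiplicity one, and $U_{h,1}$ (resp.\ $S_{r,1}$) creates the singleton set $\{c_{\max}+1\}$ in the $\mathcal{S}$-data. So $r_1$ is the chain attached to that set, and the remark borrowed from Lemma~\ref{commute 1 to 4} does not apply. The correct reason $T$ cannot act on $r_1$ is threefold:
\begin{itemize}
\item $S_{r_1,k}$ would need $1\le k<C(r_1)=1$;
\item $M$ and $U$ act only on hats;
\item $D_{h,r_1}$ would need a hat whose $\mathcal{T}$-set starts at $c_{\max}+2$, and no such hat exists.
\end{itemize}
The paper's proof skips this case silently, so treating it is worthwhile once the reason is fixed.

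\textbf{Step 4.} $T'$ does not preserve the row ending at $c_{\max}+1$: each of $U_{h,1+k_2}$, $M_{h,h_2}$, $D_{h,r}$ and $S_{r,1+k_2}$ changes exactly that row. Lemma~\ref{commute 2 to 1} only needs $T'$ to differ from the $ui^{-1}$ producing $\Theta_3(\EE)$. That holds because $T'$ is either of $dual\circ ui\circ dual$ type or splits off $1+k_2\ge 2$ circles instead of one.
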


\begin{proof}
    Let $r \in \Theta_2(\EE_1)$ affected by the $ui^{-1}$ from $\Theta_2(\EE)$ to $\Theta_3(\EE),$ and let $r'$ be its image after the last circle is separated by the $ui^{-1}.$ If $T$ does not involve $r',$ then the existence of the commutative diagram follows from Part (1) of Theorem \ref{Commutativity of Operators}.

    We may then assume that $T$ involves $r'$. The $ui^{-1}$ from $\Theta_2(\EE)$ to $\Theta_3(\EE)$ is either a $U$ or an $S.$ If $U,$ then $r'$ is a hat and $T$ is either $M, D,$ or $D.$ If $S,$ then $r'$ is a row of circles and $T$ is an $S.$ In either case, the existence of the four commutative diagrams used in the proof of Lemma \ref{commute 3 to 2} verify the existence of the desired diagram.

    Once we know that the diagram commutes, it is clear from Lemma \ref{commute 2 to 1} that $T$ must go from $\Theta(\EE)$ to $\Theta_2(\EE'')$ for some $\EE''.$ This then implies that $\EE' = \Theta_2(\EE''),$ completing the proof.
\end{proof}

Theorem \ref{commutativity theta correspondence} now follows directly from the above lemmas.

\begin{rmk}
    These commutativity results have practical use apart from providing a conceptual understanding of Theorems \ref{first block m_n = 1}, \ref{first block m_n > 1}, and \ref{Almost block classification}. They also provide a mechanism by which one could prove that sets like the ones detailed in this theorem are closed under row operations. In the above proofs of these theorems, we proved this closure property with less difficulty by examining the effect of row operations on $\mathcal{S}$-data. However, cases more general than those considered in this article (e.g., non-tempered and non-anti-tempered) may still involve operations like $S, M, U,$ and $D$ while not admitting a structural description like the $\mathcal{S}$-data. In such cases, results similar to the above eight lemmas and Theorem \ref{Commutativity of Operators} (which may be proved without reference to the $\mathcal{S}$-data) may prove useful.
\end{rmk}

\bibliographystyle{amsplain}
\bibliography{refs}

\end{document}